\titleformat{\section}{\large\bfseries}{\thesection}{1em}{}
\titleformat{\subsection}{\normalsize\bfseries}{\thesubsection}{1em}{}
\def \ora {\overrightarrow}
\def \Hess {\text{\it Hess}}
\def \Trace {\text{\it Trace}}
\def \Strain {\text{\it Strain}}
\def \Diam {\text{\it Diam}}
\def \Inactive {\text{\it Loose}}
\def \Ends {\text{\it Ends}}
\def \Ric {\text{\it Ric}}
\def \RR {\mathbb R}
\def \QQ {\mathbb Q}
\def \eps {\varepsilon}
\def \vphi {\varphi}
\def \cS {\mathcal S}
\def \cL {\mathcal L}
\def \cM {\mathcal M}
\def \cF {\mathcal F}
\def \cJ {\mathcal J}
\def \cB {\mathcal B}
\def \cN {\mathcal N}
\def \cI {\mathcal I}
\def \sgn {{\rm sgn}}
\newtheorem{theorem}{Theorem}[section]
\newtheorem{lemma}[theorem]{Lemma}
\newtheorem{proposition}[theorem]{Proposition}
\newtheorem{corollary}[theorem]{Corollary}
\newtheorem{remark}[theorem]{Remark}
\newtheorem{example}[theorem]{Example}
\newtheorem{definition}[theorem]{Definition}
\newtheorem{conjecture}[theorem]{Conjecture}
\def\myffrac#1#2 in #3{\raise 2.6pt\hbox{$#3 #1$}\mkern-1.5mu\raise 0.8pt\hbox{$
#3/$}\mkern-1.1mu\lower 1.5pt\hbox{$#3 #2$}}
\def\qed{\hfill $\vcenter{\hrule height .3mm
\hbox {\vrule width .3mm height 2.1mm \kern 2mm \vrule width .3mm
height 2.1mm} \hrule height .3mm}$ \bigskip}
\def \Exp {{\rm Exp}}
\def \dExp {{\rm dExp}}
\def \dexp {{\rm dexp}}
\begin{document}

\pretitle{\vspace{-30pt}\begin{center}\LARGE}
\pagenumbering{gobble}
\title{Needle decompositions in Riemannian geometry}
\author{Bo'az Klartag\thanks{School of Mathematical Sciences, Tel Aviv University, Tel Aviv 69978, Israel. E-mail: { klartagb@tau.ac.il}}}
\date{}
\maketitle

\vspace{-20pt}
\abstract{ The
localization technique from
convex geometry is generalized to the setting of Riemannian manifolds
whose Ricci curvature is bounded from below.
In a nutshell, our method is based on the following observation: When the Ricci curvature is non-negative,  log-concave measures are obtained
when conditioning the Riemannian volume measure
with respect to an integrable geodesic foliation. The Monge mass transfer problem plays an important role in our analysis.
 }

\bigskip
\renewcommand\cftsecfont{\normalsize}
\renewcommand\cftsecpagefont{\normalsize}
\renewcommand{\cftsecleader}{\cftdotfill{\cftdotsep}}
\tableofcontents

\newpage
\section{Introduction}
\pagenumbering{arabic}
\label{sec_intro}

The {\it localization technique} in convex geometry is a method for reducing $n$-dimensional problems
to one-dimensional problems, that was developed by
Gromov and Milman \cite{GM}, Lov\'asz and Simonovits \cite{LS} and Kannan, Lov\'asz and Simonovits \cite{KLS}.
Its earliest appearance seems to be found in the work of Payne and Weinberger \cite{PW},
where
the following inequality is stated:
For any bounded, open, convex set $K \subset \RR^n$ and an integrable, $C^1$-function $f: K \rightarrow \RR$,
\begin{equation}  \int_K f = 0 \qquad \Longrightarrow  \qquad \int_K f^2 \, \leq \, \frac{\Diam^2(K)}{\pi^2} \int_K |\nabla f|^2,
\label{eq_1356}
\end{equation}
where $\Diam(K) = \sup_{x,y \in K} |x-y|$ is the diameter of $K$, and $| \cdot |$ is the standard Euclidean norm in $\RR^n$.
 The localization proof of (\ref{eq_1356}) goes roughly as follows: Given $f$ with $\int_{K} f = 0$,
one finds a hyperplane $H \subset \RR^n$ such that $ \int_{K \cap H^+} f = \int_{K \cap H^-} f = 0$, where $H^-, H^+ \subset \RR^n$
are the two half-spaces determined by the hyperplane $H$. The problem of proving (\ref{eq_1356}) is reduced
to proving the two inequalities:
$$  \int_{K \cap H^{\pm}} f^2 \, \leq \, \frac{\Diam^2(K \cap H^{\pm})}{\pi^2} \int_{K \cap H^{\pm}} |\nabla f|^2. $$
The next step is to again bisect  each of the two half-spaces separately, retaining the requirement that the integral of
$f$ is zero. Thus one recursively obtains finer and finer partitions of $\RR^n$ into convex cells. At the $k^{th}$ step,
the proof of (\ref{eq_1356}) is reduced to $2^k$ ``smaller'' problems of a similar nature. At the limit,
the original problem
is reduced to a lower-dimensional problem, and eventually even to a one-dimensional problem. This one-dimensional problem
has turned out to be relatively simple to solve.

\medskip This bisection technique has no clear analog
in the context of an abstract Riemannian manifold. The purpose of this
manuscript is to try and bridge this gap between convex geometry and Riemannian geometry.

\medskip There are only two parameters of a given Riemannian manifold that play a role in our analysis: the dimension of the manifold,
and a uniform lower bound $\kappa$ for its Ricci curvature.
We say that an $n$-dimensional Riemannian manifold $\cM$ satisfies the curvature-dimension condition $CD(\kappa, N)$
for $\kappa \in \RR$ and $N \in (-\infty, 1) \cup [n, +\infty ]$ if
\begin{equation} \Ric_{\cM}(v, v)  \geq \kappa \cdot g(v,v) \qquad \qquad \text{for} \ p \in \cM, v \in T_p \cM, \label{eq_I1000}
\end{equation}
 where $g$ is the Riemannian metric tensor and $\Ric_{\cM}$ is the Ricci tensor of $\cM$.
 The contribution of Bakry and \'Emery \cite{BE} has made it clear
 that
{\it weighted} Riemannian manifolds are
convenient
for the study of curvature-dimension conditions.
 A weighted Riemannian manifold is a triplet $ (\cM, d, \mu), $
where $\cM$ is an $n$-dimensional Riemannian manifold with Riemannian distance function $d$, and where the measure  $\mu$  has
a smooth, positive density $e^{-\rho}$ with respect to the Riemannian volume measure on $\cM$.
The {\it generalized Ricci tensor} of the weighted Riemannian
manifold $(\cM, d, \mu)$ is defined via
\begin{equation}  \Ric_{\mu}(v,v) := \Ric_{\cM}(v,v) \, + \, \Hess_\rho(v,v) \qquad \qquad \text{for} \ p \in \cM, v \in T_p \cM,
\label{eq_I303} \end{equation}
where
$\Hess_\rho$ is the Hessian form associated with  the smooth function $\rho: \cM \rightarrow \RR$.
For $N \in (-\infty, 1) \cup [n, +\infty], p \in \cM$ and $v \in T_p \cM$ we define the {\it generalized Ricci tensor with parameter $N$} as
follows:
\begin{equation} \Ric_{\mu, N}(v,v) := \left \{ \begin{array}{lccl} \Ric_{\mu}(v,v)  \, - \, \frac{(\partial_v \rho)^2}{N - n}  &  & & N \neq n, +\infty \\
\Ric_{\mu}(v,v)  & & & N = +\infty \\
\Ric_{\cM}(v,v) & & & N = n, \rho \equiv Const
\end{array} \right.
 \label{eq_I1206}
\end{equation}
The standard agreement is that $Ric_{\mu, n}(v,v)$ is undefined unless $\rho$ is a constant function.
For $\kappa \in \RR$ and $N \in (-\infty, 1) \cup [n, +\infty]$
we say that $(\cM, d, \mu)$ satisfies the curvature-dimension condition  $CD(\kappa, N)$ when
$$ \Ric_{\mu, N}(v, v)  \geq \kappa \cdot g(v,v) \qquad \qquad \text{for} \ p \in \cM, v \in T_p \cM. $$
For instance, the $CD(0, \infty)$-condition is equivalent to the requirement that the generalized Ricci tensor be non-negative.
We refer the reader to Bakry, Gentil and Ledoux \cite{BGL} for background
on weighted Riemannian manifolds of class $CD(\kappa, N)$.
In this manuscript, a {\it minimizing geodesic} is a curve $\gamma: A \rightarrow \cM$, where $A \subseteq \RR$ is a connected set, such that
$$  d(\gamma(s), \gamma(t)) = |s-t| \qquad \qquad \qquad \text{for all} \ s,t \in A.
$$

\begin{definition} Let $\kappa \in \RR, 1 \neq N \in \RR \cup \{ \infty \}$ and let $\nu$ be a  measure on
the Riemannian manifold $\cM$.
We say that $\nu$ is a ``$CD(\kappa,N)$-needle'' if there exist a non-empty, connected open set $A \subseteq \RR$,
a smooth function $\Psi: A \rightarrow \RR$ and a minimizing geodesic $\gamma:A \rightarrow \cM$ such that:
\begin{enumerate}
\item[(i)] Denote by $\theta$ the measure on $A \subseteq \RR$ whose density with respect to the Lebesgue measure
is $e^{-\Psi}$. Then $\nu$ is the push-forward of $\theta$ under the map $\gamma$.
\item[(ii)] The following inequality holds in the entire set $A$:
\begin{equation}
\Psi^{\prime \prime} \geq \kappa + \frac{(\Psi^{\prime})^2}{N-1},
\label{eq_B1952}
\end{equation}\end{enumerate}
where in the case $N = \infty$, we interpret the term $(\Psi^{\prime})^2 / (N-1)$ as zero.
\label{def_cd}
\end{definition}

Condition (\ref{eq_B1952}) is equivalent to condition $CD(\kappa, N)$ for
the weighted Riemannian manifold $(A, d,  \theta)$ with $d(x,y) = |x-y|$.
Examples of needles include:

\begin{enumerate}
\item {\it Log-concave needles} which are defined to be $CD(0, \infty)$-needles. In this case, $\Psi$ is a convex function.
Log-concave needles are valuable when studying the uniform measure on convex sets in $\RR^n$ for large $n$.

\item  A {\it{$\sin^n$-concave needle}} is a $CD(n-1, n)$-needle. These are relevant to the sphere $S^{n}$,
since the $n$-dimensional unit sphere is of class $CD(n-1,n)$.

\item The {\it $N$-concave needles} are  $CD(0, N+1)$-needles with $N > 0$. Here, $f^{1/N}$ is a concave function,
where $f = e^{-\Psi}$ is the density of the measure $\theta$. For $N < 0$, the $CD(0,N+1)$-condition is equivalent to the convexity
of $f^{-1/|N|}$.

\item  A {\it $\kappa$-log-concave needle} is a $CD(\kappa,\infty)$-needle.
\end{enumerate}

These examples are discussed by
Gromov \cite[Section 4]{gr_waist}.
We say that the Riemannian manifold $\cM$ is {\it geodesically-convex}
if any two points in $\cM$ may be connected by a minimizing geodesic. By the Hopf-Rinow theorem, any complete, connected Riemannian manifold is
geodesically-convex. A partition of $\cM$ is a collection of non-empty disjoint subsets of $\cM$ whose union equals $\cM$.

\begin{theorem}[``Localization theorem'']
Let $n \geq 2, \kappa \in \RR$ and $N \in (-\infty, 1) \cup [n, +\infty ]$.
Assume that $(\cM,d,\mu)$ is an $n$-dimensional weighted Riemannian manifold
of class $CD(\kappa, N)$ which is geodesically-convex.
Let $f: \cM \rightarrow \RR$ be a $\mu$-integrable function with $ \int_\cM f d \mu = 0$.
Assume that there exists a point $x_0 \in \cM$ with $\int_\cM |f(x)| \cdot  d(x_0, x) d \mu(x) < \infty$.

\medskip Then there exist a partition $\Omega$ of $\cM$,
a measure $\nu$ on $\Omega$
and a family $\{ \mu_{\cI} \}_{\cI \in \Omega}$ of measures on $\cM$
such that:
\begin{enumerate}
\item[(i)] For any Lebesgue-measurable set $A \subseteq \cM$,
$$ \mu(A) = \int_{\Omega} \mu_{\cI}(A) d \nu(\cI) $$
(In particular, the map $\cI \mapsto \mu_{\cI}(A)$ is well-defined $\nu$-almost everywhere and it is a $\nu$-measurable map).
In other words, we have a ``disintegration of the measure $\mu$''.
\item[(ii)] For $\nu$-almost any $\cI \in \Omega$, the set $\cI \subseteq \cM$ is the image of a minimizing geodesic,
the measure $\mu_{\cI}$ is supported on $\cI$, and either $\cI$ is a singleton or else $\mu_{\cI}$ is a $CD(\kappa, N)$-needle.
\item[(iii)] For $\nu$-almost any $\cI \in \Omega$ we have $ \int_{\cI} f d \mu_{\cI} = 0$.
\end{enumerate}
\label{thm_main}
\end{theorem}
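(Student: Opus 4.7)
The plan is to build the partition via optimal mass transportation with the Riemannian distance as cost, in the spirit of Evans--Gangbo and Caffarelli--Feldman--McCann, adapted to the weighted setting. First I would decompose $f = f_+ - f_-$ into positive and negative parts. The hypothesis $\int f\, d\mu = 0$ asserts that $f_+ \mu$ and $f_- \mu$ have equal total mass, while the moment assumption $\int |f(x)|\, d(x_0,x) \, d\mu(x) < \infty$ guarantees that the Monge--Kantorovich cost between them with cost $c(x,y) = d(x,y)$ is finite. Kantorovich duality then produces a $1$-Lipschitz ``guiding function'' $u: \cM \to \RR$ such that every optimal plan is concentrated on the contact set $\{(x,y) : u(x) - u(y) = d(x,y)\}$, and along the minimizing geodesic between any contact pair the function $u$ decreases at unit speed.

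Next I would use $u$ to foliate $\cM$ by \emph{transport rays}: maximal minimizing geodesics $\gamma: A \to \cM$, with $A \subseteq \RR$ an interval, along which $u \circ \gamma$ is affine of slope $-1$. The goal is to show that through $\mu$-almost every point of the ``strain set'' (the union of the non-degenerate rays) passes a unique ray, and that the complement consists of fixed points of the transport, treated as singleton cells. This produces the partition $\Omega$ of item (i). Selecting a basepoint measurably on each ray identifies $\Omega$ with a standard measurable space, and a disintegration-of-measures theorem then yields a decomposition $\mu = \int_\Omega \mu_\cI \, d\nu(\cI)$ with each $\mu_\cI$ supported on $\cI$.

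The core analytic step is identifying $\mu_\cI$. After parametrizing a ray by arc length, write $\mu_\cI = e^{-\Psi} \, dt$; then $e^{-\Psi(t)}$ equals $e^{-\rho(\gamma(t))}$ times the Jacobian of the normal exponential map from a transverse slice of the ray. This Jacobian satisfies a matrix Riccati equation whose trace is controlled by $\Ric_\cM$ along $\gamma$, and combining the Riccati analysis with the Hessian of $\rho$ the $CD(\kappa, N)$ hypothesis converts into precisely the scalar differential inequality $\Psi'' \geq \kappa + (\Psi')^2/(N-1)$ of Definition~\ref{def_cd}. Thus each $\mu_\cI$ is a $CD(\kappa,N)$-needle, establishing item (ii). Item (iii), the balance $\int_\cI f \, d\mu_\cI = 0$, then follows from the fact that the optimal Monge transport is essentially one-dimensional along each ray: the masses of $f_+ \mu$ and $f_- \mu$ restricted to a single ray must coincide, since mass can only be moved along the ray that contains it.

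The hardest step, I expect, is the second one: proving that the transport set genuinely carries all of $\mu$ outside a negligible exceptional set, and that the ray partition is measurable with a well-defined disintegration. This is delicate already in $\RR^n$, where Caffarelli--Feldman--McCann had to control the regularity of $u$ and the structure of the endpoints of rays via subtle area-comparison arguments. In the Riemannian setting one additionally needs uniform local control of the normal exponential map and of conjugate-point behavior along each ray using only the lower Ricci bound; handling this simultaneously with the measurable selection that parametrizes $\Omega$, and with the non-compactness of $\cM$ allowed by the moment hypothesis, will likely dominate the technical length of the proof.
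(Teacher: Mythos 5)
Your outline has the right architecture --- dual Kantorovich maximizer $u$, transport rays, disintegration along rays, matrix Riccati comparison giving the scalar inequality $\Psi'' \geq \kappa + (\Psi')^2/(N-1)$ --- and the needle computation you sketch in the third paragraph is essentially the Heintze--Karcher / Bayle argument the paper uses. Two observations, one a difference of route and one a genuine gap.

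On the route for item (iii): you propose to construct an optimal transport plan between $f_+\mu$ and $f_-\mu$, use the structure theorem to see it is concentrated on the contact set $\{u(x)-u(y)=d(x,y)\}$, and then disintegrate the plan to conclude that the ray-conditional masses of $f_+\mu$ and $f_-\mu$ coincide. This route is viable (it is essentially Feldman--McCann), but the paper never constructs a plan. Instead it proves the mass balance purely variationally, in the Evans--Gangbo style: for a compact $K$ contained in a transport set $A$ it perturbs $u$ to $u_\delta(x)=\inf_{y}\left[u(y)+d(x,y)-\delta 1_K(y)\right]$, uses maximality of $u$ to get $\int_\cM \tfrac{u-u_\delta}{\delta}\,f\,d\mu\geq 0$, and sends $\delta\to 0^+$ to obtain $\int_A f\,d\mu\geq 0$; the reverse inequality comes from the symmetry $(u,f)\mapsto(-u,-f)$. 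This sidesteps both the existence of an optimal plan and the measure-theoretic subtleties of disintegrating it along the ray partition, which makes the argument noticeably cleaner.

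The gap is in item (ii). You write that $e^{-\Psi(t)}$ is $e^{-\rho(\gamma(t))}$ times a Jacobian obeying a matrix Riccati equation driven by $\Ric$ --- true, but only once you know the level sets of $u$ are regular enough transversally to the ray that $\nabla u$ can be differentiated there. A Kantorovich potential is merely Lipschitz; a priori $\nabla u$ exists only a.e.\ and is not differentiable anywhere, so the second fundamental form of $\{u=t\}$ is not defined and the Jacobi/Riccati system does not make sense. Your sketch diagnoses the hard step as control of the exponential map and conjugate points, but that is not where the difficulty lies; the real obstruction is the regularity of $u$ itself. The paper's resolution, which occupies all of its Section~\ref{sec_reg}, is to show via the Feldman--McCann metric comparison lemma and Whitney's extension theorem that on each $\Strain_{\eps_0}[u]$ the function $u$ coincides, together with its gradient, with a $C^{1,1}$ function $\tilde u$; the Jacobi field and Riccati analysis is then performed on $\tilde u$. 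Without this (or some substitute such as an a.e.\ second-order differentiability statement for Kantorovich potentials), the ``normal exponential map Jacobian'' in your sketch is not yet a well-defined object.
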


We demonstrate in Section \ref{applications} that Theorem
\ref{thm_main} may be used in order to obtain alternative proofs of some  familiar inequalities from convex and Riemannian geometry.
These include the isoperimetric inequality, the Poincar\'e and log-Sobolev
inequalities, the Payne-Weiberger/Yang-Zhong inequality, the inequality of Cordero-Erausquin, McCann and Schmuckenschlaeger,
 among others.  Some of these inequalities are consequences
 of the following Riemannian analog of the  four functions theorem
of Kannan, Lov\'asz and Simonovits \cite{KLS}:

\begin{theorem}[``The four functions theorem''] Let $n \geq 2, \alpha, \beta > 0, \kappa \in \RR, N \in (-\infty, 1) \cup [n, +\infty]$.
Let $(\cM, d, \mu)$ be an $n$-dimensional weighted Riemannian manifold
of class $CD(\kappa, N)$ which is geodesically-convex. Let $f_1, f_2, f_3, f_4: \cM \rightarrow [0, +\infty)$ be measurable functions
such that there exists $x_0 \in \cM$ with
$$ \int_{\cM} \left( |f_1(x)| + |f_2(x)| + |f_3(x)| + |f_4(x)| \right) \cdot (1 + d(x_0, x))  d \mu(x) < \infty. $$
Assume that $f_1^{\alpha} f_2^{\beta} \leq f_3^{\alpha} f_4^{\beta}$ almost-everywhere in $\cM$ and
that for any probability measure $\eta$ on $\cM$ which is a $CD(\kappa, N)$-needle,
\begin{equation}  \left( \int_{\cM} f_1 d \eta \right)^{\alpha} \left( \int_{\cM} f_2 d \eta \right)^{\beta}
\leq \left( \int_{\cM} f_3 d \eta \right)^{\alpha} \left( \int_{\cM} f_4 d \eta \right)^{\beta} \label{eq_E951}
\end{equation}
whenever $f_1,f_2,f_3, f_4$ are $\eta$-integrable. Then,
\begin{equation}  \left( \int_{\cM} f_1 d \mu \right)^{\alpha} \left( \int_{\cM} f_2 d \mu \right)^{\beta}
\leq \left( \int_{\cM} f_3 d \mu \right)^{\alpha} \left( \int_{\cM} f_4 d \mu \right)^{\beta}. \label{eq_E945} \end{equation}
\label{prop_four_functions}
\end{theorem}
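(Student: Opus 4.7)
The plan is to reduce (\ref{eq_E945}) to the one-dimensional version (\ref{eq_E951}) via one application of the localization theorem, mirroring the original KLS four functions argument but with Theorem~\ref{thm_main} in place of hyperplane bisection. Write $A_i := \int_\cM f_i \, d\mu$ for $i=1,\dots,4$ and assume, modulo the degenerate corner cases $A_1 = 0$ or $A_3 = 0$, that $A_1, A_3 > 0$. Put $\rho := A_1/A_3$ and consider
$$h := f_1 - \rho f_3.$$
By construction $\int_\cM h \, d\mu = 0$, and the first-moment integrability hypothesis on $f_1, f_3$ passes to $h$, so Theorem~\ref{thm_main} applies.

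From Theorem~\ref{thm_main} I obtain a partition $\Omega$ of $\cM$, a measure $\nu$ on $\Omega$ and a disintegration $\{\mu_\cI\}$ with $\mu = \int_\Omega \mu_\cI \, d\nu(\cI)$ such that for $\nu$-a.e.~$\cI$, either $\mu_\cI$ is supported on a singleton or else $\mu_\cI$ is a $CD(\kappa,N)$-needle, and in both cases
$$\int_\cI f_1 \, d\mu_\cI \;=\; \rho \int_\cI f_3 \, d\mu_\cI.$$
On a non-singleton $\cI$, applying (\ref{eq_E951}) to the probability measure $\mu_\cI / \mu_\cI(\cI)$ (both sides are homogeneous of degree $\alpha+\beta$ in $\eta$, so rescaling is harmless) and combining with the displayed equality allows one to cancel a factor of $\left(\int_\cI f_3 \, d\mu_\cI\right)^\alpha$ and arrive at
$$\rho^{\alpha/\beta}\int_\cI f_2 \, d\mu_\cI \;\leq\; \int_\cI f_4 \, d\mu_\cI.$$
For a singleton $\cI = \{x\}$ carrying mass $c := \mu_\cI(\cI)$, the same needlewise bound follows from the pointwise hypothesis $f_1(x)^\alpha f_2(x)^\beta \leq f_3(x)^\alpha f_4(x)^\beta$ together with the equality $c\,f_1(x) = \rho\,c\,f_3(x)$.

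Integrating the needlewise bound against $\nu$ and invoking the disintegration identity,
$$\rho^{\alpha/\beta} A_2 \;=\; \int_\Omega \rho^{\alpha/\beta} \int_\cI f_2 \, d\mu_\cI \, d\nu(\cI) \;\leq\; \int_\Omega \int_\cI f_4 \, d\mu_\cI \, d\nu(\cI) \;=\; A_4.$$
Raising to the $\beta$-th power and substituting $\rho = A_1/A_3$ gives $A_1^\alpha A_2^\beta \leq A_3^\alpha A_4^\beta$, which is (\ref{eq_E945}).

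I expect the main technical obstacle to lie not in this core argument but in the edge cases where $\int_\cI f_3 \, d\mu_\cI = 0$ or the global degeneracies $A_1 A_3 = 0$. These should be resolvable as follows: when $A_3 = 0$ but $A_1 > 0$, apply Theorem~\ref{thm_main} instead to $f_2 - (A_2/A_1) f_1$ and combine the resulting needlewise inequalities with (\ref{eq_E951}) to force $A_2 = 0$ as well; at singletons with $f_3(x) = 0$, the pointwise bound forces $f_1(x) = 0$, and a mild perturbation replacing $f_3$ by $f_3 + \eps g$ by a positive, $\mu$-integrable weight $g$ (followed by $\eps \to 0^+$) restores the strict positivity required for the clean cancellation performed above while preserving both hypotheses. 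The Riemannian content of the proof is thereby concentrated entirely in Theorem~\ref{thm_main}, and everything else reduces to elementary real-variable bookkeeping.
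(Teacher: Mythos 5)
Your proposal follows the paper's argument essentially verbatim: apply Theorem~\ref{thm_main} to $f_1 - (A_1/A_3)f_3$, use the needlewise constraint $\int_{\cI} f_1\,d\mu_{\cI} = \rho\int_{\cI} f_3\,d\mu_{\cI}$ together with (\ref{eq_E951}) to cancel the $f_3$-factor and obtain $\rho^{\alpha/\beta}\int_{\cI}f_2\,d\mu_{\cI}\le\int_{\cI}f_4\,d\mu_{\cI}$, then integrate in $\nu$, handling the vanishing of $f_3$ by the same $f_3 + \eps g$ perturbation. The one detail you glide over is that normalizing $\mu_{\cI}$ to a probability measure requires $\mu_{\cI}(\cI)<\infty$, which Theorem~\ref{thm_main} does not guarantee; the paper fills this in by first upgrading (\ref{eq_E951}) from probability needles to arbitrary $\sigma$-finite $CD(\kappa,N)$-needles via homogeneity and monotone convergence, after which no normalization is needed.
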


Theorem \ref{thm_main} was certainly known in the case where $\cM = \RR^n$ or
$\cM = S^{n-1}$.
However, even in these symmetric spaces, our proof of
Theorem \ref{thm_main} is very different from the traditional bisection proofs given in
Gromov and Milman \cite{GM} or Lov\'asz and Simonovits \cite{LS}.
The geodesic foliations that we construct in Theorem \ref{thm_main} are {\it integrable}, meaning that there is a function
$u: \cM \rightarrow \RR$ such that the geodesics appearing in the partition are integral curves of $\nabla u$.
This  integrability property makes the construction of the partition somewhat more ``canonical''.
In contrast, there are many arbitrary choices that one makes
during the bisection process, as there  could be many hyperplanes that bisect a domain in $\RR^n$ into two subsets of equal volumes.
For a function $u: \cM \rightarrow \RR$ we define its Lipschitz seminorm by
$$
  \| u \|_{Lip} = \sup_{x \neq y \in \cM} \frac{|u(x) - u(y)|}{d(x,y)}.
  $$
Given a $1$-Lipschitz function $u: \cM \rightarrow \RR$ and a point $y \in \cM$, we say that
$y$ is a {\it  strain point} of $u$ if there exist $x,z \in \cM$ for which
$$
 u(y) - u(x) = d(x,y) > 0, \quad u(z) - u(y) = d(y,z) > 0, \quad d(x,z) = d(x,y) + d(y, z).
 $$
 Write $\Strain[u] \subseteq \cM$ for the collection of all strain points of $u$. The set $\Strain[u]$ resembles the {\it transport set} defined at the beginning of Section 3 in  Evans and Gangbo \cite{EG}.
 It is explained below that $\Strain[u]$ is a measurable
subset of $\cM$.
  It is also proven  below that
the relation
$$  x \sim y \qquad \Longleftrightarrow \qquad |u(x) - u(y)| = d(x,y)
$$
is an equivalence relation on $\Strain[u]$, and that each equivalence class is the image of a minimizing geodesic.
Write $T^{\circ}[u]$ for the collection of all equivalence classes. It follows that for any $\cI \in T^{\circ}[u]$
there exists a minimizing geodesic $\gamma: A \rightarrow \cM$ with $\gamma(A) = \cI$ and
\begin{equation}  u(\gamma(t)) = t \qquad \qquad \qquad \text{for all} \ t \in A. \label{eq_I1220}
\end{equation}
Let $\pi: \Strain[u] \rightarrow T^{\circ}[u]$ be the partition map,
i.e., $x \in \pi(x) \in T^{\circ}[u]$ for all $x \in \Strain[u]$.
The conditioning
of $\mu$ with respect to the geodesic foliation
$T^{\circ}[u]$ is described in the following theorem:

\begin{theorem}
Let $n \geq 2, \kappa \in \RR$ and $N \in (-\infty, 1) \cup [n, +\infty ]$.
Assume that $(\cM,d,\mu)$ is an $n$-dimensional weighted Riemannian manifold
of class $CD(\kappa, N)$ which is geodesically-convex.
 Let $u: \cM \rightarrow \RR$ satisfy $\| u \|_{Lip} \leq 1$.
Then there exist a measure $\nu$ on the set $T^{\circ}[u]$
and a family $\{ \mu_{\cI} \}_{\cI \in T^{\circ}[u]}$ of measures on $\cM$
such that:
\begin{enumerate}
\item[(i)] For any Lebesgue-measurable set $A \subseteq \cM$, the map $\cI \mapsto \mu_{\cI}(A)$ is well-defined $\nu$-almost everywhere and is a $\nu$-measurable map.
If a subset $S \subseteq T^{\circ}[u]$ is $\nu$-measurable then $\pi^{-1}(S) \subseteq \Strain[u]$
is a measurable subset of $\cM$.
\item[(ii)] For any Lebesgue-measurable set $A \subseteq \cM$,
$$ \mu(A \cap \Strain[u]) = \int_{T^{\circ}[u]} \mu_{\cI}(A) d \nu(\cI). $$
\item[(iii)] For $\nu$-almost any $\cI \in T^{\circ}[u]$, the measure $\mu_{\cI}$ is a $CD(\kappa, N)$-needle supported on $\cI \subseteq \cM$.
Furthermore, the set $A \subseteq \RR$ and the minimizing geodesic $\gamma:A \rightarrow \cM$ from Definition \ref{def_cd} may be
selected so that $\cI = \gamma(A)$ and
so that (\ref{eq_I1220}) holds true.
\end{enumerate}
\label{thm_main2}
\end{theorem}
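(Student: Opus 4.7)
The plan is to construct the measures $\nu$ and $\{\mu_{\cI}\}$ by disintegrating $\mu|_{\Strain[u]}$ along the equivalence classes of $\sim$, each of which is the image of a minimizing geodesic on which $u$ grows with unit speed. First I would nail down the measurable structure: show that $\Strain[u]$ is Borel (exhibit it as a countable union of closed sets obtained by restricting the strain-witnesses $x,z$ to compact balls and taking an appropriate limit), verify that each equivalence class admits a canonical parametrization by $u$, and equip $T^{\circ}[u]$ with the $\sigma$-algebra generated by $\pi$-saturated Borel sets. A key preliminary is the construction of a \emph{Borel transversal} $\Sigma \subseteq \Strain[u]$ meeting every ray in exactly one point, which I would obtain by partitioning rays according to the $u$-levels at which they cross a countable family of sections and applying a Kuratowski--Ryll-Nardzewski style measurable selection.

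With the transversal in hand, I would parametrize $\Strain[u]$ through a ray map $\Phi: \cD \to \Strain[u]$ with $\Phi(x,t) = \Exp_x(t\, v(x))$, where $x\in\Sigma$, $v(x)$ is the unit velocity of the ray through $x$, and $\cD\subseteq \Sigma \times \RR$ is the natural domain so that $u \circ \Phi(x,t) = u(x)+t$. The next step is a change-of-variables identity
\begin{equation*}
\int_{\Strain[u]} h \, d\mu \;=\; \int_{\Sigma}\left(\int_{A(x)} h(\Phi(x,t)) \, e^{-\Psi_x(t)} \, dt\right) d\sigma(x),
\end{equation*}
where $e^{-\Psi_x(t)} = e^{-\rho(\Phi(x,t))} \cdot |\det d\Phi(x,t)|$ and $\sigma$ is a suitable Borel reference measure on $\Sigma$. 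This comes from a smooth coarea-type computation on the relative interior of $\Strain[u]$, combined with a monotone exhaustion argument to reach the full strain set. Defining $\mu_{\cI}$ as the push-forward of $e^{-\Psi_x(t)}\, dt$ under $t\mapsto \Phi(x,t)$ for $\cI = \pi(x)$ and setting $\nu := \pi_{*}\sigma$, properties (i) and (ii) follow, and (\ref{eq_I1220}) is built in.

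The heart of the proof is showing that each $\Psi_x$ satisfies the needle inequality $\Psi_x'' \geq \kappa + (\Psi_x')^2/(N-1)$. Splitting $\Psi_x(t) = \rho(\gamma(t)) - \log J(t)$ where $J(t)$ is the Riemannian Jacobian of the ray map transverse to $\dot\gamma$, the Jacobi/Riccati equation along the geodesic together with the Bochner identity yields the classical comparison
\begin{equation*}
-(\log J)''(t) \;\geq\; \Ric_{\cM}(\dot\gamma,\dot\gamma) \;+\; \frac{\bigl((\log J)'(t)\bigr)^2}{n-1}.
\end{equation*}
Adding $\Hess_\rho(\dot\gamma,\dot\gamma)$ to both sides converts the right-hand side into $\Ric_\mu(\dot\gamma,\dot\gamma) + ((\log J)')^2/(n-1)$. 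Writing $(\log J)' = \partial_{\dot\gamma}\rho - \Psi_x'$ and invoking $\Ric_{\mu,N}(\dot\gamma,\dot\gamma) \geq \kappa$ through definition (\ref{eq_I1206}), a short Cauchy--Schwarz manipulation that trades the denominator $n-1$ for $N-1$ absorbs the correction term $-(\partial_{\dot\gamma}\rho)^2/(N-n)$ exactly, producing (\ref{eq_B1952}) for $\Psi_x$.

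The main obstacle I expect is the regularity/measurability step that makes the change of variables rigorous on all of $\Strain[u]$: although $u$ is only $1$-Lipschitz, the strain condition forces $|\nabla u|=1$ at interior points of the rays and $u$ is smooth there, but one must quantify the set of ray-endpoints and boundary singularities and show it is $\mu$-negligible, so that $\Phi$ is a local diffeomorphism on a set of full measure. This requires careful use of geodesic convexity of $\cM$, together with an area estimate---essentially that inside any tubular neighborhood of a transversal patch the degenerate Jacobi set has codimension at least one---so that Fubini applies along the rays. Once this is secured, the pointwise Jacobi/Ricci comparison above delivers a genuine $CD(\kappa,N)$-needle on each $\cI$ in the sense of Definition \ref{def_cd}.
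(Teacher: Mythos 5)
Your overall architecture — disintegrate $\mu|_{\Strain[u]}$ over the rays, parametrize each ray by $u$, change variables, and then run a Jacobi/Riccati comparison to verify the needle inequality — does match the structure of the paper's proof. In particular your final algebraic manipulation (splitting $\Psi = \rho - \log J$, using the Bochner/Riccati estimate, trading $n-1$ for $N-1$ via Cauchy--Schwarz to absorb the $(\partial_{\dot\gamma}\rho)^2/(N-n)$ correction) is essentially the computation in Proposition~\ref{prop_1134}. However, there is a genuine gap in the way you set up the transversal and the change-of-variables step, and this gap is precisely the technical crux that the paper's Section~\ref{sec_reg} is designed to resolve.

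You propose to produce a single Borel transversal $\Sigma$ by a Kuratowski--Ryll-Nardzewski measurable selection and then differentiate the ray map $\Phi(x,t) = \Exp_x(t\,v(x))$ with respect to $x\in\Sigma$. But abstract measurable selection only yields a Borel set $\Sigma$ and a Borel vector field $v$ on it; it gives no Lipschitz, let alone $C^{1}$, control of $x\mapsto v(x)$ transverse to the rays. Without that, $\Phi$ need not be locally Lipschitz, the area/coarea formula in your display is not available, and the transverse derivatives $\partial_{y_i}\Phi$ that you need to identify as Jacobi fields are not even well-defined. Worse, the Riccati step requires the operator $A_t$ (the shape operator along the ray) to have real eigenvalues, which in the paper is deduced from the symmetry $\langle J_i', J_k\rangle = \langle J_k', J_i\rangle$; this symmetry comes from the almost-everywhere equality of mixed second derivatives of a $C^{1,1}$ function (Lemma~\ref{lem_1357} and Corollary~\ref{cor_B1445}), not from any structure that a Borel transversal would supply. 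Your remark about codimension-one singular sets and Fubini addresses endpoint negligibility (handled in the paper via Lemma~\ref{lem_1054} and Lemma~\ref{lem_C1047}) but not this regularity obstruction.

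The paper circumvents this by a different mechanism: it first proves (Theorem~\ref{prop_939}, via the Feldman--McCann three-point estimate and Whitney's $C^{1,1}$ extension theorem) that $u$ coincides with a genuine $C^{1,1}$ function $\tilde u$ on each $\Strain_{\eps_0}[u]\cap B(p,\delta)$, and then takes the transversal to be a $C^{1,1}$ level set $\{\tilde u = r_0\}$ inside such a chart. This gives a Lipschitz ray map $F$ outright, makes $\nabla\tilde u$ differentiable a.e.\ with symmetric Hessian, and then $\Strain[u]$ is exhausted up to measure zero by countably many such ray clusters (Proposition~\ref{cfm}), after a Luzin argument to make the cluster lengths continuous. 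To repair your proof you would need to replace the abstract selection step with some argument delivering this quantitative transverse regularity of the gradient field along the transversal; the paper's $C^{1,1}$ regularity theorem is the natural candidate, and without it or a substitute the change-of-variables and the Jacobi-field apparatus do not get off the ground.
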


We call the $1$-Lipschitz function $u$
from Theorem \ref{thm_main2} the {\it guiding function} of the needle-decomposition.
In the case where the function $u$ from Theorem \ref{thm_main2} is the distance function from a smooth hypersurface, the
conclusion of Theorem \ref{thm_main2}
is essentially a classical computation in Riemannian geometry which may be found in Gromov \cite{G_levy, G_MS},
Heintze and Karcher \cite{HK} and Morgan \cite{morgan}. That computation is related to Paul Levy's proof
of the isoperimetric inequality.
It is beneficial to analyze arbitrary Lipschitz functions in Theorem \ref{thm_main2}, because
of the relation to
the dual Monge-Kantorovich problem presented in the following:

\begin{theorem}[``Localization theorem with a guiding function'']
Let $n \geq 2, \kappa \in \RR$ and $N \in (-\infty, 1) \cup [n, +\infty ]$.
Assume that $(\cM,d,\mu)$ is an $n$-dimensional weighted Riemannian manifold
of class $CD(\kappa, N)$ which is geodesically-convex.
Let $f: \cM \rightarrow \RR$ be a $\mu$-integrable function with $ \int_\cM f d \mu = 0$.
Assume that there exists a point $x_0 \in \cM$ with $\int_\cM |f(x)| \cdot  d(x_0, x) d \mu(x) < \infty$.
Then,
\begin{enumerate}
\item[(A)] There exists a $1$-Lipschitz function $u: \cM \rightarrow \RR$ such that
\begin{equation}  \int_{\cM} u f d \mu = \sup_{\| v \|_{Lip} \leq 1} \int_{\cM} v f d \mu. \label{eq_I2103} \end{equation}
\item[(B)] For any such function $u$, the function $f$ vanishes $\mu$-almost everywhere in
$\cM \setminus \Strain[u]$. Furthermore, let $\nu$ and $\{ \mu_{\cI} \}_{\cI \in T^{\circ}[u]}$ be measures on $T^{\circ}[u]$
and $\cM$, respectively, satisfying conclusions (i), (ii) and (iii) of Theorem \ref{thm_main2}.
Then for $\nu$-almost any $\cI \in T^{\circ}[u]$,
\begin{equation}  \int_{\cI} f d \mu_{\cI} = 0. \label{eq_I1002} \end{equation}
\item[(C)] For any such function $u$, there exist $\Omega, \nu, \{ \mu_{\cI} \}_{\cI \in \Omega}$ satisfying
the conclusions of Theorem \ref{thm_main}, which also satisfy the
following  property:
For $\nu$-almost any $\cI \in \Omega$, there exist a connected set $A \subseteq \RR$ and a minimizing geodesic
$\gamma: A \rightarrow \cM$ with $\gamma(A) = \cI$ and
$$ u(\gamma(t)) = t \qquad \qquad \qquad \text{for all} \ t \in A. $$
 \end{enumerate}
 \label{prop_intro}
\end{theorem}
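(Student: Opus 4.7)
The plan is to apply Arzela--Ascoli to a maximizing sequence. Denote $M := \sup_{\|v\|_{Lip}\le 1} \int_\cM v f\, d\mu$. Since $\int_\cM f\, d\mu = 0$, the value of $\int v f\, d\mu$ is unchanged by adding a constant to $v$, so every competitor may be normalized to have $v(x_0) = 0$; the bound $|v(x)| \le d(x_0, x)$ together with the moment hypothesis then gives $|v f| \le |f|\cdot d(x_0,\cdot) \in L^1(\mu)$ and $M \in \RR$. We choose a maximizing sequence $(u_k)$ with $u_k(x_0) = 0$; the family is uniformly $1$-Lipschitz and pointwise bounded by $d(x_0,\cdot)$. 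Arzela--Ascoli on an exhausting sequence of closed geodesic balls, combined with a diagonal extraction, produces a pointwise (indeed locally uniform) limit $u$; the $1$-Lipschitz property passes to the limit. Dominated convergence with dominating function $|f(\cdot)|\, d(x_0,\cdot)$ then yields $\int u f\, d\mu = M$, so $u$ realizes the supremum.

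\textbf{Part (B).} Fix any such optimizer $u$ and let $\nu$, $\{\mu_\cI\}_{\cI \in T^{\circ}[u]}$ be as supplied by Theorem \ref{thm_main2}. Two statements are to be proved: $f = 0$ $\mu$-almost everywhere on $\cM \setminus \Strain[u]$, and $\int_\cI f\, d\mu_\cI = 0$ for $\nu$-almost every $\cI$. Both will be deduced from the maximality of $u$ by a first-variation argument: were either conclusion to fail, we would construct a $1$-Lipschitz competitor $v$ with $\int v f\, d\mu > \int u f\, d\mu$. For the vanishing off $\Strain[u]$, we exploit that a non-strain point $y$ participates in no two-sided transport relation, so $u$ is not rigidly pinned there; on a small ball around a Lebesgue density point of $\{f > 0\} \setminus \Strain[u]$ (or the negative analogue) a compactly supported $1$-Lipschitz bump can be added to $u$ in the sign of $f$ to strictly increase the objective. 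For the ray-wise identity \eqref{eq_I1002}, the natural perturbation is to shift $u$ along selected rays by a bounded measurable function $h$ of the ray index $\pi(x) \in T^{\circ}[u]$; formally the first-order gain is
\[
\int_{T^{\circ}[u]} h(\cI)\, \Bigl( \int_\cI f\, d\mu_\cI \Bigr)\, d\nu(\cI),
\]
and vanishing of this quantity for every admissible $h$ forces $\int_\cI f\, d\mu_\cI = 0$ $\nu$-a.e. The hard part, and what we expect to be the main obstacle, is that a function that is merely constant on each ray need not be Lipschitz in the transverse directions, so the shift must be regularized across the foliation while preserving the global $1$-Lipschitz bound. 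This regularization is carried out using the rigid description of $\Strain[u]$ (under the equivalence $x \sim y \iff |u(x) - u(y)| = d(x,y)$) as a disjoint union of transport rays along which $u$ is an isometry, together with an estimate controlling the transverse Lipschitz defect of the resulting fibrewise translation.

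\textbf{Part (C).} Here we combine (A), (B), and Theorem \ref{thm_main2}. Let $u$ be any maximizer from (A), and enlarge the index set to $\Omega := T^{\circ}[u] \cup \bigl\{\{x\} : x \in \cM \setminus \Strain[u]\bigr\}$. Extend $\nu$ to $\Omega$ by pushing $\mu|_{\cM \setminus \Strain[u]}$ forward under $x \mapsto \{x\}$, and put $\mu_{\{x\}} := \delta_x$ on the singleton part. Conclusion (i) of Theorem \ref{thm_main} is then Theorem \ref{thm_main2}(ii) combined with this trivial extension on the complement of $\Strain[u]$; conclusion (ii) is Theorem \ref{thm_main2}(iii) together with the constant-geodesic structure of a singleton; conclusion (iii) holds on the needle pieces by \eqref{eq_I1002} from (B), and on the singletons because $f$ vanishes $\mu$-almost everywhere on $\cM \setminus \Strain[u]$ by (B). The parametrization property $u(\gamma(t)) = t$ demanded in (C) is already built into Theorem \ref{thm_main2}(iii), which completes the assembly.
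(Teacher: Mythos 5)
Parts (A) and (C) are correct and follow the paper's own argument closely: (A) is the Arzel\`a--Ascoli selection with normalization at $x_0$ and dominated convergence (Lemma \ref{lem_C1815}), and (C) is the formal enlargement of $T^{\circ}[u]$ by singletons with Dirac masses, which reproduces the paper's proof of Theorem \ref{thm_main}.

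Part (B) is where your proposal diverges from the paper, and it contains a genuine gap. You propose two first-variation perturbations of $u$: a compactly supported bump near a density point of $\{f\neq 0\}\setminus\Strain[u]$, and a ray-indexed translation $v(x)=u(x)+\eps\,h(\pi(x))$. You correctly flag that the second is problematic because a function constant on each ray need not be Lipschitz transversally, and you leave the required regularization unresolved. But the first perturbation is also not airtight: at a point $y\in\Inactive[u]$ one has $|u(y)-u(z)|<d(y,z)$ for all $z\neq y$, yet this slack need not be bounded below near $y$ (it can degenerate to zero as $z\to y$), so there is no uniform margin that lets you add a small bump while preserving $\|v\|_{Lip}\leq 1$. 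Moreover $\cM\setminus\Strain[u]$ is not just $\Inactive[u]$; it also contains $\Ends[u]$, where $u$ \emph{is} rigidly pinned on one side, and your bump argument does not distinguish these cases. The paper circumvents both obstacles at once with a different, Evans--Gangbo-style construction (Section \ref{sec_monge}): for compact $K$, the competitor
\begin{equation}
u_\delta(x)=\inf_{y\in\cM}\bigl[u(y)+d(x,y)-\delta\,1_K(y)\bigr]
\end{equation}
is \emph{automatically} $1$-Lipschitz because it is an infimum of $1$-Lipschitz functions, so no regularization across the foliation or uniform slackness is ever required. Lemma \ref{lem_1532} identifies the first variation $\lim_{\delta\to 0^+}(u-u_\delta)/\delta$ as (an approximate indicator of) the transport set $A$ of all rays through $K$, and optimality of $u$ then gives $\int_A f\,d\mu\geq 0$; replacing $u,f$ by $-u,-f$ gives equality (Corollary \ref{cor_C1529}). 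Since every measurable subset of $\Inactive[u]$ is a transport set, $f$ vanishes a.e.\ there; together with $\mu(\Ends[u])=0$ (Lemma \ref{lem_C1047}) this yields the first half of (B). For the second half, Lemma \ref{lem_C1046} shows that for any $\nu$-measurable $S\subseteq T^{\circ}[u]$ the set $\bigcup_{\cJ\in S}\overline{\cJ}$ is a transport set differing from $\pi^{-1}(S)$ only by a subset of $\Ends[u]$, so $\int_{\pi^{-1}(S)}f\,d\mu=0$, and combining with the disintegration formula of Theorem \ref{thm_main2}(ii) gives \eqref{eq_I1002}. You should replace your perturbation arguments in (B) by this transport-set mechanism, or else supply the missing uniform-slack and transverse-Lipschitz estimates that your route requires; as written, Part (B) is incomplete.
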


Our manuscript owes much to previous investigations
of the Monge-Kantorovich problem. An integrable foliation by straight lines satisfying an analog of (\ref{eq_I1002})
was mentioned already by Monge in 1781, albeit on a heuristic level (see, e.g., Cayley's review of Monge's work \cite{cayley}).
The optimization problem  (\ref{eq_I2103})
entered the arena with the work of Kantorovich \cite[Section VIII.4]{KA}.

\medskip
An analytic resolution of the Monge-Kantorovich  problem which
is satisfactory for our needs is provided by
Evans and Gangbo \cite{EG}, with subsequent developments  by Ambrosio \cite{amb}, Caffarelli, Feldman and McCann \cite{CFM}, Feldman and McCann \cite{FM}
and Trudinger and Wang \cite{TW}.
Ideas from these papers have helped us in dealing with
the following  difficulty:  We
are obliged to work with the second fundamental form of the level set $\{ u = t_0 \}$ in order
to use the Ricci curvature and conclude that $\mu_{\cI}$ is a $CD(\kappa, N)$-needle.
However, the function $u$ is an arbitrary Lipschitz function, and it is not entirely clear
how to interpret its Hessian. Section \ref{sec_reg} is devoted to overcoming this difficulty,
 by showing that inside the set $\Strain[u]$ the function $u$
behaves as if it were a $C^{1,1}$-function.
The conditioning of $\mu$
with respect to the partition $T^{\circ}[u]$ is discussed in Section
\ref{sec_condition}, in which we prove Theorem \ref{thm_main2}.
Section \ref{sec_monge} is dedicated to the proofs of Theorem \ref{thm_main}
and Theorem \ref{prop_intro}.

\medskip Throughout this note, by a smooth function or manifold we always mean $C^{\infty}$-smooth.
All differentiable manifolds are assumed smooth and all of our Riemannian manifolds have smooth metric tensors. We do not consider
Riemannian manifolds with a boundary.
When we mention a measure $\nu$ on a set $X$ we implicitly consider
a $\sigma$-algebra of $\nu$-measurable subsets of $X$. All
of our measures in this paper are {\it complete}, meaning that if $\nu(A) = 0$
and $B \subseteq A$, then $B$ is $\nu$-measurable.
When we push-forward the measure $\nu$, we implicitly also push-forward
its $\sigma$-algebra.
Note that the concept of a Lebesgue-measurable subset of a differentiable manifold is well-defined
(e.g., Section \ref{lip_sec} below). When we write ``a measurable set'', without any reference to a specific measure, we simply mean Lebesgue-measurable.
We write $\log$  for the natural logarithm.

\medskip
{\it Acknowledgements.}
I would like to thank Emanuel Milman for introducing me to the subject of
Riemannian manifolds with lower bounds on their Ricci curvature.
Supported by a grant from the European Research Council.

\section{Regularity of geodesic foliations}
\label{sec_reg}
\setcounter{equation}{0}

\subsection{Transport rays}
\label{transport_rays}
\setcounter{equation}{0}

Let $\cM$ be an $n$-dimensional Riemannian manifold
which is geodesically-convex
and let $d$ be the Riemannian distance function on $\cM$.
As before, a curve $\gamma: I \rightarrow \cM$ is a minimizing geodesic
if $I \subseteq \RR$ is a connected subset and
$$ d(\gamma(s), \gamma(t)) = d(s,t) \qquad \qquad \qquad \text{for all } \ s,t \in I. $$
A curve $\gamma: J \rightarrow \cM$
is a {\it geodesic} if $J \subseteq \RR$ is connected, and for any $x \in J$ there exists a relatively-open subset $I \subseteq J$
containing $x$ such that $\gamma|_I$ is a minimizing geodesic. Thus, we only discuss geodesics of speed one, and not
of arbitrary speed as is customary. For the basic concepts in Riemannian geometry that we use here we refer the reader, e.g., to the first
ten pages of Cheeger and Ebin \cite{CE}. In particular, it is well-known
that all geodesic curves are smooth, and that for $p \in \cM$ and a unit vector $v \in T_{p} \cM$ there is a unique geodesic curve $\gamma_{p, v}$ with $\gamma_{p,v}(0) = p$
and $\dot{\gamma}_{p, v}(0) = v$. Let $I_{p,v} \subseteq \RR$ be the maximal set on which $\gamma_{p,v}$ is well-defined, which is an open, connected set
containing zero.
Denote
$$ \exp_{p}(tv) = \gamma_{p,v}(t) \qquad \qquad \qquad \text{for} \ t \in I_{p,v}. $$
The exponential map $\exp_{p}: T_{p} \cM \rightarrow \cM$ is a partially-defined function,
which is
well-defined
and smooth on an open subset of $T_{p} \cM$ containing the origin.

\begin{lemma} Let $A \subseteq \RR$ be an arbitrary subset, and let $\gamma: A \rightarrow \cM$ satisfy
\begin{equation} d(\gamma(s), \gamma(t)) = |s-t| \qquad \qquad \qquad \text{for all} \ s,t \in A.
\label{eq_1401} \end{equation}
Denote $conv(A) = \left \{ \lambda t + (1-\lambda) s \, ; \, s,t \in A, 0 \leq \lambda \leq 1 \right \}$.
Then there exists a minimizing geodesic $\tilde{\gamma}: conv(A) \rightarrow \cM$ with $\tilde{\gamma}|_A = \gamma$.
\label{lem_1522}
\end{lemma}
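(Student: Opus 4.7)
My plan is a two-step argument. The case $|A| \le 1$ is trivial ($conv(A) = A$ and $\tilde{\gamma} = \gamma$), so assume $|A| \ge 2$. Step one treats a finite $F = \{t_1 < \cdots < t_k\} \subseteq A$: for each $i$, geodesic-convexity supplies a minimizing geodesic $\alpha_i: [t_i, t_{i+1}] \to \cM$ from $\gamma(t_i)$ to $\gamma(t_{i+1})$, and I concatenate them into a curve $\tilde{\gamma}_F: [t_1, t_k] \to \cM$ of total length
\[
\sum_{i=1}^{k-1}(t_{i+1}-t_i) \;=\; t_k - t_1 \;=\; d(\gamma(t_1), \gamma(t_k)).
\]
Because the length equals the distance between the endpoints, the triangle inequality forces $d(\tilde{\gamma}_F(s), \tilde{\gamma}_F(t)) = |s-t|$ for every $s, t \in [t_1, t_k]$, and hence $\tilde{\gamma}_F$ is itself a minimizing geodesic extending $\gamma|_F$.

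Step two lifts this to a general $A$. Fix a basepoint $s_0 \in A$. Since $\gamma$ is $1$-Lipschitz on $A \subseteq \RR$ it extends uniquely by uniform continuity, so I may replace $A$ by a countable subset $A_0$ dense in $A$, containing $s_0$ and including $\inf A, \sup A$ whenever these belong to $A$; this does not change $conv(A)$. Enumerate $A_0 = \{t_1 = s_0, t_2, \ldots\}$ with $\min F_n \to \inf A$ and $\max F_n \to \sup A$, where $F_n := \{t_1, \ldots, t_n\}$, and apply step one to produce minimizing geodesics $\sigma_n : [\min F_n, \max F_n] \to \cM$. Geodesic-convexity combined with Hopf--Rinow makes $(\cM, d)$ complete, so $\exp_{\gamma(s_0)}$ is defined on all of $T_{\gamma(s_0)}\cM$. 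Let $v_n \in T_{\gamma(s_0)}\cM$ be the unit initial velocity of $\sigma_n$ at $s_0$. By compactness of the unit sphere, $v_{n_k} \to v_\ast$ along a subsequence, and I define
\[
\tilde{\gamma}(t) \;:=\; \exp_{\gamma(s_0)}\bigl((t - s_0)\, v_\ast \bigr), \qquad t \in \RR.
\]
For each $t_i \in A_0$, once $n_k \ge i$ one has $\sigma_{n_k}(t_i) = \gamma(t_i)$, so continuity of $\exp$ gives $\tilde{\gamma}(t_i) = \gamma(t_i)$. For $s, t \in conv(A)$ and $k$ large enough that $\{s, t\} \subseteq [\min F_{n_k}, \max F_{n_k}]$, the minimizing property of $\sigma_{n_k}$ gives $d(\sigma_{n_k}(s), \sigma_{n_k}(t)) = |s - t|$; passing to the limit yields $d(\tilde{\gamma}(s), \tilde{\gamma}(t)) = |s - t|$. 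Restricting $\tilde{\gamma}$ to $conv(A)$ produces the required extension.

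The chief obstacle is the potential non-uniqueness of minimizing geodesics between a pair of points --- antipodal points on a sphere, for instance, are joined by a whole family of them --- so the finite extensions $\sigma_n$ cannot simply be glued together as $n$ grows. The compactness step in $T_{\gamma(s_0)}\cM$ is precisely what overcomes this ambiguity: it selects a single limiting initial velocity $v_\ast$ simultaneously compatible with every constraint $\tilde{\gamma}(t) = \gamma(t)$, $t \in A$, producing one coherent geodesic defined on all of $conv(A)$.
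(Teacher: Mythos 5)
Your Step~1 is sound and is also the germ of the paper's argument: a concatenation of minimizing geodesics whose total length equals the distance between its endpoints is itself a minimizing geodesic, hence smooth. The gap is in Step~2, and it begins with the sentence ``Geodesic-convexity combined with Hopf--Rinow makes $(\cM, d)$ complete.'' This inverts Hopf--Rinow: completeness of a connected Riemannian manifold implies geodesic-convexity, not conversely. An open ball in $\RR^n$ is geodesically-convex but not complete, and the paper never assumes completeness (indeed it explicitly remarks elsewhere that ``our Riemannian manifold is not necessarily complete''). Consequently $\exp_{\gamma(s_0)}$ is only defined on an open subset of $T_{\gamma(s_0)}\cM$, and after extracting the subsequential limit $v_*$ of the unit initial velocities you have no warrant that $(t-s_0)v_*$ lies in that open domain for all $t \in conv(A)$: a limit of points in an open set need not remain in it. Without this, $\tilde{\gamma}(t) = \exp_{\gamma(s_0)}((t-s_0)v_*)$ is not known to exist, and passing to the limit in $d(\sigma_{n_k}(s), \sigma_{n_k}(t)) = |s-t|$ is not justified.

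The subsequential limit is in fact unnecessary, which is exactly what the paper exploits. Once $s_0$ lies strictly between two elements $r < s_0 < t$ of $A$, the concatenation of any minimizing geodesic from $\gamma(r)$ to $\gamma(s_0)$ with any minimizing geodesic from $\gamma(s_0)$ to $\gamma(t)$ has length $d(\gamma(r),\gamma(t))$, hence is a minimizing geodesic and in particular smooth; the two pieces must therefore share a tangent at $\gamma(s_0)$. This pins down a single unit vector $v \in T_{\gamma(s_0)}\cM$ so that for every $x \in A$, the vector $\sgn(x-s_0)\, v$ is tangent to every minimizing geodesic from $\gamma(s_0)$ to $\gamma(x)$. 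In your notation, $v_n = v$ for all $n$ large enough that $s_0$ is interior to $[\min F_n, \max F_n]$; there is no sequence and no limit. Moreover, the geodesic $x \mapsto \exp_{\gamma(s_0)}((x-s_0)v)$ is then seen \emph{directly} to pass through $\gamma(x)$ for every $x \in A$, so its (open, interval) domain of definition already contains $A$ and hence $conv(A)$, with no appeal to completeness or compactness. Making this observation collapses your Step~2 into the paper's proof.
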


\begin{proof} We may assume that $\#(A) \geq 3$, because if $A$ contains only  two points
then we may connect them by a minimizing geodesic. Fix $s\in A$ with $\inf A < s < \sup A$.
According to (\ref{eq_1401}),  for any $r,t \in A$ with $r < s < t$,
\begin{equation}  d(\gamma(r), \gamma(s)) + d(\gamma(s), \gamma(t)) = d(\gamma(r), \gamma(t)). \label{eq_1502__} \end{equation}
Denote $a = \gamma(r), b = \gamma(s), c = \gamma(t)$.  Select any minimizing geodesic $\gamma_1$
from $a$ to $b$, and any minimizing geodesic $\gamma_2$ from $b$ to $c$.
We claim that $\gamma_1$ and $\gamma_2$ make a zero angle at the point $b$.
Indeed by (\ref{eq_1502__}), the concatenation
of the curves $\gamma_1$ and $\gamma_2$ forms a minimizing geodesic from $a$ to $c$,
which is necessarily smooth, hence the curves $\gamma_1$ and $\gamma_2$ must fit together at the point $b$.
We conclude that there exists a unit vector $v \in T_{\gamma(s)} \cM$, such that for any $x \in A \setminus \{ s \}$,
the vector $\sgn(x-s) v$ is tangent to any
 minimizing geodesic
from $\gamma(s)$ to $\gamma(x)$. Here, $\sgn(x)$ is the sign of $x \in \RR \setminus \{ 0 \}$.
Denote
$$ \tilde{\gamma}(x) = \exp_{\gamma(s)} ( (x - s) v ). $$
Then $\tilde{\gamma}$ is the geodesic emanating from $\gamma(s)$ in the direction of $v$, and
it satisfies $\tilde{\gamma}(x) = \gamma(x)$ for any $x \in A$. The geodesic curve $\tilde{\gamma}$ is thus
 well-defined on the interval $conv(A)$, with $\tilde{\gamma}|_A = \gamma$. Furthermore, it follows from (\ref{eq_1401}) that the geodesic $\tilde{\gamma}: conv(A) \rightarrow
 \cM$ is a minimizing geodesic, and the lemma is proven.
 \end{proof}

The following definition was proposed by Evans and Gangbo \cite{EGang} who worked under the assumption
that $\cM$ is a Euclidean space, see Feldman and McCann \cite{FM} for the generalization to complete Riemannian manifolds.

\begin{definition} Let $u: \cM \rightarrow \RR$ be a function with $\| u \|_{Lip} \leq 1$.
A subset $\cI \subseteq \cM$ is a ``transport ray'' associated with $u$ if
\begin{equation}  |u(x) - u(y)| = d(x,y) \qquad \qquad \qquad \text{for all} \, x, y \in \cI
\label{eq_1141} \end{equation}
and if for any $\cJ \supsetneq \cI$ there exist
$x,y \in \cJ$ with $|u(x) - u(y)| \neq d(x,y)$. In other words, $\cI$ is a maximal set that satisfies condition (\ref{eq_1141}).
We write $T[u]$ for the collection of all transport rays associated with $u$. \label{def_1050}
\end{definition}

By continuity, the closure of a transport ray is also a transport ray, and by maximality
any transport ray is a closed set. By Zorn's lemma, any subset $\cI \subseteq \cM$ satisfying (\ref{eq_1141})
is contained in a certain transport ray.
For the rest of this subsection, we fix a function  $u: \cM \rightarrow \RR$ with $\| u \|_{Lip} \leq 1$.
The following lemma shows that transport rays are geodesic arcs in $\cM$ on which $u$ grows at speed one.
For a map $F$ defined on a set $A$ we write $F(A) = \{ F(x) \, ; \, x \in A \}$.

\begin{lemma}
Any $\cJ \in T[u]$ is the image of a minimizing geodesic $\gamma: A \rightarrow \cM$,
where $A = u(\cJ)$ is a connected set in $\RR$,
and we have
\begin{equation}
u(\gamma(t)) = t \qquad \qquad \qquad \text{for} \ t \in A.
\label{eq_A1717}
\end{equation}
\label{lem_1102}
\end{lemma}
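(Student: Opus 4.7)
The plan is to first recover the geodesic parametrization of $\cJ$ directly from $u$, and then use Lemma \ref{lem_1522} together with the maximality clause in Definition \ref{def_1050} to conclude.

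First I would observe that $u$ is injective on $\cJ$. Indeed, if $x, y \in \cJ$ with $x \neq y$, then $d(x,y) > 0$, so the transport-ray identity (\ref{eq_1141}) forces $|u(x) - u(y)| = d(x,y) > 0$. Hence $u|_\cJ$ is a bijection onto $A := u(\cJ) \subseteq \RR$, and I can define $\gamma := (u|_\cJ)^{-1} : A \to \cM$. By construction $u(\gamma(t)) = t$ for all $t \in A$, which already gives (\ref{eq_A1717}); and for any $s, t \in A$, applying (\ref{eq_1141}) to $x = \gamma(s), y = \gamma(t)$ yields $d(\gamma(s), \gamma(t)) = |s - t|$. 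Thus $\gamma$ satisfies the hypothesis of Lemma \ref{lem_1522}, producing a minimizing geodesic $\tilde{\gamma} : \mathrm{conv}(A) \to \cM$ with $\tilde{\gamma}|_A = \gamma$.

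The remaining point, and the main step, is to show $A$ is already convex, i.e. $\mathrm{conv}(A) = A$. Fix $s, r, t$ with $s, t \in A$ and $s < r < t$; I want to show $r \in A$. Set $p := \tilde{\gamma}(r)$. Since $\tilde{\gamma}$ is minimizing on $\mathrm{conv}(A)$, $d(p, \gamma(s)) = r - s$ and $d(p, \gamma(t)) = t - r$. Combining the $1$-Lipschitz property of $u$ with $u(\gamma(s)) = s$ and $u(\gamma(t)) = t$ gives the squeeze
\[
  u(p) \in [s, s + (r-s)] \cap [t - (t-r), t] = \{ r \},
\]
so $u(p) = r$. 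This is the crux: it says the extension $\tilde{\gamma}$ is in fact parametrized by $u$ at unit speed, which I did not know a priori.

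Given $u(p) = r$, I would then verify that $\cJ \cup \{ p \}$ still satisfies (\ref{eq_1141}). For any $x \in \cJ$, write $s' := u(x) \in A$, so $x = \gamma(s') = \tilde{\gamma}(s')$; then $d(p, x) = |r - s'| = |u(p) - u(x)|$, as required. By the maximality clause in Definition \ref{def_1050}, this forces $p \in \cJ$, hence $r = u(p) \in A$. Therefore $A$ is convex in $\RR$, hence connected, and $\tilde{\gamma} : A \to \cM$ is a minimizing geodesic with $\tilde{\gamma}(A) = \cJ$ satisfying (\ref{eq_A1717}). The only delicate step is the one-line squeeze that pins down $u(p) = r$; everything else is organized around Lemma \ref{lem_1522} and the maximality of transport rays.
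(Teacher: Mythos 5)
Your proof is correct and takes essentially the same route as the paper: define $\gamma$ as the inverse of $u|_\cJ$, extend via Lemma \ref{lem_1522} to a minimizing geodesic $\tilde{\gamma}$ on $\mathrm{conv}(A)$, note that the $1$-Lipschitz bound on $u$ forces $u(\tilde{\gamma}(r))=r$ throughout $\mathrm{conv}(A)$, and then invoke maximality of transport rays to conclude $A=\mathrm{conv}(A)$. The only cosmetic difference is that you spell out the two-sided squeeze that the paper summarizes with ``necessarily'' and apply maximality to $\cJ\cup\{p\}$ point-by-point rather than to $\tilde{\gamma}(\mathrm{conv}(A))$ all at once; these are equivalent.
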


\begin{proof} Denote $A = u(\cJ) \subseteq \RR$. From (\ref{eq_1141}) the map $u: \cJ \rightarrow A$ is invertible.
By defining $\gamma(u(x)) = x$ for $x \in \cJ$, we see from (\ref{eq_1141}) that
\begin{equation}  d(\gamma(s), \gamma(t)) = |s-t| \qquad \qquad \qquad
\text{for any} \ s,t \in A. \label{eq_1015} \end{equation}
We may apply Lemma \ref{lem_1522} in view of (\ref{eq_1015}), and conclude that $\gamma$ may be extended to a curve $\tilde{\gamma}:conv(A) \rightarrow \cM$
which is a minimizing geodesic. Furthermore, since $\| u \|_{Lip} \leq 1$ with $u(\gamma(t)) = t$ for $t \in A$, then
necessarily
\begin{equation}
u(\tilde{\gamma}(t)) = t \qquad \qquad \qquad \text{for} \ t \in conv(A). \label{eq_A1049}
\end{equation}
The curve $\tilde{\gamma}$ is a minimizing geodesic, and its image $\cI = \tilde{\gamma}(conv(A))$ satisfies (\ref{eq_1141}),
thanks to (\ref{eq_A1049}). The maximality property of $\cJ$ entails  that $\cI = \cJ$ and $A = conv(A)$. Consequently $\cJ$ is
the image of the minimizing geodesic $\gamma \equiv \tilde{\gamma}$, and (\ref{eq_A1717}) follows from (\ref{eq_A1049}).
\end{proof}

Lemma \ref{lem_1102} states that we may identify between a transport ray $\cI \subseteq \cM$
and the image of a certain minimizing geodesic $\gamma: A \rightarrow \cM$. When we write that a unit vector $v \in T \cM$
is tangent to $\cI$ we mean that $v = \dot{\gamma}(t)$ for some $t \in A$.
We say that
$$ \{ \gamma(t) \, ; \, t \in int(A) \} $$ is the {\it relative interior} of the transport ray $\cI$,
where $int(A) \subseteq \RR$ is the interior of the set $A \subseteq \RR$.
Note that a transport ray $\cI$ could be a singleton, and then its relative interior turns out to be empty.
The set $$ \{ \gamma(t) \, ; \, t \in A \setminus int(A) \} $$ is defined to be the
{\it relative boundary} of the transport ray $\cI$.
Since $A \subseteq \RR$ is connected, then the relative boundary of any transport ray contains at most two points.
The short proof of the following lemma appears in  Feldman and McCann \cite[Lemma 10]{FM}:

\begin{lemma}
For any transport ray $\cI \in T[u]$ and a point $x$ in the relative interior of $\cI$, the function  $u$ is differentiable
at $x$, and $\nabla u(x)$ is a unit vector tangent to  $\cI$.
\label{lem_1046}
\end{lemma}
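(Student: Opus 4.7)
The plan is to exploit the squeeze of $u$ between two smooth distance functions at $x$, using the 1-Lipschitz property and the geodesic structure supplied by Lemma \ref{lem_1102}.

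First I would set the stage: by Lemma \ref{lem_1102}, write $\cI = \gamma(A)$ with $\gamma: A \to \cM$ a minimizing geodesic and $u(\gamma(t)) = t$ on $A$. Let $x = \gamma(t_0)$ with $t_0 \in \mathrm{int}(A)$, and set $v = \dot{\gamma}(t_0) \in T_x \cM$, which is a unit vector tangent to $\cI$. Choose $\eps > 0$ small enough that $[t_0 - \eps, t_0 + \eps] \subseteq A$, and put $y = \gamma(t_0 - \eps)$, $z = \gamma(t_0 + \eps)$, so that $d(y,x) = d(x,z) = \eps$ and $\gamma|_{[t_0-\eps,t_0+\eps]}$ is a minimizing geodesic from $y$ to $z$ through $x$.

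Next I would invoke the standard fact that, because $\gamma$ stays minimizing for time $2\eps$ past $y$ and time $2\eps$ before $z$, the interior point $x$ is not in the cut locus of $y$ nor of $z$ (for all sufficiently small $\eps$). Hence the functions $p \mapsto d(p,y)$ and $p \mapsto d(p,z)$ are smooth on a neighborhood $U$ of $x$, with gradients at $x$ equal to $\dot{\gamma}(t_0) = v$ and $-\dot{\gamma}(t_0) = -v$ respectively. Using $\|u\|_{Lip} \leq 1$ together with $u(y) = t_0 - \eps$ and $u(z) = t_0 + \eps$, I would then record the two-sided bound
\begin{equation*}
g(p) := (t_0 + \eps) - d(p,z) \ \leq \ u(p) \ \leq \ (t_0 - \eps) + d(p,y) =: h(p) \qquad (p \in U).
\end{equation*}
At $p = x$ this chain collapses to $g(x) = u(x) = h(x) = t_0$, while $\nabla g(x) = v = \nabla h(x)$.

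Finally, a standard squeeze argument finishes the job: if a function is sandwiched between two functions that are differentiable at a point, agree with it there, and have a common gradient there, then the sandwiched function is differentiable at that point with the same gradient. Applying this to $g \leq u \leq h$ at $x$ yields that $u$ is differentiable at $x$ with $\nabla u(x) = v$, which is a unit vector tangent to $\cI$, as desired. The main (and really the only) subtlety is the cut-locus regularity of $d(\cdot,y)$ and $d(\cdot,z)$ at $x$; once that is in hand, the rest is a direct comparison. Note also that the 1-Lipschitz property alone would force $\langle \nabla u(x), v\rangle = 1$ and $|\nabla u(x)| \leq 1$, so the identification $\nabla u(x) = v$ is automatic from Cauchy-Schwarz once differentiability is established.
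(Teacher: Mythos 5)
Your proof is correct, and it is essentially the argument the paper refers to: the paper itself does not reproduce a proof but cites Feldman and McCann \cite[Lemma 10]{FM}, whose argument is exactly this squeeze of $u$ between the two cone functions $(t_0+\eps)-d(\cdot,z)$ and $(t_0-\eps)+d(\cdot,y)$, both touching $u$ at $x$ with common gradient $\dot\gamma(t_0)$. The one place worth tightening is the regularity step: rather than invoking cut-locus language, it is cleanest to take $\eps$ small enough that $y,z$ lie in a strongly convex ball about $x$ (as guaranteed by Lemma \ref{lem_1315}), where $d(\cdot,y)$ and $d(\cdot,z)$ are automatically smooth away from $y$ and $z$ respectively.
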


In this subsection we define
the set $\Strain[u] \subseteq \cM$ to be the union of all relative interiors of transport rays associated with $u$.
Very soon  we will show that  this definition, in fact, coincides with the definition of $\Strain[u]$ provided in Section \ref{sec_intro}.

\begin{lemma} For any $x \in \Strain[u]$ there exists a unique $\cI \in T[u]$ such that $x \in \cI$.
Furthermore, $x$ belongs to the relative interior of $\cI$.
\label{lem_A1031}
\end{lemma}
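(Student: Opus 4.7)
The plan is to separate the claim into three pieces: existence of some $\cI \in T[u]$ containing $x$, uniqueness of such $\cI$, and the ``furthermore'' clause that $x$ lies in the relative interior of $\cI$. Existence and the third piece are both immediate from the working definition of $\Strain[u]$ adopted in this subsection, since by that definition $x \in \Strain[u]$ already means $x$ lies in the relative interior of \emph{some} distinguished transport ray $\cI_0 \in T[u]$. The substantive work is therefore the uniqueness statement, after which the unique transport ray through $x$ is forced to be $\cI_0$, and so contains $x$ in its relative interior.

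For uniqueness, I would fix the distinguished ray $\cI_0$ above and let $\cI' \in T[u]$ be an arbitrary transport ray with $x \in \cI'$. Using Lemma \ref{lem_1102} I would write $\cI_0 = \gamma_0(A_0)$ and $\cI' = \gamma'(A')$, where $\gamma_0, \gamma'$ are minimizing geodesics satisfying $u \circ \gamma_0 = \mathrm{id}_{A_0}$ and $u \circ \gamma' = \mathrm{id}_{A'}$. Setting $t_0 = u(x)$, we have $\gamma_0(t_0) = \gamma'(t_0) = x$ and $t_0 \in \mathrm{int}(A_0)$. Lemma \ref{lem_1046} gives that $u$ is differentiable at $x$ with $\nabla u(x) = \dot\gamma_0(t_0)$, a unit vector. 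Differentiating $u(\gamma'(t)) = t$ at $t_0$ (one-sidedly if needed) produces $\langle \nabla u(x), \dot\gamma'(t_0)\rangle = 1$, and the equality case of Cauchy--Schwarz then forces $\dot\gamma'(t_0) = \dot\gamma_0(t_0)$. The uniqueness theorem for geodesics with prescribed initial data identifies $\gamma_0$ and $\gamma'$ as restrictions of a single geodesic $\tilde\gamma$ of $\cM$, along which $u$ advances at unit speed.

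It remains to promote this to the set equality $\cI_0 = \cI'$. I would verify that $\cI_0 \cup \cI'$ itself satisfies the defining identity (\ref{eq_1141}): for any $p,q$ in the union, the sub-arc of $\tilde\gamma$ between them has length $|u(p) - u(q)|$, so $d(p,q) \leq |u(p)-u(q)|$, while the reverse inequality is the Lipschitz hypothesis on $u$. The maximality clause in Definition \ref{def_1050}, applied to $\cI_0$ and to $\cI'$, then collapses $\cI_0 \cup \cI'$ to $\cI_0 = \cI'$. The one subtlety worth flagging is the differentiation of $u \circ \gamma'$ at a parameter $t_0$ possibly lying in $\partial A'$ (so that $x$ would be a relative boundary point of $\cI'$); but because $\gamma'$ extends smoothly as a geodesic in a two-sided neighborhood of $t_0$ and $u$ is already differentiable at $x$ by Lemma \ref{lem_1046}, a one-sided chain rule suffices to extract the key relation $\langle \nabla u(x), \dot\gamma'(t_0)\rangle = 1$.
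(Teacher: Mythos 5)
Your proof is correct and takes essentially the same route as the paper's: both arguments rest on Lemma~\ref{lem_1046} (differentiability of $u$ at $x$ with $|\nabla u(x)| = 1$), the chain-rule/Cauchy--Schwarz observation that $\nabla u(x)$ must be tangent at $x$ to any transport ray containing $x$, the uniqueness of geodesics with prescribed initial data, and the maximality clause in Definition~\ref{def_1050}. The only difference is organizational: you compare two transport rays $\cI_0, \cI'$ directly by showing the union $\cI_0 \cup \cI'$ again satisfies (\ref{eq_1141}), whereas the paper introduces the canonical arc $\tilde\gamma(A)$ with $\tilde\gamma(t) = \exp_x(t \nabla u(x))$ and shows every transport ray through $x$ is sandwiched between $\cI$ and $\tilde\gamma(A)$; the two variants are equivalent.
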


\begin{proof} From Lemma \ref{lem_1046} we know that $u$ is differentiable at $x$ and that $\nabla u(x)$ is a unit vector.
Consider the geodesic
\begin{equation}  \tilde{\gamma}(t) = \exp_{x} (t \nabla u(x))  \label{eq_B919_} \end{equation}
which is well-defined in a maximal subset $(a,b) \subseteq \RR$ containing zero. Define
\begin{equation}  A = \{ t \in (a,b) \, ; \, u(\tilde{\gamma}(t)) = u(x) + t \}. \label{eq_A1234_} \end{equation}
Note that $0 \in A$. Since $\tilde{\gamma}$ is a geodesic and $\| u \|_{Lip} \leq 1$,
then $A$ is necessarily connected
and $\tilde{\gamma}: A \rightarrow \cM$ is a minimizing geodesic.
In fact, by (\ref{eq_A1234_}) the set $\tilde{\gamma}(A)$ is contained in a certain transport ray.

\medskip We will show that $\tilde{\gamma}(A)$ is the unique transport ray containing $x$.
Indeed, $x \in \Strain[u]$ and hence there exists
 $\cI \in T[u]$ with $x \in \cI$. Since $x$ is contained in the relative interior
 of a certain  transport ray, then $\cI$ is not a singleton by the maximality property
of transport rays.
Note that $\nabla u(x)$ is necessarily tangent to $\cI$: this follows
from equation (\ref{eq_A1717}) of Lemma
\ref{lem_1102} and from the fact that $\nabla u(x)$ is a unit vector.
We conclude
from (\ref{eq_B919_}), (\ref{eq_A1234_}) and Lemma \ref{lem_1102} that $\cI \subseteq \tilde{\gamma}(A)$. However, we said earlier that $\tilde{\gamma}(A)$ is contained
in a  transport ray, and by maximality $\cI = \tilde{\gamma}(A)$. Therefore $\tilde{\gamma}(A)$ is the unique transport ray containing $x$.
Since $x \in \Strain[u]$ then the point $x$ necessarily belongs to the relative interior of the transport ray $\tilde{\gamma}(A)$.
\end{proof}

For a point $y \in \Strain[u]$  define
$$ \alpha_u(y) = u(y) - \inf_{z \in \cJ} u(z), \qquad  \beta_u(y) = \left[ \sup_{z \in \cJ} u(z) \right]  - u(y), $$
where $\cJ \in T[u]$ is the unique transport ray containing $y$.
For $y \not \in \Strain[u]$ we set $\alpha_u(y) = \beta_u(y) = -\infty$.
Thus, the functions $\alpha_u, \beta_u$ are positive on $\Strain[u]$, and equal to $-\infty$ outside $\Strain[u]$.
Lemma \ref{lem_1102} and Lemma \ref{lem_1046} admit the following immediate corollary:

\begin{corollary} Let $y \in \Strain[u]$. Set $A = (-\alpha_u(y), \beta_u(y)) \subseteq \RR$.
Then there exists a minimizing geodesic $\gamma: A \rightarrow \cM$ whose image is the
relative interior of a transport ray, such that $\gamma(0) = y$ and for all $t \in A$,
$$u(\gamma(t)) = u(y) + t, \qquad \dot{\gamma}(t) = \nabla u(\gamma(t)). $$
\label{cor_A1217}
\end{corollary}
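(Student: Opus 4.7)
The statement is labeled a corollary because essentially all the work has been done in the preceding lemmas; the task is just to stitch them together. The plan is as follows.

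First I would invoke Lemma \ref{lem_A1031} to obtain the unique transport ray $\cJ \in T[u]$ with $y \in \cJ$, and to record that $y$ belongs to the relative interior of $\cJ$. Next I would apply Lemma \ref{lem_1102} to $\cJ$, yielding a minimizing geodesic $\tilde{\gamma}:\tilde{A}\rightarrow\cM$ whose image is $\cJ$, with $\tilde{A}=u(\cJ)$ a connected subset of $\RR$, and with the unit-speed identification $u(\tilde{\gamma}(s))=s$ for every $s\in\tilde{A}$. In particular, $\tilde{A}$ is an interval, and from the definitions of $\alpha_u$ and $\beta_u$ its interior is precisely $\bigl(u(y)-\alpha_u(y),\, u(y)+\beta_u(y)\bigr)$; the image of this interior under $\tilde{\gamma}$ is, by definition, the relative interior of $\cJ$.

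Now I would reparametrize by translation: set
$$ \gamma(t) := \tilde{\gamma}(u(y)+t) \qquad \text{for } t \in A=(-\alpha_u(y),\beta_u(y)). $$
Since $\tilde{\gamma}$ is a unit-speed minimizing geodesic and $t\mapsto u(y)+t$ is an isometry of intervals, $\gamma$ is itself a minimizing geodesic defined on $A$. The identity $\tilde{\gamma}(u(y))=y$ (which follows because $u:\cJ\rightarrow\tilde{A}$ is injective with inverse $\tilde{\gamma}$) gives $\gamma(0)=y$, and $u(\gamma(t))=u(\tilde{\gamma}(u(y)+t))=u(y)+t$. By the previous paragraph, $\gamma(A)$ is exactly the relative interior of $\cJ$.

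It remains to show $\dot\gamma(t)=\nabla u(\gamma(t))$ for $t\in A$. Since $\gamma(t)$ lies in the relative interior of the transport ray $\cJ$, Lemma \ref{lem_1046} applies: $u$ is differentiable at $\gamma(t)$ and $\nabla u(\gamma(t))$ is a unit vector tangent to $\cJ$, hence parallel to $\dot\gamma(t)$ (also a unit vector). Differentiating the relation $u(\gamma(t))=u(y)+t$ in $t$ yields $\langle \nabla u(\gamma(t)), \dot\gamma(t)\rangle = 1$, which forces the two unit vectors to coincide. This completes the argument.

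There is no genuine obstacle to anticipate; the only point requiring a little care is matching the domain $A=(-\alpha_u(y),\beta_u(y))$ with the interior of $\tilde{A}$, and this is immediate once one unpacks the definitions of $\alpha_u$, $\beta_u$ and the relative interior of a transport ray given just before the corollary.
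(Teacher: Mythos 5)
Your proof is correct and matches the paper's intent exactly: the paper declares this an ``immediate corollary'' of Lemma \ref{lem_1102} and Lemma \ref{lem_1046}, and your argument simply carries out that deduction explicitly, with the translated reparametrization and the observation that differentiating $u(\gamma(t))=u(y)+t$ pins down the sign of $\nabla u(\gamma(t))$ relative to $\dot\gamma(t)$. The auxiliary appeal to Lemma \ref{lem_A1031} for uniqueness of the transport ray is implicit in the paper's definitions of $\alpha_u$ and $\beta_u$, so there is no divergence in approach.
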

Recall that the set $\Strain[u] = \left \{ x \in \cM \, ; \, \alpha_u(x) > 0 \right \}
= \left \{ x \in \cM \, ; \, \beta_u(x) > 0 \right \}
$ was defined a bit differently in Section \ref{sec_intro}. The equivalence of the two definitions follows
from our next little lemma:

\begin{lemma} Let $y \in \cM$. Then $\alpha_u(y)$ equals the supremum over all $\eps > 0$ for which there exist $x, z \in \cM$ with
\begin{equation} d(x,y) = u(y) - u(x) \geq \eps, \quad
d(y,z) = u(z) - u(y) > 0, \quad
d(x,y) + d(y,z) = d(x,z).  \label{eq_1751}
\end{equation}
The supremum over an empty set is defined to be $-\infty$.
\label{lem_A1053}
\end{lemma}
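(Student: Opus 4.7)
The plan is to prove the two inequalities $\alpha_u(y) \geq \tilde{\alpha}(y)$ and $\alpha_u(y) \leq \tilde{\alpha}(y)$, where $\tilde{\alpha}(y)$ denotes the supremum in the statement (with the convention $\sup \emptyset = -\infty$), and to handle both cases $y \in \Strain[u]$ and $y \notin \Strain[u]$ in a uniform way.

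For the inequality $\tilde{\alpha}(y) \leq \alpha_u(y)$, I would start from a triple $(x, \eps, z)$ satisfying (\ref{eq_1751}) and show that $\{x,y,z\}$ sits inside a common transport ray with $y$ in its relative interior. From (\ref{eq_1751}) we get $|u(a) - u(b)| = d(a,b)$ for all pairs $a,b \in \{x,y,z\}$: the pairs $(x,y)$ and $(y,z)$ are given directly, and for $(x,z)$ we compute $d(x,z) = d(x,y) + d(y,z) = (u(y)-u(x)) + (u(z)-u(y)) = u(z) - u(x)$. Hence the three-point set $\{x,y,z\}$ satisfies the maximality condition (\ref{eq_1141}) and, by Zorn's lemma as mentioned right after Definition \ref{def_1050}, it is contained in some $\cI \in T[u]$. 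By Lemma \ref{lem_1102}, $\cI$ is parametrized by a minimizing geodesic $\gamma: A \rightarrow \cM$ with $u(\gamma(t)) = t$, so $x = \gamma(u(x))$, $y = \gamma(u(y))$, $z = \gamma(u(z))$ with $u(x) < u(y) < u(z)$. Thus $u(y) \in (u(x), u(z)) \subseteq A$ lies in the interior of $A$, which puts $y$ in the relative interior of $\cI$. By Lemma \ref{lem_A1031}, $\cI$ is then the unique transport ray containing $y$, and hence $\alpha_u(y) = u(y) - \inf_{w \in \cI} u(w) \geq u(y) - u(x) \geq \eps$. Taking the supremum over all admissible $\eps$ gives $\alpha_u(y) \geq \tilde{\alpha}(y)$.

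For the reverse inequality, I would assume $y \in \Strain[u]$ in the transport-ray sense (otherwise $\alpha_u(y) = -\infty$ and there is nothing to show). By Corollary \ref{cor_A1217} there is a minimizing geodesic $\gamma:(-\alpha_u(y), \beta_u(y)) \rightarrow \cM$ with $\gamma(0) = y$ and $u(\gamma(t)) = u(y) + t$. For any $0 < \eps < \alpha_u(y)$ and any fixed $\delta \in (0, \beta_u(y))$, set $x = \gamma(-\eps)$ and $z = \gamma(\delta)$. Because $\gamma$ is a minimizing geodesic, $d(x,y) = \eps = u(y) - u(x)$, $d(y,z) = \delta = u(z) - u(y) > 0$, and $d(x,z) = \eps + \delta = d(x,y) + d(y,z)$, so (\ref{eq_1751}) is satisfied with this $\eps$. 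Hence $\tilde{\alpha}(y) \geq \eps$ for every $\eps < \alpha_u(y)$, giving $\tilde{\alpha}(y) \geq \alpha_u(y)$.

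Finally, the case $y \notin \Strain[u]$ (where $\alpha_u(y) = -\infty$ by convention) is automatic: the argument of the first paragraph shows that any valid triple $(x, \eps, z)$ would force $y$ into the relative interior of some transport ray and hence into $\Strain[u]$; so no such triple exists and $\tilde{\alpha}(y) = -\infty$ as well. The main point of the argument is really the parametrization step in the first paragraph, which is an immediate consequence of Lemma \ref{lem_1102}; once one knows that a transport ray is $u$-monotone, the statement that $y$ is sandwiched strictly between $x$ and $z$ in the $u$-parameter becomes tautological, so there is no genuine obstacle.
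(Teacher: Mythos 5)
Your proof is correct and follows essentially the same route as the paper: the easy inequality $\tilde\alpha_u(y)\ge\alpha_u(y)$ via Corollary \ref{cor_A1217}, and the hard inequality $\tilde\alpha_u(y)\le\alpha_u(y)$ via Zorn's lemma, the parametrization of Lemma \ref{lem_1102}, and the uniqueness of Lemma \ref{lem_A1031}. The only cosmetic difference is that after locating $y$ in the relative interior of a transport ray $\cI$ containing $x$ and $z$, you read off $\alpha_u(y)\ge u(y)-u(x)\ge\eps$ directly from the definition of $\alpha_u$, whereas the paper re-routes once more through Corollary \ref{cor_A1217}; the substance is identical. (One small wording quibble: you call (\ref{eq_1141}) ``the maximality condition,'' but it is only the compatibility condition; the maximality is what Zorn's lemma supplies, and you do use it correctly.)
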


\begin{proof} Write $\tilde{\alpha}_u(y)$ for the supremum over all $\eps > 0$ for which there exist $x, z \in \cM$
such that (\ref{eq_1751}) holds. We need to show that
\begin{equation} \alpha_u(y) = \tilde{\alpha}_u(y) \qquad \qquad \qquad \text{for all} \ y \in \cM.
\label{eq_A1225} \end{equation}
Corollary \ref{cor_A1217} implies that $\alpha_u(y) \leq \tilde{\alpha}_u(y)$ for any $y \in \Strain[u]$.
Clearly $\alpha_u(y) \leq \tilde{\alpha}_u(y)$ for any $y \not \in \Strain[u]$, since $\alpha_u(y) = -\infty$
for such $y$. It thus remains to prove the ``$\geq$'' inequality between the terms in (\ref{eq_A1225}).
To this end, we fix $y \in \cM$ for which $\tilde{\alpha}_u(y) > -\infty$.
Then there exist
 $x,z \in \cM$ satisfying  (\ref{eq_1751}) with some $\eps > 0$. The triplet $\cI = \{ x, y, z \}$ satisfies (\ref{eq_1141}).
 By Zorn's lemma, $\cI$ is contained in a transport ray $\cJ$, and the point $y$ must belong to the relative interior of $\cJ$ as $$ u(x) < u(y) < u(z). $$
 By Lemma \ref{lem_A1031}, the point $y$ does not belong to any transport ray other than $\cJ$. Additionally, any points
 $x, z \in \cM$ satisfying (\ref{eq_1751})
 must belong to the transport ray $\cJ$. It follows from Corollary \ref{cor_A1217} that
 $\tilde{\alpha}_u(y) \leq \alpha_u(y)$, and (\ref{eq_A1225}) is proven.
\end{proof}

A transport ray which is a singleton is called a {\it degenerate} transport ray.
According to Lemma \ref{lem_1102}, a transport ray $\cI \in T[u]$ is non-degenerate if and only
if its relative interior is non-empty.

\begin{lemma} The following relation is an equivalence relation on $\Strain[u]$:
\begin{equation} x \sim y \qquad \Longleftrightarrow \qquad |u(x) - u(y)| = d(x,y).
\label{eq_B1109}
\end{equation}
As in Section \ref{sec_intro}, we write $T^{\circ}[u]$ for the collection of all equivalence classes.
Then $T^{\circ}[u]$ is the collection of all relative interiors
of  non-degenerate transport rays.
\label{lem_A317}
\end{lemma}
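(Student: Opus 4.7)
The plan is to reduce the equivalence class structure to the uniqueness statement of Lemma \ref{lem_A1031}. Reflexivity and symmetry of $\sim$ are immediate from the definition, so the entire content lies in proving transitivity and identifying the equivalence classes.

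The key intermediate claim I would establish first is: \emph{if $x, y \in \Strain[u]$ satisfy $|u(x) - u(y)| = d(x, y)$, then $x$ and $y$ lie in a common transport ray.} Indeed, the pair $\{ x, y \}$ satisfies condition (\ref{eq_1141}) of Definition \ref{def_1050}, so by Zorn's lemma it is contained in some transport ray $\cJ$. But Lemma \ref{lem_A1031} asserts that each point of $\Strain[u]$ belongs to a \emph{unique} transport ray; hence $\cJ$ must coincide with the unique transport ray through $x$ and with the unique transport ray through $y$, and in particular these two coincide.

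Transitivity now follows almost automatically. If $x \sim y$ and $y \sim z$ with $x, y, z \in \Strain[u]$, the intermediate claim forces $x, y, z$ to lie in a single transport ray $\cJ$. By Lemma \ref{lem_1102}, $\cJ$ is the image of a minimizing geodesic $\gamma : A \to \cM$ satisfying $u(\gamma(t)) = t$, so writing $x = \gamma(s_1)$, $z = \gamma(s_3)$ gives
\[
|u(x) - u(z)| = |s_1 - s_3| = d(\gamma(s_1), \gamma(s_3)) = d(x, z),
\]
which is exactly $x \sim z$.

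For the identification of $T^{\circ}[u]$ with the collection of relative interiors of non-degenerate transport rays, I would argue as follows. For any $x \in \Strain[u]$, let $\cJ_x \in T[u]$ be the unique transport ray through $x$; by Lemma \ref{lem_A1031}, $x$ lies in the relative interior of $\cJ_x$, and in particular $\cJ_x$ is non-degenerate. The argument above shows that the equivalence class of $x$ consists precisely of those $y \in \Strain[u]$ lying in $\cJ_x$. Since every point of $\Strain[u] \cap \cJ_x$ lies in the relative interior of $\cJ_x$ (again by Lemma \ref{lem_A1031}), and conversely every point of the relative interior of $\cJ_x$ belongs to $\Strain[u]$ by the definition of $\Strain[u]$ as the union of all such relative interiors, we conclude that
\[
[x]_{\sim} = \Strain[u] \cap \cJ_x = \text{relative interior of } \cJ_x,
\]
and the correspondence $[x]_{\sim} \longleftrightarrow \cJ_x$ puts $T^{\circ}[u]$ in bijection with the collection of non-degenerate transport rays via their relative interiors.

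The only conceptual obstacle is transitivity, and that obstacle is dissolved entirely by Lemma \ref{lem_A1031}: the rest is bookkeeping about maximality via Zorn's lemma and the geodesic parametrization supplied by Lemma \ref{lem_1102}.
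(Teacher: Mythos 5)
Your proof is correct and takes essentially the same route as the paper: the crux in both is Lemma \ref{lem_A1031} (uniqueness of the transport ray through a point of $\Strain[u]$, together with the fact that the point sits in its relative interior), which immediately reduces everything to bookkeeping. The only cosmetic difference is that the paper proves the biconditional ``$x \sim y$ iff $x$ and $y$ lie in the relative interior of the same transport ray'' directly, which yields the equivalence relation and the identification of $T^{\circ}[u]$ in one stroke, whereas you separately verify transitivity (via the geodesic parametrization of Lemma \ref{lem_1102}) and then identify the classes — a slightly longer path to the same place.
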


\begin{proof} According to Lemma \ref{lem_A1031}, The collection of all relative interiors of non-degenerate transport rays is a partition of
$\Strain[u]$. Let $x,y \in \Strain[u]$. We need to show that $x \sim y$ if and only if $x$ and $y$ belong to the relative interior
of the same transport ray.

\medskip Assume first that  $x \sim y$. Then $\cI = \{ x, y \}$
satisfies (\ref{eq_1141}), and hence  there exists a transport ray $\cJ \in T[u]$
such that $x,y \in \cJ$. However, $x, y \in \Strain[u]$ and
$\cJ$ is a transport ray containing $x$ and $y$. From Lemma
\ref{lem_A1031} we conclude that $x$ and $y$ belong to the relative interior of $\cJ$.
Conversely, suppose that $x,y \in \Strain[u]$ belong to the relative interior of a certain
transport ray $\cJ \in T[u]$. By (\ref{eq_B1109}) and Definition \ref{def_1050}, we have $x \sim y$. The proof is complete.
\end{proof}

A $\sigma$-compact set
is a countable union of compact sets.
A topological space is second-countable
if its topology has a countable basis of open sets.
Note that any geodesically-convex, Riemannian manifold $\cM$
is second-countable: Indeed, since $\cM$ is a metric space, it suffices
to find a countable, dense subset. Fix $a \in \cM$ and a countable, dense subset of $T_a \cM$.
Since $\cM$ is geodesically-convex,  the image of the latter subset under $\exp_a$ is
a countable, dense subset of $\cM$. Therefore $\cM$ is second-countable, and any open cover
of any subset $S \subseteq \cM$ has a countable subcover. Since $\cM$ is locally-compact
and second-countable,  it is $\sigma$-compact.

\medskip Define $\ell_u(y) = \min \{ \alpha_u(y), \beta_u(y) \}$ for $y \in \cM$.
Then $\ell_u$ is positive on $\Strain[u]$, and it equals $-\infty$ outside $\Strain[u]$.

\begin{lemma} The functions $\alpha_u, \beta_u, \ell_u: \cM \rightarrow \RR \cup \{ \pm  \infty \}$ are Borel-measurable.
\label{lem_1055}
\end{lemma}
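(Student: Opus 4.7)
The plan is to combine the alternative characterization of $\alpha_u$ furnished by Lemma \ref{lem_A1053} with the $\sigma$-compactness of $\cM$ just established in the paragraph above. For each real $c \geq 0$ and each pair of positive integers $k, m$, I would introduce the subset
\[
E_{c,k,m} = \{ (x, y, z) \in \cM^3 \, ; \, u(y) - u(x) = d(x,y) \geq c + 1/k, \; u(z) - u(y) = d(y,z) \geq 1/m, \; d(x,z) = d(x,y) + d(y,z) \}.
\]
Because $u$ is $1$-Lipschitz and $d$ is continuous, each of the three defining conditions is a closed condition on $\cM^3$, so $E_{c,k,m}$ is closed. Since $\cM^3$ inherits $\sigma$-compactness from $\cM$, the set $E_{c,k,m}$ is itself $\sigma$-compact.

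Next, I would invoke Lemma \ref{lem_A1053} to rewrite the super-level sets of $\alpha_u$ as projections: for every $c \geq 0$,
\[
\{ y \in \cM \, ; \, \alpha_u(y) > c \} \;=\; \bigcup_{k, m \geq 1} \pi_y (E_{c,k,m}),
\]
where $\pi_y : \cM^3 \to \cM$ denotes projection onto the middle coordinate. A continuous image of a compact set is compact, so each $\pi_y(E_{c,k,m})$ is an $F_\sigma$ set, in particular Borel; countable unions of Borel sets remain Borel. Since $\alpha_u$ takes only positive values or the value $-\infty$, the remaining case $c < 0$ reduces to $\{ \alpha_u > c \} = \{ \alpha_u > 0 \} = \Strain[u]$, which is Borel by what has just been proved. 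This gives Borel-measurability of $\alpha_u$.

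The same argument applies to $\beta_u$ after interchanging the roles of $x$ and $z$ in the characterization of Lemma \ref{lem_A1053}; equivalently, one may observe that $\beta_u = \alpha_{-u}$ and $\Strain[u] = \Strain[-u]$, and appeal to the case of $\alpha$ applied to the $1$-Lipschitz function $-u$. Once both $\alpha_u$ and $\beta_u$ are Borel, the function $\ell_u = \min(\alpha_u, \beta_u)$ is Borel as the pointwise minimum of two Borel functions.

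The main subtlety I would flag is that projections of Borel sets need not be Borel in general, so this step is not automatic; what saves us is that we are projecting closed sets inside a $\sigma$-compact space, so the projections remain $F_\sigma$. In other words, the innocuous-looking paragraph immediately preceding the statement, establishing that $\cM$ is $\sigma$-compact, is doing the essential work in this proof.
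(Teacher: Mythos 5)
Your proof is correct and takes essentially the same route as the paper: both rely on Lemma \ref{lem_A1053} to describe $\alpha_u$ through the closed set of admissible triplets $(x,y,z)$, and both exploit the $\sigma$-compactness of $\cM$ to conclude that the relevant projection onto the $y$-coordinate is Borel (indeed $F_\sigma$). The paper phrases this as writing $\alpha_u$ as a countable supremum of Borel functions indexed by rational $\eps,\delta$ and a compact exhaustion, whereas you directly show each super-level set $\{\alpha_u>c\}$ is $F_\sigma$; these are two equivalent packagings of the same argument, and your remark that projection of a closed set in a $\sigma$-compact space is what makes the projection step legitimate is exactly the right point to flag.
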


\begin{proof} We will only prove that $\alpha_u$ is Borel-measurable. The argument for $\beta_u$ is similar, while
$\ell_u$ is Borel-measurable as $\ell_u = \min \{ \alpha_u, \beta_u \}$.
For $\eps, \delta > 0$ we define $A_{\eps, \delta}$ to be the collection of all triplets $(x,y,z) \in \cM^3$ with
$$ d(x,y) = u(y) - u(x) \geq \eps,
\quad
d(y,z) = u(z) - u(y) \geq \delta,
\quad
d(x,y) + d(y,z) = d(x,z). $$
Then $A_{\eps, \delta}$ is a closed set, by the continuity of $u$ and of the distance function. The Riemannian manifold
$\cM$ is $\sigma$-compact, hence  there exist compacts $K_1 \subseteq K_2 \subseteq \ldots$ such that $\cM = \cup_i K_i$.
Define
$$ A_{i, \eps, \delta} = A_{\eps, \delta} \cap (K_i \times K_i \times K_i) \qquad \qquad \qquad (i \geq 1, \eps > 0, \delta > 0).  $$
Note that $A_{i, \eps, \delta}$ is compact and hence $\pi(A_{i, \eps, \delta})$ is also compact, where $\pi(x,y,z) = y$.
Clearly, $A_{\eps, \delta} = \cup_i A_{i, \eps, \delta}$.
Let $\alpha_{i, \eps, \delta}: \cM \rightarrow \RR \cup \{-\infty\}$ be the function that equals $\eps$ on the compact set
$\pi(A_{i, \eps, \delta})$ and equals $-\infty$ otherwise. Then $\alpha_{i, \eps, \delta}$ is a Borel-measurable function and by
Lemma \ref{lem_A1053}, for any $y \in \cM$,
$$ \alpha_u(y) =  \sup \left \{ \eps > 0 \, ; \, \exists \delta > 0, \, y \in \pi(A_{\eps, \delta}) \right \} = \sup \left \{ \alpha_{i, \eps, \delta}(y) \, ; \,  \eps, \delta \in \QQ \cap (0,\infty), i \geq 1 \right \}. $$
Hence $\alpha_u$ is the supremum of countably many Borel-measurable functions, and is thus necessarily Borel-measurable.
\end{proof}

For $\eps > 0$ denote
$ \Strain_{\eps}[u] = \left \{ x \in \cM \, ; \, \ell_u(x) > \eps \right \}$. Thus,
$$ \Strain[u] = \bigcup_{\eps > 0} \Strain_{\eps}[u] = \{ x \in \cM \, ; \, \ell_u(x) > 0 \}. $$
The function $u$ is basically an arbitrary Lipschitz function, yet the following theorem asserts higher regularity of $u$ inside
the set $\Strain[u]$. Denote $B_{\cM}(p, \delta) = \left \{ x \in \cM \, ; \, d(x,p) < \delta \right \}$.

\begin{theorem} Let $\cM$ be a geodesically-convex Riemannian manifold. Let $u: \cM \rightarrow \RR$ be a function with $\| u \|_{Lip} \leq 1$.
Let $p \in \cM, \eps_0 > 0$.
Then there exist
$\delta > 0$ and a $C^{1,1}$-function $\tilde{u}: B_{\cM}(p, \delta) \rightarrow \RR$
such that for any $x \in \cM$,
\begin{equation}  x \in
B_{\cM}(p, \delta) \cap \Strain_{\eps_0}[u] \qquad \Longrightarrow \qquad \tilde{u}(x) = u(x), \ \ \nabla \tilde{u}(x) = \nabla u(x). \label{eq_1921} \end{equation}
\label{prop_939}
\end{theorem}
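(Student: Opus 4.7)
The plan is to verify a quadratic Whitney $C^{1,1}$-condition for $u$ on the closed set $E := \Strain_{\eps_0}[u] \cap \overline{B_\cM(p, \delta)}$, and then to invoke the classical Whitney extension theorem in a normal coordinate chart at $p$.

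First, I would shrink $\delta > 0$ so that $\overline{B_\cM(p, \delta + \eps_0)}$ lies in a relatively-compact chart around $p$ with uniform bounds on the curvature and on the injectivity radius. For each $x \in E$, Corollary \ref{cor_A1217} supplies a minimizing geodesic $\gamma_x: (-\alpha_u(x), \beta_u(x)) \to \cM$ with $\gamma_x(0) = x$ and $\alpha_u(x), \beta_u(x) > \eps_0$; set $x^\pm := \gamma_x(\pm \eps_0/2)$. Define the two ``barrier functions''
\[ \phi_x^-(y) := u(x^-) + d(x^-, y), \qquad \phi_x^+(y) := u(x^+) - d(x^+, y). \]
The $1$-Lipschitz property of $u$ yields $\phi_x^+ \leq u \leq \phi_x^-$ on all of $\cM$, with equality at $y = x$ (since $d(x^\pm, x) = \eps_0/2 = \pm(u(x^\pm) - u(x))$). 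Because $x$ lies in the relative interior of a minimizing geodesic joining $x^-$ to $x^+$, neither endpoint is a cut point of $x$, so $d(x^\pm, \cdot)$ is $C^\infty$ near $x$. Using Lemma \ref{lem_1046}, one checks that $\nabla \phi_x^+(x) = \nabla \phi_x^-(x) = \dot\gamma_x(0) = \nabla u(x)$.

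Next I would upgrade this to estimates that are uniform in $x \in E$. Standard Jacobi-field bounds on the fixed compact chart, combined with the fact that the segments $\gamma_x|_{[-\eps_0/2,\, \eps_0/2]}$ all lie in a fixed compact set at a definite distance from their focal and cut loci, yield constants $\eta, C > 0$ independent of $x \in E$ such that $\phi_x^\pm \in C^2(B_\cM(x, \eta))$ with $\|\phi_x^\pm\|_{C^2} \leq C$. Taylor expanding the barriers around $x$ (where they share the value $u(x)$ and the gradient $\nabla u(x)$) and invoking the sandwich $\phi_x^+ \leq u \leq \phi_x^-$ gives the pointwise Whitney estimate
\[ |u(y) - u(x) - \langle \nabla u(x), \exp_x^{-1}(y)\rangle| \leq C\, d(x,y)^2 \qquad \text{for all } y \in B_\cM(x, \eta). \]
For pairs $x, y \in E$ with $d(x,y) \geq \eta$, the crude bound arising from $\| u \|_{Lip} \leq 1$ and $|\nabla u(x)| \leq 1$ gives a remainder $\leq 2\, d(x,y) \leq (2/\eta)\, d(x,y)^2$, so a uniform Whitney constant $L$ on $E$ is obtained.

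Finally, I would pass to normal coordinates on $B_\cM(p, \delta)$, where the Riemannian and Euclidean geometries differ by $O(\delta^2)$, so the above estimate translates into the usual Euclidean Whitney $C^{1,1}$-condition for the pair $(u, \nabla u)$ on the image of $E$ (with a comparable constant). The classical Whitney extension theorem then produces a $C^{1,1}$ function $\tilde u$ on a neighborhood of the image of $E$ that agrees with $u$ and has prescribed gradient $\nabla u$ on $E$; pulling back and shrinking $\delta$ if necessary yields the desired $\tilde u$ on $B_\cM(p, \delta)$. The main obstacle I anticipate is the uniform $C^2$ bound on the barriers, and this is precisely where the Strain hypothesis pays off: the lower bound $\eps_0/2$ on the distance from $x$ to $x^\pm$ along a minimizing geodesic keeps us a definite distance away from both focal and cut loci and confines the endpoints $x^\pm$ to a fixed compact set in which Jacobi-field estimates are uniform.
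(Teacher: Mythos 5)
Your proposal takes a genuinely different route from the paper. The paper never introduces upper and lower barrier functions; its key geometric input is the six-point metric estimate of Feldman and McCann (Lemma~\ref{lem_849}), applied through Lemma~\ref{lem_1444} to produce the two Whitney inequalities in normal coordinates, and the Whitney theorem is invoked in the coordinate-free form of Corollary~\ref{cor_1320}. Your approach instead exploits the semi-concavity/semi-convexity structure: at each $x\in\Strain_{\eps_0}[u]$ the $1$-Lipschitz bound sandwiches $u$ between the smooth functions $\phi_x^+$ and $\phi_x^-$, with matching value and gradient at $x$. This is a legitimate classical strategy (closer in spirit to Caffarelli--Feldman--McCann and to viscosity-solution arguments than to the paper's metric lemma), and your identification of the role of $\Strain_{\eps_0}[u]$ --- keeping $x^\pm$ a fixed distance away from cut and focal loci so the barriers have uniformly bounded $C^2$-norms --- is exactly right.

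However, there is a genuine gap: the Whitney $C^{1,1}$ criterion (Theorem~\ref{thm_1052}) demands two estimates, not one. In the paper's notation, condition (iii) is the second-order Taylor bound $\bigl|u(y)-u(x)-\langle\nabla u(x),\exp_x^{-1}(y)\rangle\bigr|\lesssim d(x,y)^2$, which is what your sandwich $\phi_x^+\leq u\leq\phi_x^-$ delivers. But condition (ii), the Lipschitz estimate $|\nabla u(x)-\nabla u(y)|\lesssim d(x,y)$ (after parallel transport), is a separate requirement, and on an arbitrary closed set it does \emph{not} follow from (iii): one can easily assign to finitely many points values $f$ and vectors $V$ for which the Taylor remainder vanishes identically while $V$ is wildly non-Lipschitz. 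Your write-up verifies only the Taylor bound and silently concludes that ``a uniform Whitney constant $L$ on $E$ is obtained''; this is exactly where the argument is incomplete. The omission is repairable within your framework: consider $\psi:=\phi_x^- -\phi_y^+$, a $C^2$ function with uniformly bounded Hessian near $y$ that is nonnegative on a fixed-size ball (since $\phi_x^-\geq u\geq\phi_y^+$ globally) and satisfies $\psi(y)=\phi_x^-(y)-u(y)\leq Cd(x,y)^2$. The elementary fact that a nonnegative $C^2$ function with bounded Hessian and small value has small gradient then yields $|\nabla\phi_x^-(y)-\nabla u(y)|\lesssim d(x,y)$, and combining with the uniform $C^2$ bound on $\phi_x^-$ (which controls $|\nabla\phi_x^-(y)-\Pi_{x\to y}\nabla u(x)|\lesssim d(x,y)$) gives condition (ii). You should add this step; as written, the proposal does not establish the hypotheses of Whitney's theorem.
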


Section \ref{sec_939} contains the standard background on $C^{1,1}$-functions.
In Section \ref{charts} we discuss the Riemann normal coordinates, and in Section \ref{proof_c11} we complete the proof
of Theorem \ref{prop_939}.
Our proof of Theorem \ref{prop_939}
is related to the arguments of Evans and Gangbo \cite{EG}
and to the contributions by Ambrosio \cite{amb}, Caffarelli, Feldman and McCann \cite{CFM},
Feldman and McCann \cite{FM} and Trudinger and Wang \cite{TW}.
The new ingredient in our analysis is the use
of Whitney's extension theorem.

\subsection{Whitney's extension theorem for $C^{1,1}$}
\label{sec_939}
\setcounter{equation}{0}

Given a function $f: \RR^n \rightarrow \RR$ we write $\partial_i f = \partial f / \partial x_i$ for its $i^{th}$ partial
derivative, so that $\nabla f = (\partial_1 f, \ldots, \partial_n f)$.
Denote by $| \cdot |$ the standard Euclidean
norm in $\RR^n$, and $x \cdot y$ is the usual scalar product of $x,y \in \RR^n$.
For an open, convex set $K \subseteq \RR^n$ and a $C^1$-function $\vphi = (\vphi_1,\ldots,\vphi_m): K \rightarrow \RR^m$  we set
\begin{equation}  \left \| \vphi \right \|_{C^{1,1}} = \sup_{x \in K} \left( |\vphi(x)| + \| \vphi^{\prime}(x) \|_{op} \right)
\, + \, \sup_{x \neq y \in K} \frac{\| \vphi^{\prime}(x) \, - \, \vphi^{\prime}(y) \|_{op}}{|x-y|},
\label{eq_1017} \end{equation}
where the derivative $\vphi^{\prime}(x)$ is an $m \times n$ matrix whose $(i,j)$-entry is $\partial_j \vphi_i(x)$,
and $$ \| A \|_{op} = \sup_{0 \neq v \in \RR^n} |Av|/|v| $$ is the operator norm.
Similarly, we may define the $C^{1,1}$-norm of a function $\vphi: K \rightarrow Y$,
where $X$ and $Y$ are  finite-dimensional linear spaces with inner products
and where $K \subseteq X$ is an open, convex set. In fact, formula (\ref{eq_1017}) remains valid in the latter scenario, yet in this case we need to interpret $\vphi^{\prime}(x)$
as  a linear map from $X$ to $Y$ and not as a matrix.
For an open set $U \subseteq \RR^n$, we say that $f: U \rightarrow \RR^m$ is a $C^{1,1}$-function if for any $x \in U$ there exists $\delta > 0$ such that
 $$ \left \| \left. f \right|_{B(x, \delta)} \right \|_{C^{1,1}} < \infty $$
 where $\left. f \right|_{B(x, \delta)}$ is the restriction of $f$ to the open ball $B(x, \delta) = \{ y \in \RR^n \, ; \, |y-x| < \delta \}$.
 In other words, a   $C^1$-function $f: U \rightarrow \RR^m$ is a $C^{1,1}$-function if and only if the derivative $f^{\prime}$ is
 a locally-Lipschitz map into the space of $m \times n$ matrices.
 Any $C^2$-function $f: U \rightarrow \RR^m$ is automatically a $C^{1,1}$-function.
 A  map $\vphi: U \rightarrow V$ is a $C^{1,1}$-diffeomorphism, for open sets $U,V \subseteq \RR^n$, if $\vphi$ is an invertible $C^{1,1}$-map and
 the inverse map $\vphi^{-1}: V \rightarrow U$ is also $C^{1,1}$.
 The $C^1$-version of the following lemma  may be found in any textbook on multivariate calculus.

\begin{lemma} \begin{enumerate}
\item[(i)] Let $U_1 \subseteq \RR^n$ and $U_2 \subseteq \RR^m$ be open sets. Let $f_2: U_2 \rightarrow \RR^k$ and $f_1: U_1 \rightarrow U_2$
be $C^{1,1}$-functions. Then  $f_2 \circ f_1$ is also a $C^{1,1}$-function.
\item[(ii)] Let $U \subseteq \RR^n$ be an open set and let $f: U \rightarrow \RR^n$ be a $C^{1,1}$-function. Assume that $x_0 \in U$ is such that
$\det f^{\prime}(x_0) \neq 0$. Then there exists $\delta > 0$ such that $f|_{B(x_0,\delta)}$ is a $C^{1,1}$-diffeomorphism onto
some open set $V \subseteq \RR^n$.
\item[(iii)] Let $U \subseteq \RR^n$ be an open set and let $f: U \rightarrow \RR$ be a $C^{1,1}$-function. Assume that $x_0 \in U$ is such that
$\nabla f(x_0) \neq 0$. Then there exists an open set $V \subseteq U$ containing the point $x_0$,
an open set $\Omega \subseteq \RR^{n-1} \times \RR$ of the form $\Omega = \Omega_0 \times (a,b) \subseteq \RR^{n-1} \times \RR$ and a $C^{1,1}$-diffeomorphism
$G: \Omega \rightarrow V$ such that for any $(y,t) \in \Omega$,
$$ f(G(y,t)) = t. $$
\end{enumerate}
\label{lem_1345}
\end{lemma}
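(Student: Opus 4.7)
All three assertions are $C^{1,1}$-upgrades of standard $C^1$ theorems from multivariate calculus: the chain rule, the inverse function theorem, and the straightening of a regular level set. In each case I would invoke the classical $C^1$-version (which is available since $C^{1,1}\subseteq C^1$) and then check by direct inspection that the derivative produced is actually locally Lipschitz. The mechanism throughout relies on two trivial facts: the class of locally Lipschitz maps is closed under composition and under sums and products of locally bounded, locally Lipschitz functions; and matrix inversion $A \mapsto A^{-1}$ is smooth on the open set of invertible matrices, hence Lipschitz on any compact subset thereof.

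For (i), the chain rule gives $(f_2 \circ f_1)'(x) = f_2'(f_1(x)) \cdot f_1'(x)$. The derivative $f_1'$ is locally Lipschitz by hypothesis and locally bounded, which forces $f_1$ itself to be locally Lipschitz; hence $f_2' \circ f_1$ is locally Lipschitz as the composition of two locally Lipschitz maps. The pointwise matrix product with $f_1'$ is then locally Lipschitz because each factor is locally bounded and locally Lipschitz.

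For (ii), the classical $C^1$ inverse function theorem already supplies a $\delta>0$ and a $C^1$-diffeomorphism $g := f|_{B(x_0,\delta)}^{-1}: V \to B(x_0,\delta)$, with $g'(y) = [f'(g(y))]^{-1}$. Shrinking $\delta$ if necessary, the values $f'(x)$ lie in a compact subset of the invertible $n\times n$ matrices on which inversion is Lipschitz; composing with the (locally Lipschitz) maps $g$ and $f'$ then shows that $g'$ is locally Lipschitz, which is precisely the claim.

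For (iii), pick a coordinate direction with $\partial_i f(x_0) \neq 0$; after relabeling assume $i = n$. Define $F: U \to \RR^n$ by $F(x_1,\ldots,x_n) = (x_1,\ldots,x_{n-1}, f(x))$. Then $F$ is $C^{1,1}$ (either directly from the $C^{1,1}$-regularity of its coordinate functions, or by applying part (i)), and its Jacobian at $x_0$ is block lower triangular with determinant $\partial_n f(x_0) \neq 0$. Part (ii) produces a $C^{1,1}$-local inverse $G$ of $F$ on some open neighborhood of $F(x_0)$; after shrinking, this neighborhood contains a product box $\Omega = \Omega_0 \times (a,b)$, and on it the identity $F(G(y,t)) = (y,t)$ reads in its last coordinate $f(G(y,t)) = t$, as required. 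I do not anticipate any serious obstacle; the only care needed is to keep track of $C^{1,1}$-norms under composition and inversion, but this is a routine consequence of the two elementary facts highlighted above.
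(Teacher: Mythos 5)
Your proposal is correct and follows essentially the same route as the paper: for (i) and (ii) it differentiates via the chain-rule and inverse-function-theorem formulas and checks local Lipschitzness of the resulting derivative as a composition/product of locally Lipschitz, locally bounded maps, and for (iii) it derives the implicit function theorem from (ii) in the standard way. The paper's (iii) merely cites Edwards for this last deduction, which you spell out explicitly, but the argument is the same.
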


\begin{proof} \begin{enumerate} \item[(i)] We know that $h = f_2 \circ f_1$ is a $C^1$-function.
The map $x \mapsto f_2^{\prime}(f_1(x))$ is locally-Lipschitz, since it is the composition of two locally-Lipschitz maps.
Since $f_1^{\prime}$ is locally-Lipschitz,  the product $h^{\prime}(x) = f_2^{\prime}(f_1(x)) \cdot f_1^{\prime}(x)$ is also locally-Lipschitz. Hence $h$ is a $C^{1,1}$-function.
\item[(ii)] The usual inverse function theorem for $C^1$ guarantees the existence of $\delta > 0$ and an open set $V \subseteq \RR^n$ such that $f: B(x_0,\delta) \rightarrow V$ is a $C^1$-diffeomorphism. Let $g: V \rightarrow B(x_0, \delta)$ be the inverse map.
    The map $g^{\prime}(x) = \left( f^{\prime}(g(x)) \right)^{-1}$ is the composition of three locally-Lipschitz maps, hence it is locally-Lipschitz and $g$ is  $C^{1,1}$.
    \item[(iii)] This follows from (ii) in exactly the same way that the implicit function theorem follows from the inverse function theorem in the $C^1$ case, see e.g. Edwards
    \cite[Chapter III.3]{edwards}. \qedhere
    \end{enumerate}
\end{proof}

Lemma \ref{lem_1345}(i) shows that the concept of a $C^{1,1}$-function on a differentiable manifold
is well-defined:

\begin{definition} Let $\cM$ and $\cN$ be differentiable manifolds. A function $f: \cM \rightarrow \cN$
is a $C^{1,1}$-function if $f$ is $C^{1,1}$ in any local chart. A $C^{1,1}$-function $f: \cM \rightarrow \cN$ is a $C^{1,1}$-diffeomorphism if it is invertible
and the inverse function $f^{-1}: \cN \rightarrow \cM$ is also $C^{1,1}$.
\label{def_1741}
\end{definition}

Let $K \subseteq \RR^n$ be an open, convex set  and  let $f: K \rightarrow \RR$ satisfy $ M := \| f \|_{C^{1,1}} < \infty$.
It follows from the definition (\ref{eq_1017}) that for $x,y \in K$,
\begin{equation} |\nabla f(x) - \nabla f(y)| \leq M |x-y|. \label{eq_1057}
\end{equation}
For $x, y \in K$ we also have, denoting $x_t = (1-t) x + t y$,
\begin{equation}
\left| f(x) + \nabla f(x) \cdot (y - x) - f(y) \right|
= \left| \int_0^1 \left[ \nabla f( x ) - \nabla f(x_t) \right] \cdot (y-x) dt \right| \leq \frac{M}{2} |x-y|^2.
\label{eq_1042}
\end{equation}
Conditions (\ref{eq_1057}) and (\ref{eq_1042}), which are basically  Taylor's theorem for $C^{1,1}$-functions,
capture the essence of
the concept of a $C^{1,1}$-function, as is demonstrated in Theorem \ref{thm_1052} below.
For points $x,y \in \RR^n$ and for $f: \{ x, y \} \rightarrow \RR$ and $V: \{ x, y \} \rightarrow \RR^n$
we define $\| (f, V) \|_{x,y}$ to be the infimum over all $M \geq 0$ for which the following three conditions hold:
\begin{enumerate}
\item[(i)] $\displaystyle |f(x)| \leq M, \, |V(x)| \leq M$,
\item[(ii)] $\displaystyle \left| V(y) - V(x) \right| \leq M |y-x|$,
\item[(iii)] $\displaystyle \left| f(x) + V(x) \cdot (y-x) -  f(y) \right| \leq M |y-x|^2$.
\end{enumerate}
This infimum is in fact a minimum. Note that $\| (f, V) \|_{x,y}$ is not
necessarily the same as $\| (f, V) \|_{y,x}$.

\begin{theorem}[Whitney's extension theorem for $C^{1,1}$]
Let $A \subseteq \RR^n$ be an arbitrary set, let $f: A \rightarrow \RR$ and $V: A \rightarrow \RR^n$. Assume that
\begin{equation} \sup_{x, y \in A} \left \| (f, V) \right \|_{x,y} < \infty. \label{eq_goth}
\end{equation}
Then there exists a $C^{1,1}$-function $\tilde{f}: \RR^n \rightarrow \RR$ such that for any $x \in A$,
$$ \tilde{f}(x) = f(x), \quad \nabla \tilde{f}(x) = V(x). $$
\label{thm_1052}
\end{theorem}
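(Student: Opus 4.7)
The plan is to follow the classical Whitney extension construction, adapted to first-order polynomial approximations so as to match the jet data $(f, V)$. Set $M = \sup_{x, y \in A} \| (f, V) \|_{x,y}$, which is finite by hypothesis.

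First, I would reduce to the case that $A$ is closed. Condition (ii) says that $V$ is globally $M$-Lipschitz on $A$, so it extends uniquely by uniform continuity to a Lipschitz map on $\bar A$. Condition (iii) then yields $|f(y) - f(x)| \leq |V(x)| \cdot |y-x| + M |y-x|^2$, making $f$ locally Lipschitz on $A$ and hence continuously extendable to $\bar A$. Passing to limits in (i)--(iii) shows the extended data still obey the same inequalities with the same constant $M$, so we may assume $A = \bar A$.

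Second, I would Whitney-decompose the open complement $U = \RR^n \setminus A$ into a locally finite family of closed dyadic cubes $\{ Q_i \}$ with pairwise disjoint interiors and $\text{diam}(Q_i) \leq \text{dist}(Q_i, A) \leq 4 \, \text{diam}(Q_i)$. For each $i$ pick $a_i \in A$ realizing, up to a constant, the distance from $Q_i$ to $A$, and fix a smooth partition of unity $\{ \varphi_i \}$ subordinate to the slightly enlarged cubes $Q_i^* = \tfrac{9}{8} Q_i$, with bounded overlap and the standard scale-invariant bounds $\| \nabla \varphi_i \|_\infty \leq C / \text{diam}(Q_i)$. With the local polynomials $P_i(x) := f(a_i) + V(a_i) \cdot (x - a_i)$ I define
\[
\tilde f(x) \;=\; \begin{cases} f(x) & x \in A, \\ \displaystyle \sum_i \varphi_i(x) \, P_i(x) & x \in U. \end{cases}
\]
On $U$ the function is manifestly $C^\infty$, so it remains to verify $C^{1,1}$-regularity across $A$.

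The continuity and differentiability at a point $x_0 \in A$ with $\nabla \tilde f(x_0) = V(x_0)$ are routine: for $y \in U$ and $x_0 \in A$ nearest to $y$, every $i$ with $\varphi_i(y) \neq 0$ satisfies $|a_i - x_0| \leq C |y - x_0|$ by the Whitney property, so using $\sum_i \varphi_i \equiv 1$ together with (iii) applied at $a_i$ gives $|\tilde f(y) - f(x_0) - V(x_0) \cdot (y - x_0)| \leq C M |y - x_0|^2$. The main obstacle, and the heart of the argument, is showing that $\nabla \tilde f$ is Lipschitz across the interface between $U$ and $A$. For $y \in Q_j^*$ one exploits $\sum_i \nabla \varphi_i \equiv 0$ to write
\[
\nabla \tilde f(y) - V(a_j) \;=\; \sum_i \varphi_i(y) \bigl[ V(a_i) - V(a_j) \bigr] \;+\; \sum_i \nabla \varphi_i(y) \bigl[ P_i(y) - P_j(y) \bigr],
\]
then uses (ii) at $a_i, a_j$ to bound the first sum by $C M \, \text{diam}(Q_j)$, and expands $P_i - P_j$ using (ii) and (iii) to bound the second sum by the same order. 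Comparing $\nabla \tilde f(y_1)$ with $\nabla \tilde f(y_2)$ for $y_1, y_2 \in U$ or with $V(x_0)$ for $x_0 \in A$ then reduces to a case analysis depending on whether $|y_1 - y_2|$ is much larger or much smaller than $\text{dist}(y_i, A)$; the bookkeeping of which cubes are active and the geometric comparisons between $|a_i - a_j|$, $\text{diam}(Q_i)$ and $|y_1 - y_2|$ is the delicate part, though standard for Whitney-type proofs.
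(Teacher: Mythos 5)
The paper does not supply its own proof of Theorem \ref{thm_1052}: it cites Stein and Whitney, adding only the remark that one may first extend $f$ and $V$ to $\overline{A}$ by continuity. Your sketch is precisely the classical Whitney-cube construction from those references (reduction to closed $A$, Whitney decomposition of the complement, partition of unity, first-order local polynomials, and the key identities $\sum_i \varphi_i \equiv 1$ and $\sum_i \nabla\varphi_i \equiv 0$), including the correct closure reduction matching the paper's remark, so it is the same approach as the cited source.
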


For a proof of Theorem \ref{thm_1052} see Stein \cite[Chapter VI.2.3]{stein}
or the original paper by Whitney \cite{W}.
Whitney's theorem is usually stated under the additional assumption that $A \subseteq \RR^n$
is a closed set, but it is straightforward to extend $f$ and $V$ from $A$ to the closure $\overline{A}$ by continuity,
preserving the validity of assumption (\ref{eq_goth}).

\medskip
Given a differentiable manifold $\cM$ and a subset $A \subseteq \cM$, a $1$-form on $A$ is a map $\omega: A \rightarrow T^* \cM$
with $\omega(x) \in T_x^* \cM$ for $x \in A$.
Let $\cM, \cN$ be differentiable manifolds and let $\vphi: \cM \rightarrow \cN$ be a $C^1$-map.
For a $1$-form $\omega$ on $A \subseteq \cN$ we write $\vphi^* \omega$ for the pull-back of $\omega$
under the map $\vphi$. Thus $\vphi^* \omega$ is a $1$-form on $\vphi^{-1}(A)$.
Write $\RR^{n*}$ for the space of all linear functionals from $\RR^n$ to $\RR$.
With any $\ell \in \RR^{n*}$
we associate the vector $V_{\ell} \in \RR^n$ which satisfies
$$ \ell(x) = x \cdot V_{\ell} \qquad \qquad \text{for any} \ x \in \RR^n. $$
Since $T_x^* (\RR^n)$ is canonically isomorphic to $\RR^{n*}$, any $1$-form $\omega$ on a subset $A \subseteq \RR^n$ may be identified with a map $\omega: A \rightarrow \RR^{n*}$.
Defining $V_{\omega}(x) := V_{\omega(x)} \in \RR^n$ we recall the formula
\begin{equation}
V_{\vphi^* \omega}(x) =  \vphi^{\prime}(x)^{*} \cdot V_{\omega}(\vphi(x)),
\label{eq_1821_} \end{equation}
where $B^{*}$ is the transpose of the matrix $B$. Here,  $\omega$ is a $1$-form on a subset $A \subseteq \RR^m$,
the function $\vphi$ is a $C^1$-map from an open set $U \subseteq \RR^n$ to $\RR^m$, and the formula (\ref{eq_1821_})
is valid for any $x \in \vphi^{-1}(A)$.
For $x, y \in \RR^n$ and for $f: \{ x, y \} \rightarrow \RR, \omega: \{ x, y \} \rightarrow \RR^{n*}$ we define
$$ \left \| (f, \omega) \right \|_{x,y} =
\left \| (f, V_\omega) \right \|_{x,y}. $$

\begin{lemma} Let $K_1, K_2 \subseteq \RR^n$ be open, convex sets. Let $R \geq 1$ and let $\vphi: K_1  \rightarrow K_2$ be a $C^1$-diffeomorphism with
\begin{equation}  \| \vphi^{-1} \|_{C^{1,1}} \leq R. \label{eq_937} \end{equation}
Let $x, y \in K_2$, denote $A = \{ x, y \}$, let $f: A \rightarrow \RR$, and let $\omega: A \rightarrow \RR^{n*}$ be a $1$-form on $A$. Denote
$\tilde{A} = \vphi^{-1}(A), \tilde{\omega}
= \vphi^* \omega, \tilde{f} = f \circ \vphi$, and $\tilde{x} = \vphi^{-1}(x), \tilde{y} = \vphi^{-1}(y)$.
Then,
$$ \left \| (f, \omega) \right \|_{x,y} \leq C_{n,R} \left \| (\tilde{f}, \tilde{\omega}) \right \|_{\tilde{x}, \tilde{y}}, $$
where $C_{n,R} > 0$ is a constant depending solely on $n$ and $R$. \label{lem_1245}
\end{lemma}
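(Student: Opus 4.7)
The strategy is to verify the three defining inequalities (i)--(iii) of $\|(f,\omega)\|_{x,y}$ directly, each with a constant of the form $C_R \|(\tilde f, \tilde\omega)\|_{\tilde x, \tilde y}$. Setting $\psi = \varphi^{-1}$, so that $\|\psi\|_{C^{1,1}} \leq R$, the relation $\varphi \circ \psi = \mathrm{id}$ gives $\varphi'(\tilde z) = (\psi'(z))^{-1}$ for every $z \in K_2$ with $\tilde z = \psi(z)$, and the pull-back formula (\ref{eq_1821_}) then reads
\[ V_\omega(z) \;=\; \psi'(z)^{*} V_{\tilde\omega}(\tilde z), \qquad f(z) \;=\; \tilde f(\tilde z). \]
These identities let me transport every estimate on $(\tilde f, \tilde\omega)$ to one on $(f, \omega)$, with the $C^{1,1}$-Taylor inequalities (\ref{eq_1057}) and (\ref{eq_1042}) applied to $\psi$ supplying the only analytic input.

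Condition (i) is immediate: $|f(x)| = |\tilde f(\tilde x)| \leq M$ and $|V_\omega(x)| \leq \|\psi'(x)\|_{op}\,|V_{\tilde\omega}(\tilde x)| \leq R M$, where $M := \|(\tilde f, \tilde\omega)\|_{\tilde x, \tilde y}$. For condition (ii) I split
\[ V_\omega(y) - V_\omega(x) \;=\; \psi'(y)^{*}\bigl[V_{\tilde\omega}(\tilde y) - V_{\tilde\omega}(\tilde x)\bigr] \;+\; \bigl[\psi'(y)^{*} - \psi'(x)^{*}\bigr] V_{\tilde\omega}(\tilde x). \]
The $R$-Lipschitz bound on $\psi$ gives $|\tilde y - \tilde x| \leq R|y - x|$, while $\|\psi\|_{C^{1,1}} \leq R$ combined with (\ref{eq_1057}) yields $\|\psi'(y) - \psi'(x)\|_{op} \leq R|y - x|$; the two terms are thus bounded respectively by $R \cdot M \cdot R|y-x|$ and $R|y-x| \cdot M$, summing to at most $2 R^2 M |y-x|$.

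Condition (iii) is the heart of the argument and the only place any real bookkeeping is required. I rewrite $f(y) - f(x) = \tilde f(\tilde y) - \tilde f(\tilde x)$ and apply the $(\tilde f, \tilde\omega)$-Taylor inequality to obtain
\[ f(y) - f(x) \;=\; V_{\tilde\omega}(\tilde x) \cdot (\tilde y - \tilde x) + E_1, \qquad |E_1| \leq M|\tilde y - \tilde x|^2 \leq M R^2 |y-x|^2. \]
On the other hand, the adjoint identity gives $V_\omega(x) \cdot (y-x) = V_{\tilde\omega}(\tilde x) \cdot \psi'(x)(y-x)$, so
\[ f(x) + V_\omega(x)\cdot(y-x) - f(y) \;=\; V_{\tilde\omega}(\tilde x)\cdot\bigl[\psi'(x)(y-x) - (\tilde y - \tilde x)\bigr] - E_1. \]
Applying (\ref{eq_1042}) coordinate-wise to $\psi$ bounds the bracket by $(R/2)|y-x|^2$, so that piece contributes at most $(MR/2)|y-x|^2$; combined with $E_1$ this gives condition (iii) with a constant of order $R^2 M$. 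The dimension $n$ enters only through the conversion between operator and Euclidean norms in the coordinate-wise application of (\ref{eq_1042}), and is absorbed into the stated constant $C_{n,R}$. I anticipate no serious obstacle beyond this routine separation of the linearization error of $\psi$ from the Taylor error of $(\tilde f, \tilde\omega)$; once the three estimates are assembled, taking $M' = C_{n,R} M$ verifies all three defining conditions of $\|(f,\omega)\|_{x,y}$ simultaneously and concludes the proof.
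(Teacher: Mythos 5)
Your proposal is correct and takes essentially the same approach as the paper: you verify conditions (i)--(iii) of $\|(f,\omega)\|_{x,y}$ using the identity $V_\omega(x) = \psi'(x)^* V_{\tilde\omega}(\tilde x)$, the Lipschitz bound on $\psi$, and the coordinate-wise Taylor estimate (\ref{eq_1042}), with the $n$-dependence entering exactly where the paper's does (in passing from the coordinate-wise bounds on $\psi'(x)(y-x) - (\psi(y) - \psi(x))$ to a norm bound). The only small gloss is that your intermediate display "bounds the bracket by $(R/2)|y-x|^2$" should carry a factor of $\sqrt{n}$ (or $n$, as in the paper), but you already flag this as being absorbed into $C_{n,R}$, so the argument is sound.
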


\begin{proof} It follows from (\ref{eq_1017}), (\ref{eq_937}) and the convexity of $K_2$ that the map $\psi := \vphi^{-1}$ is $R$-Lipschitz. Thus,
\begin{equation}
|\tilde{y} - \tilde{x}| = |\psi(y) - \psi(x)| \leq R |y-x|. \label{eq_957_}
\end{equation}
Set $V = V_{\omega}: A \rightarrow \RR^n$ and $\tilde{V} = V_{\tilde{\omega}}: \tilde{A} \rightarrow \RR^n$.
Since $\tilde{\omega} = \vphi^* \omega$ then $\omega = \psi^* \tilde{\omega}$ and from (\ref{eq_1821_}),
$$ V(x) = \psi^{\prime}(x)^* \cdot \tilde{V}({\tilde{x}}). $$
Denote $M =
\| (\tilde{f}, \tilde{\omega})  \|_{\tilde{x}, \tilde{y}}
= \| (\tilde{f}, \tilde{V})  \|_{\tilde{x}, \tilde{y}}$.
It suffices to show that
$f$ and $V$ satisfy conditions (i), (ii) and (iii) from the definition of $\| (f, V) \|_{x,y}$ with $M$ replaced by $2M (R^2 + nR + 1)$.
To that end, observe that
\begin{equation}
|f(x)| = |\tilde{f}(\tilde{x})| \leq M, \qquad |V(x)| = |\psi^{\prime}(x)^* \cdot \tilde{V}(\tilde{x})| \leq M R.
\label{eq_926}
\end{equation}
Thus condition (i) is satisfied. To prove condition (ii), we  compute that
\begin{align} \label{eq_2155}
|V(y) &- V(x)|  = \left| \psi^{\prime}(y)^* \tilde{V}(\tilde{y}) - \psi^{\prime}(x)^* \tilde{V}(\tilde{x})
\right| \\ & \leq \left| \psi^{\prime}(y)^*  (\tilde{V}(\tilde{y}) - \tilde{V}(\tilde{x}))
\right| + \left| ( \psi^{\prime}(y)^* - \psi^{\prime}(x)^* ) \tilde{V}(\tilde{x})
\right| \leq R M \left( |\tilde{y} - \tilde{x}| + |y-x| \right). \nonumber
\end{align}
Condition (ii) holds in view of (\ref{eq_957_}) and (\ref{eq_2155}).
Denote $\psi = (\psi_1,\ldots,\psi_n)$. From (\ref{eq_1042}) and (\ref{eq_957_}), \begin{align*}
| & f(x)  + V(x) \cdot (y-x) - f(y) | = | \tilde{f}(\tilde{x}) + \psi^{\prime}(x)^* \tilde{V}(\tilde{x})\cdot (y-x) - \tilde{f}(\tilde{y}) |
\\ & \leq | \tilde{f}(\tilde{x}) + \tilde{V}(\tilde{x}) \cdot (\tilde{y} - \tilde{x}) - \tilde{f}(\tilde{y}) | \, + \,
| \tilde{V}(\tilde{x}) |  \cdot \left|  \psi^{\prime}(x) (y - x) - (\psi(y) - \psi(x))  \right| \\ & \leq M |\tilde{x} - \tilde{y}|^2 +
M \sum_{i=1}^n \left| \nabla \psi_i(x) \cdot (y - x) - ( \psi_i(y) - \psi_i(x) ) \right| \leq (M R^2 + n M R) |y-x|^2.
\end{align*}
Condition (iii) is thus satisfied and the lemma is proven.
\end{proof}

\begin{corollary} Let $\cM$ be an $n$-dimensional  differentiable manifold, let $R \geq 1$ and let $U \subseteq \cM$ be an open set.
Assume that for any $a \in U$ we are given a convex, open set $U_a \subseteq \RR^n$ and a $C^{1,1}$-diffeomorphism $\vphi_a: U_a \rightarrow U$.
Suppose that for any $a, b \in U$,
\begin{equation} \| \vphi_b^{-1} \circ \vphi_a \|_{C^{1,1}} \leq R.
\label{eq_1243} \end{equation}

\medskip \noindent
Let $A \subseteq U$. Let $f: A \rightarrow \RR$ and let $\omega$ be a $1$-form on $A$.
For $a \in U$ set $f_a = f \circ \vphi_a$ and $w_a = \vphi_a^* w$.
Suppose that for any $x,y \in A$ there exists $a \in U$ for which
\begin{equation}  \| (f_a, \omega_a) \|_{\vphi_a^{-1}(x), \vphi_a^{-1}(y)} \leq R. \label{eq_1244} \end{equation}
Then there exists a $C^{1,1}$-function $\tilde{f}: U \rightarrow \RR$ with
\begin{equation}  \tilde{f} |_A = f, \quad  d \tilde{f} |_A = \omega, \label{eq_1257} \end{equation}
where $d \tilde{f}$ is the differential of the function $\tilde{f}$. \label{cor_1320}
\end{corollary}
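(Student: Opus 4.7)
The plan is to transfer the problem to a single, fixed coordinate chart, apply Whitney's extension theorem (Theorem \ref{thm_1052}) there, and push the resulting function forward to $U$. Fix any $a_0 \in U$ and set $\tilde A = \vphi_{a_0}^{-1}(A) \subseteq U_{a_0}$. Define $g := f_{a_0} = f \circ \vphi_{a_0}: \tilde A \to \RR$, and let $\tilde V := V_{\omega_{a_0}}: \tilde A \to \RR^n$ be the vector-valued map on $\tilde A$ corresponding to the $1$-form $\omega_{a_0} = \vphi_{a_0}^* \omega$.

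The heart of the argument is to verify the hypothesis of Whitney's theorem in this chart, namely that
$$ M := \sup_{\tilde x, \tilde y \in \tilde A} \|(g, \tilde V)\|_{\tilde x, \tilde y} < \infty. $$
Given $\tilde x, \tilde y \in \tilde A$, let $x = \vphi_{a_0}(\tilde x)$, $y = \vphi_{a_0}(\tilde y)$, and choose $a \in U$ as provided by hypothesis (\ref{eq_1244}), so that $\|(f_a, \omega_a)\|_{\vphi_a^{-1}(x), \vphi_a^{-1}(y)} \leq R$. Consider the $C^{1,1}$-diffeomorphism $\vphi := \vphi_{a_0}^{-1} \circ \vphi_a: U_a \to U_{a_0}$, whose inverse $\vphi^{-1} = \vphi_a^{-1} \circ \vphi_{a_0}$ satisfies $\|\vphi^{-1}\|_{C^{1,1}} \leq R$ by (\ref{eq_1243}). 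A short bookkeeping with pullbacks shows that the data $(g, \omega_{a_0})$ restricted to $\{\tilde x, \tilde y\}$ pulls back through $\vphi$ to $(f_a, \omega_a)$ restricted to $\{\vphi_a^{-1}(x), \vphi_a^{-1}(y)\}$. Applying Lemma \ref{lem_1245} with this choice of $\vphi$ therefore yields
$$ \|(g, \tilde V)\|_{\tilde x, \tilde y} \leq C_{n,R} \cdot \|(f_a, \omega_a)\|_{\vphi_a^{-1}(x), \vphi_a^{-1}(y)} \leq C_{n,R} R, $$
so $M \leq C_{n,R} R < \infty$, uniformly in $\tilde x, \tilde y \in \tilde A$.

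Whitney's extension theorem now provides a $C^{1,1}$-function $G: \RR^n \to \RR$ with $G|_{\tilde A} = g$ and $\nabla G|_{\tilde A} = \tilde V$. Define $\tilde f: U \to \RR$ by $\tilde f := G \circ \vphi_{a_0}^{-1}$. The equality $\tilde f|_A = f$ is immediate, while $d\tilde f|_A = \omega$ follows from $\nabla G|_{\tilde A} = V_{\omega_{a_0}}$ together with the fact that $\omega_{a_0} = \vphi_{a_0}^* \omega$ determines $\omega$ uniquely on $A$ via the isomorphism $\vphi_{a_0}^*$. Finally, to see that $\tilde f$ is $C^{1,1}$ on $U$ in the sense of Definition \ref{def_1741}, observe that for every chart $\vphi_b$ one has
$$ \tilde f \circ \vphi_b = G \circ (\vphi_{a_0}^{-1} \circ \vphi_b), $$
which is $C^{1,1}$ by Lemma \ref{lem_1345}(i), since $G$ is $C^{1,1}$ and the transition $\vphi_{a_0}^{-1} \circ \vphi_b$ is $C^{1,1}$ by (\ref{eq_1243}); composition with any smooth chart on $\cM$ then inherits $C^{1,1}$ regularity because each $\vphi_a$ is a $C^{1,1}$-diffeomorphism. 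The main obstacle is really the pullback computation in the middle step: one must be careful to verify that the norm $\|(f, \omega)\|_{x,y}$ transforms under the transition maps exactly in the direction required to turn the chart-dependent hypothesis (\ref{eq_1244}) into a uniform Whitney bound in the fixed chart $\vphi_{a_0}$. Once this is in hand, the rest is a straightforward application of Whitney's theorem and the chain rule for $C^{1,1}$-maps.
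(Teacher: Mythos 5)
Your proof is correct and is essentially identical to the paper's: both fix a single reference chart (you call it $\vphi_{a_0}$, the paper calls it $\vphi_b$), use Lemma \ref{lem_1245} together with (\ref{eq_1243}) to transfer the hypothesis (\ref{eq_1244}) into a uniform Whitney bound in that chart, apply Theorem \ref{thm_1052}, and push the resulting function forward via $\vphi_{a_0}^{-1}$. The only cosmetic difference is that you extend to all of $\RR^n$ and spell out the verification of $C^{1,1}$-regularity via the transition maps, which the paper dispatches in one sentence.
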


\begin{proof}
Fix $b \in U$ and denote $A_b = \vphi_b^{-1}(A) \subseteq U_b \subseteq \RR^n$.
Abbreviate $\vphi_{b, a} = \vphi_a^{-1} \circ \vphi_b$. Let $x, y \in A_{b} \subseteq \RR^n$. According to (\ref{eq_1244}) there exists
$a \in U$ for which
\begin{equation}  \| (f_a, \omega_a) \|_{\vphi_{b,a}(x), \vphi_{b,a}(y)} \leq R. \label{eq_1249} \end{equation}
We may apply Lemma \ref{lem_1245}, thanks to (\ref{eq_1243}) and (\ref{eq_1249}), and conclude that for any $x,y \in A_b$,
\begin{equation}  \| (f_{b}, \omega_{b}) \|_{x,y} \leq C_{n,R}, \label{eq_1247} \end{equation}
for some $C_{n, R} > 0$ depending only on $n$ and $R$. Recall that for any linear functional $\ell \in \RR^{n*}$ there corresponds
a vector $V_{\ell} \in \RR^n$ defined via
$$ \ell(z) = V_{\ell} \cdot z \qquad \qquad \qquad (z \in \RR^n). $$
In particular, for $x \in A_b$
we have $\omega_b(x) \in \RR^{n*}$ and let us set $ V_b(x) := V_{\omega_b(x)} \in \RR^n$.
According to (\ref{eq_1247}),
the function $f_b: A_b \rightarrow \RR$ and the vector field $V_b : A_b \rightarrow \RR^n$
satisfy
$$ \sup_{x,y \in A_b} \left \| (f_b, V_b) \right \|_{x,y} \leq C_{n,R} < \infty. $$
Theorem \ref{thm_1052} thus produces a $C^{1,1}$-function $\tilde{f}_b: U_b \rightarrow \RR$ with
$$ \tilde{f}_b(x) = f_b(x), \quad \nabla \tilde{f}_b(x) = V_b(x) \qquad \qquad \qquad (x \in A_b). $$
In particular $ d \tilde{f}_b |_{A_b} = \omega_b$. Setting $\tilde{f}(x) = \tilde{f}_b(\vphi_b^{-1}(x))$ for $x \in U$, we obtain a function $\tilde{f}: U \rightarrow \RR$
satisfying (\ref{eq_1257}). The function $\tilde{f}$ is a $C^{1,1}$-function since it is the composition of two $C^{1,1}$-functions.
\end{proof}

\begin{remark}{\rm Corollary \ref{cor_1320} admits the following formal generalization:
Rather than stipulating that $U_a$ is a subset of $\RR^n$ for any $a \in U$, we may assume
that $U_a \subseteq X_a$, where $X_a$ is an $n$-dimensional linear space with an inner product.
This generalization is completely straightforward, and it does not involve any substantial modifications to neither the
formulation
nor the proof of Corollary \ref{cor_1320}.
}\label{rem_1320} \end{remark}

\subsection{Riemann normal coordinates}
\label{charts}
\setcounter{equation}{0}

 Let $\cM$ be an $n$-dimensional Riemannian manifold with  Riemannian distance function $d$.
 For $a \in \cM$ we write $\langle \cdot, \cdot \rangle$ for
 the Riemannian scalar product in $T_a \cM$, and $| \cdot |$ is the norm induced by this scalar product.
 Given a $C^2$-function $g: T_a \cM \rightarrow \RR$ and a point $X \in T_a \cM$ we
 may speak of the gradient $\nabla g(X) \in T_a \cM$ and of the
 Hessian operator $\nabla^2 g(X): T_a \cM \rightarrow
 T_a \cM$, which is a symmetric operator such that
 \begin{equation}  g(Y) = g(X) + \langle \nabla g(X), Y - X \rangle + \frac{1}{2} \left \langle \nabla^2 g(X) (Y - X), Y - X \right \rangle + o(| Y - X|^2).
 \label{eq_A1817_} \end{equation}
On a very formal level, since $T_a \cM$ is a linear space,  we canonically identify $T_X (T_a \cM) \cong T_a \cM$
for any $X \in T_a \cM$.  Therefore the gradient $\nabla g(X)$ belongs to $T_a \cM \cong T_X (T_a \cM)$.
 A subset $U \subseteq \cM$  is {\it strongly convex} if for any two points $x,y \in U$
there exists a {\it unique} minimizing geodesic in $\cM$ that connects $x$ and $y$,
and furthermore this minimizing geodesic is contained in $U$, while there are no other geodesic curves contained
in $U$ that join $x$ and $y$. See, e.g., Chavel \cite[Section IX.6]{Ch} for more information.
The following standard lemma expresses the fact that a Riemannian manifold
is ``locally-Euclidean''.

\begin{lemma} Let $\cM$ be a Riemannian manifold and let $p \in \cM$.
Then there exists $\delta_0 = \delta_0(p) > 0$
such that the following hold:
\begin{enumerate}
\item[(i)] For any $x \in B_{\cM}(p, \delta_0)$ and $0 < \delta \leq \delta_0$, the
ball $B_{\cM}(x, \delta)$ is strongly convex and its closure is compact.
\item[(ii)] Denote $U = B_{\cM}(p, \delta_0 / 2)$ and for $a \in U$ set $U_a = \exp_a^{-1}(U)$.
Then $U_a \subseteq T_a \cM$ is a bounded, open set
and $\exp_a$ is a smooth diffeomorphism between $U_a$ and $U$.
\item[(iii)] Define $ f_{a, X}(Y) = \frac{1}{2} \cdot d^2( \exp_a X, \exp_a Y)$ for $a \in U, X,Y \in U_a$.
Then $f_{a,X}: U_a \rightarrow \RR$ is a smooth function, and its Hessian operator $\nabla^2 f_{a,X}$ satisfies
\begin{equation}  \frac{1}{2} \cdot Id \leq \nabla^2 f_{a,X}(Y) \leq 2 \cdot Id \qquad \qquad \qquad (a \in U, X,Y \in U_a),
\label{eq_1030} \end{equation}
in the sense of symmetric operators, where $Id$ is the identity operator.
\item[(iv)] For any $a,x \in U$ and $0 < \delta \leq \delta_0$, the
set $\exp_a^{-1} \left( B_{\cM}(x, \delta)  \right)$ is a convex subset of $T_a \cM$. In particular, $U_a$ is convex.
\item[(v)] For any $a \in U, X, Y \in U_a$,
$$  \frac{1}{2} \cdot |X - Y| \leq d(\exp_a X, \exp_a Y) \leq 2 \cdot |X-Y|. $$
\item[(vi)] For $a, b \in U$ consider the transition map $\vphi_{a,b}: U_a \rightarrow U_b$
defined by $\vphi_{a,b} = \exp_b^{-1} \circ \exp_a$. Then,
\begin{equation} \sup_{a,b \in U} \| \vphi_{a,b} \|_{C^{1,1}} < \infty. \label{eq_1059_} \end{equation}
\end{enumerate}\label{lem_1315} \end{lemma}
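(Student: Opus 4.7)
The plan is to choose $\delta_0$ small by combining several standard local facts about Riemannian manifolds, and then verify each conclusion in turn. First, I invoke Whitehead's theorem on the positivity of the convexity radius together with the positivity and lower semicontinuity of the injectivity radius on a compact neighborhood of $p$. Using local compactness of $\cM$, I fix some $r_0 > 0$ for which $\overline{B_\cM(p, 2 r_0)}$ is compact and, for every $x$ in this compact ball, every geodesic ball $B_\cM(x, \delta)$ with $\delta \leq 2 r_0$ is strongly convex, and $\exp_x$ is a smooth diffeomorphism from the Euclidean ball of radius $2 r_0$ in $T_x \cM$ onto its image. The final $\delta_0 \leq r_0$ will be obtained by shrinking further to secure conclusion (iii). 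Conclusion (i) is then immediate, and (ii) follows by the triangle inequality: for $a \in U = B_\cM(p, \delta_0/2)$ we have $U \subseteq B_\cM(a, \delta_0)$, so $U_a = \exp_a^{-1}(U)$ is a bounded open subset of the Euclidean ball on which $\exp_a$ is a diffeomorphism.

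The main work is conclusion (iii), from which (iv) and (v) follow easily. The function $f_{a, X}$ is smooth on $U_a$ because $\exp_a X$ and $\exp_a Y$ lie in a strongly convex neighborhood, so distance is smooth there. The crucial observation is that $f_{p, 0}(Y) = \tfrac{1}{2}|Y|^2$ by the defining property of Riemannian normal coordinates, and hence $\nabla^2 f_{p, 0}(0) = \mathrm{Id}$. Trivializing the tangent bundle in a chart around $p$, the quantity $\nabla^2 f_{a, X}(Y)$ depends smoothly on $(a, X, Y)$ near the base point $(p, 0, 0)$. I therefore shrink $\delta_0$ so that $\tfrac{1}{2} \cdot \mathrm{Id} \leq \nabla^2 f_{a, X}(Y) \leq 2 \cdot \mathrm{Id}$ throughout the relevant compact region. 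Conclusion (iv) is then an immediate consequence: strict convexity of $f_{a, X}$ makes its sublevel sets $\exp_a^{-1}(B_\cM(\exp_a X, \delta)) = \{Y \in U_a : f_{a, X}(Y) < \delta^2/2\}$ convex. For (v), observe that $Y = X$ is the minimum of $f_{a, X}$ with $f_{a, X}(X) = 0$ and $\nabla f_{a, X}(X) = 0$, and hence the Hessian bounds and Taylor's formula yield $\tfrac14 |X-Y|^2 \leq f_{a, X}(Y) \leq |X-Y|^2$, which gives the desired two-sided Lipschitz estimate since $1/\sqrt{2} \geq 1/2$ and $\sqrt{2} \leq 2$.

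Finally, conclusion (vi) will follow by a continuity-plus-compactness argument. Fixing a smooth chart near $p$, I identify all $T_a \cM$ for $a \in \overline{U}$ with $\RR^n$ and embed every $U_a$ into a common bounded open subset of $\RR^n$; the transition $\vphi_{a, b} = \exp_b^{-1} \circ \exp_a$ then becomes a smooth map depending smoothly jointly on $(a, b)$. Since the values, first derivatives, and Lipschitz constants of first derivatives in the $C^{1,1}$-norm definition (\ref{eq_1017}) are each continuous functions of $(a, b)$ on the compact set $\overline{U} \times \overline{U}$, they are uniformly bounded, yielding (\ref{eq_1059_}). The main obstacles I expect are (iii), which requires carefully identifying the Hessian of the pullback squared-distance at the distinguished base point and then exploiting continuity, and (vi), where one must set up a single chart carefully enough that $C^{1,1}$-norms of all transition maps can be compared and bounded uniformly by compactness.
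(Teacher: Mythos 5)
Your proposal follows essentially the same route as the paper: Whitehead's theorem for (i)–(ii), the observation that $\nabla^2 f_{p,0}(0) = \mathrm{Id}$ together with smooth dependence on $(a,X,Y)$ to shrink $\delta_0$ and get (iii), convexity of sublevel sets for (iv), Taylor's theorem with $f_{a,X}(X)=0$, $\nabla f_{a,X}(X)=0$ for (v), and continuity plus compactness of $\overline{U}\times\overline{U}$ for (vi). The only cosmetic difference is that you note the stronger fact $f_{p,0}(Y)=\tfrac12|Y|^2$ exactly on normal coordinates, whereas the paper only records the Hessian at the origin; both are correct and lead to the same argument.
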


\begin{proof} We will see that the conclusions of the lemma hold for any sufficiently small $\delta_0$, i.e.,
there exists $\tilde{\delta}_0 > 0$
such that the conclusions of the lemma hold for any $0 < \delta_0 < \tilde{\delta}_0$.
For $a \in \cM, X \in T_a \cM$ and $\delta > 0$ we define $B_{T_a \cM}(X, \delta) = \{ Y \in T_a \cM \, ; \, |X - Y| < \delta \}$.

\medskip Item (i) is the content of
Whitehead's theorem, see \cite[Theorem 5.14]{CE} or \cite[Theorem IX.6.1]{Ch}.
Regarding (ii), the openness
of $U_a$ and the fact that $\exp_a: U_a \rightarrow U$ is a smooth diffeomorphism are standard, see \cite[Chapter I]{CE}.
Furthermore, $U_a \subseteq B_{T_a \cM}(0, \delta_0)$, and hence $U_a$ is bounded and  (ii) holds true.

\medskip We move to item (iii).
The function
 $f_{a,X}(Y) := d^2(\exp_a X, \exp_a Y)/2$ is a smooth function, which depends smoothly   also on $a \in U$ and $X \in U_a$.
 The Hessian operator of $f_{p,0}$ at the point $0 \in T_{p} \cM$ is precisely the identity, as follows from (\ref{eq_A1817_}) and \cite[Corollary 1.9]{CE}.
By smoothness, the Hessian operator of $f_{a, X}$ at the point $Y \in T_a \cM$ is at least $\frac{1}{2} \cdot Id$
and at most $2 Id$, whenever $a$ is sufficiently close
to $p$ and $X,Y$ are sufficiently close to zero. In other words, assuming that $\delta_0$
is at most a certain positive constant determined by $p$, we know that for $a \in B_{\cM}(p, 2\delta_0)$ and $X,Y \in B_{T_a \cM}(0, 2\delta_0)$,
\begin{equation}
 \frac{1}{2} \cdot Id \leq \nabla^2 f_{a,X}(Y) \leq 2 \cdot Id. \label{eq_11139}
 \end{equation}
Thus (iii) is proven. It follows from (\ref{eq_11139}) that the function $f_{a,X}$ is convex in the Euclidean ball $B_{T_a \cM}(0, 2\delta_0)$.
Let $a,x \in U$ and $0 < \delta \leq \delta_0$. Then $B_{\cM}(x, \delta) \subseteq B_{\cM}(a, 2\delta_0)$.
Denoting $X = \exp_a^{-1}(x)$ we observe that
\begin{equation}  \{ Y \in T_a \cM \, ; \, f_{a, X}(Y) \leq \delta^2/2 \}
= \exp_a^{-1} \left( B_{\cM}(x, \delta)  \right)
\subseteq B_{T_a \cM}(0, 2 \delta_0). \label{eq_11141} \end{equation}
Since $f_{a,X}$ is convex in $B_{T_a \cM}(0, 2 \delta_0)$, then (\ref{eq_11141}) implies that
the set $\exp_a^{-1} \left( B_{\cM}(x, \delta)  \right)$ is convex. Therefore (iv) is proven.
Thanks to the convexity of $U_a$ we may use Taylor's theorem, and conclude from (\ref{eq_1030})
that for $a \in U, X, Y \in U_a$,
\begin{equation}  \frac{1}{4} \cdot |X-Y|^2 \leq  |f_{a,X}(Y) - (f_{a,X}(X) + \nabla f_{a,X}(X) \cdot (Y-X))| \leq |X-Y|^2. \label{eq_957} \end{equation}
However $f_{a,X}(X) = 0$, and also $\nabla f_{a,X}(X) = 0$ since  $Y \mapsto f_{a,X}(Y)$ attains its minimum at the point $X$.
Therefore (v) follows from (\ref{eq_957}).
Finally, the smooth map $\vphi_{a,b} = \exp_b^{-1} \circ \exp_a: U_a \rightarrow U_b$ smoothly depends  also on $a, b \in U$.
Since the closure of $U$
is compact, the continuous function $\| \vphi_{a,b} \|_{C^{1,1}}$ is bounded over $a, b \in U$, and (\ref{eq_1059_}) follows.
\end{proof}

For the rest of this subsection, we fix a point $p \in \cM$,
and let $\delta_0 > 0$ be the radius whose existence is guaranteed by Lemma \ref{lem_1315}.
Set $U = B_{\cM}(p, \delta_0 / 2)$
and $U_a = \exp_a^{-1}(U)$ for $a \in U$. When we say that a constant $C$ depends  on $p$,
we implicitly allow this constant to depend on the choice of $\delta_0$, on the Riemannian structure of $\cM$
and on the dimension $n$.

\medskip Since $T_X(T_a \cM) \cong T_a \cM$ for any $a \in \cM$ and $X \in T_a \cM$,  we may view the differential of the map $\exp_a$ at the point $X \in T_a \cM$
as a map
$$ \dexp_X: T_a \cM \rightarrow T_x \cM, $$
where $x = \exp_a(X)$. We define $\Pi_{x,a}: T_x \cM \rightarrow T_a \cM$ to be the adjoint map,
where we identify $T_x \cM \cong T_x^* \cM$ and $T_a \cM \cong T_a^* \cM$ by using the Riemannian scalar products.
In other words, for $V \in T_{x} \cM$ we define $\Pi_{x,a}(V) \in T_a \cM$ via
\begin{equation}
\langle \Pi_{x,a}(V), W \rangle_a = \langle V, \dexp_X(W) \rangle_{x} \qquad \qquad \qquad \text{for all} \ W \in T_a \cM.
\label{eq_1728}
\end{equation}
Here, $\langle \cdot, \cdot \rangle_a$ is the Riemannian scalar product in $T_a \cM$, and
$\langle \cdot, \cdot \rangle_x$ is the Riemannian scalar product in $T_x \cM$.
Following Feldman and McCann \cite{FM}, for $a \in U$ and $X, Y \in U_a$ we denote $x = \exp_a(X), y = \exp_a(Y)$ and define
$$ F_a(X, Y) := \exp_{x}^{-1} y. $$
It follows from  Lemma \ref{lem_1315} that the vector $F_a(X,Y) \in U_x$ is well-defined, as $x, y \in U$ and $\exp_x: U_x \rightarrow U$
is a diffeomorphism. Equivalently,  $F_a(X, Y)$
is the unique vector $V \in U_x \subseteq T_x \cM$ for which $\exp_x(V) = y$.
Given $a \in U$ and $X, Y \in U_a$ we define
\begin{equation} \ora{XY} = \Pi_{x,a}(F_a(X,Y)) \in T_a \cM. \label{eq_A1647}
\end{equation}
Intuitively, we think of $\ora{XY}$ as
a vector in $T_a \cM$ which represents ``how $\exp_a(Y)$ is viewed from $\exp_a(X)$''.

\begin{lemma} Let $f: U \rightarrow \RR, t \in \RR, a \in U$ and $X,Y \in U_a$. Denote $x = \exp_a(X), y = \exp_a(Y)$.
Assume that $f$ is differentiable at $x$ with $\nabla f(x) = t \cdot F_a(X,Y)$ and set $f_a = f \circ \exp_a$.
Then $\nabla f_a(X) = t \cdot \ora{XY}$.
\label{lem_grad}
\end{lemma}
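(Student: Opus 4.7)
The proof is essentially a short chain-rule computation combined with unwinding the definitions. The plan is as follows.

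First I would compute the differential of $f_a = f \circ \exp_a$ at the point $X \in T_a \cM$ using the chain rule. For any tangent vector $W \in T_X(T_a \cM) \cong T_a \cM$, this gives
\[
  df_a(X)(W) = df(x)\bigl(\dexp_X(W)\bigr) = \langle \nabla f(x), \dexp_X(W) \rangle_{x},
\]
where in the second equality I use the Riemannian identification $T_x \cM \cong T_x^* \cM$.

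Next I would invoke the defining property (\ref{eq_1728}) of the adjoint map $\Pi_{x,a}: T_x \cM \to T_a \cM$, which precisely says that $\langle \nabla f(x), \dexp_X(W)\rangle_{x} = \langle \Pi_{x,a}(\nabla f(x)), W\rangle_a$ for every $W \in T_a \cM$. Since this holds for all $W$, the Riesz representation in $T_a \cM$ yields
\[
  \nabla f_a(X) = \Pi_{x,a}(\nabla f(x)).
\]

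Finally, I would substitute the hypothesis $\nabla f(x) = t \cdot F_a(X,Y)$, use linearity of $\Pi_{x,a}$, and recall the definition (\ref{eq_A1647}) of $\ora{XY}$:
\[
  \nabla f_a(X) = \Pi_{x,a}\bigl(t \cdot F_a(X,Y)\bigr) = t \cdot \Pi_{x,a}(F_a(X,Y)) = t \cdot \ora{XY}.
\]

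There is no real obstacle here; the only thing to be careful about is the canonical identification $T_X(T_a \cM) \cong T_a \cM$ used to view $\nabla f_a(X)$ as a vector in $T_a \cM$, and the parallel use of the Riemannian scalar products in $T_a \cM$ and $T_x \cM$ to pass between differentials and gradients. Everything else is just a direct application of the definitions recorded immediately before the statement.
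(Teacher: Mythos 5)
Your proof is correct and follows essentially the same route as the paper: compute $d f_a$ via the chain rule (equivalently, $d f_a = \exp_a^*(df)$), pair it with an arbitrary $W \in T_a\cM$, use the defining property (\ref{eq_1728}) of $\Pi_{x,a}$ to move the pairing back to $T_a\cM$, and then substitute $\nabla f(x) = t\,F_a(X,Y)$ together with the definition (\ref{eq_A1647}) of $\ora{XY}$.
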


\begin{proof} Let us pass to $1$-forms.
Then $d f_a = \exp_a^* (df)$, and for any $W \in T_a \cM$,
\begin{align} \label{eq_A1649} \langle \nabla f_a(X), W \rangle_a & = (d f_a)_X (W) = (df)_x \left( \dexp_X(W) \right) \\
& = \langle \nabla f(x), \dexp_X(W) \rangle_x =
 \langle t F_a(X,Y), \dexp_X(W) \rangle_x.  \nonumber
\end{align}
From (\ref{eq_1728}) and (\ref{eq_A1649}) we obtain that $\nabla f_a(X) = \Pi_{x,a}(t F_a(X)) = t \Pi_{x,a}(F_a(X))$.
The lemma thus follows from (\ref{eq_A1647}).
\end{proof}

\begin{lemma} Let $a \in U, X,Y \in U_a$. Assume that there exists $\alpha \in \RR$ such that $X = \alpha Y$. Then,
\begin{equation}
\ora{XY} = Y - X,
\label{eq_2224}
\end{equation}
and
\begin{equation}
|\ora{XY}| = d(\exp_a X, \exp_a Y).
\label{eq_1738}
\end{equation}
\label{lem_1742}
\end{lemma}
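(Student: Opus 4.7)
The plan is to exploit that the hypothesis $X = \alpha Y$ places $a$, $x := \exp_a X$ and $y := \exp_a Y$ on a common geodesic. Introduce $\gamma(t) = \exp_a(tY)$; since $U_a$ is convex and contains both $Y$ and $\alpha Y = X$, Lemma~\ref{lem_1315}(ii) ensures $\gamma$ is well-defined on an interval containing $0$, $\alpha$ and $1$, with $\gamma(\alpha) = x$ and $\gamma(1) = y$. Strong convexity of $U$ forces the minimizing geodesic between $x$ and $y$ to be unique, so it must be a reparameterization of $\gamma$. Shifting the parameter so it starts at $x$, one reads off
\[ y = \exp_x\bigl((1-\alpha)\,\dot\gamma(\alpha)\bigr), \]
and hence $F_a(X,Y) = (1-\alpha)\,\dot\gamma(\alpha)$. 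The standard identity $\dot\gamma(t) = \dexp_{tY}(Y)$ then yields $F_a(X,Y) = (1-\alpha)\,\dexp_X(Y)$.

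Next I would compute $\ora{XY} = \Pi_{x,a}(F_a(X,Y)) = (1-\alpha)\,\Pi_{x,a}(\dexp_X(Y))$. The heart of the proof is the identity $\Pi_{x,a}(\dexp_X(Y)) = Y$, which I would obtain from the Gauss lemma
\[ \langle \dexp_X(X), \dexp_X(W)\rangle_x = \langle X, W\rangle_a \qquad (W \in T_a\cM). \]
Since $X = \alpha Y$, dividing by $\alpha$ (in the case $\alpha \neq 0$) gives $\langle \dexp_X(Y), \dexp_X(W)\rangle_x = \langle Y, W\rangle_a$ for every $W \in T_a \cM$; the case $\alpha = 0$ is trivial, because then $X = 0$, $x = a$, and $\dexp_0$ is the identity. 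Comparing with the defining relation (\ref{eq_1728}) of $\Pi_{x,a}$ forces $\Pi_{x,a}(\dexp_X(Y)) = Y$, giving $\ora{XY} = (1-\alpha)\,Y = Y - X$, which is (\ref{eq_2224}).

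For the norm identity (\ref{eq_1738}), the minimizing geodesic from $x$ to $y$ is $\gamma$ restricted to the parameter interval between $\alpha$ and $1$, and $|\dot\gamma| \equiv |Y|$. Thus
\[ d(\exp_a X, \exp_a Y) = |1-\alpha|\cdot|Y| = |(1-\alpha)Y| = |Y - X| = |\ora{XY}|, \]
where the last equality uses (\ref{eq_2224}).

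I do not anticipate a substantive obstacle. The only points requiring care are the degenerate case $\alpha = 0$ (handled separately via $\dexp_0 = \mathrm{Id}$) and the possibility that $\alpha$ lies outside $[0,1]$, which merely amounts to traversing $\gamma$ in the opposite direction; the uniqueness of minimizing geodesics inside the strongly convex set $U$ dispenses with this without incident.
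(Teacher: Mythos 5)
Your proof is correct and follows essentially the same route as the paper: use strong convexity of $U$ to identify the unique minimizing geodesic through $a$, $x$, $y$, and then invoke the Gauss lemma to evaluate $\Pi_{x,a}$ on the radial tangent. The only cosmetic difference is that the paper argues via a unit vector $Z$ and a proportionality-plus-orthogonality argument, whereas you derive the clean identity $\Pi_{x,a}(\dexp_X(Y))=Y$ directly from the full radial-isometry form of the Gauss lemma, $\langle \dexp_X(X),\dexp_X(W)\rangle_x=\langle X,W\rangle_a$; this is logically equivalent and, if anything, slightly more streamlined.
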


\begin{proof} Let $Z \in T_a \cM$ be a unit vector such that $X$ and $Y$ are proportional to $Z$.
Write $\gamma(t) = \exp_a(t Z)$ for the geodesic leaving $a$ in direction $Z$. Then $\exp_a(X)$ and $\exp_a(Y)$
lie on this geodesic and by the strong convexity of $U$,
$$
d(\exp_a(X), \exp_a(Y)) = |X - Y|.
$$
Therefore (\ref{eq_1738}) would follow once we prove (\ref{eq_2224}). In order to prove (\ref{eq_2224})
we denote $x =\exp_a(X)$ and claim that
\begin{equation}
\langle Y-X, Z \rangle_a = \langle F_a(X,Y), \dexp_X(Z) \rangle_x.
\label{eq_1716}
\end{equation}
Indeed, $F_a(X,Y) \in T_x \cM$ is a vector of length $d(\exp_a X, \exp_a Y) = |Y - X|$ which is tangential to the curve $\gamma$.
The vector $\dexp_X(Z) \in T_x \cM$ is a unit tangent to $\gamma$.
Therefore
$F_a(X,Y)$ is proportional to the unit vector $\dexp_X(Z)$, in exactly the same way that $Y-X$ is proportional
to the unit vector $Z$. Thus (\ref{eq_1716}) follows.
The Gauss lemma \cite[Lemma 1.8]{CE} states that for any $W \in T_a \cM$,
\begin{equation} \langle Z, W \rangle_a = 0 \qquad \Longrightarrow \qquad \langle \dexp_X(Z), \dexp_{X}(W) \rangle_x = 0. \label{eq_1711}
\end{equation}
Recall that $\ora{XY} = \Pi_{x,a}(F_a(X,Y))$ and that
$F_a(X,Y)$ is proportional to
the unit vector $\dexp_X(Z)$.
From (\ref{eq_1728}) and (\ref{eq_1711}) we learn that
$\ora{XY} = \beta Z$ for some $\beta \in \RR$. From (\ref{eq_1728}) and (\ref{eq_1716}),
\begin{equation}  \langle Y-X, Z \rangle_a = \langle F_a(X,Y), \dexp_X(Z) \rangle_x = \langle \ora{XY}, Z \rangle_a =  \langle \beta Z, Z \rangle_a = \beta.
\label{eq_A332} \end{equation}
Since $X$ and $Y$ are proportional to the unit vector $Z$, then $\ora{XY} = \langle Y-X, Z \rangle_a \cdot Z = Y-X$
according to  (\ref{eq_A332}). Thus (\ref{eq_2224}) is proven.
\end{proof}

 \begin{lemma} Let $a \in U$ and $t_0 \in \RR$.
Assume that $V, Z \in U_a$ are such that
$t_0 V \in U_a$. Then, in the notation of Lemma \ref{lem_1315}(iii),
\begin{equation}
 f_{a, t_0 V}(Z) \leq  f_{a, t_0 V}(V) +  \langle (1-t_0) V, Z - V \rangle  +  |Z - V|^2.
 \label{eq_935_}
\end{equation} \label{cor_1205}
\end{lemma}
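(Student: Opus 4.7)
The goal is to combine the second-order Hessian bound from Lemma \ref{lem_1315}(iii) with a precise identification of the gradient of $f_{a, t_0 V}$ at $V$. I would proceed in three steps.

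First, I would apply Taylor's theorem with integral remainder on the line segment from $V$ to $Z$, which lies in the convex set $U_a$ (by Lemma \ref{lem_1315}(iv)). Using the upper Hessian bound $\nabla^2 f_{a, t_0 V} \leq 2 \cdot Id$ from Lemma \ref{lem_1315}(iii), this gives
\begin{equation*}
f_{a, t_0 V}(Z) \;\leq\; f_{a, t_0 V}(V) \;+\; \langle \nabla f_{a, t_0 V}(V), Z - V \rangle \;+\; |Z - V|^2,
\end{equation*}
so the inequality reduces to showing $\nabla f_{a, t_0 V}(V) = (1 - t_0) V$.

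The main point, therefore, is the gradient computation. Set $g(y) = \tfrac{1}{2} d^2(x_0, y)$ with $x_0 = \exp_a(t_0 V)$, so that $f_{a, t_0 V} = g \circ \exp_a$. Since both $x_0$ and $\exp_a(V)$ belong to $U$, which is strongly convex by Lemma \ref{lem_1315}(i), the standard Riemannian identity gives
\begin{equation*}
\nabla g(\exp_a V) \;=\; -\exp_{\exp_a V}^{-1}(x_0) \;=\; -F_a(V, t_0 V).
\end{equation*}
Now I invoke Lemma \ref{lem_grad} with $t = -1$ and with the lemma's base point and target taken to be $V$ and $t_0 V$ respectively. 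This converts the gradient of $g$ at $\exp_a V$ into the gradient of $g \circ \exp_a = f_{a, t_0 V}$ at $V$, yielding
\begin{equation*}
\nabla f_{a, t_0 V}(V) \;=\; -\, \overrightarrow{V \,(t_0 V)}.
\end{equation*}

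Finally, since $V$ and $t_0 V$ are proportional, Lemma \ref{lem_1742} applies and gives $\overrightarrow{V\,(t_0 V)} = t_0 V - V = (t_0 - 1) V$, whence $\nabla f_{a, t_0 V}(V) = (1 - t_0) V$. Substituting into the Taylor estimate above gives exactly the claimed inequality. The one mildly delicate point is the bookkeeping in step two, where one must correctly match the variables in Lemma \ref{lem_grad} (the lemma takes the base at $X$ and expresses the gradient using $F_a(X,Y)$, so one has to recognize $\nabla g$ as a multiple of $F_a(V, t_0 V)$ rather than $F_a(t_0 V, V)$). Everything else is a direct application of the infrastructure already established.
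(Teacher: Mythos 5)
Your proof is correct and follows essentially the same route as the paper's: reduce (\ref{eq_935_}) to Taylor's theorem with the Hessian bound of Lemma \ref{lem_1315}(iii), and establish $\nabla f_{a,t_0 V}(V) = (1-t_0)V$ by identifying $\nabla g(\exp_a V) = -F_a(V, t_0 V)$, transferring it via Lemma \ref{lem_grad}, and evaluating $\ora{V\,(t_0 V)}$ with Lemma \ref{lem_1742}. The paper's proof is organized with $X_0 = t_0 V$, $Y_0 = V$ playing the roles you give directly to $V$ and $t_0 V$, but the logical content is identical.
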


\begin{proof} Fix $X_0, Y_0 \in U_a$ and define $x_0 = \exp_a(X_0) \in U, y_0 = \exp_a(Y_0) \in U$.
Consider the function $g_{x_0}(y) = \frac{1}{2} \cdot d(x_0, y)^2$, defined for $y \in U$.
Then $\nabla g_{x_0}(y_0)$ equals the vector $V \in U_{y_0} \subseteq T_{y_0} \cM$
for which $x_0 = \exp_{y_0}(-V)$. Consequently,
\begin{equation}  \nabla g_{x_0}(y_0) = - \exp_{y_0}^{-1}(x_0) = -F_a(Y_0,X_0). \label{eq_A336} \end{equation}
Since $f_{a, X_0}  =  g_{x_0} \circ \exp_a$, then from (\ref{eq_A336}) and Lemma \ref{lem_grad},
\begin{equation}  \nabla f_{a, X_0}(Y_0) = -\ora{Y_0 X_0}. \label{eq_goth2}
\end{equation}
According to (\ref{eq_goth2}) and Lemma \ref{lem_1742}, if $X, Y \in U_a$
lie on the same line through the origin, then
$$ \nabla f_{a, X}(Y) = -\ora{YX} = -(X-Y) = Y-X. $$
In particular,
\begin{equation}
\nabla f_{a,t_0 V}(V) =  V - t_0 V = (1 - t_0) V.
\label{eq_912_}
\end{equation}
We may use Taylor's theorem in the convex set $U_a \subseteq T_a\cM$, and deduce from
the bound (\ref{eq_1030}) in Lemma \ref{lem_1315}(iii) that
\begin{equation}
\left| f_{a, t_0 V}(Z) \, - \, \left(  f_{a, t_0 V}(V) +  \langle \nabla f_{a,t_0 V}(V), Z - V \rangle \right) \right| \leq \frac{1}{2} \cdot 2 \cdot |Z - V|^2.
\label{eq_1200} \end{equation}
Now (\ref{eq_935_}) follows from (\ref{eq_912_}) and (\ref{eq_1200}).
\end{proof}

\begin{lemma} Let $a \in U$ and $X, X_1, X_2, Y, Y_1, Y_2 \in U_a$. Then,
\begin{equation} \left| \ora{X Y_2} - \ora{X Y_1} \, - \, (Y_2 - Y_1) \right| \leq C_{p} \cdot |X| \cdot |Y_2 - Y_1|,
\label{eq_1734}
\end{equation}
and
\begin{equation}  \left| \ora{X_1 Y} - \ora{X_2 Y} \, - \, (X_2 - X_1) \right| \leq C_{p} \cdot |Y| \cdot |X_2 - X_1|.
\label{eq_1735} \end{equation}
Here, $C_{p} > 0$ is a constant depending on $p$.
\label{lem_1737}
\end{lemma}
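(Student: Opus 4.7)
The plan is to regard both inequalities as second-order Taylor-remainder estimates for the smooth map $G_a(X,Y) := \ora{XY}$, expanded about the ``axis'' $\{X=0\}$ for (\ref{eq_1734}) and about $\{Y=0\}$ for (\ref{eq_1735}), along which $G_a$ is given by an explicit formula. First I would record the boundary values $\ora{0 Y} = Y$ and $\ora{X 0} = -X$ for all $X, Y \in U_a$. For $\ora{0 Y} = Y$, Lemma \ref{lem_1742} applies directly with $\alpha = 0$. The value $\ora{X 0} = -X$ is not literally covered by that statement, since $X = \alpha Y$ forces $X = 0$ when $Y = 0$; however, rerunning the proof of Lemma \ref{lem_1742} with the unit vector $Z$ chosen in the direction of $X$ (rather than of $Y$) shows that $F_a(X, 0) = -|X| \, \dexp_X(Z)$ by the strong convexity of $U$, whence $\Pi_{x,a}(F_a(X,0)) = -X$ exactly as in that proof.

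Next I would observe that the map $(a, X, Y) \mapsto \ora{XY}$ is smooth on the open set of admissible triples, and that its partial derivatives in $(X, Y)$ up to any fixed order are bounded by a constant depending only on $p$. This follows from the definitions (\ref{eq_1728}) and (\ref{eq_A1647}) combined with the joint smoothness of $\exp_a$ in $a$ provided by Lemma \ref{lem_1315}(ii), together with the facts that $\overline{U}$ is compact and that the sets $U_a$ sit uniformly inside a bounded Euclidean ball in $T_a \cM$.

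To prove (\ref{eq_1734}), set $\Phi_a(X, Y) := \ora{XY} - Y$. The first step gives $\Phi_a(0, Y) \equiv 0$ in $Y$, whence $(\partial_Y \Phi_a)(0, Y) = 0$ for every $Y \in U_a$. Combining this with the uniform bound on the mixed derivative $\partial_X \partial_Y \Phi_a$ from the second step and the convexity of $U_a$ (Lemma \ref{lem_1315}(iv)), integration along the segment $[0, X] \subseteq U_a$ yields $\| (\partial_Y \Phi_a)(X, Y) \| \leq C_p \, |X|$ uniformly on $U_a \times U_a$. Integrating $\partial_Y \Phi_a$ along the straight segment $[Y_1, Y_2] \subseteq U_a$ then produces (\ref{eq_1734}). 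Inequality (\ref{eq_1735}) is proved identically with the roles of $X$ and $Y$ interchanged, by setting $\Psi_a(X, Y) := \ora{XY} + X$ and using $\Psi_a(X, 0) \equiv 0$. The main obstacle is essentially bookkeeping: one needs the $C^2$ bound on $G_a$ to be uniform in $a \in U$, which is precisely the step that manufactures the constant $C_p$. Given Lemma \ref{lem_1315}, this uniformity is routine.
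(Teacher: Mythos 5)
Your proof is correct and takes essentially the same approach as the paper's: set $H_{a,X}(Y)=\ora{XY}-Y$ (your $\Phi_a$), observe it vanishes identically at $X=0$ by Lemma~\ref{lem_1742}, bound the mixed derivative uniformly in $a\in U$ by compactness and smooth dependence (Lemma~\ref{lem_1315}), then integrate first along $[0,X]$ and then along $[Y_1,Y_2]$, with the symmetric argument for~(\ref{eq_1735}) via $\ora{XY}+X$. You also correctly flag and patch a small imprecision that the paper glosses over, namely that $\ora{X0}=-X$ is not literally an instance of the statement of Lemma~\ref{lem_1742} (which posits $X=\alpha Y$) but follows by rerunning its proof with $Z$ proportional to $X$.
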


\begin{proof} For $a \in U, X,Y \in U_a$ denote
\begin{equation}  H_{a, X}(Y) = \ora{XY} - Y. \label{eq_A1844_} \end{equation} Then $H_{a, X}: U_a \rightarrow T_a \cM$
is a smooth function. Since $T_a \cM$ is a linear space, then at the point $Y \in U_a$ the derivative
$ H_{a,X}^{\prime}(Y) $
is a linear operator from the space $T_a \cM$ to itself. We claim that there exists a constant $C_p > 0$ depending on $p$ such that
\begin{equation}
 \left \| H_{a, X_2}^{\prime}(Y) - H_{a, X_1}^{\prime}(Y) \right \|_{op}
 \leq  C_{p} \cdot |X_2 - X_1| \qquad \text{for} \ a \in U, X_1, X_2, Y \in U_a,
 \label{eq_1821}
\end{equation}
where $\| S \|_{op} = \sup_{0 \neq V} |S(V)| / |V|$ is the operator norm. Write $\cL(T_a \cM)$ for the space
of linear operators on $T_a \cM$, equipped with the operator norm.
For $a \in U, Y \in U_a$ the map
\begin{equation}  U_a \ni X \mapsto H_{a,X}^{\prime}(Y) \in \cL(T_a \cM) \label{eq_A1825_} \end{equation}
is a smooth map.
In fact, the map in (\ref{eq_A1825_}) may be extended smoothly to the larger domain $a \in B_{\cM}(p, \delta_0), X,Y \in \exp_a^{-1}(B_{\cM}(p, \delta_0))$.
Since $U_a$ is convex with a compact closure, the smooth map in (\ref{eq_A1825_}) is necessarily a Lipschitz map, and  the Lipschitz constant of this map
 depends continuously on $a \in U$ and $Y \in U_a$.
Since the closure of $U$ is compact, the Lipschitz constant of the map in (\ref{eq_A1825_})
is bounded  over
$a \in U$ and $Y \in U_a$. This completes the proof of (\ref{eq_1821}).
From (\ref{eq_A1844_}) and Lemma \ref{lem_1742},
\begin{equation} H_{a, 0}(Y) = 0 \qquad \qquad \qquad \text{for any} \ Y \in U_a. \label{eq_1830}
\end{equation}
From (\ref{eq_1830}) we have $H_{a, 0}^{\prime}(Y) = 0$ for any $Y \in U_a$. The set $U_a$ is convex, and by applying (\ref{eq_1821}) with $X_2 = X$ and $X_1 = 0$ we obtain
$$ \sup_{Y_1, Y_2 \in U_a \atop{Y_1 \neq Y_2}} \frac{|H_{a, X}(Y_2) - H_{a,X}(Y_1)|}{|Y_2 - Y_1|}
= \sup_{Y \in U_a}  \left \| H_{a, X}^{\prime}(Y) \right \|_{op} \leq C_{p} \cdot |X| \qquad \text{for all} \ X \in U_a, $$
and (\ref{eq_1734}) is proven. In order to prove (\ref{eq_1735}), one  needs to analyze
$\tilde{H}_{a, Y}(X) = \ora{XY} + X$. According to Lemma \ref{lem_1742} we know that $\tilde{H}_{a,0}(X) = 0$ for any $X \in U_a$.
The latter equality  replaces (\ref{eq_1830}), and the rest of the proof of (\ref{eq_1735}) is entirely parallel  to the analysis of $H_{a,X}$ presented above.
\end{proof}

\subsection{Proof of the regularity theorem}
\label{proof_c11}
\setcounter{equation}{0}

In this subsection we  prove Theorem  \ref{prop_939}.
We begin with a  geometric lemma:

\begin{lemma}[Feldman and Mccann \cite{FM}]
Let $\cM$ be a Riemannian manifold with distance function $d$, and let $p \in \cM$. Then there exists  $\delta_1 = \delta_1(p) > 0$
with the following property: Let
$x_0,x_1, x_2, y_0, y_1, y_2 \in B_{\cM}(p, \delta_1)$.
Assume that there exists $\sigma > 0$ such that
\begin{equation}  d(x_i, x_j) = d(y_i, y_j) = \sigma |i-j| \leq d(x_i, y_j) \qquad \qquad \text{for} \ i,j \in \{ 0,1,2 \}.\label{eq_0845} \end{equation}
Then,
\begin{equation}
\max \left \{ d(x_0, y_0),  d(x_2,y_2) \right \}  \leq 10 \cdot d(x_1, y_1). \label{eq_1502}
 \end{equation}
\label{lem_849}
\end{lemma}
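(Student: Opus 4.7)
The plan is to reduce the Riemannian inequality to a Euclidean parallelogram computation by working in Riemann normal coordinates at $x_1$. First, I choose $\delta_1>0$ small enough that the ball $B_{\cM}(p,2\delta_1)$ satisfies all the conclusions of Lemma \ref{lem_1315}; in particular it is strongly convex, the exponential map $\exp_{x_1}$ is a diffeomorphism between a bounded convex open set $U_{x_1}\subseteq T_{x_1}\cM$ and a neighborhood of $B_{\cM}(p,\delta_1)$, and distances in $\cM$ agree with Euclidean distances in these coordinates up to a multiplicative factor arbitrarily close to $1$. Set $X_i=\exp_{x_1}^{-1}(x_i)$ and $Y_i=\exp_{x_1}^{-1}(y_i)$ for $i=0,1,2$. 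Strong convexity together with $d(x_i,x_j)=\sigma|i-j|$ forces the $X_i$ to be collinear through the origin of $T_{x_1}\cM$, with $X_1=0$, $X_2=-X_0$ and $|X_2|=\sigma$.

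Next I carry out the Euclidean model of the lemma. Writing $x_i'=i\sigma e$ and $y_i'=w+i\sigma f$ in $\RR^n$ for unit vectors $e,f$ and some $w\in\RR^n$, the hypotheses $|x_0'-y_2'|\geq 2\sigma$ and $|x_2'-y_0'|\geq 2\sigma$ expand and add to $|w|^2\geq\sigma^2|e-f|^2$, and the parallelogram identity gives
\[
|x_0'-y_0'|^2+|x_2'-y_2'|^2 \;=\; 2|w|^2+2\sigma^2|e-f|^2 \;\leq\; 4|w|^2.
\]
Hence $\max\{|x_0'-y_0'|,|x_2'-y_2'|\}\leq 2|w|=2\,|x_1'-y_1'|$; this is the Euclidean sharp form of the lemma, with constant $2$.

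To transfer the Euclidean computation to $T_{x_1}\cM$ I observe that the image of the $y$-geodesic under $\exp_{x_1}^{-1}$ is an arithmetic progression only up to second-order corrections: using Lemmas \ref{lem_1737} and \ref{lem_1742} together with the transition bound in Lemma \ref{lem_1315}(vi), the vector $Y_0+Y_2-2Y_1$ has norm of order $\delta_1\cdot\sigma$, and each $|Y_i-Y_j|$ deviates from $\sigma|i-j|$ by a comparable error. Likewise, Lemma \ref{lem_1315}(iii) implies that the ratio $d^2(x_i,y_j)/|X_i-Y_j|^2$ is close to $1$ when $\delta_1$ is small. Plugging these perturbed quantities into the Euclidean computation yields $\max\{|X_0-Y_0|,|X_2-Y_2|\}\leq(2+\varepsilon)\,|X_1-Y_1|$ for some $\varepsilon>0$ tending to $0$ with $\delta_1$. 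Converting back to Riemannian distances via Lemma \ref{lem_1315}(v), which costs a factor of at most $2$ in each direction, produces the desired bound $\max\{d(x_0,y_0),d(x_2,y_2)\}\leq 10\,d(x_1,y_1)$ once $\delta_1$ is chosen sufficiently small.

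The main obstacle is the error bookkeeping in the transfer step: one must track terms proportional to $\sigma$, to $d(x_1,y_1)$, and to $\delta_1$ separately, because the ratio $d(x_1,y_1)/\sigma$ is not assumed to be small and any error that is multiplicative in one of these quantities but additive in another would spoil the proof. Since the Euclidean sharp constant is $2$, the generous factor $10$ in the statement leaves ample slack to absorb the curvature corrections coming from Lemmas \ref{lem_1315}(iii), (v) and \ref{lem_1737}, provided $\delta_1$ is chosen small enough depending only on $p$.
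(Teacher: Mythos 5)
The Euclidean model calculation is sound: with $w := y_1' - x_1'$, the two outer constraints $|x_0'-y_2'|\geq 2\sigma$ and $|x_2'-y_0'|\geq 2\sigma$ do add to give $|w|^2\geq\sigma^2|e-f|^2$, and the parallelogram identity then yields the sharp Euclidean constant $2$. The gap is in the transfer step, and it is not merely a bookkeeping nuisance as you suggest at the end.

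First, your stated error bound $|Y_0 + Y_2 - 2Y_1| = O(\delta_1\sigma)$ is too weak to close the argument; when $\eps := d(x_1,y_1) \ll \delta_1\sigma$ an additive error of size $\delta_1\sigma$ swamps $\eps$ entirely and the factor-of-$5$ slack cannot help. What you actually need is $|Y_0+Y_2-2Y_1| \lesssim \eps\sigma$, and this \emph{is} available from Lemma~\ref{lem_1737}: the geodesic relation $\ora{Y_1 Y_0} + \ora{Y_1 Y_2} = 0$ is exact, and the deviation of $\ora{Y_1 Y_i}$ from $Y_i - Y_1$ is bounded by $C_p\,|Y_1|\,|Y_i-Y_1| \lesssim \eps\sigma$ since $|Y_1|\lesssim\eps$. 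You cited the right lemma but did not extract the factor $|Y_1|$.

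Second, even with the sharpened $\eps\sigma$ bound, your framework of Euclidean differences $Y_j - X_i$ in $T_{x_1}\cM$ forces you to convert each cross-constraint $d(x_i,y_j)\geq\sigma|i-j|$ into an inequality for $|X_i - Y_j|$ via Lemma~\ref{lem_1315}(iii) or (v), and this conversion carries a multiplicative curvature distortion of size $1 + O(\delta_1^2)$. When you square and subtract, this produces additive error terms of order $\delta_1^2\sigma^2$ (and also $\eps\sigma^2$) in the inequality $\sigma^2|e-f'|^2 \lesssim \eps^2$ that drives the parallelogram estimate. These terms do not vanish relative to $\eps^2$ in the regime $\eps\ll\delta_1\sigma$, which is a legitimate regime that must be handled (indeed it occurs with equality on both sides as $\eps\to 0$). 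No amount of shrinking $\delta_1$ fixes this, since $\eps$ is allowed to shrink faster. The Feldman--McCann proof in the Appendix avoids this entirely: it works at base point $a=x_0$ with the map $\Phi_a$, which satisfies $|\Phi_a(X,Y)| = d(\exp_a X,\exp_a Y)$ \emph{exactly}, so the constraints $d(x_0,y_2)\geq 2\sigma$ and $d(y_0,x_2)\geq 2\sigma$ transfer with no distortion at all, and $\Phi_a(0,\cdot)=\mathrm{Id}$ and $\Phi_a(Y_0,Y_2)=2\Phi_a(Y_0,Y_1)$ are likewise exact. The only approximation there is the comparison $\Phi_a(Y_0,\cdot)-\Phi_a(Y_0,\cdot) \approx$ difference of arguments, whose error (Lemma~A.1) is proportional to $|Y_0| = d(x_0,y_0)$, and this is precisely what the WLOG reduction $d(x_0,y_0)\leq d(x_2,y_2)$ is designed to control. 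To repair your argument you would need to replace the Euclidean differences by an exactly distance-preserving device of this kind, at which point you would essentially be reproducing the Appendix's proof.
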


Together with Whitney's extension theorem, Lemma \ref{lem_849} is the central
ingredient in our proof of Theorem  \ref{prop_939}.
The proof of Lemma \ref{lem_849} provided by Feldman and McCann in \cite[Lemma 16]{FM} is very clear and detailed, yet the notation is a bit
different from ours. For the convenience of the reader, their proof is reproduced
in the Appendix below.

\medskip
Let us recall the assumptions of Theorem \ref{prop_939}. The Riemannian manifold
$\cM$ is geodesically-convex and the function $u: \cM \rightarrow \RR$ satisfies $\| u \|_{Lip} \leq 1$.
We are given a point $p \in \cM$ and a number $\eps_0 > 0$. Set:
\begin{equation}
\delta_2 = \min \left \{ \frac{1}{10 C_{p}}, \frac{\delta_0}{2}, \delta_1 \right \} > 0 \label{eq_1038}
\end{equation}
where
$C_{p}$ is the constant from Lemma \ref{lem_1737},
the constant  $\delta_0 = \delta_0(p)$ is provided by Lemma \ref{lem_1315},
 and $\delta_1 = \delta_1(p)$ is the constant from Lemma \ref{lem_849}.
As before, we denote for $a \in U$,
$$ U = B_{\cM}(p, \delta_0/2), \qquad \qquad U_a = \exp_a^{-1}(U) \subseteq T_a \cM. $$
Recall from the previous subsection that $U \subseteq \cM$ is strongly convex, and that for $a \in U$ and $X, Y \in U_a$
we defined a certain vector $\ora{XY} \in T_a \cM$.

\begin{lemma} Let $\eps, \sigma > 0$.
Let $x, x_0,x_1, x_2, y_0, y_1, y_2 \in B_{\cM}(p, \delta_2) \subseteq U$.
Assume that $d(x, y_1) = \eps$, that $x$ lies on the geodesic arc between $x_0$ and $x_2$, and
that for $i,j \in \{ 0,1,2 \}$,
\begin{equation}  d(x_i, x_j) = d(y_i, y_j) = \sigma |i-j| \leq d(x_i, y_j).\label{eq_916} \end{equation}
Denote $a = x_0$ and let $X, X_0,X_1, X_2, Y_0, Y_1, Y_2 \in U_a = \exp_a^{-1}(U)$
be such that $x = \exp_a(X)$ and $x_i = \exp_a(X_i), y_i = \exp_a(Y_i)$ for $i=0,1,2$.
Then,
\begin{equation}
 \left| \ora{Y_1 Y_2}  \, - \, \ora{X_1 X_2} \right| \leq 100 \cdot \eps, \label{eq_15022}
 \end{equation}
and
\begin{equation}
|\langle X_1, Y_1 - X_1 \rangle| \leq 2000 \cdot \eps^2. \label{eq_949}
\end{equation}
\label{lem_1444}
\end{lemma}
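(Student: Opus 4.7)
The plan has three stages: first, use the rigidity Lemma~\ref{lem_849} together with the hypothesis $d(x, y_1) = \eps$ to obtain $O(\eps)$-bounds on $d(x_i, y_i)$ for each $i=0,1,2$; second, derive (\ref{eq_15022}) by combining two applications of Lemma~\ref{lem_1737}; third, derive (\ref{eq_949}) from a comparison between the hypothesis $d(x_2, y_1) \geq \sigma$ and the Taylor-type upper bound provided by Lemma~\ref{cor_1205}.

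For the first stage, since $x$ lies on the minimizing geodesic between $x_0$ and $x_2$, setting $t = d(x_0, x)$ and invoking the inequalities $d(x_0, y_1) \geq \sigma$ and $d(x_2, y_1) \geq \sigma$ from (\ref{eq_916}) forces $|t - \sigma| \leq \eps$, so that $d(x, x_1) \leq \eps$ and hence $d(x_1, y_1) \leq 2\eps$. Since $\delta_2 \leq \delta_1$, the hypotheses of Lemma~\ref{lem_849} are met, yielding $d(x_i, y_i) \leq 20\eps$ for $i = 0, 2$. Lemma~\ref{lem_1315}(v) transfers these to the tangent space as $|Y_i - X_i| \leq 40\eps$ for $i \in \{0, 2\}$ and $|Y_1 - X_1| \leq 4\eps$.

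For (\ref{eq_15022}), observe that $X_0 = 0$ and that $X_1, X_2$ lie on one line through the origin, so Lemma~\ref{lem_1742} gives $\ora{X_1 X_2} = X_2 - X_1$ exactly. I would interpolate via $\ora{X_1 Y_2}$: the first inequality of Lemma~\ref{lem_1737} replaces $X_2$ by $Y_2$, and the second replaces $X_1$ by $Y_1$. Adding yields
\[
 \left|\ora{Y_1 Y_2} - \ora{X_1 X_2} - \left((Y_2 - Y_1) - (X_2 - X_1)\right)\right| \leq C_{p} \left( |X_1| \cdot |Y_2 - X_2| + |Y_2| \cdot |X_1 - Y_1| \right).
\]
The linear correction on the left has norm at most $|Y_2 - X_2| + |Y_1 - X_1| = O(\eps)$ by stage one. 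Because $a = x_0$ and $x_1, y_2 \in B_{\cM}(p, \delta_2)$, we have $|X_1| = d(x_0, x_1) \leq 2\delta_2$ and $|Y_2| = d(x_0, y_2) \leq 2\delta_2$, so the choice (\ref{eq_1038}) gives $C_{p} |X_1|, C_{p} |Y_2| \leq 1/5$; a brief bookkeeping of constants then produces the announced bound $100 \eps$.

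For (\ref{eq_949}), I exploit once more that the identification $a = x_0$ makes normal coordinates behave Euclideanly along rays from the origin: $|Y_1| = d(x_0, y_1)$ and $|X_1| = \sigma$. The hypothesis $d(x_0, y_1) \geq \sigma$ gives $|Y_1|^2 \geq |X_1|^2$, and expanding yields the one-sided estimate $\langle X_1, Y_1 - X_1 \rangle \geq -\frac{1}{2} |Y_1 - X_1|^2$. For the matching upper bound I apply Lemma~\ref{cor_1205} with $V = X_1$, $t_0 = 2$ (so that $t_0 V = X_2$) and argument $Z = Y_1$:
\[
 f_{a, X_2}(Y_1) \leq f_{a, X_2}(X_1) - \langle X_1, Y_1 - X_1 \rangle + |Y_1 - X_1|^2 = \frac{\sigma^2}{2} - \langle X_1, Y_1 - X_1 \rangle + |Y_1 - X_1|^2.
\]
Together with $f_{a, X_2}(Y_1) = d^2(x_2, y_1)/2 \geq \sigma^2/2$, this forces $\langle X_1, Y_1 - X_1 \rangle \leq |Y_1 - X_1|^2$, so $|\langle X_1, Y_1 - X_1\rangle| \leq |Y_1 - X_1|^2 \leq 16 \eps^2$, comfortably within the stated constant $2000$.

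The main obstacle, to my mind, is the first stage: one has to recognise that the localized hypothesis $d(x, y_1) = \eps$ only becomes useful once the rigidity Lemma~\ref{lem_849} is invoked to propagate it to uniform control along both geodesic arcs; without that propagation, the error terms in Lemma~\ref{lem_1737} cannot be absorbed into an $O(\eps)$ bound. Once stage one is in hand, the remainder is Taylor-expansion bookkeeping in normal coordinates for which Lemmas~\ref{lem_1737} and~\ref{cor_1205} are tailor-made.
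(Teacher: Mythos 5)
Your proof is correct and follows essentially the same three-stage route as the paper: derive $d(x_1,y_1)\leq 2\eps$ from the collinearity hypothesis, upgrade to $d(x_i,y_i)=O(\eps)$ via Lemma~\ref{lem_849}, then estimate in normal coordinates at $a=x_0$ using Lemma~\ref{lem_1737} for (\ref{eq_15022}) and Lemma~\ref{cor_1205} for (\ref{eq_949}). The only differences are cosmetic: you keep the sharper bound $|Y_1-X_1|\leq 4\eps$ coming directly from $d(x_1,y_1)\leq 2\eps$ rather than folding $i=1$ into the uniform $40\eps$ Feldman--McCann bound, and you obtain the lower bound $\langle X_1, Y_1-X_1\rangle \geq -\tfrac12|Y_1-X_1|^2$ by the exact algebraic expansion of $|Y_1|^2\geq|X_1|^2$ instead of the paper's application of Lemma~\ref{cor_1205} with $t_0=0$; your version is in fact exact where the paper uses a Taylor estimate, so both are fine.
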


\begin{proof} From (\ref{eq_916}), the point
$x_1$ is the midpoint of the geodesic arc between $x_0$ and $x_2$. The point $x$ also lies on the geodesic between $x_0$
and $x_2$. Let $K \in \{ 0, 2 \}$ be such that $x$ lies on the geodesic from $x_1$ to $x_K$.
According to (\ref{eq_916}),
\begin{equation}  d(x_1, x) + d(x, x_K) = d(x_1, x_K) = \sigma. \label{eq_1741_}
\end{equation}
From  (\ref{eq_916}) and (\ref{eq_1741_}),
\begin{equation} \sigma \leq   d(x_K, y_1) \leq d(x_K, x) + d(x, y_1) =  (\sigma - d(x, x_1)) + d(x, y_1). \label{eq_11622} \end{equation}
By using (\ref{eq_11622}) and our assumption that $d(x,y_1) = \eps$ we obtain
\begin{equation}
d(x_1, y_1) \leq d(x_1, x) + d(x, y_1)   \leq 2 d(x, y_1) = 2 \eps.
\label{eq_918}
\end{equation}
We would like to apply Lemma \ref{lem_849}. Recall from (\ref{eq_1038})
that $\delta_2 \leq \delta_1$, where $\delta_1 = \delta_1(p)$ is the constant from Lemma \ref{lem_849}.
Therefore $x_0,x_1,x_2,y_0,y_1, y_2 \in B_{\cM}(p, \delta_1)$. Moreover, assumption (\ref{eq_0845})
holds in view of (\ref{eq_916}). We may therefore apply Lemma \ref{lem_849}, and according to
its conclusion,
\begin{equation}
d(x_i, y_i) \leq 10 \cdot d(x_1, y_1) \leq 20 \eps \qquad \qquad \qquad (i=0,1,2),
\label{eq_992}
\end{equation}
where we used (\ref{eq_918}) in the last passage.
By Lemma \ref{lem_1315}(v), the inequality (\ref{eq_992}) yields
\begin{equation}
|X_i - Y_i| \leq 40 \eps \qquad \qquad \qquad (i=0,1,2).
\label{eq_999}
\end{equation}
Since $a = x_0$ and $\exp_a(X_0) = x_0$, then $X_0 = 0$.
According to Lemma \ref{lem_1742},  for $i=0,1,2$,
\begin{equation} |Y_i| = |\ora{X_0 Y_i}| = d(x_0, y_i) \leq 2 \delta_2,
\qquad |X_i| = |\ora{X_0 X_i}| = d(x_0, x_i) \leq 2 \delta_2,
\label{eq_922_} \end{equation}
as $x_0, x_1, x_2, y_0, y_1, y_2 \in B_{\cM}(p, \delta_2)$.
From Lemma \ref{lem_1737} combined with  (\ref{eq_999}) and (\ref{eq_922_}),
\begin{equation}
\left| \ora{Y_1 Y_2} - \ora{X_1 Y_2}  - (X_1 - Y_1)  \right| \leq C_{p} \cdot |Y_2| \cdot |Y_1 - X_1| \leq C_{p} \cdot 2 \delta_2 \cdot 40 \eps \leq 10 \eps,
\label{eq_1121}
\end{equation}
where we used the fact that $\delta_2 C_{p} \leq 1/10$ in the last passage, as follows from (\ref{eq_1038}). Similarly, according to Lemma \ref{lem_1737} and the inequalities (\ref{eq_999}) and
(\ref{eq_922_}),
\begin{equation}
\left| \ora{X_1 Y_2} - \ora{X_1 X_2}  - (Y_2 - X_2)  \right| \leq C_{p} \cdot |X_1| \cdot |Y_2 - X_2| \leq C_{p} \cdot 2 \delta_2 \cdot 40 \eps \leq 10\eps.
\label{eq_1136}
\end{equation}
Finally, by using (\ref{eq_999}), (\ref{eq_1121}) and (\ref{eq_1136}),
\begin{align*}  |\ora{Y_1 Y_2}  & - \ora{X_1 X_2}| = | (\ora{Y_1 Y_2} - \ora{X_1 Y_2}) \, + \, (\ora{X_1 Y_2} - \ora{X_1 X_2})  | \\ & \leq 20\eps +
|(X_1 - Y_1) + (Y_2 - X_2)| \leq 20\eps + |X_1 - Y_1| + |Y_2 - X_2| \leq 100 \eps, \end{align*}
and (\ref{eq_15022}) is proven. We move on to the proof of (\ref{eq_949}).
For $a \in U$ and $W,Z \in U_a$ define
\begin{equation}  d_{a}(W,Z) :=  d(\exp_a W, \exp_a Z). \label{eq_946} \end{equation}
Then $d_{a}^2(W,Z) = 2 f_{a,W}(Z)$, in the notation of
 Lemma \ref{lem_1315}(iii).
  Using Lemma \ref{cor_1205} with $V = X_1, t_0 =0$ and $Z = Y_1$,
\begin{equation}
d_a^2(X_0, Y_1) \leq d_a^2(X_0, X_1) + \langle 2 X_1, Y_1 - X_1 \rangle + 2 |Y_1 - X_1|^2.
\label{eq_940}
\end{equation}
From (\ref{eq_916}) and (\ref{eq_946}),
$$ d_a(X_0, Y_1) = d(x_0, y_1) \geq d(x_0, x_1) = d_a(X_0, X_1). $$ Therefore (\ref{eq_940}) entails
\begin{equation}
\langle X_1, Y_1 - X_1 \rangle \geq - |Y_1 - X_1|^2. \label{eq_949_}
\end{equation}
Since
$x_1$ is the midpoint of the geodesic between $a=x_0$ and $x_2$, then
$x_2 = \exp_a(X_2) = \exp_a(2 X_1)$. Hence $X_2 = 2 X_1$.
By using Lemma \ref{cor_1205} with $V = X_1, t_0 =2$ and $Z = Y_1$ we obtain
\begin{equation}
d_a^2(X_2, Y_1) \leq d_a^2(X_2, X_1) + \langle -2 X_1, Y_1 - X_1 \rangle + 2|Y_1 - X_1|^2.
\label{eq_941_}
\end{equation}
As before, from (\ref{eq_916}) and (\ref{eq_946}) we deduce that $d_a(X_2, Y_1) \geq d_a(X_2, X_1)$. Therefore (\ref{eq_941_})
leads to
\begin{equation}
\langle X_1, Y_1 - X_1 \rangle  \leq |Y_1 - X_1|^2.
\label{eq_941}
\end{equation}
The desired conclusion (\ref{eq_949}) follows from (\ref{eq_999}), (\ref{eq_949_}) and (\ref{eq_941}).
\end{proof}

\begin{proof}[Proof of Theorem \ref{prop_939}]
Denote
\begin{equation} \sigma = \min \{ \eps_0 / 2, \delta_2 / 3\}.
\label{eq_A2157} \end{equation} We will prove the theorem with
\begin{equation} \delta = \min \{ \sigma/2, 1 \}. \label{eq_B1336A} \end{equation}
We would like to apply Whitney's extension theorem, in the form
of Corollary \ref{cor_1320} and Remark \ref{rem_1320}. Denote $\vphi_a = \exp_a: U_a \rightarrow U$ for any $a \in U$.
Then $\vphi_a$ is a smooth diffeomorphism between the convex, open set $U_a \subseteq T_a \cM$
and the open set $U \subseteq \cM$.
Thanks to
Lemma \ref{lem_1315}(vi), there exists a constant $R = R_{p} > 0$
depending  on $p$ with the following property: For any $a,b \in U$, condition (\ref{eq_1243}) from Corollary \ref{cor_1320} holds true.
Furthermore, since $u$ is a Lipschitz function,
\begin{equation}
R_2 := 1 + \sup_{x \in B_{\cM}(p, \delta)} |u(x)| < \infty. \label{eq_1213}
\end{equation}
Denote
\begin{equation}  A = \{ x \in B_{\cM}(p, \delta) \, ; \, \ell_u(x) > \eps_0 \} = B_{\cM}(p, \delta) \cap \Strain_{\eps_0}[u].
\label{eq_A2158} \end{equation}
Then $A \subseteq U = \cB_{\cM}(p, \delta_0/2)$ according to
(\ref{eq_1038}), (\ref{eq_A2157}) and (\ref{eq_B1336A}).
The function $u$ is differentiable on the entire set $A$, according to Lemma
\ref{lem_1046}. Define a $1$-form $\omega$ on  $A$ by setting
 $ \omega = du|_A$.
 We will verify that the scalar function $u: A \rightarrow \RR$
 and the $1$-form $\omega$ on the set $A$ satisfy condition
 (\ref{eq_1244}) from Corollary \ref{cor_1320}. In fact, for any $x, y \in A$ we will show that
there exists $a \in U$ for which
 \begin{equation}
 \| (u_a, \omega_a) \|_{\vphi_a^{-1}(x), \vphi_a^{-1}(y)} \leq \max \left \{ R_2, \frac{10^4}{\sigma} \right \}, \label{eq_A1819}
 \end{equation}
 where $u_a = u \circ \vphi_a$ and $\omega_a = \vphi_a^* \omega$. Once we prove (\ref{eq_A1819}),
 the theorem easily follows:
The right-hand side of (\ref{eq_A1819}) depends on the point $p$ and on the function $u$,
but not on the choice of $x, y \in A$. Thus condition (\ref{eq_1244}) of Corollary \ref{cor_1320} is satisfied.
From the conclusion of Corollary \ref{cor_1320}, there exists a $C^{1,1}$-function $\tilde{u}: U \rightarrow \RR$ with
\begin{equation} \tilde{u}|_A = u|_A, \qquad d \tilde{u}|_A  = \omega = du|_A.
\label{eq_A2037} \end{equation}
Since $U \supseteq B_{\cM}(p, \delta)$, the theorem follows from
(\ref{eq_A2158}) and (\ref{eq_A2037}). Therefore, all that remains is
to show that for any $x, y \in A$ there exists $a \in U$ for which (\ref{eq_A1819}) holds true.

\medskip
Let us fix $x,y \in A$.
Since $\ell_u(x) > \eps_0 \geq 2 \sigma$ and also $\ell_u(y) > 2 \sigma$ then by Corollary \ref{cor_A1217}
there exist minimizing geodesics $\gamma_x, \gamma_y: (-2 \sigma, 2 \sigma) \rightarrow \cM$
with $\gamma_x(0) = x, \gamma_y(0) = y$ such that
\begin{equation}
u(\gamma_x(t)) = u(x) + t, \qquad u(\gamma_y(t)) = u(y) + t, \qquad \qquad \text{for} \ t \in (-2 \sigma, 2 \sigma),
\label{eq_A1159}
\end{equation}
and such that
\begin{equation}  \nabla u(\gamma_x(t)) = \dot{\gamma}_x(t), \qquad
\nabla u(\gamma_y(t)) = \dot{\gamma}_y(t) \qquad \qquad  \text{for} \ t \in (-2 \sigma, 2 \sigma).
\label{eq_A1234} \end{equation}
Recall that $x,y \in A \subseteq B_{\cM}(p, \delta)$. Denote
\begin{equation} \eps := d(x, y) < 2 \delta \leq \sigma.
\label{eq_1212}
\end{equation}
Set $t_0 = u(y) - u(x)$. Since $u$ is $1$-Lipschitz, then (\ref{eq_1212}) implies that $|t_0| < \sigma$.
We now define
\begin{equation} x_i = \gamma_x \left( t_0 + (i-1) \sigma \right), \qquad y_i = \gamma_y \left( (i-1) \sigma \right) \qquad
\qquad \qquad \text{for} \ i=0,1,2.
\label{eq_A1202}
\end{equation}
Since $|t_0| < \sigma$ then $t_0 + (i-1) \sigma \in (-2 \sigma, 2 \sigma)$ and the points
$x_0,x_1,x_2,y_0,y_1,y_2$ are well-defined. Since $t_0 = u(y) - u(x)$ then (\ref{eq_A1159}) and (\ref{eq_A1202}) yield
\begin{equation} u(x_i) = u(y_i) = u(x_0) + i \sigma \qquad \qquad \qquad \text{for} \ i=0,1,2.
\label{eq_A2110}
\end{equation}
Recall that $\| u \|_{Lip} \leq 1$ and that
$\gamma_x, \gamma_y$
are minimizing geodesics. We deduce from (\ref{eq_A1202}) and (\ref{eq_A2110})
that for $i,j \in \{ 0,1,2 \}$,
\begin{equation}  d(x_i, x_j) = d(y_i, y_j) = \sigma |i-j| = |u(x_i) - u(y_j)| \leq d(x_i, y_j). \label{eq_1217} \end{equation}
Since $\gamma_x(0) = x$ and $|t_0| < \sigma$, then by (\ref{eq_A1202})
the points $x_0,x_1,x_2$ are of distance at most
$2 \sigma$ from $x$. Similarly,
 the points $y_0,y_1,y_2$ are of distance at most
$\sigma$ from $y = y_1$. Since $x,y \in \cB_{\cM}(p, \delta)$ we obtain
\begin{equation} x, x_0, x_1, x_2, y_0, y_1, y_2 \in B(p, \delta_2) \subseteq U, \label{eq_1209}
\end{equation}
as $\delta \leq \sigma/2 \leq \delta_2/6$.
Recall from (\ref{eq_A1202}) that  $x_0 = \gamma_x(t_0 - \sigma)$ and $x_2 = \gamma_x(t_0 + \sigma)$.
Since $\gamma_x(0) = x$ and $|t_0| < \sigma$, the point $x$ lies on the geodesic arc from $x_0$ to $x_2$.
Furthermore, $x \not \in \{ x_0, x_2 \}$.
Thus all of the requirements of Lemma \ref{lem_1444} are satisfied: This follows from
(\ref{eq_1212}), (\ref{eq_1217}) and (\ref{eq_1209}), as $y = y_1$. We are therefore permitted
to use the conclusions of Lemma \ref{lem_1444}.
Denote $$ a = x_0. $$
As in Lemma \ref{lem_1444} we define $X, X_0,X_1, X_2, Y_0, Y_1, Y_2 \in U_a$
via $x = \exp_a(X)$ and $x_i = \exp_a(X_i), y_i = \exp_a(Y_i)$ for $i=0,1,2$.
Thus $X_0 = 0$. According to (\ref{eq_1212}) and Lemma \ref{lem_1315}(v),
\begin{equation}
\eps = d(x,y) = d(x, y_1) \leq 2 |X - Y_1|.
\label{eq_1300} \end{equation}
The four points $a = x_0, x_1, x_2, x$ lie on the minimizing geodesic $\gamma_x$,
according to (\ref{eq_A1202}). Therefore the four vectors $0 = X_0, X_1, X_2, X$ lie on a line through the origin
in $T_a \cM$.
Furthermore, since $x$ and $x_1$ lie on the geodesic arc between $x_0$ and $x_2$, then
 $X$ and $X_1$ belong to the line segment
between $X_0$ and $X_2$.
Since
$x_1$ is the midpoint of the geodesic between $x_0$ and $x_2$, then
$x_2 = \exp_a(X_2) = \exp_a(2 X_1)$. Hence,
\begin{equation} X_2 = 2 X_1. \label{eq_A529_}
\end{equation}
Since $X$ lies on the line segment between the point $0 = X_0$ and the point $X_2 = 2 X_1$ while $X \not \in \{ X_0, X_2 \}$, then
there exists $t \in (0, 2 \sigma)$ such that $X_2 = X + (t / \sigma) \cdot X_1$.
We claim that
\begin{equation}
\gamma_x(t) = \exp_a( X + (t/ \sigma) \cdot  X_1 ) \qquad \qquad \text{for} \ t \in (-2 \sigma, 2 \sigma).
\label{eq_A2224} \end{equation}
Indeed, since $\exp_a(X_1) = x_1$ then $|X_1| = d(a, x_1) = d(x_0, x_1) = \sigma$
according to (\ref{eq_1217}) and the strong convexity of $U$. Therefore $t \mapsto \exp_a( X + (t/ \sigma) \cdot  X_1 )$ is a geodesic of unit speed. Since $\gamma_x(0) = x = \exp_a(X)$, then the equality in (\ref{eq_A2224}) holds true  when $t = 0$.
The two unit speed geodesics $t \mapsto \gamma_x(t)$ and $t \mapsto \exp_a( X + (t/ \sigma) \cdot  X_1 )$
visit the point $x$ at time $t =0$, and at a later time $t \in (0, 2 \sigma)$ they visit the point $x_2$.
By strong convexity, these two geodesics coincide, and (\ref{eq_A2224}) is proven.
Next, from (\ref{eq_1217}), (\ref{eq_A529_}) and Lemma \ref{lem_1742},
\begin{equation}
\ora{XX_2} = X_2 - X =   |X_2 - X| \cdot \frac{X_1 }{|X_1|} =
|X_2 - X| \cdot \frac{X_2 - X_1}{|X_1|} = d(x, x_2) \cdot \frac{\ora{X_1 X_2}}{\sigma}.
\label{eq_1222}
\end{equation}
From (\ref{eq_A1234}) we see that $\nabla u(x)$ is the unit tangent to the geodesic from $x$ to $x_2$.
Similarly, $\nabla u(y)$ is the unit tangent to the geodesic from $y = y_1$ to $y_2$.
Thus,
\begin{equation} \nabla u(x) = \frac{F_p(X, X_2)}{d(x, x_2)}, \qquad \qquad \nabla u(y) = \frac{F_p(Y_1, Y_2)}{d(y_1, y_2)} = \frac{F_p(Y_1, Y_2)}{\sigma},
\label{eq_1220_} \end{equation}
where we used (\ref{eq_1217}) in the last equality.
Recall that $u_a(Z) = u(\vphi_a(Z)) = u(\exp_a(Z))$ for $Z \in U_a$. According to Lemma  \ref{lem_grad}, (\ref{eq_1222}) and (\ref{eq_1220_}),
\begin{equation}  \nabla u_a(X) = \frac{\ora{X X_2}}{d(x, x_2)} = \frac{\ora{X_1 X_2}}{\sigma} = \frac{X_1}{\sigma}, \qquad \qquad \nabla u_a(Y_1) = \frac{\ora{Y_1 Y_2}}{\sigma}.
\label{eq_1255} \end{equation}
From (\ref{eq_A1159}) and (\ref{eq_A2224}), the function $u_a = u \circ \exp_a$ satisfies that $u_a( (t / \sigma) X_1 ) = u_a(0) + t$
for all $t \in [0,2 \sigma]$. Since both $X_1$ and $X$ belong to the line segment between $X_0 = 0$ and $X_2 = 2 X_1$ then
\begin{align} u_a(X_1) - u_a(X)  =  \left \langle X_1, \frac{X_1}{\sigma} \right \rangle
- \left \langle X, \frac{X_1}{\sigma} \right \rangle  =
 \left \langle X_1 - X, \frac{X_1}{\sigma} \right \rangle. \label{eq_1200_} \end{align}
According to (\ref{eq_1255}) and  conclusion (\ref{eq_15022}) of Lemma \ref{lem_1444},
\begin{equation}
|\nabla u_a(X) - \nabla u_a(Y_1)| = \frac{1}{\sigma} \cdot |\ora{X_1 X_2} - \ora{Y_1 Y_2}| \leq 100  \eps  / \sigma \leq  \frac{200}{\sigma} \cdot |X - Y_1|,
\label{eq_1301}
\end{equation}
where we used (\ref{eq_1300}) in the last passage. Furthermore,
conclusion (\ref{eq_949}) of Lemma \ref{lem_1444} implies that
\begin{equation}
|\langle X_1, Y_1 - X_1 \rangle| \leq 2000 \eps^2 \leq 10^4  |X - Y_1|^2, \label{eq_1307}
\end{equation}
where again we used (\ref{eq_1300}) in the last passage.
From (\ref{eq_A2110}) we know that $u_a(X_1) = u(x_1) = u(y_1) = u_a(Y_1)$.
According to  (\ref{eq_1255}), (\ref{eq_1200_}) and (\ref{eq_1307}),
\begin{align}   \label{eq_1918}
|&u_a(X) +  \left \langle \nabla u_a(X), Y_1 - X \right \rangle - u_a(Y_1)| \\ & = |u_a(X) + \left \langle \nabla u_a(X), X_1 - X \right \rangle + \left \langle \nabla u_a(X), Y_1 - X_1 \right \rangle - u_a(Y_1) | \nonumber \\ & = \left|u_a(X_1) +
\left \langle \frac{X_1}{\sigma}, Y_1 - X_1 \right \rangle - u_a(Y_1)\right|  = \left| \left \langle \frac{X_1}{\sigma}, Y_1 - X_1 \right \rangle \right| \leq \frac{10^4}{\sigma} |X - Y_1|^2.
\nonumber   \end{align}
From (\ref{eq_1213}), (\ref{eq_1217}) and (\ref{eq_1255}),
\begin{equation}
|u_a(X)| \leq R_2, \qquad |\nabla u_a(X)| = \frac{|X_1 - X_0|}{\sigma} = \frac{d(x_0, x_1)}{\sigma} = 1 \leq R_2.
\label{eq_1215_}
\end{equation}
Recall that
$\vphi_a = \exp_a$
and that
$\omega_a = \vphi_a^* \omega = \vphi_a^* (du|_A) = d u_a|_{\vphi_a^{-1}(A)}$.
The inequalities (\ref{eq_1301}), (\ref{eq_1918}) and (\ref{eq_1215_}) mean precisely that
$$
 \| (u_a, \omega_a) \|_{\vphi_a^{-1}(x), \vphi_a^{-1}(y)} = \| (u_a, \omega_a) \|_{X,Y_1}
 = \| (u_a, \nabla u_a) \|_{X, Y_1}
\leq \max \left \{ R_2, \frac{10^4}{\sigma} \right \}.
$$
To summarize, given the arbitrary points $x,y \in A$, we found
$a \in U$ for which (\ref{eq_A1819}) holds true. The proof is thus complete.
\end{proof}

By using a partition of unity and a standard argument, we may deduce from Theorem
\ref{prop_939} the following corollary (which will not be needed here):

\begin{corollary} Let $\cM$ be a geodesically-convex Riemannian manifold. Let $u: \cM \rightarrow \RR$ satisfy $\| u \|_{Lip} \leq 1$ and let $\eps_0 > 0$.
Then there exists
a $C^{1,1}$-function $\tilde{u}: \cM \rightarrow \RR$
such that for any $x \in \cM$,
$$  x \in
 \Strain_{\eps_0}[u] \qquad \Longrightarrow \qquad \tilde{u}(x) = u(x), \ \ \nabla \tilde{u}(x) = \nabla u(x).
 $$ \end{corollary}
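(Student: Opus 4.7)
The plan is a standard partition-of-unity gluing argument applied to the local $C^{1,1}$-extensions produced by Theorem \ref{prop_939}.

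First, for every point $p \in \cM$, Theorem \ref{prop_939} supplies a radius $\delta_p > 0$ and a $C^{1,1}$-function
$$ \tilde{u}_p : B_{\cM}(p, \delta_p) \longrightarrow \RR $$
such that $\tilde{u}_p(x) = u(x)$ and $\nabla \tilde{u}_p(x) = \nabla u(x)$ for every $x \in B_{\cM}(p, \delta_p) \cap \Strain_{\eps_0}[u]$. The collection $\{ B_{\cM}(p, \delta_p) \}_{p \in \cM}$ is an open cover of $\cM$.

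Second, I would invoke paracompactness: any geodesically-convex Riemannian manifold is second-countable (as noted in the paper, just before Lemma \ref{lem_1055}) and hence paracompact. Therefore there exists a locally finite open refinement $\{ V_i \}_{i \in I}$ of the above cover, together with a smooth partition of unity $\{ \chi_i \}_{i \in I}$ subordinate to it. For each $i$, choose a point $p_i \in \cM$ with $V_i \subseteq B_{\cM}(p_i, \delta_{p_i})$, and write $\tilde{u}_i := \tilde{u}_{p_i}|_{V_i}$, which is $C^{1,1}$ on $V_i$. Extend $\chi_i \tilde{u}_i$ by zero outside $V_i$; since $\chi_i$ is smooth and compactly supported in $V_i$ while $\tilde{u}_i$ is $C^{1,1}$ on $V_i$, the product $\chi_i \tilde{u}_i$ is a $C^{1,1}$-function on all of $\cM$ (this uses the product rule for $C^{1,1}$-maps, which follows from Lemma \ref{lem_1345}(i) upon regarding $\chi_i$ as a $C^{1,1}$-factor, together with the fact that its support is compact in $V_i$). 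Set
$$ \tilde{u}(x) := \sum_{i \in I} \chi_i(x) \tilde{u}_i(x) \qquad (x \in \cM). $$
By local finiteness the sum has only finitely many nonzero terms on a neighborhood of each point, so $\tilde{u}$ is $C^{1,1}$ on $\cM$.

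Third, I would verify the required identities on $\Strain_{\eps_0}[u]$. Fix $x \in \Strain_{\eps_0}[u]$ and let $J = \{ i \in I : \chi_i(x) \neq 0 \}$, a finite set; for every $i \in J$ we have $x \in V_i \cap \Strain_{\eps_0}[u]$, hence $\tilde{u}_i(x) = u(x)$ and $\nabla \tilde{u}_i(x) = \nabla u(x)$. Using $\sum_{i \in I} \chi_i \equiv 1$ and differentiating,
$$ \tilde{u}(x) = \sum_{i \in J} \chi_i(x) u(x) = u(x), $$
$$ \nabla \tilde{u}(x) = \sum_{i \in J} \chi_i(x) \nabla u(x) + u(x) \sum_{i \in J} \nabla \chi_i(x) = \nabla u(x), $$
since $\sum_i \nabla \chi_i \equiv 0$. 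This yields the stated conclusion.

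The only nonroutine point is the compatibility claim in the previous paragraph, namely that every local extension $\tilde{u}_i$ agrees with $u$ (values \emph{and} gradients) on all of $V_i \cap \Strain_{\eps_0}[u]$. This is essentially built into Theorem \ref{prop_939}, but one must be slightly careful: the statement of Theorem \ref{prop_939} provides matching on $B_{\cM}(p, \delta) \cap \Strain_{\eps_0}[u]$, and since we take $V_i \subseteq B_{\cM}(p_i, \delta_{p_i})$ this is automatic. No genuine obstacle arises; the argument is essentially a transcription of the standard paracompact gluing of locally defined smooth extensions, with $C^{1,1}$ in place of $C^{\infty}$.
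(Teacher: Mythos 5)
Your proof is correct and is essentially the standard partition-of-unity gluing argument the paper alludes to in the sentence ``By using a partition of unity and a standard argument, we may deduce from Theorem \ref{prop_939} the following corollary.'' Two small remarks: the product-rule step you attribute to Lemma \ref{lem_1345}(i) (which is about composition) is really just the elementary fact that a product of a $C^{1,1}$-function and a smooth function is $C^{1,1}$, and when you restrict the gradient computation to the index set $J = \{i : \chi_i(x) \neq 0\}$ you are implicitly using that each $\chi_i \geq 0$, so $\chi_i(x) = 0$ forces $\nabla \chi_i(x) = 0$; this is worth flagging, but it is standard and does not affect the validity of the argument.
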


\section{Conditioning a measure with respect to  an integrable geodesic foliation}
\label{sec_condition}
\setcounter{equation}{0}

Let $(\cM, d, \mu)$ be a weighted Riemannian manifold of dimension $n$ which is geodesically-convex.
In this section we describe the conditioning of $\mu$ with respect to the partition $T^{\circ}[u]$
 associated with
a given $1$-Lipschitz function $u$.
 The conditioning is based on ``ray clusters'' which are defined
 in Section \ref{lip_sec}. Analogous constructions appear in
Caffarelli, Feldman and McCann
 \cite{CFM}, Evans and Gangbo \cite{EG}, Feldman and McCann \cite{FM} and Trudinger and Wang \cite{TW}.
Section \ref{sec_decomp} explains that the set $\Strain[u]$  may be partitioned into countably
 many ray clusters. The connection with curvature
  appears on Section  \ref{ricci}.

\subsection{Geodesics emanating from a $C^{1,1}$-hypersurface}
\label{lip_sec}
\setcounter{equation}{0}

In what follows we prefer to work with a slightly different normalization of the exponential map. For $t \in \RR$ set
$$ \Exp_t(v) = \exp_p(t v) \qquad \qquad \qquad (p \in \cM, v \in T_p \cM). $$
Then  $\Exp_t: T \cM \rightarrow \cM$
is a partially-defined map which is well-defined and smooth on a maximal open set containing the zero section.
That is, for any $v \in T \cM$
there is a maximal connected set $I \subseteq \RR$ containing the origin such that $\Exp_t (v)$
is well-defined for $t \in I$. This maximal connected subset $I$ is always open,
and if $t \in I$, then $\Exp_s(w)$ is well-defined for any  $(w,s) \in T \cM \times \RR$ which is sufficiently close to
$(v,t) \in T \cM \times \RR$.
 Write $\dExp_t: T(T \cM) \rightarrow T \cM$ for the differential of the map $\Exp_t: T \cM \rightarrow \cM$. The maps $\Exp_t$ and $\dExp_t$ are smooth in all of their variables, including the $t$-variable.

\medskip
Let $\gamma:(a,b) \rightarrow \cM$ be a smooth curve with $a,b \in \RR \cup \{ \pm \infty \}$.
We say that $J: (a,b) \rightarrow T \cM$ is a smooth vector field along $\gamma$ if $J$ is smooth and $J(t) \in T_{\gamma(t)} \cM$ for any $t \in (a,b)$.
As in Cheeger and Ebin \cite[Section 1.1]{CE}, we may use the Riemannian connection and consider the {\it covariant derivative} of $J$ along $\gamma$, denoted by
$$ J^{\prime} = \nabla_{\dot{\gamma}} J. $$
Then $J^{\prime}: (a,b) \rightarrow T \cM$ is a well-defined, smooth vector field along $\gamma$. Assume that $\gamma: (a,b) \rightarrow \cM$
is a geodesic. We say that a smooth vector field $J$ along $\gamma$ is a {\it Jacobi field} if
\begin{equation} J''(t) = R(\dot{\gamma}(t), J(t)) \dot{\gamma}(t) \qquad \qquad \qquad \text{for} \ t \in (a,b),
\label{eq_840}
\end{equation}
where $R$ is the {\it Riemann curvature tensor}. We refer the reader to Cheeger and Ebin \cite[Chapter I]{CE}
for background on the Jacobi equation  (\ref{eq_840}). The space of Jacobi
fields along the fixed geodesic curve $\gamma$ is a linear space of dimension $2n$. In fact,
we may parameterize the space of Jacobi fields along $\gamma$ by the $(2n)$-dimensional vector space
$T_{\dot{\gamma}(0)} (T \cM)$. The parametrization is defined as follows: For $\xi \in T_{\dot{\gamma}(0)} (T \cM)$
we define a Jacobi field $J$ via
\begin{equation} J(t) = \dExp_t (\xi) \qquad \qquad \qquad \text{for} \ t \in (a,b).
\label{eq_B1624} \end{equation}

Let $V: \cM \rightarrow T \cM$ be a vector field on $\cM$,
i.e., $V(p) \in T_p \cM$ for any $p \in \cM$. Assume that $V$ is
differentiable at the point $p \in \cM$.
For $w \in T_p  \cM$ we write  $\partial_w V \in T_{V(p)} (T \cM)$ for the usual directional derivative of the map $V: \cM \rightarrow
T \cM$.
We write $\nabla_w V \in T_p \cM$ for the covariant derivative of $V$ with respect to the Riemannian connection.
Note the formal difference between the directional derivative $\partial_w V \in T_{V(p)} (T \cM)$ and the covariant derivative
 $\nabla_w V \in T_p \cM$.
In the case where $\cM = \RR$, the relation between $\partial_w V$ and $\nabla_w V$ is rather like the relation
between the tangent to the plane curve $t \mapsto (t,f(t))$ and the derivative of the scalar-valued function $t \mapsto f(t)$.

\begin{lemma}
Let $a \in [-\infty, 0), b \in (0, +\infty]$, let $\gamma: (a, b) \rightarrow \cM$ be a geodesic and let $\xi \in T_{\dot{\gamma}(0)} (T \cM)$. Let $J(t)$
 be the Jacobi field
along $\gamma$ that is given by (\ref{eq_B1624}).
Assume that $V$ is a vector field on $\cM$ that is differentiable at the point $\gamma(0) \in \cM$ and satisfies  $\partial_{J(0)} V = \xi$. Then,
$$  J^{\prime}(0) = \nabla_{J(0)} V. $$
\label{lem_B1815}
\end{lemma}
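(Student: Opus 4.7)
I would prove the lemma by realizing $J$ as the variation field of a one-parameter family of geodesics built from $V$, and then exchanging the order of differentiation via the torsion-freeness of the Levi-Civita connection.

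First I would address a regularity technicality: since both sides of the claimed identity $J'(0) = \nabla_{J(0)} V$ depend only on the value $V(\gamma(0)) = \dot\gamma(0)$ (this equality is forced by the assumption $\partial_{J(0)} V = \xi$, as the two sides must live in the same tangent fibre of $T\cM$) and on the derivative $\partial_{J(0)} V$, I may assume without loss of generality that $V$ is a smooth vector field, by replacing it with any smooth extension matching the value and first-order data of $V$ at $\gamma(0)$. This lets me apply the usual smooth variation calculus.

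Next I would pick any smooth curve $c: (-\varepsilon, \varepsilon) \to \cM$ with $c(0) = \gamma(0)$ and $\dot c(0) = J(0)$, and define the variation
\[
H(s,t) = \Exp_t\bigl( V(c(s)) \bigr)
\]
on a neighborhood of $\{0\} \times (a,b)$; this is possible since $V(c(0)) = \dot\gamma(0)$ and the domain of $\Exp$ is open. By construction $H(0,t) = \gamma(t)$, and for each fixed $s$ the curve $t \mapsto H(s,t)$ is a geodesic. By the chain rule applied to $s \mapsto \Exp_t(V(c(s)))$,
\[
\partial_s H(s,t) \big|_{s=0} = \dExp_t\bigl( \partial_s V(c(s))\big|_{s=0} \bigr) = \dExp_t(\xi) = J(t),
\]
so $J$ is exactly the variation field of $H$ along $\gamma$.

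Finally I would invoke the symmetry $\nabla_{\partial_t} \partial_s H = \nabla_{\partial_s} \partial_t H$ at $(s,t) = (0,0)$. The left-hand side is $\nabla_{\dot\gamma} J$ evaluated at $t=0$, i.e.\ $J'(0)$. For the right-hand side, note that $\partial_t H(s,t)\big|_{t=0} = \tfrac{d}{dt}\big|_{t=0} \Exp_t(V(c(s))) = V(c(s))$, so
\[
\nabla_{\partial_s}\big|_{s=0} \partial_t H(s,0) = \nabla_{\partial_s}\big|_{s=0} V(c(s)) = \nabla_{\dot c(0)} V = \nabla_{J(0)} V,
\]
yielding the desired identity. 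The main obstacle is the bookkeeping around regularity of $V$ and ensuring the initial-point compatibility $V(\gamma(0)) = \dot\gamma(0)$ so that the variation actually passes through $\gamma$; once that is set up, the proof reduces to the standard symmetry-of-the-connection trick for Jacobi fields.
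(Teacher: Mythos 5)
Your proof is correct and follows essentially the same route as the paper's: realize $J$ as the variation field of a one-parameter family of geodesics $\Exp_t(\cdot)$ and apply the torsion-freeness of the Levi-Civita connection to swap $\nabla_{\partial_s}$ and $\nabla_{\partial_t}$. The paper bypasses your WLOG smoothening step by choosing an arbitrary smooth curve $\beta:(-1,1)\to T\cM$ with $\beta(0)=\dot\gamma(0)$, $\dot\beta(0)=\xi$ and observing directly that $\nabla_{J(0)}\beta=\nabla_{J(0)}V$, which is the same invariance-of-first-order-data observation you invoke, just packaged without passing through a smoothened vector field.
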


\begin{proof}
Let $\beta: (-1, 1) \rightarrow T \cM$ be a smooth, one-to-one curve satisfying
$\beta(0) = \dot{\gamma}(0)$ and $\dot{\beta}(0) = \xi = \partial_{J(0)} V$. A moment of contemplation reveals that
$$ \nabla_{J(0)} \beta = \nabla_{J(0)} V, $$
where  we use the conventions from \cite[Section 1.1]{CE} regarding vector fields along a smooth map
 and their covariant derivatives.
 Set $\alpha(s,t) = \Exp_t(\beta(s))$. Then $\alpha$ is smooth in $(s,t) \in \RR^2$ near the origin,
while $J(t) = \frac{\partial \alpha}{\partial s} (0, t)$ and
 $\beta(s) = \frac{\partial \alpha}{\partial t} (s,0)$.
As in \cite[Section 1.5]{CE} we abbreviate $S = d \alpha( \frac{\partial}{\partial s})$
 and $T = d \alpha (\frac{\partial}{\partial t})$, which are smooth vector fields along the map $\alpha$
 with $S(0, t) = J(t)$ and $T(s, 0) =  \beta(s)$.    Then,
 \begin{equation} J^{\prime}(0) = \left. \nabla_{T}
S  \right|_{t,s=0}, \qquad \nabla_{J(0)} V = \nabla_{J(0)} \beta
 = \left. \nabla_{S}
 T \right|_{t,s=0}. \label{eq_B936_} \end{equation}
Since $\left[\frac{\partial}{\partial s}, \frac{\partial}{\partial t} \right] = 0$
then $[S,T] = 0$ and consequently $\nabla_S T = \nabla_T S$. The lemma thus follows from (\ref{eq_B936_}).
\end{proof}

We say that a $C^{1}$-function $f: \cM \rightarrow \RR$ is {\it twice differentiable with a symmetric Hessian}
at the point $p \in \cM$ if the vector field $\nabla f$ is differentiable at $p$ and
$$  \langle \nabla_v (\nabla f),  w \rangle \, = \,
\langle \nabla_w (\nabla f), v \rangle \qquad \qquad \qquad \text{for} \ v,w \in T_p \cM.
$$

The notation of the next  lemma will accompany us now for several pages.
We will consider geodesics orthogonal to the level set $\{ \tilde{u} = r_0 \}$,
where $\tilde{u}: \cM \rightarrow \RR$ is usually twice differentiable with a symmetric Hessian.
This level set is locally parameterized by a $C^1$-function $f: \Omega_0 \rightarrow \cM$ where $\Omega_0 \subseteq \RR^{n-1}$ is an open set.
The geodesics are denoted by $\tilde{F}(y,t) = \Exp_t(\nabla \tilde{u}(f(y))$.
Later on, the restriction of $\tilde{F}$ to a certain set will be denoted by $F$,
while $\tilde{u}$ will be the function provided by Theorem \ref{prop_939}.
 By differentiating $\tilde{F}(y,t)$ with respect to $y_i$ we obtain a Jacobi field $J_i$, as is
 precisely stated  in the following lemma:

\begin{lemma} Let $r_0 \in \RR$ and let $\tilde{u}: \cM \rightarrow \RR$ be a $C^{1}$-function.
Let $\Omega_0 \subseteq \RR^{n-1}$ be an open set and let $y_0 \in \Omega_0$. Let $f: \Omega_0 \rightarrow \cM$ be a $C^{1}$-map,
and assume that the function $\tilde{u}$ is twice differentiable with a symmetric Hessian
at the point $f(y_0)$. For $y \in \Omega_0$ and $t \in \RR$ set
 $$ \tilde{F}(y,t) = \Exp_t(\nabla \tilde{u}(f(y))), \qquad N(y,t) = \frac{\partial \tilde{F}}{\partial t}(y,t). $$
 Our Riemannian manifold is not necessarily complete, and we assume that $t \mapsto \tilde{F}(y, t)$ is well-defined  in a maximal subset $(a_y,b_y) \subseteq \RR$ containing the origin.
 Suppose that $B_0 \subseteq \Omega_0$ is a measurable set containing $y_0$, such that $y_0$ is a Lebesgue density point of $B_0 \subseteq \RR^{n-1}$,
 and
\begin{equation}  \tilde{u}(f(y)) = r_0, \quad |\nabla \tilde{u}(f(y))| = 1 \qquad \qquad \qquad \text{for} \ y \in B_0. \label{eq_B1627}
\end{equation}
Then,
\begin{enumerate}
\item[(i)] For any $t \in (a_{y_0},b_{y_0})$ the map $\tilde{F}$ is
 differentiable at the point $(y_0, t) \in \Omega_0 \times \RR$.
 (We note that  $\tilde{F}$ is  well-defined in an open neighborhood of $(y_0, t)$
in $\RR^{n-1} \times \RR$).
\item[(ii)] There exist
 Jacobi fields $J_1(y_0,t),\ldots,$ $J_{n-1}(y_0,t)$ along the geodesic
curve $t \mapsto \tilde{F}(y_0,t)$, which are well-defined in the entire interval $t \in (a_{y_0}, b_{y_0})$, such that
$$ J_i(y_0,t) = \frac{\partial \tilde{F}}{\partial y_i}(y_0,t) \qquad \qquad \text{for all} \ i=1,\ldots, n-1, \ t \in (a_{y_0}, b_{y_0}). $$
\item[(iii)] At the point $(y_0,0)
\in \Omega_0 \times \RR$ we have
\begin{equation} \langle J_i, N \rangle = \langle J_i', N \rangle = 0 \qquad \qquad (i=1,\ldots,n-1), \label{eq_B1537} \end{equation}
and
\begin{equation}
 \langle J_i', J_k \rangle =\langle J_k', J_i \rangle \qquad \qquad (i,k=1,\ldots,n-1). \label{eq_B1536} \end{equation}
 Here, $J_i^{\prime}(y_0, t)$ is the covariant derivative of the Jacobi field
 $t \mapsto J_i(y_0, t)$ along the geodesic curve $t \mapsto \tilde{F}(y_0, t)$ for $t \in (a_{y_0}, b_{y_0})$. \end{enumerate}
 \label{lem_jacobi}
\end{lemma}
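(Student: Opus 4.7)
The plan is to read off each claim as follows, with the only subtle ingredient being an analytic consequence of $y_0$ being a Lebesgue density point of $B_0$.

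For (i), I would write $\tilde{F} = \Exp \circ \Phi$, where $\Phi(y,t) = (\nabla \tilde{u}(f(y)), t) \in T\cM \times \RR$. Since $\tilde{u}$ is twice differentiable with a symmetric Hessian at $f(y_0)$, the vector field $\nabla \tilde{u}$ is differentiable at $f(y_0)$, and composing with the $C^1$-map $f$ yields a map $y \mapsto \nabla \tilde{u}(f(y))$ that is differentiable at $y_0$. The geodesic flow map $\Exp: T \cM \times \RR \to \cM$ is smooth on its domain, and this domain is open and contains the curve $\{ (\nabla \tilde{u}(f(y_0)), s) : s \in (a_{y_0}, b_{y_0}) \}$, so for each fixed $t \in (a_{y_0}, b_{y_0})$ the composition $\tilde{F}$ is well-defined in an open neighbourhood of $(y_0,t)$ and differentiable there by the chain rule.

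For (ii), I would invoke the standard variation-of-geodesics principle: for each fixed $y$ near $y_0$ the curve $t \mapsto \tilde{F}(y,t)$ is a geodesic, so $\tilde{F}$ is a smooth one-parameter (in fact $(n-1)$-parameter at the formal level) family of geodesics, and the variation fields $\partial \tilde{F}/\partial y_i$ are Jacobi fields along $t \mapsto \tilde{F}(y_0,t)$. Concretely, setting $\xi_i = \partial_{y_i}[\nabla \tilde{u} \circ f](y_0) \in T_{\dot{\gamma}(0)}(T \cM)$, formula (\ref{eq_B1624}) directly yields $J_i(y_0,t) = \dExp_t(\xi_i) = \partial \tilde{F}/\partial y_i(y_0,t)$ for all $t \in (a_{y_0}, b_{y_0})$.

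The core of (iii) is the following analytic observation, which I regard as the main obstacle and would isolate first: if $h: \Omega_0 \to \RR$ is differentiable at $y_0$ and constant on $B_0$, and if $y_0$ is a Lebesgue density point of $B_0$, then $dh(y_0) = 0$. Indeed, by the density property, every unit direction $v \in S^{n-2}$ is the limit of $(y_k - y_0)/|y_k - y_0|$ for some sequence $y_k \in B_0 \setminus \{y_0\}$ with $y_k \to y_0$; writing the differentiability at $y_0$ and using $h(y_k) = h(y_0)$ gives $dh(y_0)(v) = 0$. I apply this to $h_1(y) = \tilde{u}(f(y))$ (which is differentiable at $y_0$ by the chain rule and equals $r_0$ on $B_0$) to obtain $\langle \nabla \tilde{u}(f(y_0)), \partial f/\partial y_i(y_0) \rangle = 0$, which is the $\langle J_i, N \rangle = 0$ part of (\ref{eq_B1537}) since $J_i(y_0,0) = \partial f/\partial y_i(y_0)$ and $N(y_0,0) = \nabla \tilde{u}(f(y_0))$. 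Applied to $h_2(y) = |\nabla \tilde{u}(f(y))|^2$ (differentiable at $y_0$ because $\nabla \tilde{u}$ is differentiable at $f(y_0)$), and using $dh_2(y_0)(e_i) = 2\langle \nabla_{\partial f/\partial y_i}(\nabla \tilde{u}), \nabla \tilde{u}(f(y_0)) \rangle$, it yields $\langle \nabla_{J_i(y_0,0)}(\nabla \tilde{u}), N(y_0,0) \rangle = 0$.

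Finally, I would invoke Lemma \ref{lem_B1815} with $V = \nabla \tilde{u}$ and initial data $\xi_i = \partial_{J_i(y_0,0)} V$ to conclude $J_i'(y_0,0) = \nabla_{J_i(y_0,0)}(\nabla \tilde{u})$. Combined with the previous paragraph this delivers $\langle J_i', N \rangle = 0$ at $(y_0,0)$, completing (\ref{eq_B1537}). For (\ref{eq_B1536}), the same formula gives
\[
\langle J_i', J_k \rangle \,=\, \langle \nabla_{J_i}(\nabla \tilde{u}), J_k \rangle, \qquad \langle J_k', J_i \rangle \,=\, \langle \nabla_{J_k}(\nabla \tilde{u}), J_i \rangle,
\]
at $(y_0,0)$, and these two expressions are equal by the assumed symmetry of the Hessian of $\tilde{u}$ at $f(y_0)$. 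I expect the only point requiring genuine care to be the density-point step, since it is where the possibly non-open nature of $B_0$ is handled; the rest is a bookkeeping exercise with the Jacobi equation and Lemma \ref{lem_B1815}.
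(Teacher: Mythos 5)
Your proposal is correct and follows essentially the same route as the paper's proof: differentiate $\tilde{F}(y,t)=\Exp_t(\nabla\tilde u(f(y)))$ in $y$ at $y_0$ to identify the Jacobi fields via (\ref{eq_B1624}), invoke Lemma \ref{lem_B1815} to pass from the directional derivative $\xi_{y_0,i}$ to the covariant derivative $J_i'(y_0,0)=\nabla_{J_i(y_0,0)}\nabla\tilde u$, use the Lebesgue density property of $y_0$ to kill the tangential derivatives of $\tilde u\circ f$ and of $|\nabla\tilde u\circ f|$ (the paper records this as (\ref{eq_B1114}), you package it as a clean standalone fact about differentiable functions constant on a set of which $y_0$ is a density point), and finish (\ref{eq_B1536}) with the symmetric-Hessian hypothesis. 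The only cosmetic deviations are that you state the density-point lemma explicitly and you differentiate $|\nabla\tilde u(f(y))|^2$ rather than $|\nabla\tilde u(f(y))|$; both are equivalent to what the paper does.
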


\begin{proof}
The curve $t \mapsto \tilde{F}(y_0,t)$ is a geodesic curve of speed one since $|\nabla \tilde{u}(f(y_0))| = 1$ as
follows from (\ref{eq_B1627}) and the fact that  $y_0 \in B_0$.
 The vector field $t \mapsto N(y_0,t)$ is the unit tangent along this geodesic, with
$ N(y_0,0) = \nabla \tilde{u}(f(y_0)). $
The equation
\begin{equation} \tilde{F}(y,t) = \Exp_t(\nabla \tilde{u}(f(y)))
\label{eq_B1113} \end{equation}
is valid in an open set in $\Omega_0 \times \RR$ containing  $\{ y_0 \} \times (a_{y_0}, b_{y_0})$.
Note also that $\tilde{F}(y,0) = f(y)$ for $y \in \Omega_0$.
Since $f$ is a $C^1$-function,
$$ \frac{\partial f}{\partial y_i}(y_0) = \frac{\partial \tilde{F}}{\partial y_i}(y_0, 0) \qquad \qquad \qquad \text{for} \ i=1,\ldots,n-1. $$
Differentiating (\ref{eq_B1113}) at the point $y = y_0$ yields
\begin{equation}  J_i(y_0,t) := \frac{\partial \tilde{F}}{\partial y_i} (y_0,t)
= \dExp_t \left( \xi_{y_0,i} \right) \qquad \text{for} \ t \in (a_{y_0}, b_{y_0}), i=1,\ldots,n-1, \label{eq_B1811} \end{equation}
where
\begin{equation} \xi_{y_0,i} = \frac{\partial [ (\nabla \tilde{u}) \circ f ]}{\partial y_i} (y_0) = \partial_{J_i(y_0, 0)} \nabla \tilde{u} \in T_{N(y_0,0)} (T \cM)
 \qquad \text{for} \ i=1,\ldots,n-1. \label{eq_B1001} \end{equation}
This differentiation is legitimate since $f$ is a $C^{1}$-map and
since the vector field $\nabla \tilde{u}: \cM \rightarrow T \cM$ is differentiable at the point $f(y_0)$.
We conclude that
for any $t \in (a_{y_0},b_{y_0})$, the map $\tilde{F}$ is differentiable at $(y_0,t)$, and (i) is proven.
From (\ref{eq_B1811}) we learn that the vector fields $J_1(y_0, t),\ldots,J_{n-1}(y_0, t)$
have the form (\ref{eq_B1624}), and hence they are Jacobi fields along the geodesic $t \mapsto \tilde{F}(y_0, t)$.
This proves (ii). Thanks to (\ref{eq_B1811}) and (\ref{eq_B1001})
we may apply Lemma \ref{lem_B1815} with $V = \nabla \tilde{u}, \xi = \xi_{y_0, i}$ and $J(t) = J_i(y_0,t)$, and  conclude  that
\begin{equation}
J_i^{\prime}(y_0, 0) = \nabla_{J_i(y_0, 0)} \nabla \tilde{u}
\qquad \qquad \text{for} \ i=1,\ldots,n-1.
\label{eq_B1652_}
\end{equation}
Since $y_0$ is a Lebesgue density point of $B_0$, then (\ref{eq_B1627}) entails that for $i=1,\ldots,n-1$,
\begin{equation}   \left. \frac{\partial \tilde{u}(f(y))}{\partial y_i} \right|_{y = y_0} = 0 \qquad \text{and} \qquad \left. \frac{\partial |\nabla \tilde{u}(f(y))|}{\partial y_i} \right|_{y = y_0} = 0.
\label{eq_B1114} \end{equation}
Since $J_i(y_0,0) = \frac{\partial \tilde{F}}{\partial y_i} (y_0, 0) = \frac{\partial f}{\partial y_i} (y_0)$ and $ N(y_0,0) = \nabla \tilde{u}(f(y_0))$, we may
rewrite (\ref{eq_B1114}) as
\begin{equation}  \langle N(y_0, 0), J_i(y_0, 0) \rangle = 0 \qquad \text{and} \qquad
\langle \nabla_{J_i(y_0, 0)} \nabla \tilde{u}, N(y_0, 0) \rangle = 0,
  \label{eq_B1639}
\end{equation}
for $i=1,\ldots,n-1$. Now (\ref{eq_B1537}) follows from
(\ref{eq_B1652_}) and (\ref{eq_B1639}). As for the proof of (\ref{eq_B1536}): in view of (\ref{eq_B1652_}) we actually need to prove that
$$ \langle \nabla_{J_i(y_0, 0)} \nabla \tilde{u}, J_k(y_0, 0) \rangle =
\langle \nabla_{J_k(y_0, 0)} \nabla \tilde{u}, J_i(y_0, 0) \rangle \qquad \qquad \text{for} \ i,k=1,\ldots,n-1. $$
The latter relations hold as $\tilde{u}$ is twice differentiable with a symmetric Hessian at
the point $f(y_0) = \tilde{F}(y_0, 0)$.
\end{proof}

Recall the definitions of $\Strain[u], \Strain_{\eps_0}[u]$ and $\alpha_u, \beta_u$ from Section \ref{transport_rays}.

\begin{definition} Let
 $u :\cM \rightarrow \RR$ satisfy $\| u \|_{Lip} \leq 1$ and let $R_0 \subseteq \cM$ be a Borel set.
We say that $R_0$ is a ``seed of a ray cluster'' associated with $u$
if there exist numbers $r_0 \in \RR, \eps_0 > 0$, open sets $U \subseteq \cM, \Omega_0 \subseteq \RR^{n-1}$
and $C^{1,1}$-functions
$\tilde{u}: U \rightarrow \RR, f: \Omega_0 \rightarrow \cM$
for which the following hold:
\begin{enumerate}
\item[(i)] For any $x \in U \cap \Strain_{\eps_0}[u]$ we have that $ \tilde{u}(x) = u(x)$ and $\nabla \tilde{u}(x) = \nabla u(x)$.
\item[(ii)] The $C^{1,1}$-map $f: \Omega_0 \rightarrow \cM$ is one-to-one with
$ f(\Omega_0) = \{ x \in U \, ; \, \tilde{u}(x) = r_0 \}$. The inverse map $f^{-1}: f(\Omega_0) \rightarrow \Omega_0$ is continuous.
\item[(iii)] For almost any point $y \in \Omega_0$, the function $\tilde{u}$ is twice differentiable with a symmetric Hessian at the point $f(y)$.
\item[(iv)] $\displaystyle R_0 \subseteq \left \{ x \in U \cap \Strain_{\eps_0}[u] \, ; \, \tilde{u}(x) = r_0 \right \}$.
\end{enumerate}

If the functions $\alpha_u, \beta_u: R_0 \rightarrow \RR \cup \{ \pm \infty \}$ are continuous,
then we say that $R_0$ is a ``seed of a ray cluster of continuous length''.
\label{def_B1216}
\end{definition}

Note that any Borel set which is contained in a seed of a ray cluster, is in itself a seed of a ray cluster.
Recall from Lemma \ref{lem_A317} that $T^{\circ}[u]$
is the collection of all relative interiors of non-degenerate transport rays associated with $u$,
and that
$T^{\circ}[u]$ is a partition of $\Strain[u]$.

\begin{definition} Let $u :\cM \rightarrow \RR$ satisfy $\| u \|_{Lip} \leq 1$.
A  subset $R \subseteq \Strain[u]$ is a ``ray cluster'' associated with $u$ if there exists $R_0 \subseteq \cM$ which is a seed of
a ray cluster such that
\begin{equation}   R = \left \{ x \in \cM \, ; \, \exists \cI \in T^{\circ}[u] \ \text{such that} \ x \in \cI \ \text{and} \ \cI \cap R_0 \neq \emptyset \right \}. \label{eq_B2030}
\end{equation}
We say that $R$ is a ``ray cluster of continuous length'' if $R_0$ is a seed of a ray cluster of continuous length.
\label{def_B1215}
\end{definition}

When $A \subseteq \RR^n$ is a measurable set and
 $f: A \rightarrow \RR^m$ is locally-Lipschitz, the function $f$ maps measurable sets to measurable sets:
Indeed, any measurable set equals the union of a Lebesgue-null set
and countably many compacts, hence also its image under a locally-Lipschitz map
is the union of a Lebesgue-null set
and countably many compacts.
Therefore, the concept
of a measurable subset of a differentiable manifold $\cM$ is well-defined.
Similarly, the
concepts of a Lebesgue-null set and a Lebesgue density point of a measurable set
in a differentiable manifold $\cM$ are well-defined.
The Lebesgue theorem, stating that almost any point of a measurable set $A$
is a Lebesgue density point of $A$, also applies  in the context of an abstract  differentiable manifold.

\medskip
For a subset $A \subseteq \RR^n$, a function $f: A \rightarrow \RR^m$ and a point $x_0 \in A$, we
say that $f$ is differentiable at $x_0$ if there is a {\it unique} linear map $T: \RR^n \rightarrow \RR^m$ such that
$$ \lim_{A \ni x \rightarrow x_0} |f(x_0) + T (x-x_0) - f(x)| / |x - x_0| = 0. $$
In this case we may speak of the differential of $f$ at $x_0$. For instance, if $f: A \rightarrow \RR^m$
is differentiable at the point $x \in A \subseteq \RR^n$, and $B \subseteq A$ is a measurable
set containing $x$ such that $x$ is a Lebesgue density point of $B$, then $f|_B$ is differentiable at $x$.
In what follows we will usually consider the differential of a function $f: A \rightarrow \RR^m$ only at Lebesgue density points of $A$.

\medskip Similarly, given differentiable manifolds $\cM$ and $\cN$, a subset $A \subseteq \cM$
and a function $f: A \rightarrow \cN$, we may speak about the differentiability of $f$
at the point $p_0 \in A$.
When $f$ is differentiable at $p_0$, we may consider the differential of $f$ at $p_0$,
and we may also consider the  directional derivatives $\partial_v f$ for $v \in T_{p_0} \cM$.
A function defined in a subset of a differentiable manifold is said to be locally-Lipschitz when it is
locally-Lipschitz in any chart.
By the Rademacher theorem and the Kirszbraun theorem (see, e.g., Evans and Gariepy \cite[Section 3.1]{EG}),
any locally-Lipschitz function defined on a measurable subset $A$ of a differentiable manifold,
is differentiable almost-everywhere in $A$.

\medskip
A {\it parallel line-cluster} is a subset $B \subseteq \RR^{n-1} \times \RR$
of the following form: There exist a measurable set $B_0 \subseteq \RR^{n-1}$ and continuous functions $a: B_0 \rightarrow [-\infty, 0)$
and $b: B_0 \rightarrow (0, +\infty]$ such that
\begin{equation} B = \left \{ (y,t) \in \RR^{n-1} \times \RR \, ; \, y \in B_0, \ a_y < t < b_y \right \}, \label{eq_917}
\end{equation}
where $a_y = a(y)$ and $b_y = b(y)$ for $y \in B_0$.
Note that when $y \in B_0$ is a Lebesgue density point of $B_0$, the point  $(y, t) \in B$
is a Lebesgue density point of $B$ for any $t \in (a_{y}, b_y)$.

\medskip
An {\it almost line-cluster} is a subset $B \subseteq \RR^{n-1} \times \RR$
of the form (\ref{eq_917}) where $B_0 \subseteq \RR^{n-1}$ is measurable
and the functions
$a: B_0 \rightarrow [-\infty, 0)$ and $b: B_0 \rightarrow (0, +\infty]$
are only assumed to be measurable, and not continuous.
Note that a parallel line-cluster is always measurable, as well as an almost line-cluster.
We say that a map $F$ is invertible if it is one-to-one and onto.

\begin{proposition} Let $u :\cM \rightarrow \RR$ satisfy $\| u \|_{Lip} \leq 1$.
Suppose that $R \subseteq \Strain[u]$ is a non-empty ray cluster of continuous length.
Then there exist a parallel line-cluster $B \subseteq \RR^{n-1} \times \RR$,
a measurable set $B_0 \subseteq \RR^{n-1}$, functions $a,b: B_0 \rightarrow \RR \cup \{ \pm \infty \}$
 and a locally-Lipschitz, invertible map $F: B \rightarrow R$
with the following properties:
\begin{enumerate}
\item[(i)] The relation (\ref{eq_917}) holds true.
Write $f(y) = F(y,0)$ for $y \in B_0$. Then
the set $R_0 = f( B_0)$ is a seed of a ray cluster satisfying (\ref{eq_B2030}). Additionally,
 \begin{equation} a_y = -\alpha_u(f(y)), \quad b_y = \beta_u(f(y)) \qquad \qquad \qquad \text{for all} \ y \in B_0. \label{eq_B1043A} \end{equation}
\item[(ii)] For any $y \in B_0$, the curve
$$ t \mapsto F(y,t) \qquad \qquad \qquad \qquad t \in (a_y, b_y) $$
is a minimizing geodesic whose image is the relative interior of a transport ray associated with $u$. Furthermore, there exists $r_0 \in \RR$
such that
\begin{equation}
 u(F(y,t)) = t + r_0 \qquad \qquad \qquad \text{for all} \ (y,t) \in B.
 \label{eq_B1104} \end{equation}
  \item[(iii)] For almost any Lebesgue density point $y_0 \in B_0$ the following hold:
The map $F$ is differentiable at $(y_0, t)$ for all
 $t \in (a_{y_0}, b_{y_0})$, and there exist
 Jacobi fields $J_1(y_0,t),\ldots,$ $J_{n-1}(y_0,t)$ along the geodesic
$t \mapsto F(y_0,t)$ in the entire interval $t \in (a_{y_0}, b_{y_0})$ such that for $i=1,\ldots,n-1$,
\begin{equation} J_i(y_0,t) = \frac{\partial F}{\partial y_i}(y_0,t) \qquad \qquad \text{for all} \ t \in (a_{y_0}, b_{y_0}). \label{eq_B1807} \end{equation}
Denoting $N(y_0,t) = \frac{\partial F}{\partial t}(y_0,t) $
we have, at the point $(y_0,0) \in B$,
\begin{equation}  \langle J_i, N \rangle = \langle J_i^{\prime}, N \rangle =0 \qquad \qquad (i=1,\ldots,n-1),
\label{eq_B1459} \end{equation}
and
\begin{equation}  \langle J_i^{\prime}, J_k \rangle = \langle J_k^{\prime}, J_i \rangle \qquad \qquad (i,k=1,\ldots,n-1).
\label{eq_B1621} \end{equation}
Here, $J_i^{\prime}(y_0, 0)$ is the covariant derivative at $t= 0$ of the Jacobi field $t \mapsto J_i(y_0, t)$ along the geodesic curve $t \mapsto
F(y_0, t)$.
\item[(iv)] For $(y,t) \in B$ denote $T(y,t) = \left \{ \langle J_i(y,t), J_k(y,t) \rangle \right \}_{i,k=1,\ldots,n}$, where $J_n := N$.
Then the symmetric matrix $T(y,t)$ is well-defined and positive semi-definite almost everywhere in $B$, and for any Borel set $A \subseteq R$,
\begin{equation} \lambda_\cM(A) = \int_{F^{-1}(A)} \sqrt{\det T(y,t)} dy dt, \label{eq_B1314A} \end{equation}
where $\lambda_{\cM}$ is the Riemannian volume measure in $\cM$.
\end{enumerate}
\label{prop_B1122}
\end{proposition}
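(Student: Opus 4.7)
The plan is to extract the seed data from Definitions \ref{def_B1216} and \ref{def_B1215}, then build $F$ as the geodesic flow generated by $\nabla \tilde u$ emanating from the $C^{1,1}$-level-set chart $f$. Let $(r_0,\eps_0,U,\Omega_0,\tilde u,f)$ denote the data associated with a seed $R_0$ of $R$. Put $B_0=f^{-1}(R_0)\subseteq \Omega_0$; this is Borel, since $f$ is a continuous injection with continuous inverse on $f(\Omega_0)$ and $R_0$ is Borel. For $y\in B_0$ set $a_y=-\alpha_u(f(y))$ and $b_y=\beta_u(f(y))$; these are continuous by the ``continuous length'' hypothesis, and $a_y<-\eps_0<0<\eps_0<b_y$ since $R_0\subseteq \Strain_{\eps_0}[u]$. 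Let $B$ be the parallel line-cluster defined by (\ref{eq_917}) and put $F(y,t)=\Exp_t(\nabla\tilde u(f(y)))$.

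For (i)--(ii), fix $y\in B_0$ and $x=f(y)\in R_0$. By Definition \ref{def_B1216}(i), $\nabla\tilde u(x)=\nabla u(x)$, which by Lemma \ref{lem_1046} is a unit tangent to the unique transport ray through $x$. Corollary \ref{cor_A1217} then identifies $t\mapsto F(y,t)$ on $(a_y,b_y)$ with the minimizing geodesic parametrizing the relative interior of that ray, and gives $u(F(y,t))=u(x)+t=r_0+t$, which is (\ref{eq_B1104}). Surjectivity onto $R$ follows from (\ref{eq_B2030}), since every point of $R$ lies in the relative interior of some such ray. For injectivity, if $F(y,t)=F(y',t')$ then because $T^\circ[u]$ partitions $\Strain[u]$ (Lemma \ref{lem_A317}), $f(y)$ and $f(y')$ lie on the same ray with equal $u$-value $r_0$, forcing $f(y)=f(y')$ by strict monotonicity of $u$ along rays, hence $y=y'$ by injectivity of $f$, and then $t=t'$ by injectivity of the arc-length parametrization. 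Local Lipschitz continuity of $F$ follows from local Lipschitz continuity of $\nabla\tilde u$ (as $\tilde u\in C^{1,1}$) and $f$, composed with the smooth map $\Exp$.

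For (iii), Definition \ref{def_B1216}(iii) supplies a full-measure set $\Omega_0^\star\subseteq \Omega_0$ on which $\tilde u$ is twice differentiable with symmetric Hessian at $f(y)$, so $B_0\setminus \Omega_0^\star$ has measure zero and almost every Lebesgue density point $y_0$ of $B_0$ lies in $\Omega_0^\star$. At such $y_0$, the hypothesis (\ref{eq_B1627}) of Lemma \ref{lem_jacobi} is satisfied: $\tilde u\circ f\equiv r_0$ on $B_0$, and $|\nabla\tilde u(f(y))|=|\nabla u(f(y))|=1$ by Lemma \ref{lem_1046}. Applying Lemma \ref{lem_jacobi} (with the $F$ of that lemma equal to the present $F$ on the fiber through $y_0$) yields differentiability of $F$ at $(y_0,t)$ for every $t\in(a_{y_0},b_{y_0})$, the Jacobi-field representation (\ref{eq_B1807}), and the orthogonality/symmetry relations (\ref{eq_B1459})--(\ref{eq_B1621}) at $t=0$.

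Finally, for (iv), $T(y,t)$ is by construction the Gram matrix of $n$ vectors in $T_{F(y,t)}\cM$, hence positive semi-definite wherever it is defined. By Rademacher's theorem $F$ is differentiable almost everywhere on $B$, and by (iii) together with Fubini, at almost every $(y,t)\in B$ the differential of $F$ maps the coordinate basis of $\RR^{n-1}\times\RR$ to $(J_1(y,t),\dots,J_{n-1}(y,t),N(y,t))$. Consequently the Riemannian Jacobian of $F$ equals $\sqrt{\det T(y,t)}$ almost everywhere, and (\ref{eq_B1314A}) follows from the area formula for locally Lipschitz injective maps between $n$-dimensional manifolds. I expect the main obstacle to be this last step: the standard Euclidean area formula must be transported to a setting in which $B$ is merely a measurable subset of $\RR^n$ and the target carries a Riemannian volume, which requires covering $R$ by finitely many coordinate charts on $\cM$, assembling the measurability of $F$ and of $(y,t)\mapsto \sqrt{\det T(y,t)}$ from (i)--(iii), and handling the null set of non-Lebesgue-density points of $B_0$ separately.
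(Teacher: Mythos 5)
Your proposal follows essentially the same construction as the paper's own proof: extract the seed data $(r_0,\eps_0,U,\Omega_0,\tilde u,f)$ from Definitions \ref{def_B1216}--\ref{def_B1215}, set $B_0 = f^{-1}(R_0)$, define $F(y,t)=\Exp_t(\nabla\tilde u(f(y)))$, verify (i)--(ii) via Corollary \ref{cor_A1217} and Lemma \ref{lem_1046}, prove (iii) by invoking Lemma \ref{lem_jacobi} at density points, and establish (iv) via the area formula. The only place where you stop short of what the paper provides is step (iv): the paper carries out the transfer of the Euclidean area formula to a measurable domain $B$ and Riemannian target in full detail (using the Kirszbraun extension theorem to justify applying \cite[Section 3.3.3]{EG} to a locally-Lipschitz $H$ defined only on $D$, and then interpreting $\sqrt{\det T}/J_H$ as the chart density of $\lambda_\cM$), whereas you correctly flag these as the obstacles but leave them as a sketch. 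This is not a gap in the logic, but filling in those routine but slightly fiddly measure-theoretic details is exactly what the paper's proof of (iv) occupies itself with, and any complete write-up would need to do the same.
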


\begin{proof} Let $R_0 \subseteq \cM$ be the seed of a ray cluster
of continuous length given by Definition \ref{def_B1215}. Then $R_0$ is a Borel set with
\begin{equation} R = \left \{ x \in \cM \, ; \, \exists \cI \in T^{\circ}[u] \ \text{such that} \ x \in \cI \ \text{and} \ \cI \cap R_0 \neq \emptyset \right \}.
\label{eq_B1228} \end{equation}
Since $R_0$ is a seed of a ray cluster, Definition \ref{def_B1216}
provides us with certain numbers $r_0 \in \RR, \eps_0 > 0$, open sets $U \subseteq \cM, \Omega_0 \subseteq \RR^{n-1}$ and
$C^{1,1}$-functions $\tilde{u}: U \rightarrow \RR, f: \Omega_0 \rightarrow \cM$ such that
\begin{equation}
R_0 \subseteq \left \{ x \in U \cap \Strain_{\eps_0}[u] \, ; \, \tilde{u}(x) = r_0 \right \}.
\label{eq_B1524}
\end{equation}
Additionally, $f$ is a one-to-one map with
$ f(\Omega_0) = \{ x \in U \, ; \, \tilde{u}(x) = r_0 \}.
$
In particular, $R_0 \subseteq f(\Omega_0)$. Denote
$$ B_0 := f^{-1}(R_0) \subseteq \Omega_0.
$$
Since $R_0 \subseteq f(\Omega_0)$ then
\begin{equation} f(B_0) = R_0.
\label{eq_B1234}
\end{equation}
 Since $B_0$ is the preimage of the Borel set $R_0$ under the continuous map $f$, then  $B_0 \subseteq \RR^{n-1}$ is measurable.
 According to (\ref{eq_B1524}) and (\ref{eq_B1234}), for each $y \in B_0$, the point $f(y)$ belongs to $\Strain_{\eps_0}[u] \subseteq
 \Strain[u]$. Since $T^{\circ}[u]$ is a partition of $\Strain[u]$,
 then for any $y \in B_0$  there exists a
unique $\cI = \cI(y) \in T^{\circ}[u]$ for which $f(y) \in \cI$.
In view of (\ref{eq_B1234}), we may rewrite (\ref{eq_B1228}) as follows:
\begin{equation} R = \bigcup_{y \in B_0} \cI(y). \label{eq_B1242} \end{equation}
For any $y \in B_0$, the set $\cI(y)$ is the relative interior of a non-degenerate transport ray.
According to
Corollary \ref{cor_A1217}
there exists
an open set $(a_y, b_y) \subseteq \RR$ containing the origin, with $a_y = -\alpha_u(f(y)), b_y = \beta_u(f(y))$, such that
\begin{equation} \cI(y) = \left \{ \Exp_t \left( \nabla u(f(y)) \right) \, ; \, t \in (a_y, b_y) \right \} \qquad \qquad \qquad  \text{for} \ y \in B_0, \label{eq_B1434} \end{equation}
and such that $t \mapsto \Exp_t( \nabla u(f(y)) )$ is a minimizing geodesic in $t \in (a_y, b_y)$ with
\begin{equation}
u \left( \Exp_t( \nabla u(f(y)) ) \right) = u(f(y)) + t \qquad \qquad \qquad \text{for} \ y \in B_0, t \in (a_y, b_y). \label{eq_B1650}
\end{equation}
The curve $t \mapsto \Exp_t( \nabla u(f(y)) )$ is a geodesic of speed one, so
\begin{equation}
|\nabla u(f(y))| = 1 \qquad \qquad \qquad \text{for} \ y \in B_0.
\label{eq_B1135_}
\end{equation}
Since $R_0$ is a
seed of a ray cluster
of continuous length, then the functions $\alpha_u, \beta_u: R_0 \rightarrow (0, +\infty]$
are continuous. Therefore $b_y = \beta_u(f(y))$ and $a_y = -\alpha_u(f(y))$ are continuous functions of $y \in B_0$, thanks
to (\ref{eq_B1234}) and the continuity of $f$.
 Consequently,
 \begin{equation}  B = \left \{ (y,t) \in \RR^{n-1} \times \RR \, ; \, y \in B_0, \ a_y < t < b_y \right \} \label{eq_B936} \end{equation}
is a parallel line-cluster. According to (\ref{eq_B1524}), (\ref{eq_B1234}) and item (i)  of Definition \ref{def_B1216},
\begin{equation}
u(f(y)) = \tilde{u}(f(y)) = r_0, \quad \nabla \tilde{u}(f(y)) =
\nabla u(f(y)) \qquad \qquad \text{for} \ y \in B_0. \label{eq_B1652} \end{equation}
For $y \in \Omega_0$ and $t \in \RR$ define
\begin{equation}  \tilde{F}(y,t) = \Exp_{t} \left( \nabla \tilde{u}(f(y)) \right), \qquad N(y,t) = \frac{\partial \tilde{F}}{\partial t}(y,t). \label{eq_B917_}
\end{equation}
Since $\cM$ is not necessarily complete, then $(y,t) \mapsto \tilde{F}(y,t)$ and $(y,t) \mapsto N(y,t)$ are well-defined on a maximal open subset of $\Omega_0 \times \RR$ that contains $\Omega_0 \times \{ 0 \}$. The functions $\tilde{u}$ and $f$ are $C^{1,1}$-maps,
and hence $$ \Omega_0 \ni y \mapsto \nabla{\tilde{u}}(f(y)) \in T \cM $$ is locally-Lipschitz.
The exponential map is smooth, and
from (\ref{eq_B917_}) we learn that $\tilde{F}$ is locally-Lipschitz. According to (\ref{eq_B1434}),
the  map $\tilde{F}$ is well-defined on the entire set $B$. Set
$$ F = \tilde{F}|_B, $$
a well-defined, locally-Lipschitz map.
From (\ref{eq_B936}), (\ref{eq_B1652}) and (\ref{eq_B917_}),
\begin{equation} F(y,t) = \tilde{F}(y,t) = \Exp_{t} \left( \nabla \tilde{u}(f(y)) \right) = \Exp_{t} \left( \nabla u(f(y)) \right)
\qquad \text{for all} \ (y,t) \in B. \label{eq_B1432} \end{equation}
We conclude from
(\ref{eq_B1242}), (\ref{eq_B1434}), (\ref{eq_B936}) and (\ref{eq_B1432}) that
$$ R = F(B). $$
Thus $F: B \rightarrow R$ is onto. We argue that for any $y_1, y_2 \in B_0$,
\begin{equation} y_1 \neq y_2 \qquad \Longrightarrow \qquad f(y_1) \not \in \cI(y_2).
\label{eq_B1119}
\end{equation}
Indeed, $u(f(y_1)) = u(f(y_2)) = r_0$ according to (\ref{eq_B1652}). Hence,
if $f(y_1) \in \cI(y_2)$ then by
(\ref{eq_B1434}) and (\ref{eq_B1650}) necessarily $f(y_1) = \Exp_t(\nabla u(f(y_2)))$ for $t = 0$.
Therefore $f(y_1) = f(y_2)$ and consequently  $y_1 = y_2$ as the function $f$ is one-to-one. This establishes (\ref{eq_B1119}). Recalling that $T^{\circ}[u]$ is a partition, we deduce from (\ref{eq_B1119}) that the union in
(\ref{eq_B1242}) is a disjoint union.
Glancing at (\ref{eq_B1434}) and (\ref{eq_B1432}),
we see that the locally-Lipschitz map
$F: B \rightarrow R$ is  one-to-one and hence invertible, as required.

\medskip Let us verify conclusion (i) of the proposition: The relation
(\ref{eq_917}) holds true in view of (\ref{eq_B936}). It follows from (\ref{eq_B1432}) that $F(y,0) = f(y)$
for all $y \in B_0$. By (\ref{eq_B1228}) and (\ref{eq_B1234}), the set $R_0 = f(B_0)$ is a seed of a ray cluster satisfying
(\ref{eq_B2030}). The definition of $a_y$ and $b_y$ above implies (\ref{eq_B1043A}),
 and (i) is proven. We move on to the proof of  conclusion (ii) of the proposition:
The fact that $t \mapsto F(y,t)$ is a minimizing geodesic whose image is the relative
interior of a transport ray follows from (\ref{eq_B1434}) and (\ref{eq_B1432}).
The relation (\ref{eq_B1104}) follows from
(\ref{eq_B1650}), (\ref{eq_B1652}) and (\ref{eq_B1432}).
Thus conclusion (ii) is proven as well.

\medskip In order to obtain
conclusion (iii) we would like to apply Lemma \ref{lem_jacobi}. To this end,
observe  that our definition (\ref{eq_B917_}) of $\tilde{F}(y,t)$ and $N(y,t)$ coincides
with that of Lemma \ref{lem_jacobi}. According to Definition \ref{def_B1216}(iii), for almost any $y_0 \in B_0 \subseteq \Omega_0$, the function $\tilde{u}$
is twice differentiable with a symmetric Hessian at $f(y_0)$. Note that the requirement (\ref{eq_B1627}) of
Lemma \ref{lem_jacobi} is satisfied in view of
(\ref{eq_B1135_}) and (\ref{eq_B1652}).
Thus, from conclusion (ii) of Lemma \ref{lem_jacobi}, for almost any Lebesgue density point $y_0 \in B_0$,
\begin{equation}  J_1(y_0, t) = \frac{\partial \tilde{F}}{\partial y_1}(y_0,t), \ldots, J_{n-1}(y_0, t) = \frac{\partial \tilde{F}}{\partial y_{n-1}}(y_0,t),
 \label{eq_B1821} \end{equation}
are well-defined Jacobi fields along the entire geodesic $t \mapsto \tilde{F}(y_0, t)$ for $t \in (a_{y_0}, b_{y_0})$.
In fact, $(y_0,t)$ is a Lebesgue density point of $B$ for any $t \in (a_{y_0}, b_{y_0})$. Recalling that $F = \tilde{F}|_B$
we conclude from
Lemma \ref{lem_jacobi}(i) that the map $F: B \rightarrow R$ is differentiable at $(y_0, t)$ whenever $t \in (a_{y_0}, b_{y_0})$.
The relation (\ref{eq_B1807}) thus follows from the validity of (\ref{eq_B1821}) for all $t \in (a_{y_0}, b_{y_0})$.
The Jacobi fields $t \mapsto J_1(y_0, t), \ldots, t \mapsto J_{n-1}(y_0, t)$
 also satisfy (\ref{eq_B1459}) and (\ref{eq_B1621}), thanks to Lemma \ref{lem_jacobi}(iii),
and the proof of (iii) is complete.

\medskip We continue with the proof of (iv). First of all, the function $F$ is locally-Lipschitz and hence
differentiable almost everywhere in $B$. According to conclusion (iii) which was proven above, for almost any $(y,t) \in B$,
\begin{equation}  T(y,t) = \left \{ \left \langle J_i(y,t), J_k(y,t) \right \rangle \right \}_{i,k=1,\ldots,n}
= \left \{ \left \langle \frac{\partial F}{\partial y_i} (y,t), \frac{\partial F}{\partial y_k} (y,t) \right \rangle \right \}_{i,k=1,\ldots,n} \label{eq_B1119_}
\end{equation}
where $\partial F / \partial y_n := \partial F / \partial t$. We will use the area formula for Lipschitz maps
from Evans and Gariepy \cite{EG}. Let us recall the relevant theory.
Let $H: \RR^n \rightarrow \RR^n$ be a Lipschitz function. The Jacobian of $H$, denoted by $J_H$,
is well-defined almost everywhere. According to \cite[Section 3.3.3]{EG}, for any measurable function $g: \RR^n \rightarrow [0, \infty)$
and a measurable set $D \subseteq \RR^n$,
\begin{equation}
 \int_{D} g(x) J_H(x) dx = \int_{\RR^n} \left[ \sum_{x \in D \cap H^{-1}(y)} g(x) \right] dy,
 \label{eq_B1719} \end{equation}
 where an empty sum is defined to be zero. We claim that
  in order to define the left-hand side and the right-hand side of (\ref{eq_B1719}),
 it suffices to know the values of $H$ in the set $D$ alone. Indeed, the Jacobian $J_H(x)$ is determined
 by $H|_D$  at any Lebesgue density point $x \in D$ in which $H$ is differentiable.
  The Kirszbraun theorem \cite[Section 3.3.1]{EG} states that any Lipschitz map from $D$ to $\RR^n$
 may be extended to a Lipschitz map from $\RR^n$ to $\RR^n$.
It therefore suffices to
 assume that $H: D \rightarrow \RR^n$ is a Lipschitz
 function in order for (\ref{eq_B1719}) to hold true.
 In fact, it is enough to assume that $H: D \rightarrow \RR^n$
 is only locally-Lipschitz. Indeed, there exist compacts $K_1 \subseteq K_2 \subseteq \ldots$
that are contained in $D$ with
$$ m \left( D \setminus \bigcup_{i=1}^{\infty} K_i \right) = 0, $$
where $m$ is the Lebesgue measure on $\RR^n$. We now apply (\ref{eq_B1719}) with the compact set $K_i$
playing the role of $D$ and use the monotone convergence theorem. This yields (\ref{eq_B1719}) for the original set $D$, even though $H$ is only
locally-Lipschitz. To summarize, when $D \subseteq \RR^n$ is a measurable set and $H: D \rightarrow \RR^n$ is a
locally-Lipschitz, one-to-one map, then for any measurable function $g: \RR^n \rightarrow [0, \infty)$,
\begin{equation}
 \int_{D} g(x) J_H(x) dx = \int_{H(D)} g(H^{-1}(y)) dy.
 \label{eq_B1736} \end{equation}
 Next, what happens if the range of $H$ is not a Euclidean space, but a Riemannian manifold $\cM$? In this case, we claim that
 for any measurable set $D \subseteq \RR^n$ and a locally-Lipschitz map $H: D \rightarrow \cM$ which is one-to-one,
\begin{equation}
 \int_{D} \vphi(x) \sqrt{\det T(x)} dx = \int_{H(D)} \vphi(H^{-1}(y)) d \lambda_{\cM}(y),
 \label{eq_B1741} \end{equation}
 for any measurable $\vphi: \RR^n \rightarrow [0, \infty)$.
 Here, $T(x) = \left( \langle \partial H/ \partial x_i, \partial H/ \partial x_j \rangle \right)_{i,j=1,\ldots,n}$.
 Note that (iv) follows from (\ref{eq_B1119_}) and (\ref{eq_B1741}), with $D = B, H = F$ and $\vphi = 1_{H^{-1}(A)}$. In order to deduce (\ref{eq_B1741})
 from (\ref{eq_B1736}) we need to work in a local chart, and observe that $\sqrt{\det T(x)}$ is the Riemannian
 volume of the parallelepiped spanned by the tangent vectors
 $$ \frac{\partial H}{\partial x_1}, \ldots, \frac{\partial H}{\partial x_n}. $$
 The usual Jacobian $J_H(x)$ is the Euclidean volume of this parallelepiped in our local chart.
 We conclude that $\sqrt{\det T(x)} / J_H(x)$ is precisely the density of the Riemannian volume measure
 $\lambda_{\cM}$ at the point $H(x)$ in our local chart. By setting $$ g(x) = \vphi(x) \sqrt{\det T(x)} / J_H(x), $$
 we deduce (\ref{eq_B1741}) from (\ref{eq_B1736}).
\end{proof}

\begin{remark}{\rm
It suffices to assume that $A \subseteq R$ is a measurable set in order for (\ref{eq_B1314A})
to hold true. In fact, denote by $\theta$ the complete measure on the set $B$ whose density
is $(y,t) \mapsto \sqrt{\det T(y,t)}$. Note also that
the restriction of $\lambda_{\cM}$ to $R$ is a complete measure on $R$.
The validity of (\ref{eq_B1314A}) for all
Borel subsets of $R$ and a standard measure-theoretic argument
show that a subset $A \subseteq R$ is $\lambda_{\cM}$-measurable if and only if $F^{-1}(A)$
is $\theta$-measurable. Therefore, $F$ pushes forward the measure $\theta$ to the restriction of $\lambda_{\cM}$ to the ray cluster $R$.
} \label{rem_1318} \end{remark}

\begin{remark}{\rm What happens if the ray cluster $R$ from
Proposition \ref{prop_B1122} is not assumed to be of {\it continuous length}?
The assumption that the ray cluster $R$ is of continuous length was mainly used
to prove that the set $B$ defined in (\ref{eq_B936}) is a parallel line-cluster.
Without the assumption that $R$ is of continuous length, the functions
$$ b_y = \beta_u(f(y)), \qquad a_y = -\alpha_u(f(y)) $$ are still measurable functions of $y \in B_0$, thanks to Lemma \ref{lem_1055}
and the continuity of $f$. Therefore $B$ is an {\it almost line-cluster}.
We thus see that only minor changes will occur in the conclusion of the proposition,
if the ray cluster $R$ is not assumed to be of continuous length.
One obvious change would be that $B$
becomes an almost line-cluster, and not a parallel line-cluster.
The only additional change
is that
\begin{center} ``for all $t \in (a_{y_0}, b_{y_0})$''
\end{center}
in the second line of (iii) and also in (\ref{eq_B1807}) will be replaced by
\begin{center} ``for almost all $t \in (a_{y_0}, b_{y_0})$''.
\end{center}  Indeed,
the function $F = \tilde{F}|_B$ is differentiable at $(y_0, t)$ and it satisfies the equality in (\ref{eq_B1807}) at any
point $(y_0, t) \in B$ which is a Lebesgue density point of $B$. By the Lebesgue density theorem,
for almost any $y_0 \in B$ and for almost any $t \in (a_{y_0}, b_{y_0})$, the point $(y_0, t) \in B$ is a
Lebesgue density point of $B$.
To conclude, we are allowed to apply Proposition \ref{prop_B1122}, with the aforementioned tiny changes, even if the ray cluster $R$ is not assumed to be of continuous length.
\label{rem_942}
}\end{remark}

For a subset $A \subseteq \cM$  define $\Ends(A) \subseteq \cM$ to be the union
of all relative {\it boundaries} of transport rays intersecting $A$. In other words,
a point $x \in \cM$
belongs to $\Ends(A)$ if and only if there exists a transport ray $\cI \in T[u]$,
whose relative boundary contains $x$, such that
$ A \cap \cI \neq \emptyset$.

\begin{lemma} Let $u :\cM \rightarrow \RR$ satisfy $\| u \|_{Lip} \leq 1$ and let $R \subseteq \Strain[u]$ be a ray cluster. Then,
$$ \lambda_{\cM}(\Ends(R)) = 0. $$
\label{lem_1054}
\end{lemma}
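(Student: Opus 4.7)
The plan is to apply Proposition~\ref{prop_B1122}, invoking Remark~\ref{rem_942} to cover the case when $R$ is not of continuous length, to parametrize $R$ as $F(B)$, where
\[ B = \{(y,t) \in \RR^{n-1} \times \RR : y \in B_0,\ a_y < t < b_y\} \]
is an almost line-cluster with $a_y, b_y$ measurable. Here $F = \tilde{F}|_B$ and $\tilde{F}(y,t) = \Exp_t(\nabla \tilde{u}(f(y)))$ is locally-Lipschitz on a maximal open domain $\cD \subseteq \RR^{n-1} \times \RR$ containing $B$. For each $y \in B_0$ the curve $t \mapsto \tilde{F}(y,t)$ is a smooth geodesic which agrees, on the parameter set $A_y \supseteq (a_y, b_y)$ of the transport ray $\tilde{\cI}_y$ through $f(y)$, with the minimizing geodesic provided by Lemma~\ref{lem_1102}.

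Continuity of $u$ together with the maximality in Definition~\ref{def_1050} shows that transport rays are closed in $\cM$, so $A_y \setminus (a_y, b_y) \subseteq \{a_y, b_y\}$ and the relative boundary of $\tilde{\cI}_y$ consists of at most two points. Whenever $b_y \in A_y$, that endpoint lies in $\cM$; the identity $\nabla \tilde{u}(f(y)) = \nabla u(f(y))$ from Definition~\ref{def_B1216}(i) together with the uniqueness of solutions to the geodesic ODE ensures that $\tilde{F}(y,\cdot)$ extends past $b_y$, so openness of $\cD$ gives $(y, b_y) \in \cD$ with $\tilde{F}(y,b_y)$ equal to the corresponding endpoint of $\tilde{\cI}_y$; the analogous statement holds for $a_y \in A_y$. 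Setting
\[ G^+ = \{(y, b_y) : y \in B_0,\ b_y \in A_y\}, \qquad G^- = \{(y, a_y) : y \in B_0,\ a_y \in A_y\}, \]
I therefore obtain $G^\pm \subseteq \cD$ and $\Ends(R) = \tilde{F}(G^+) \cup \tilde{F}(G^-)$.

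The conclusion then reduces to two standard facts. First, by Lemma~\ref{lem_1055} and continuity of $f$, the functions $y \mapsto b_y = \beta_u(f(y))$ and $y \mapsto a_y = -\alpha_u(f(y))$ are Borel-measurable on $B_0 \subseteq \RR^{n-1}$, so each of $G^+, G^-$ is contained in the graph of a measurable $\RR$-valued function on a measurable subset of $\RR^{n-1}$; Fubini's theorem gives that such a graph has $n$-dimensional Lebesgue measure zero in $\RR^{n-1} \times \RR$. Second, the open set $\cD$ is $\sigma$-compact and $\tilde{F}|_K$ is Lipschitz on every compact $K \subseteq \cD$; working in charts of $\cM$, Lipschitz maps from $\RR^n$ into $\cM$ send Lebesgue-null sets to $\lambda_{\cM}$-null sets. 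Exhausting $\cD$ by compacts and summing, I obtain $\lambda_{\cM}(\tilde{F}(G^\pm)) = 0$, and hence $\lambda_{\cM}(\Ends(R)) = 0$.

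The hard part is really the bookkeeping step in the second paragraph: cleanly relating the (at most two) relative-boundary points of each closed transport ray $\tilde{\cI}_y$ to values of $\tilde{F}$ at $(y, a_y)$ and $(y, b_y)$, and verifying that these arguments lie in the open domain of $\tilde{F}$. Everything after that is a routine application of Fubini's theorem combined with the Lipschitz preservation of null sets.
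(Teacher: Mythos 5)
Your proof is correct and follows essentially the same route as the paper: apply Proposition~\ref{prop_B1122} via Remark~\ref{rem_942} to parametrize $R = F(B)$, identify $\Ends(R)$ as the image under the locally-Lipschitz extension $\tilde{F}$ of a set contained in the graphs of the two measurable functions $y \mapsto a_y$ and $y \mapsto b_y$, and conclude by Fubini together with Lipschitz preservation of null sets. The only cosmetic difference is that the paper sidesteps your careful verification that $G^{\pm}$ lie in the domain $\cD$ by simply writing the set as $\{\tilde{F}(y,t): t \in \RR \cap \{a_y, b_y\},\ \tilde{F}(y,t)\ \text{is well-defined}\}$, i.e., by fiat intersecting with $\cD$, which costs nothing since the set is still contained in the two graphs.
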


\begin{proof} We can assume that $R \neq \emptyset$.
We may apply Proposition \ref{prop_B1122}(ii) thanks to Remark
\ref{rem_942}. Whence,
\begin{equation} R = \left \{ F(y,t) \, ; \, y \in B_0, \  a_y < t < b_y \right \}. \label{eq_B948_} \end{equation}
Furthermore,  $F = \tilde{F}|_B$ where $\tilde{F}$ as defined in (\ref{eq_B917_}) is a locally-Lipschitz map which is well-defined in a maximal open subset
of $\Omega_0 \times \RR$ containing $\Omega_0 \times \{ 0 \}$. We claim that
\begin{equation}
\Ends(R) = \left \{ \tilde{F}(y,t) \, ; \, y \in B_0,\ t \in \RR \cap \{ a_y, b_y \}, \ \tilde{F}(y,t) \ \text{is well-defined} \right \}.
\label{eq_B949_}
\end{equation}
Indeed, fix an arbitrary point $x \in R$.
Since $R \subseteq \Strain[u]$,
then according to Lemma
\ref{lem_A1031}, there is a unique transport ray $\cI \in T[u]$ containing $x$.
The relative interior of $\cI$ contains the point $x$.
By Proposition \ref{prop_B1122}(ii), the relative interior of $\cI$ must take the form
\begin{equation}  \left \{ F(y,t) \, ; \, t \in (a_y, b_y) \right \} \label{eq_B1016A} \end{equation}
for a certain $y \in B_0$.
The transport ray $\cI \subseteq \cM$ is a closed set.
Recall that $F = \tilde{F}|_B$, and that the curve $t \mapsto F(y,t)$
 is a minimizing geodesic in $t \in (a_y, b_y)$.
  We thus deduce from
 (\ref{eq_B917_}), (\ref{eq_B1016A}) and Lemma \ref{lem_1102} that
\begin{equation} \cI = \left \{ \tilde{F}(y,t) \, ; \, t \in \RR \cap [a_y, b_y], \ \tilde{F}(y,t) \ \text{is well-defined} \right \}. \label{eq_B1731Q}
\end{equation}
Since $x \in R$ was an arbitrary point, the relation (\ref{eq_B949_})  follows
from the representation (\ref{eq_B1731Q}) of the unique transport ray $\cI$ containing $x$. Consider the set
\begin{equation}  \left \{ (y,t) \in B_0 \times \RR \, ; \, t \in \{ a_y, b_y \},  \ \tilde{F}(y,t) \ \text{is well-defined} \right \}. \label{eq_B951_}
\end{equation}
This set is contained in the union of two graphs of measurable functions, and hence it is a set of measure zero in $\RR^{n-1} \times \RR$.
Since $\Ends(R)$ is the image of the set in (\ref{eq_B951_}) under the locally-Lipschitz map $\tilde{F}$, then $\Ends(R)$ is a null-set
in the $n$-dimensional manifold $\cM$.
\end{proof}

\subsection{Decomposition into ray clusters}
\label{sec_decomp}
\setcounter{equation}{0}

As before, we write $\lambda_{\cM}$ for the Riemannian
volume measure on the geodesically-convex, Riemannian manifold $\cM$.
Our main result in this subsection is the following:

\begin{proposition} Let $u: \cM \rightarrow \RR$ satisfy $\| u \|_{Lip} \leq 1$. Then there exists a countable
family $\{ R_i \}_{i=1,\ldots,\infty}$ of disjoint ray clusters of continuous length such that
$$  \lambda_{\cM} \left( \Strain[u] \setminus \left( \bigcup_{i=1}^{\infty} R_i \right) \right) = 0. $$
\label{cfm}
\end{proposition}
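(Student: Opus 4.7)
The plan is to build the decomposition locally from Theorem \ref{prop_939} together with the $C^{1,1}$ implicit function theorem (Lemma \ref{lem_1345}(iii)), and then to globalize via second-countability and standard measure-theoretic bookkeeping.

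Fix $k\geq 1$. For each $p\in\Strain_{1/k}[u]$ one has $|\nabla u(p)|=1$ by Lemma \ref{lem_1046}, so applying Theorem \ref{prop_939} with $\eps_0=1/k$ yields a $C^{1,1}$ extension $\tilde u$ on a ball $B_{\cM}(p,\delta)$ that agrees with $u$ and $\nabla u$ on $B_{\cM}(p,\delta)\cap\Strain_{1/k}[u]$; shrinking $\delta$ gives $|\nabla\tilde u|>1/2$ throughout, so Lemma \ref{lem_1345}(iii) produces an open $V\ni p$, an open box $\Omega_0\times(a,b)$, and a $C^{1,1}$-diffeomorphism $G:\Omega_0\times(a,b)\to V$ with $\tilde u\circ G(y,t)=t$. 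Second-countability of $\cM$ and Lindelöf give a countable subfamily $\{(V_{k,j},\tilde u_{k,j},G_{k,j},\Omega_{k,j},a_{k,j},b_{k,j})\}_{j\ge 1}$ whose $V_{k,j}$'s cover $\Strain_{1/k}[u]$. Since each $\tilde u_{k,j}$ is $C^{1,1}$, its gradient is locally Lipschitz, hence differentiable off a Lebesgue-null set $N_{k,j}\subseteq V_{k,j}$ by Rademacher; at points of $V_{k,j}\setminus N_{k,j}$ the function $\tilde u_{k,j}$ is twice differentiable, with symmetric Hessian because the distributional mixed second partials of a $C^{1,1}$ function coincide as $L^\infty$ functions and equal the classical derivatives wherever these exist. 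The coarea formula, combined with $|\nabla\tilde u_{k,j}|>1/2$, forces $N_{k,j}\cap\{\tilde u_{k,j}=r\}$ to have zero $\mathcal{H}^{n-1}$-measure for almost every $r$; fix a countable dense subset $D_{k,j}$ of such ``good'' levels and define the Borel seeds
$$R_0^{k,j,r}\;=\;\{\tilde u_{k,j}=r\}\cap V_{k,j}\cap\Strain_{1/k}[u],\qquad r\in D_{k,j},$$
with parameterization $f(y)=G_{k,j}(y,r)$; items (i)--(iv) of Definition \ref{def_B1216} then hold with $\eps_0=1/k$ and $r_0=r$.

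Let $R^{k,j,r}$ be the corresponding ray cluster. Given $p\in\Strain[u]$, choose $k$ with $\ell_u(p)>1/k$ and $j$ with $p\in V_{k,j}$. The transport ray $\cI$ of $p$ meets $\{\tilde u_{k,j}=r\}$ at the unique point $q_r$ with $u(q_r)=r$, at arc-length $|r-u(p)|$ from $p$; the identities $\alpha_u(q_r)=\alpha_u(p)+(r-u(p))$ and $\beta_u(q_r)=\beta_u(p)-(r-u(p))$ show $q_r\in\Strain_{1/k}[u]\cap V_{k,j}$ (so $q_r\in R_0^{k,j,r}$) whenever $|r-u(p)|$ is small enough; density of $D_{k,j}$ supplies such an $r$, hence $\cI\subseteq R^{k,j,r}$. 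Thus the countable family $\{R^{k,j,r}\}$ covers $\Strain[u]$ on the nose.

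To enforce continuous length, apply Lusin's theorem to the Borel map $(\alpha_u,\beta_u)$ on each seed (with respect to the pull-back of $\mathcal{H}^{n-1}$): for each $m\geq 1$ extract a compact set on which $(\alpha_u,\beta_u)$ is continuous and whose complement in the seed has measure at most $1/m$, then disjointify over $m$ to obtain a countable Borel partition of the seed, modulo a null set, into pieces of continuous length. By Proposition \ref{prop_B1122}(iv) together with Remark \ref{rem_942}, a null set in the base corresponds to a $\lambda_{\cM}$-null set in $\cM$ under the ray-cluster parameterization, so the refined family of ray clusters still covers $\Strain[u]$ up to a $\lambda_{\cM}$-null set. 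Enumerate it as $\{\tilde R_i\}_{i\geq 1}$, with Borel continuous-length seeds $\tilde R_{0,i}$. Each $\tilde R_i$ is Borel in $\cM$: the map $(x,t)\mapsto\Exp_t(\nabla u(x))$ is a continuous injection from the Borel subset $\{(x,t)\in\tilde R_{0,i}\times\RR:\,-\alpha_u(x)<t<\beta_u(x)\}$ of a Polish space onto $\tilde R_i$, so the Lusin--Souslin theorem applies. Finally set $R_i:=\tilde R_i\setminus\bigcup_{j<i}\tilde R_j$; since each $\tilde R_j$ is a union of complete relative interiors of transport rays, so is $R_i$, and its seed $R_{0,i}:=\tilde R_{0,i}\setminus\bigcup_{j<i}\tilde R_j$ is a Borel subset of a continuous-length seed. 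Thus $\{R_i\}_{i\ge 1}$ is a disjoint family of ray clusters of continuous length with $\lambda_{\cM}(\Strain[u]\setminus\bigcup_i R_i)=0$. The most delicate step is the Lusin--Souslin argument establishing Borel measurability of each $\tilde R_i$, which is precisely what lets naive disjointification preserve the Borel-seed requirement of Definition \ref{def_B1216}.
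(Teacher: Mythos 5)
Your argument is essentially the paper's: the local $C^{1,1}$ extension plus implicit-function-theorem construction mirrors Lemma~\ref{lem_B1745}, the second-countable covering and disjointification mirror Lemma~\ref{lem_B952}, and the Lusin step for continuous length mirrors the proof of Proposition~\ref{cfm} itself. The remaining differences — coarea rather than Fubini for selecting good levels, Lusin applied on the seed with $\mathcal{H}^{n-1}$ instead of on $B_0\subseteq\RR^{n-1}$ with Lebesgue measure, and a Lusin--Souslin argument in place of the explicit Borel test of Lemma~\ref{cor_2108} — are cosmetic and do not change the substance.
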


We begin the proof of Proposition \ref{cfm} with the following lemma.

\begin{lemma} Let $u :\cM \rightarrow \RR$ satisfy $\| u \|_{Lip} \leq 1$. Let $R \subseteq \Strain[u]$ be any ray cluster
associated with $u$.
Then $R$ is a Borel subset of $\cM$.
\label{cor_2108}
\end{lemma}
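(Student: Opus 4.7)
My plan is to realize $R$ as $S \cap g^{-1}(R_0)$ for a Borel-measurable ``retraction'' map $g$, which reduces Borel measurability of $R$ to that of $R_0$. Let $R_0$ be the Borel seed of $R$, with associated parameters $r_0 \in \RR$ and $\eps_0 > 0$ as in Definition \ref{def_B1216}. Because $u$ and the $C^{1,1}$-extension $\tilde u$ coincide on $\Strain_{\eps_0}[u] \cap U$, every point of $R_0$ satisfies $u = r_0$. Define
\[
 S \;=\; \bigl\{ x \in \Strain[u] \,:\, -\alpha_u(x) < r_0 - u(x) < \beta_u(x) \bigr\},
\]
which is a Borel set by Lemma \ref{lem_1055} combined with the continuity of $u$. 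For $x \in S$, Corollary \ref{cor_A1217} guarantees that the geodesic $t \mapsto \Exp_t(\nabla u(x))$ is defined throughout $(-\alpha_u(x), \beta_u(x))$, and in particular at $t = r_0 - u(x)$; so the map
\[
 g(x) \;:=\; \Exp_{r_0 - u(x)}(\nabla u(x))
\]
is well-defined on $S$.

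Next I would establish the identity $R = S \cap g^{-1}(R_0)$. For the forward inclusion, if $x \in R$ then $x$ lies in some $\cI \in T^{\circ}[u]$ meeting $R_0$; since $u$ grows at unit speed along the geodesic parametrization of $\cI$ (Corollary \ref{cor_A1217}), the unique point $y$ of $\cI \cap R_0$ equals $\Exp_{r_0 - u(x)}(\nabla u(x)) = g(x)$, which also forces $r_0 - u(x) \in (-\alpha_u(x), \beta_u(x))$, hence $x \in S$ and $g(x) \in R_0$. Conversely, if $x \in S$ with $g(x) \in R_0 \subseteq \Strain[u]$, then $g(x)$ lies in the relative interior of the transport ray through $x$, so that transport ray meets $R_0$ and $x \in R$.

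It then remains to verify that $g:S \to \cM$ is Borel measurable, for which the only non-routine step is Borel measurability of $\nabla u$ on $\Strain[u]$. I would invoke Theorem \ref{prop_939}: for each positive integer $k$ and each $p \in \cM$ there is an open ball $B_{\cM}(p, \delta_{p,k})$ and a $C^{1,1}$ function $\tilde u_{p,k}$ whose gradient agrees with $\nabla u$ throughout $B_{\cM}(p, \delta_{p,k}) \cap \Strain_{1/k}[u]$. Using second-countability of $\cM$ to extract a countable subcover, one expresses $\nabla u$ as a gluing of continuous maps over countably many Borel pieces, and therefore as a Borel-measurable map into $T\cM$. Composing with the continuous function $u$ and the smooth exponential map then makes $g$ Borel measurable on $S$, and so $R = S \cap g^{-1}(R_0)$ is Borel. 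The only step requiring genuine care is this measurability of $\nabla u$; the rest is bookkeeping with the results already at hand.
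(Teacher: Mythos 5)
Your proposal is correct and takes essentially the same approach as the paper: you characterize $R$ by the same two conditions (membership in $S$, i.e., $r_0 - u(x) \in (-\alpha_u(x),\beta_u(x))$, and $g(x) = \Exp_{r_0 - u(x)}(\nabla u(x)) \in R_0$), and conclude Borel measurability from that of $\alpha_u,\beta_u, u, R_0$ and of the map $g$. The only minor difference is that for the Borel measurability of $\nabla u$ on $\Strain[u]$ you invoke Theorem \ref{prop_939} and a countable covering of $\Strain_{1/k}[u]$ by balls carrying a $C^{1,1}$ extension, whereas the paper notes more directly that $\nabla u$ is a pointwise limit of Borel difference quotients; both sub-arguments are valid.
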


\begin{proof} We may assume that $R \neq \emptyset$. According to Proposition \ref{prop_B1122} and Remark \ref{rem_942} we
know that $R = F(B)$
where $B$ is an almost-line cluster. Let $R_0 \subseteq \cM$ and $r_0 \in \RR$ be as in
Proposition \ref{prop_B1122}. We claim that a given point $x \in \Strain[u]$ belongs to $R$ if and only if the following two conditions are met:
\begin{enumerate}
\item[(A)] $\displaystyle r_0 - u(x) \in (- \alpha_u(x), \beta_u(x))$.
\item[(B)] $\displaystyle \Exp_{r_0 - u(x)}(\nabla u(x)) \in R_0$.
\end{enumerate}
In order to prove this claim,  assume that $x \in \Strain[u]$ satisfies conditions (A) and (B).
Since $T^{\circ}[u]$ is a partition of $\Strain[u]$, there exists $\cI \in T^{\circ}[u]$ such that $x \in \cI$. From (A) and Corollary \ref{cor_A1217} the point $\Exp_{r_0 - u(x)}(\nabla u(x))$ belongs to $\cI$,
while condition (B) shows that this point belongs to $R_0$. Hence $\cI \cap R_0 \neq \emptyset$.
From Definition \ref{def_B1215} we obtain that  $\cI \subseteq R$ and consequently $x \in R$. Conversely, assume that $x \in R$.
According to  Proposition \ref{prop_B1122} there exists $(y,t)\in B$ for which  $F(y,t) = x$ and $u(x) = t + r_0$.
Additionally, $$ \alpha_u(x) = t - a_y, \qquad \beta_u(x) = b_y - t, $$ in the notation of Proposition \ref{prop_B1122}.
Since $B$ is an almost-line cluster,  then
 $0 \in (a_y, b_y)$ and consequently $r_0 - u(x) = -t \in (a_y - t, b_y - t) = (-\alpha_u(x), \beta_u(x))$. We have thus verified condition (A).
By Proposition \ref{prop_B1122} and Corollary \ref{cor_A1217}, we have
$ R_0 \ni F(y,0) = \Exp_{r_0 - u(x)}(\nabla u(x))$, and (B) follows as well.

\medskip Recall that the set $\Strain[u]$ is Borel  according to Lemma \ref{lem_1055},
as well as the functions $\alpha_u, \beta_u: \cM \rightarrow \RR \cup \{ \pm \infty \}$. Since $u$ is continuous, then the collection of all $x \in \Strain[u]$ satisfying
condition (A) is a Borel set. As for condition (B), the set $R_0$ is a seed of a ray cluster and by definition
it is a Borel set. Consider the partially-defined function
\begin{equation}  \Strain[u] \ni x \mapsto \Exp_{r_0 - u(x)}(\nabla u(x)) \in \cM. \label{eq_B938A} \end{equation}
We claim that this function is well-defined on a Borel subset of $\Strain[u]$, and that it is a Borel map.
Indeed, Lemma \ref{lem_1046} shows that the Lipschitz function $u$ is differentiable in the Borel set $\Strain[u]$.
Consequently $\nabla u: \Strain[u] \rightarrow T \cM$ is a well-defined Borel map, as it may be represented as a pointwise
limit of Borel maps. The exponential map is continuous
and the domain of definition of the partially-defined map
$$ T \cM \times \RR \ni (v,t) \mapsto \Exp_t(v) \in \cM $$
is an open set. Hence the map in (\ref{eq_B938A}) is a Borel map
which is defined on a Borel subset of $\Strain[u]$. We conclude that the collection of all $x \in \Strain[u]$ satisfying condition (B) is Borel, being
the preimage
of the Borel set $R_0$ under the Borel map (\ref{eq_B938A}). Therefore the set $R \subseteq \Strain[u]$,
which is defined by conditions (A) and (B), is a Borel set.\end{proof}

\begin{lemma} Let $u: \cM \rightarrow \RR$ satisfy $\| u \|_{Lip} \leq 1$. Assume that $R, R_1, R_2, \ldots,  R_L \subseteq \Strain[u]$
are ray clusters. Then also $R \setminus (\bigcup_{i=1}^{L}) R_i$ is a ray cluster. \label{lem_B1747}
\end{lemma}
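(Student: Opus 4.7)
The plan is to realize the set-difference $R':=R\setminus\bigcup_{i=1}^{L}R_{i}$ as the ray cluster generated by a suitably cut-down seed of $R$. Let $R_{0}$ be a seed producing $R$, with parameters $r_{0},\eps_{0},U,\Omega_{0},\tilde{u},f$ as in Definition \ref{def_B1216}. I would define
$$R_{0}':=R_{0}\setminus\bigcup_{i=1}^{L}R_{i}$$
and claim that $R_{0}'$ is a seed of a ray cluster (using the same parameters $r_{0},\eps_{0},U,\Omega_{0},\tilde{u},f$) and that the ray cluster it generates equals $R'$.

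For the seed property: items (i)--(iii) of Definition \ref{def_B1216} depend only on $U,\tilde{u},f,\Omega_{0},r_{0},\eps_{0}$, so they are inherited automatically. Item (iv) holds for $R_{0}'$ because $R_{0}'\subseteq R_{0}$. The only nontrivial thing to verify is that $R_{0}'$ is Borel, and for this I would invoke Lemma \ref{cor_2108}, which guarantees that each ray cluster $R_{i}$ is a Borel subset of $\cM$; since $R_{0}$ is Borel by hypothesis, the set-difference $R_{0}'$ is Borel.

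The key structural observation — which I would make explicit — is that every ray cluster is a union of elements of the partition $T^{\circ}[u]$: indeed, by Definition \ref{def_B1215} combined with Lemma \ref{lem_A317}, if $\cI\in T^{\circ}[u]$ meets $R_{i}$, then in fact $\cI\subseteq R_{i}$, because the unique equivalence class through any point of $R_{i}$ necessarily hits the seed $R_{0}^{(i)}$. Using this, the identification of $R'$ with the ray cluster of $R_{0}'$ is a short bookkeeping argument. If $x$ lies in the ray cluster generated by $R_{0}'$, pick $\cI\in T^{\circ}[u]$ with $x\in\cI$ and $z\in\cI\cap R_{0}'$; then $z\in R_{0}$ gives $x\in R$, while $z\notin R_{i}$ together with the observation above forces $\cI\cap R_{i}=\emptyset$ and hence $x\notin R_{i}$. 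Conversely, if $x\in R\setminus\bigcup_{i}R_{i}$, choose $\cI\ni x$ with $z\in\cI\cap R_{0}$; if $z$ were in some $R_{i}$, the structural observation would give $\cI\subseteq R_{i}$ and thus $x\in R_{i}$, a contradiction, so $z\in R_{0}'$ and $x$ lies in the ray cluster of $R_{0}'$.

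I do not expect a genuine obstacle here: the only input that goes beyond routine set-theoretic manipulation is the Borel-measurability of each $R_{i}$, which is exactly the content of Lemma \ref{cor_2108}. Everything else reduces to the fact, already packaged into Definition \ref{def_B1215}, that ray clusters are saturated with respect to the equivalence relation defining $T^{\circ}[u]$.
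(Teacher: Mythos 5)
Your proposal is correct and follows essentially the same route as the paper: cut down the seed $R_0$ to $R_0 \setminus \bigcup_i R_i$, use Lemma \ref{cor_2108} to get Borel-measurability (and hence the seed property, via the remark after Definition \ref{def_B1216}), and then check that the resulting ray cluster is $R \setminus \bigcup_i R_i$. The only difference is that you spell out the final bookkeeping step — that ray clusters are saturated unions of $T^{\circ}[u]$-classes — which the paper compresses into ``as follows from Definition \ref{def_B1215} and the fact that $T^{\circ}[u]$ is a partition of $\Strain[u]$.''
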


\begin{proof} Denote by $R_0$ the seed of the ray cluster $R$
provided by Definition \ref{def_B1215}. Then $R_0$ is a Borel set.
Lemma \ref{cor_2108} implies that $\tilde{R}_0 = R_0 \setminus (\cup_{i=1}^{L} R_i)$
is a Borel set as well. By the remark following Definition \ref{def_B1216},
the set $\tilde{R}_0$ is a seed of a ray cluster associated with $u$. In fact, the set $\tilde{R}_0$
is the seed of the ray cluster $R \setminus (\cup_{i=1}^{L} R_i)$, as follows from Definition \ref{def_B1215}
and the fact that $T^{\circ}[u]$ is a partition of $\Strain[u]$.
\end{proof}

The equality of the mixed second derivatives of $C^{1,1}$-functions, stated in the following lemma, is of
great importance to us.

\begin{lemma} Let $U \subseteq \RR^n$ be an open set and let $f: U \rightarrow \RR$
be a $C^{1,1}$-function. Then for $i,j=1,\ldots,n$, the functions $\partial_i f$ and $\partial_j f$ are differentiable almost everywhere in $U$, with
\begin{equation} \partial_i \left( \partial_j f \right) \, = \,
\partial_j \left( \partial_i f \right)
\qquad \qquad \text{almost everywhere in} \ U. \label{eq_1653} \end{equation}
\label{lem_1357}
\end{lemma}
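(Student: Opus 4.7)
The plan is to reduce the statement to two standard facts: Rademacher's theorem, and the identification of classical and distributional derivatives for Lipschitz functions.

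First, since $f$ is $C^{1,1}$, each partial derivative $\partial_j f$ is, by definition, locally Lipschitz on $U$. Rademacher's theorem therefore guarantees that $\partial_j f$ is differentiable at almost every point of $U$, so the iterated partial derivatives $\partial_i(\partial_j f)$ and $\partial_j(\partial_i f)$ are well-defined as measurable, locally bounded functions almost everywhere in $U$. This handles the first assertion of the lemma and sets up the objects that need to be compared.

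For the symmetry (\ref{eq_1653}), I would pass through distribution theory. The key intermediate fact is that for any locally Lipschitz function $g$ on $U$, the pointwise partial derivatives (defined a.e.\ by Rademacher) coincide almost everywhere with the distributional partial derivatives; this is a standard consequence of mollification, since for $g_\eps = g * \rho_\eps$ one has $\partial_i g_\eps = (\partial_i g) * \rho_\eps$ in the classical sense once $\partial_i g$ is interpreted distributionally, and one may pass to the limit using dominated convergence together with pointwise differentiability. Applying this to $g = \partial_j f$, the classical function $\partial_i(\partial_j f)$ represents the distributional mixed partial $\partial_i \partial_j f$, and likewise $\partial_j(\partial_i f)$ represents $\partial_j \partial_i f$.

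The proof then concludes by the trivial observation that distributional partial derivatives commute: for any test function $\vphi \in C_c^\infty(U)$ one has by classical calculus $\partial_i \partial_j \vphi = \partial_j \partial_i \vphi$, whence by definition of the distributional derivative
\[
\int_U f \cdot \partial_j \partial_i \vphi \, = \, \int_U f \cdot \partial_i \partial_j \vphi,
\]
which says $\partial_i \partial_j f = \partial_j \partial_i f$ as distributions on $U$. Matching distributional and pointwise descriptions via the previous step yields the a.e.\ equality (\ref{eq_1653}).

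The main obstacle in this outline is the identification of classical and weak derivatives for Lipschitz functions; everything else is formal. However, this identification is entirely standard (it appears, for instance, in Evans and Gariepy \cite{EG}), so the proof is essentially a quotation of two well-known theorems plus the distributional commutation of partials. No novel estimates are required.
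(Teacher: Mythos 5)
Your proof is correct, but it takes a genuinely different route from the paper's. You argue abstractly through distribution theory: Rademacher provides the a.e.\ pointwise second partials; the standard identification of Rademacher derivatives with distributional derivatives for locally Lipschitz functions (provable either via mollification as you sketch, or more directly by passing to the limit in difference quotients using the dominated convergence theorem) shows that $\partial_i(\partial_j f)$ and $\partial_j(\partial_i f)$ both represent the corresponding distributional mixed partials; and the commutativity of distributional derivatives, which is immediate from $\partial_i\partial_j\vphi = \partial_j\partial_i\vphi$ for test functions, forces the two $L^1_{loc}$ representatives to agree a.e. The paper instead gives a self-contained, hands-on argument: it reduces to $n=2$ by slicing, sets $h = \partial_x(\partial_y f)$, reconstructs $f$ by integrating $h$ twice with Fubini, and then differentiates the resulting identity in $x$ and $y$ to read off the symmetry directly. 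Your approach is shorter and arguably more conceptual, at the cost of quoting the Lipschitz weak-derivative identification as a black box; the paper's approach avoids distribution theory entirely, relying only on the fundamental theorem of calculus and Fubini, which keeps the manuscript self-contained given that it otherwise has no need for the distributional framework. Both are standard and correct; neither has a gap.
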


\begin{proof} Let $x_0 \in U$. It suffices to prove the lemma in an open neighborhood
of $x_0$, in which $f$ and $\partial_1 f, \ldots, \partial_n f$ are Lipschitz functions.
By the Rademacher theorem, the functions $\partial_1 f,\ldots, \partial_n f$ are differentiable almost everywhere in $U$.
By considering slices of $U$, we see that it
suffices to prove (\ref{eq_1653}) assuming that $n=2$ and that $U$ is a rectangle parallel to the axes, of the form
$$ U = \left \{ (x,y) \in \RR^2 \, ; \, a < x < b, \, c < y < d \right \}. $$
Denote
$$ h = \frac{\partial}{\partial x} \left( \frac{\partial f}{\partial y} \right). $$
Since $\partial f / \partial y$ is Lipschitz, then $h$ is an $L^{\infty}$-function. Furthermore, for any $(x,y) \in U$,
$$ \frac{\partial f}{\partial y}(x,y) = \frac{\partial f}{\partial y}(a,y) + \int_a^x h(t,y) dt. $$
Integrating with respect to the $y$-variable we see that for any $(x,y) \in U$,
\begin{equation}
 f(x,y) = f(x,c) + \int_c^y \frac{\partial f}{\partial y}(x,s) ds = f(x,c) + \int_c^y \frac{\partial f}{\partial y}(a,s) ds +
\int_{[a,x] \times [c, y]} h, \label{eq_1651} \end{equation}
where the use of Fubini's theorem is legitimate as $h$ is an $L^{\infty}$-function on $U$.
Differentiating (\ref{eq_1651}) with respect to $x$, we deduce
that the Lipschitz function $\partial f / \partial x$ satisfies
\begin{equation}
\frac{\partial f}{\partial x}(x,y) = \frac{\partial f}{\partial x}(x,c) + \int_c^y h(x,s) ds  \label{eq_12121}
\end{equation}
almost everywhere in $U$. Both the left-hand side and the right-hand side of (\ref{eq_12121}) are differentiable
with respect to $y$ almost everywhere in $U$. Therefore, by differentiating (\ref{eq_12121}) with respect to $y$ we obtain
$$ \frac{\partial}{\partial y} \left( \frac{\partial f}{\partial x} \right) = h $$
almost everywhere in $U$. Thus (\ref{eq_1653}) is proven.
\end{proof}

\begin{corollary}
Let $f: \cM \rightarrow \RR$ be a $C^{1,1}$-function. Then
 the vector field $\nabla f$ is differentiable almost-everywhere in $\cM$, and for almost any $p \in \cM$,
\begin{equation}  \langle \nabla_v (\nabla f),  w \rangle \, = \,
\langle \nabla_w (\nabla f), v \rangle \qquad \qquad \qquad \text{for} \ v,w \in T_p \cM. \label{eq_B1357}
\end{equation}
Here,
by ``almost-everywhere''
we refer to the Riemannian volume measure $\lambda_{\cM}$.
\label{cor_B1445}
\end{corollary}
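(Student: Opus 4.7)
The assertion is local on $\cM$, so the plan is to fix a smooth coordinate chart $\vphi: V \to \Omega \subseteq \RR^n$ around an arbitrary point, prove the statement almost-everywhere on $V$, and then cover $\cM$ by countably many such charts. This covering is legitimate because a geodesically-convex Riemannian manifold is second-countable and $\sigma$-compact, as noted earlier, and the Riemannian volume measure $\lambda_\cM$ is equivalent to Lebesgue measure in every smooth chart, so ``almost everywhere'' statements transfer between the two settings. Within the chart write $g_{ij}$ and $g^{ij}$ for the smooth components and inverse components of the metric and $\Gamma^k_{ij}$ for the Christoffel symbols of the Levi-Civita connection.

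Since $f$ is a $C^{1,1}$-function in the sense of Definition \ref{def_1741}, each partial derivative $\partial_j f$ is locally Lipschitz in $\Omega$, hence (by Rademacher's theorem) differentiable almost everywhere. The coordinate components $(\nabla f)^k = g^{kj}\partial_j f$ of the gradient vector field are then locally Lipschitz as products of smooth and locally-Lipschitz functions, so $\nabla f$ is differentiable almost everywhere on $V$. This establishes the first part of the corollary.

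The main computational step is to verify the standard coordinate identity
\begin{equation}
\langle \nabla_{\partial_i}(\nabla f),\,\partial_j\rangle \;=\; \partial_i\partial_j f \;-\; \Gamma^l_{ij}\,\partial_l f
\label{eq_hessplan}
\end{equation}
at every point of $V$ where $\nabla f$ is differentiable and all mixed partials of $f$ exist. This is a purely algebraic expansion: starting from $\nabla f = g^{kl}(\partial_l f)\,\partial_k$, applying the Leibniz rule for $\nabla_{\partial_i}$, and using the identity $\partial_i g^{kl} = -g^{ka}g^{lb}\partial_i g_{ab}$ together with the explicit formula $\Gamma^m_{ik} = \tfrac{1}{2}g^{ma}(\partial_i g_{ka}+\partial_k g_{ia}-\partial_a g_{ik})$, all terms involving first derivatives of the metric regroup into the single Christoffel contribution in \eqref{eq_hessplan}. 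I expect this metric-bookkeeping to be the only computational obstacle, but it is entirely routine.

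Once \eqref{eq_hessplan} is in hand the symmetry is immediate: the Levi-Civita connection is torsion-free, so $\Gamma^l_{ij}=\Gamma^l_{ji}$, while applying Lemma \ref{lem_1357} separately to each of the finitely many scalar functions $\partial_j(f\circ \vphi^{-1})$ gives $\partial_i\partial_j f = \partial_j\partial_i f$ almost everywhere in $\Omega$. Hence the right-hand side of \eqref{eq_hessplan}, and therefore $\langle\nabla_{\partial_i}(\nabla f),\partial_j\rangle$, is symmetric in $i,j$ at almost every point of $V$. By bilinearity of $(v,w)\mapsto\langle\nabla_v(\nabla f),w\rangle$ over $T_p\cM$, symmetry on the coordinate basis at $p$ promotes to the identity \eqref{eq_B1357} for all $v,w\in T_p\cM$. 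Taking countably many charts covering $\cM$ and discarding the corresponding countable union of null sets completes the proof.
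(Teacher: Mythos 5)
Your argument is correct and follows essentially the same route as the paper: localize in a chart, invoke Lemma \ref{lem_1357} for the a.e.\ equality of mixed second partials, and exploit metric compatibility together with torsion-freeness of the Levi--Civita connection to transfer this symmetry to the covariant Hessian. The only cosmetic difference is that you derive the full coordinate identity $\langle\nabla_{\partial_i}\nabla f,\partial_j\rangle = \partial_i\partial_j f - \Gamma^l_{ij}\partial_l f$ and then symmetrize, whereas the paper computes the antisymmetric difference $\langle\nabla_{\partial_i}\nabla f,\partial_j\rangle-\langle\nabla_{\partial_j}\nabla f,\partial_i\rangle$ directly via the Leibniz rule, which avoids any explicit bookkeeping with $g^{ij}$ and $\Gamma^l_{ij}$.
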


\begin{proof} Working in a local chart, we may replace $\cM$ by an open set $U \subseteq \RR^n$ equipped with a Riemannian metric tensor.
Since $f: U \rightarrow \RR$ is a $C^{1,1}$-function, Lemma \ref{lem_1357} implies that the functions $\partial_1 f, \ldots, \partial_n f$
are differentiable almost everywhere, and
\begin{equation}  \partial_i (\partial_j f) = \partial_j (\partial_i f) \label{eq_1421_} \end{equation}
almost everywhere in $U$. The Leibnitz rule applies at any point where the involved functions are differentiable
and hence,
\begin{align*} \langle \nabla_{\partial_i}  (\nabla f),  \partial_j \rangle  & -  \langle \nabla_{\partial_j} (\nabla f), \partial_i \rangle
 \\ & =  \partial_i \langle \nabla f, \partial_j \rangle  -  \partial_j \langle \nabla f, \partial_i \rangle
- \langle \nabla f, \nabla_{\partial_i} \partial_j - \nabla_{\partial_j} \partial_i \rangle  = \partial_i (\partial_j f) - \partial_j (\partial_i f)
\end{align*}
at any point in which $\partial_1 f, \ldots, \partial_n f$ are differentiable. Now (\ref{eq_B1357}) follows
from the validity of (\ref{eq_1421_}) almost everywhere in $U$.
\end{proof}

\begin{lemma} Let $u: \cM \rightarrow \RR$ satisfy $\| u \|_{Lip} \leq 1$ and let $\eps > 0$ and $p \in \Strain_{\eps}[u]$.
Then there exist an open set $V \subseteq \cM$ containing $p$ and a ray cluster $R \subseteq \cM$ such that
\begin{equation}  \Strain_{\eps}[u] \cap V \subseteq R.  \label{eq_B1557}
\end{equation}
\label{lem_B1745}
\end{lemma}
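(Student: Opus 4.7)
The plan is to construct $R$ as the ray cluster whose seed $R_0$ is a suitable piece of a nearby level set of the $C^{1,1}$-extension of $u$. I would first apply Theorem~\ref{prop_939} with $\eps_0 = \eps/2$ to obtain a $C^{1,1}$-function $\tilde{u}$ on a ball $B_{\cM}(p, \delta)$ that agrees with $u$ and $\nabla u$ on $B_{\cM}(p, \delta) \cap \Strain_{\eps/2}[u]$. Since $p \in \Strain_\eps[u]$ gives $\nabla \tilde{u}(p) = \nabla u(p) \neq 0$ (Lemma~\ref{lem_1046}), Lemma~\ref{lem_1345}(iii) produces an open neighborhood $V' \subseteq B_{\cM}(p, \delta)$ of $p$, an open product set $\Omega_0 \times (a, b) \subseteq \RR^{n-1} \times \RR$ with $u(p) \in (a,b)$, and a $C^{1,1}$-diffeomorphism $G: \Omega_0 \times (a, b) \to V'$ with $\tilde{u}(G(y, t)) = t$.

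Next I would pick a ``good level'' $r_0$. By Corollary~\ref{cor_B1445} there is a Lebesgue null set $E \subseteq V'$ outside of which $\tilde{u}$ is twice differentiable with symmetric Hessian; since $G^{-1}$ is Lipschitz, $G^{-1}(E)$ is null in $\Omega_0 \times (a, b)$, and Fubini yields a dense set of levels $t \in (a, b)$ for which $\{y \in \Omega_0 \, ; \, G(y, t) \in E\}$ is null in $\RR^{n-1}$. I would choose such a good level $r_0$ with $|r_0 - u(p)|$ small, put $f(y) = G(y, r_0)$, and set
$$ R_0 = \left \{ x \in V' \cap \Strain_{\eps/2}[u] \, ; \, \tilde{u}(x) = r_0 \right \}. $$
Then $R_0$ is Borel by Lemma~\ref{lem_1055} and satisfies all four clauses of Definition~\ref{def_B1216} with $U = V'$ and $\eps_0 = \eps/2$; let $R$ be the associated ray cluster from Definition~\ref{def_B1215}.

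To verify $\Strain_\eps[u] \cap V \subseteq R$ for a sufficiently small open $V \subseteq V'$ around $p$: given $x \in V \cap \Strain_\eps[u]$, set $s_x = r_0 - u(x)$ and $q_x = \exp_x(s_x \nabla u(x))$. By Corollary~\ref{cor_A1217}, provided $|s_x| < \ell_u(x)$ the point $q_x$ lies in the relative interior of the same transport ray $\cI \in T^{\circ}[u]$ as $x$, with $u(q_x) = r_0$ and $\ell_u(q_x) \geq \ell_u(x) - |s_x|$. Choosing $r_0$ close enough to $u(p)$ and $V$ small enough, the continuity of $u$ and of the map $(x, t) \mapsto \exp_x(t \nabla \tilde{u}(x))$ at $(p, 0)$ force $|s_x| < \eps/2$ and $q_x \in V'$; hence $q_x \in V' \cap \Strain_{\eps/2}[u]$ and $\tilde{u}(q_x) = u(q_x) = r_0$, which yields $q_x \in R_0 \cap \cI$, and therefore $x \in R$.

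I expect the main obstacle to be clause (iii) of Definition~\ref{def_B1216}: almost every $y \in \Omega_0$ must be such that $\tilde{u}$ is twice differentiable with symmetric Hessian at $f(y)$, but this concerns a codimension-one slice of $V'$, while the exceptional set from Corollary~\ref{cor_B1445} is only known to be Lebesgue null in $\cM$ and could a priori contain the whole level $\{\tilde{u} = u(p)\}$. The Fubini argument above resolves this by letting us perturb $r_0$ away from the specific value $u(p)$; this perturbation is harmless for the final inclusion because transport rays of points in $\Strain_\eps[u]$ extend at least $\eps$ in each direction, which is ample room to reach the shifted level and still land inside $\Strain_{\eps/2}[u]$.
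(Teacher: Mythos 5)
Your proposal is correct and matches the paper's proof in all essentials: the same $\eps_0 = \eps/2$, Theorem~\ref{prop_939} followed by Lemma~\ref{lem_1345}(iii), the same Fubini argument over levels to satisfy Definition~\ref{def_B1216}(iii), the same seed $R_0$ on a nearby good level, and the same verification of $\Strain_\eps[u]\cap V\subseteq R$ by sliding along a transport ray from $x$ by distance $|r_0-u(x)|<\eps/2$ to land in $R_0$. The only (immaterial) difference is that the paper phrases the continuity step as the explicit triangle-inequality estimate $d(\gamma(t_1),p)\leq d(x,p)+|t_1|$ rather than an appeal to continuity of $(x,t)\mapsto\exp_x(t\nabla\tilde u(x))$.
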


\begin{proof} Set $\eps_0 = \eps / 2$.
Applying Theorem \ref{prop_939}, we find $\delta > 0$
and a $C^{1,1}$-function $\tilde{u}: B_{\cM}(p, \delta) \rightarrow \RR$ such that
\begin{equation} x \in B_{\cM}(p,\delta) \cap \Strain_{\eps_0}[u] \qquad \Longrightarrow \qquad \tilde{u}(x) = u(x), \quad \nabla \tilde{u}(x) = \nabla u(x). \label{eq_B1401} \end{equation}
We would like to apply the implicit function theorem, in the form of Lemma \ref{lem_1345}(iii).
Decreasing $\delta$ if necessary, we may assume that $B_{\cM}(p, \delta)$  is contained in a single chart
of the differentiable manifold $\cM$. Since $p \in \Strain_{\eps}[u] \subseteq \Strain_{\eps_0}[u]$ then $p$ belongs to the relative
interior of some transport ray. From (\ref{eq_B1401}) and Lemma \ref{lem_1046},
\begin{equation}  \nabla \tilde{u}(p) = \nabla u(p) \neq 0 \qquad \text{and} \qquad \tilde{u}(p) = u(p). \label{eq_B1428} \end{equation}
We may apply Lemma \ref{lem_1345}(iii) in the local chart, thanks to (\ref{eq_B1428}). We conclude from Lemma \ref{lem_1345}(iii)
that there exist an open set
\begin{equation} U \subseteq B_{\cM}(p, \delta) \label{eq_B1835} \end{equation} containing $p$, an open set $\Omega = \Omega_0 \times (a,b) \subseteq \RR^{n-1} \times \RR$
and a $C^{1,1}$-diffeomorphism $G: \Omega \rightarrow U$ with
\begin{equation}
\tilde{u}(G(y,t)) = t \qquad \qquad \qquad \text{for} \ (y,t) \in \Omega_0 \times (a,b).
\label{eq_B1443}
\end{equation}
Since $p \in U$ and $G: \Omega \rightarrow U$ is
onto,
then (\ref{eq_B1428}) and (\ref{eq_B1443}) imply that
\begin{equation}
u(p) = \tilde{u}(p) \in (a,b). \label{eq_B1701} \end{equation}
The set $U$ is an open neighborhood of  $p$, hence there exists $0 < \eta < \eps_0$
with
\begin{equation}
B_{\cM}(p, \eta) \subseteq U. \label{eq_B1702}
\end{equation}
According to  Corollary \ref{cor_B1445}, for almost any $x \in U$, the $C^{1,1}$-function $\tilde{u}$ is twice differentiable
with a symmetric Hessian at $x$. Since $G$ is a $C^1$-diffeomorphism, then for almost any $(y,t) \in \Omega_0 \times (a,b)$,
the function $\tilde{u}$ is twice differentiable
with a symmetric Hessian at the point $G(y,t)$. From the latter fact and from (\ref{eq_B1701}) we conclude that
there exists
\begin{equation}
t_0 \in (a,b) \cap \left( u(p) - \frac{\eta}{2}, u(p) + \frac{\eta}{2} \right) \label{eq_B1703}
\end{equation} with the following property: For almost any $y \in \Omega_0 \subseteq \RR^{n-1}$, the function $\tilde{u}$ is
twice differentiable
with a symmetric Hessian at the point $G(y,t_0)$.
Denote
\begin{equation}  R_0 = \left \{ x \in U \cap \Strain_{\eps_0}[u] \, ; \, \tilde{u}(x) = t_0 \right \}.
\label{eq_B1523}
\end{equation}
Lemma \ref{lem_1055} implies that
$\Strain_{\eps_0}[u] = \{ x \in \cM \, ; \, \ell_u(x) > \eps_0 \}$ is a Borel set. From (\ref{eq_B1523}), the set $R_0 \subseteq \cM$ is also Borel.
We claim that $R_0$ is a seed of a ray cluster in the sense of Definition \ref{def_B1216}.
In order to prove our claim we define  $r_0 := t_0$ and set
$$ f(y) := G(y,t_0) \qquad \qquad \qquad (y \in \Omega_0). $$
Since $G$ is a $C^{1,1}$-diffeomorphism onto $U$, then the $C^{1,1}$-function $f$ is one-to-one with a continuous inverse. The relation (\ref{eq_B1443}) implies that
\begin{equation}
f(\Omega_0) = \left \{ x \in U \, ; \, \tilde{u}(x) = t_0 \right \}
= \left \{ x \in U \, ; \, \tilde{u}(x) = r_0 \right \}. \label{eq_B1738}
\end{equation}
Let us verify that
the numbers  $r_0 \in \RR, \eps_0 > 0$, the open sets $U \subseteq \cM, \Omega_0 \subseteq \RR^{n-1}$
and the $C^{1,1}$-functions
$\tilde{u}: U \rightarrow \RR, f: \Omega_0 \rightarrow \cM$
satisfy the requirements of Definition \ref{def_B1216}.
Indeed, by the choice of $t_0$ we verify requirement (iii) of Definition \ref{def_B1216}.
By using (\ref{eq_B1738}) and the preceding sentence we obtain Definition \ref{def_B1216}(ii).
The relation  (\ref{eq_B1523}) and the fact that $r_0 = t_0$ show that Definition \ref{def_B1216}(iv) holds as well.
From (\ref{eq_B1401}) and (\ref{eq_B1835}) we deduce Definition \ref{def_B1216}(i).
Thus $R_0$ is a seed of a ray cluster associated with $u$.
Set
\begin{equation}   R = \left \{ x \in \cM \, ; \, \exists \cI \in T^{\circ}[u] \ \text{such that} \ x \in \cI \ \text{and} \ \cI \cap R_0 \neq \emptyset \right \}. \label{eq_B1529} \end{equation}
Then $R \subseteq \Strain[u]$ is a ray cluster, according to Definition \ref{def_B1215}.
We  still need to find an open set $V \subseteq \cM$ containing $p$ for
which
(\ref{eq_B1557}) holds true. Let us define
\begin{equation}
V = \left \{ x \in B_{\cM} \left(p, \frac{\eta}{2} \right) \, ; \,  |u(x) - t_0| < \eta/2 \right \},
\label{eq_B1706} \end{equation}
which is an open set containing $p$ in view of (\ref{eq_B1703}).
In order to prove (\ref{eq_B1557}), we recall that $\eps = 2 \eps_0$
and  let
$x \in \Strain_{\eps}[u] \cap V$ be an arbitrary point. Since $\ell_u(x) > \eps$, then Corollary
\ref{cor_A1217} implies that there exist
$\cI \in T^{\circ}[u]$ and a minimizing geodesic
$\gamma:[-\eps, \eps] \rightarrow \cM$ with
\begin{equation} \gamma(0) = x
\label{eq_B1831} \end{equation} such that
\begin{equation}
\gamma \left( [-\eps, \eps] \right) \subseteq \cI, \label{eq_1630}
\end{equation}
and such that
\begin{equation}
u(\gamma(t)) = u(x) + t \qquad \qquad \qquad
\text{for} \ t \in [-\eps, \eps].
\label{eq_B1723}
\end{equation}
It follows from (\ref{eq_B1723}) and the definition of $\alpha_u, \beta_u$ and $\ell_u$ in Section \ref{transport_rays} that
\begin{equation}
\ell_u(\gamma(t)) \geq \eps - |t| \qquad \qquad \qquad \text{for} \ t \in (-\eps, \eps).
\label{eq_B1729}
\end{equation}
Since $x \in V$, then $|u(x) - t_0| < \eta/2$ according to (\ref{eq_B1706}).
Denoting $t_1 = t_0 - u(x)$, we have
\begin{equation}
|t_1| = |u(x) - t_0| < \eta/2 < \eps_0 = \eps/2,
\label{eq_B1726}
\end{equation}
where $\eta < \eps_0$ according to the line before (\ref{eq_B1702}).
From (\ref{eq_B1723}) and (\ref{eq_B1726}) we see that $u(\gamma(t_1)) = u(x) + t_1 = t_0$. From (\ref{eq_B1729}) and (\ref{eq_B1726}) it follows
that $\ell_u(\gamma(t_1)) > \eps/2 = \eps_0$. Therefore,
\begin{equation}  \gamma(t_1) \in \Strain_{\eps_0}[u] \cap \left \{ x \in \cM \, ; \, u(x) = t_0 \right \}.
\label{eq_B1731} \end{equation}
Furthermore, $x \in V$ and hence $d(x, p) < \eta/2$ by (\ref{eq_B1706}). Since $\gamma$ is a unit speed geodesic, then
from (\ref{eq_B1831}) and (\ref{eq_B1726}),
\begin{equation}
d(\gamma(t_1), p) \leq d(\gamma(0), p) + |t_1| = d(x,p) + |t_1| < \eta/2 + \eta/2 = \eta.
\label{eq_B1733}
\end{equation}
We learn from (\ref{eq_B1702}) and (\ref{eq_B1733}) that $\gamma(t_1) \in U$. From (\ref{eq_B1401}), (\ref{eq_B1835}) and (\ref{eq_B1731}),
we thus obtain that $\tilde{u}(\gamma(t_1)) = u(\gamma(t_1)) = t_0$.
By using (\ref{eq_B1523}) and (\ref{eq_B1731}), we finally obtain that
$$\gamma(t_1) \in R_0.
$$
Note also that $\gamma(t_1) \in \cI$, thanks to (\ref{eq_1630}) and (\ref{eq_B1726}).
We have thus found a point $\gamma(t_1) \in \cI \cap R_0$, and hence $\cI \cap R_0 \neq \emptyset$.
Recalling that $\cI \in T^{\circ}[u]$ we learn from (\ref{eq_B1529}) that $\cI \subseteq R$. Since $x = \gamma(0) \in \cI$
by (\ref{eq_B1831}) and (\ref{eq_1630}), then  $x \in R$. However, $x$ was an arbitrary
point in $\Strain_{\eps}[u] \cap V$, and hence the proof of
 (\ref{eq_B1557}) is complete.
\end{proof}

\begin{lemma} Let $u: \cM \rightarrow \RR$ satisfy $\| u \|_{Lip} \leq 1$. Then there exists a countable
family $\{ R_i \}_{i=1,2,\ldots}$ of disjoint ray clusters associated with $u$ such that
\begin{equation}  \Strain[u] = \bigcup_{i=1}^{\infty} R_i. \label{eq_1122}
\end{equation}
\label{lem_B952}
\end{lemma}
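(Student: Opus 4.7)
The plan is to cover $\Strain[u]$ by countably many ray clusters using Lemma \ref{lem_B1745} together with the second-countability of $\cM$, and then disjointify the resulting family via Lemma \ref{lem_B1747}.

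First I would write
$$\Strain[u] = \bigcup_{k=1}^{\infty} \Strain_{1/k}[u],$$
which is valid since $\Strain[u] = \{x \in \cM \, ; \, \ell_u(x) > 0\}$. Fix $k \geq 1$. For every $p \in \Strain_{1/k}[u]$, Lemma \ref{lem_B1745} produces an open neighborhood $V_p \subseteq \cM$ of $p$ and a ray cluster $R_p \subseteq \Strain[u]$ such that $\Strain_{1/k}[u] \cap V_p \subseteq R_p$. The open sets $\{V_p\}_{p \in \Strain_{1/k}[u]}$ form an open cover of $\Strain_{1/k}[u]$. Since $\cM$ is second-countable (as pointed out in Section \ref{transport_rays}), any open cover of a subset of $\cM$ admits a countable subcover. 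Extracting such a subcover gives points $p_{k,1}, p_{k,2}, \ldots \in \Strain_{1/k}[u]$ with
$$\Strain_{1/k}[u] \subseteq \bigcup_{j=1}^{\infty} V_{p_{k,j}} \cap \Strain_{1/k}[u] \subseteq \bigcup_{j=1}^{\infty} R_{p_{k,j}}.$$

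Next I would enumerate the doubly-indexed countable family $\{R_{p_{k,j}} \, ; \, k,j \geq 1 \}$ as a single sequence $\tilde R_1, \tilde R_2, \ldots$ of ray clusters. Taking the union over $k$ yields
$$\Strain[u] \subseteq \bigcup_{i=1}^{\infty} \tilde R_i.$$
Since every ray cluster is by definition a subset of $\Strain[u]$, the reverse inclusion holds, so equality follows.

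Finally, to disjointify, I would set
$$R_1 = \tilde R_1, \qquad R_i = \tilde R_i \setminus \bigcup_{j=1}^{i-1} \tilde R_j \qquad (i \geq 2).$$
By Lemma \ref{lem_B1747} (applied with $R = \tilde R_i$ and $R_1, \ldots, R_{i-1} = \tilde R_1, \ldots, \tilde R_{i-1}$), each $R_i$ is a ray cluster associated with $u$. The sets $R_i$ are pairwise disjoint by construction, and $\bigcup_i R_i = \bigcup_i \tilde R_i = \Strain[u]$, giving (\ref{eq_1122}). The only subtle point is the disjointification step, and that is handled cleanly by Lemma \ref{lem_B1747}; no extra regularity arguments are required.
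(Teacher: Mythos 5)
Your proof is correct and follows essentially the same route as the paper: decompose $\Strain[u]$ as $\bigcup_k \Strain_{1/k}[u]$, cover each $\Strain_{1/k}[u]$ by countably many ray clusters via Lemma \ref{lem_B1745} and second-countability, then disjointify with Lemma \ref{lem_B1747}. The only difference is cosmetic ordering — the paper first reduces to the non-disjoint covering and then constructs it, while you construct first and disjointify afterward.
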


\begin{proof} In order to prove the lemma, it suffices to find ray clusters
$\tilde{R}_i \subseteq \cM$ for $i=1,2,\ldots$ which are not necessarily disjoint, such that
\begin{equation}  \Strain[u] \subseteq \bigcup_{i=1}^{\infty} \tilde{R}_i. \label{eq_B1022}
\end{equation}
Indeed, any ray cluster $R$ is automatically contained in $\Strain[u]$.
By setting $R_i = \tilde{R}_i \setminus \cup_{j < i} \tilde{R}_j$ and using Lemma
\ref{lem_B1747}, we deduce (\ref{eq_1122}) from (\ref{eq_B1022}).
We thus focus on the proof of (\ref{eq_B1022}). Recall from Section \ref{transport_rays} that
$ \Strain[u] = \bigcup_{k=1}^{\infty} \Strain_{1/k}[u]$. Hence, in order to prove (\ref{eq_B1022}),
it suffices to fix $\eps > 0$ and to find ray clusters $R_1, R_2, \ldots$ with
\begin{equation}
\Strain_{\eps}[u] \subseteq \bigcup_{i=1}^{\infty} R_i. \label{eq_B1014_}
\end{equation}
Let us fix $\eps > 0$. We need to find ray clusters $R_1,R_2,\ldots$ satisfying (\ref{eq_B1014_}).
For $p \in \Strain_{\eps}[u]$
let us write $V_{p, \eps} = V \subseteq \cM$ for the open set containing $p$ that is provided by
Lemma \ref{lem_B1745}. Then for any $p \in \Strain_{\eps}[u]$ there is a ray cluster
$R = R_{p, \eps} \subseteq \cM$ such that
\begin{equation}  \Strain_{\eps}[u] \cap V_{p, \eps} \subseteq R_{p, \eps}.  \label{eq_B1007_}
\end{equation}
Consider all open sets of the form $V_{p, \eps}$ where $p \in \Strain_{\eps}[u]$.
This collection is an open cover of $\Strain_{\eps}[u]$.
Recall that $\cM$ is second-countable. Hence we may find an open sub-cover of $\Strain_{\eps}[u]$ which is countable. That is, there
exist points $p_1,p_2,\ldots \in \Strain_{\eps}[u]$ such that
\begin{equation}  \Strain_{\eps}[u] \subseteq \bigcup_{i=1}^{\infty} V_{p_i, \eps}. \label{eq_B1018_}
\end{equation}
From (\ref{eq_B1007_}) and (\ref{eq_B1018_}) we conclude that the ray clusters $R_i = R_{p_i, \eps}$
satisfy (\ref{eq_B1014_}), and the lemma is proven.
\end{proof}

\begin{proof}[Proof of Proposition \ref{cfm}]
In view of Lemma \ref{cor_2108} and
Lemma \ref{lem_B952}, all that remains is to prove the following: For
any ray cluster $R \subseteq \cM$ with $\lambda_{\cM}(R) > 0$, there exist disjoint ray clusters of {\it continuous
length} $\{ R_i \}_{i=1,\ldots,\infty}$, all contained in $R$, such that
\begin{equation}  \lambda_{\cM} \left( R \setminus \left( \bigcup_{i=1}^{\infty} R_i \right) \right) = 0. \label{eq_B1122}
\end{equation}
According to Remark \ref{rem_942}, we may apply Proposition \ref{prop_B1122} for the ray cluster $R$.
Let $B$ be the almost-line cluster that is provided by Remark \ref{rem_942} and Proposition \ref{prop_B1122},
and let $F, f, B_0, a, b$ be as in Proposition \ref{prop_B1122}.
From  Proposition \ref{prop_B1122}(i), the set $R_0 = f( B_0 )$ is a seed of a ray cluster.
The set $B_0 \subseteq \RR^{n-1}$ is a measurable set, and $a: B_0 \rightarrow [-\infty, 0)$ and $b: B_0 \rightarrow
(0, +\infty]$ are measurable functions. By Luzin's theorem from real analysis,
there exist disjoint $\sigma$-compact subsets $\tilde{B}^{(k)}_0 \subseteq B_0$ for $k=1,2,\ldots$ such that
\begin{equation}  m \left( B_0 \setminus \left( \bigcup_{k=1}^{\infty} \tilde{B}^{(k)}_0 \right) \right) = 0, \label{eq_B1207}
\end{equation}
while for any $k \geq 1$, the functions $a|_{\tilde{B}^{(k)}_0}$ and $b|_{\tilde{B}^{(k)}_0}$ are continuous.
Here, $m$ is the Lebesgue measure on $\RR^{n-1}$.
Note that  $\tilde{R}^{(k)} := f( \tilde{B}^{(k)}_0 )$ is a $\sigma$-compact set for any $k \geq 1$,
being the image of a $\sigma$-compact set under a continuous map.
By the remark following Definition \ref{def_B1216},
the set  $\tilde{R}^{(k)} \subseteq R_0$ is a seed of a ray cluster.

\medskip From our  construction the functions $a_y = -\alpha_u(f(y))$ and $b_y = \beta_u(f(y))$ are
continuous functions of $y \in \tilde{B}_0^{(k)}$, for any $k \geq 1$. From Definition \ref{def_B1216}(ii), the function $f^{-1}$
is continuous on $R_0$, and therefore the functions $\alpha_u, \beta_u$ are continuous on
$\tilde{R}^{(k)} = f( \tilde{B}^{(k)}_0 )$ for any $k \geq 1$. This shows that
$\tilde{R}^{(k)}$ is actually a seed of a ray cluster of continuous length.
 The function $f$ is one-to-one, and
therefore $\tilde{R}^{(1)},\tilde{R}^{(2)},\ldots$
are pairwise-disjoint.

\medskip For $k \geq 1$, define $R_k$ to be the union of all relative interiors of transport
rays intersecting $\tilde{R}^{(k)}$. The
sets $R_1,R_2,\ldots$ are pairwise-disjoint and are contained in $R$,
according to Proposition \ref{prop_B1122}(ii).
From Definition \ref{def_B1215}, the sets $R_1,R_2,\ldots$ are ray clusters of continuous length, while
Lemma \ref{cor_2108} implies the measurability of these sets.
The desired relation
(\ref{eq_B1122}) holds true in view of (\ref{eq_B1207}) and Proposition \ref{prop_B1122}(iv).
This completes the proof.
\end{proof}

\subsection{Needles and Ricci curvature}
\label{ricci}
\setcounter{equation}{0}

We begin this section with an addendum to Proposition \ref{prop_B1122}.

\begin{lemma} We work under the notation and assumptions of Proposition \ref{prop_B1122}.
Let $y = y_0 \in B_0$ be a Lebesgue density point of $B_0$ for which the conclusions of Proposition \ref{prop_B1122}(iii)
hold true.  Then either for all $t \in (a_y, b_y)$ the vectors
$$ J_1(y,t),\ldots,J_{n-1}(y,t) \in T_{F(y,t)} \cM $$
are linearly independent, or else for all $t \in (a_y, b_y)$, these vectors are linearly dependent.
\label{lem_B1811}
\end{lemma}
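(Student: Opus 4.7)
The plan rests on the identity $J_i'(y_0,0)=\nabla_{J_i(y_0,0)}\nabla\tilde u$ derived during the proof of Lemma~\ref{lem_jacobi} (see equation~(\ref{eq_B1652_})), together with the symmetric-Hessian hypothesis in Definition~\ref{def_B1216}(iii). These imply that the linear operator $S_0:T_{f(y_0)}\cM\to T_{f(y_0)}\cM$ defined by $S_0(v):=\nabla_v\nabla\tilde u$ is symmetric, and by linearity every Jacobi field of the form $V:=\sum_{i=1}^{n-1}c_iJ_i$ along the geodesic $\gamma(t):=F(y_0,t)$ satisfies $V'(0)=S_0(V(0))$. The desired dichotomy will then split according to the behavior of the vectors $J_1(y_0,0),\ldots,J_{n-1}(y_0,0)$.

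First I would handle the case when these vectors are already linearly dependent. Choosing a nonzero $c\in\RR^{n-1}$ with $\sum_ic_iJ_i(y_0,0)=0$, we get $V(0)=0$, whence $V'(0)=S_0(0)=0$; uniqueness for the linear Jacobi ODE then forces $V\equiv 0$ throughout $(a_y,b_y)$. Hence the linear relation $\sum_ic_iJ_i(y_0,t)=0$ persists for every $t$, delivering the ``dependent'' branch of the dichotomy.

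For the complementary case, assume $J_1(y_0,0),\ldots,J_{n-1}(y_0,0)$ are linearly independent. I would argue by contradiction: suppose $\sum_ic_iJ_i(y_0,t_1)=0$ for some nonzero $c$ and some $t_1\in(a_y,b_y)\setminus\{0\}$, WLOG $t_1>0$. Then $V$ is a nonzero Jacobi field with $V(0)\neq 0$, tangent to the local $C^{1,1}$ hypersurface $\Sigma:=\{\tilde u=r_0\}\cap U$ at $f(y_0)$ (using both that the $J_i(0)$'s span $T_{f(y_0)}\Sigma$ and the orthogonality $\langle V'(0),N\rangle=0$ from (\ref{eq_B1459})), with $V'(0)=S_0(V(0))$, and with $V(t_1)=0$; these are precisely the initial conditions of a focal Jacobi field of $\Sigma$ along $\gamma$, so $\gamma(t_1)$ is a focal point of $\Sigma$. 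On the other hand, (\ref{eq_B1104}) together with the $1$-Lipschitz property of $u$ gives, for any $L'\in(t_1,b_y)$ and every $x\in\{u=r_0\}$, $d(\gamma(L'),x)\geq L'=d(\gamma(L'),f(y_0))$, so $\gamma|_{[0,L']}$ realizes the distance from $\gamma(L')$ to $\{u=r_0\}$. After shrinking to a small enough neighborhood $W$ of $f(y_0)$ in which $\Sigma\cap W\subseteq\{u=r_0\}$, one obtains $d(\gamma(L'),\Sigma\cap W)=L'$ as well. But the classical focal-point theorem forbids a geodesic from continuing to realize the distance to a hypersurface beyond a focal point, contradicting the existence of a focal point at $t_1<L'$. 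The case $t_1<0$ is symmetric.

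The main technical obstacle will be producing the neighborhood $W$ with $\Sigma\cap W\subseteq\{u=r_0\}$, which reduces to showing that $\Strain_{\eps_0}[u]$ contains a full $\cM$-neighborhood of $f(y_0)$ (so that $\tilde u=u$ throughout $W$ by Definition~\ref{def_B1216}(i), and hence $W\cap\{\tilde u=r_0\}=W\cap\{u=r_0\}$). This is delicate because the strain set is only $F_\sigma$ in general, but it can be arranged here using that $f(y_0)$ lies in the relative interior of a non-degenerate transport ray with $\ell_u(f(y_0))>\eps_0$, together with the local transport-ray structure around $f(y_0)$. Once $W$ is fixed, the classical focal-point theorem applies to the $C^{1,1}$ hypersurface $\Sigma\cap W$ with only cosmetic modifications to the standard second-variation-of-arc-length proof, because $\gamma$ is smooth and the focal initial conditions depend only on the pointwise Hessian $\nabla^2\tilde u(f(y_0))$, which exists by Definition~\ref{def_B1216}(iii).
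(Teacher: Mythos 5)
Your case split on whether $J_1(y_0,0),\ldots,J_{n-1}(y_0,0)$ are linearly dependent is a sensible idea, and the first branch is correct and self-contained: $V(0)=0$ forces $V'(0)=\nabla_{V(0)}\nabla\tilde u=0$ via (\ref{eq_B1652_}), and Jacobi ODE uniqueness gives $V\equiv 0$. (In fact in the paper's actual construction, where $f(y)=G(y,t_0)$ for a $C^{1,1}$-diffeomorphism $G$, the vectors $J_i(y_0,0)=\partial f/\partial y_i(y_0)$ are automatically linearly independent, so this branch is a priori vacuous in practice; but it is still a valid observation.)

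The second branch has a genuine gap. The focal-point contradiction requires you to compare $\gamma(L')$'s distance to the level set of the \emph{smooth} $\tilde u$, and to do that you assert that $\Strain_{\eps_0}[u]$ contains a full $\cM$-neighborhood $W$ of $f(y_0)$, so that $\tilde u=u$ on $W$ and hence $\Sigma\cap W\subseteq\{u=r_0\}$. This is not justified and cannot be expected in general: $\Strain_{\eps_0}[u]$ is merely a Borel set, the functions $\alpha_u,\beta_u,\ell_u$ are not continuous (they are not even lower semicontinuous, because the defining inequalities in Lemma~\ref{lem_A1053} involve the strict condition $d(y,z)=u(z)-u(y)>0$ which is not preserved under limits), and the entire point of the density-point hypothesis in the lemma is that $B_0$ need only have Lebesgue density $1$ at $y_0$, not contain a relative neighborhood of $y_0$. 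The paper's proof is built precisely to avoid needing any neighborhood: it restricts a $C^1$ curve $\gamma$ through $y_0$ in direction $(\lambda_1,\ldots,\lambda_{n-1})$ to the set $I=\{s:\gamma(s)\in B_0\}$ (which merely accumulates at $0$ because $y_0$ is a density point), reads off $|J(y,t)|$ as a limit of distance quotients between the geodesics $t\mapsto F(y,t)$ and $t\mapsto F(\gamma(s),t)$, and then invokes the purely metric comparison inequality of Lemma~\ref{lem_849} (Feldman--McCann) to obtain $|J(y,t_1-\delta)|+|J(y,t_1+\delta)|\le 20\,|J(y,t_1)|$. Combined with $J(y,t_1)=0$ this makes the zero set of $J$ open, and open-plus-closed in a connected interval gives the dichotomy. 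No focal-point theory, no neighborhood of $f(y_0)$, and no regularity of the hypersurface beyond what is already packaged into Proposition~\ref{prop_B1122} is required.

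A secondary concern, even granting the neighborhood: the focal-point theorem is being invoked for a $C^{1,1}$ hypersurface whose second fundamental form is known only pointwise at Lebesgue density points of a measurable subset. The standard second-variation / index-form proof requires a genuine one-parameter family of curves emanating from $\Sigma$, and ``cosmetic modifications'' is not an adequate justification here. If you want to pursue this route you would need to carefully redo the index-form argument in this low-regularity setting, which is a non-trivial task. The Feldman--McCann distance comparison sidesteps all of this by working directly with distances between explicit geodesics, which is exactly why the paper uses it.
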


\begin{proof}
Fix $\lambda_1,\ldots,\lambda_{n-1} \in \RR$ and denote
$$ J(y,t) = \sum_{i=1}^{n-1} \lambda_i J_i(y,t) \qquad \qquad \qquad \text{for} \ t \in (a_y, b_y). $$
We would like to show that the set $ \left \{ t \in (a_y, b_y) \, ; \, J(y,t) = 0 \right \} $
is an open set. Assume that $t_1 \in (a_y, b_y)$ satisfies
\begin{equation}  J(y, t_1) = 0. \label{eq_B1755} \end{equation}
We need to prove that $J(y,t) = 0$ for $t$ in a small neighborhood of $t_1$.
To this end, denote $v = (\lambda_1,\ldots, \lambda_{n-1}) \in \RR^{n-1}$. Since $y \in B_0$ is a Lebesgue density point of $B_0 \subseteq \RR^{n-1}$, then there exists
a $C^1$-curve $\gamma: (-1,1) \rightarrow \RR^{n-1}$ with $\gamma(0) = y$ and $\dot{\gamma}(0) = v$, such that
the set
$$ I = \left \{ s \in (-1,1) \, ; \, \gamma(s) \in B_0 \right \} $$
has an accumulation point at zero. We are going to
view $\gamma$ as a map from $I$ to $B_0$, and we will never use the values of $\gamma$ outside $I$.
Thus, from now on when we write $\dot{\gamma}(0) = v$, we actually mean that
$$ \lim_{I \ni s \rightarrow 0} \frac{\gamma(s) - \gamma(0)}{s} = v. $$
We plan to apply the geometric lemma of Feldman and McCann, which is
Lemma \ref{lem_849} above. Set
\begin{equation}  p = F(y, t_1) \in \cM. \label{eq_B2033A} \end{equation} Let $\delta_1 = \delta_1(p) > 0$ be the parameter
provided by Lemma \ref{lem_849}. Fix
$\eps > 0$ with
\begin{equation}  \eps < \min \{ \delta_1, b_y - t_1, t_1 - a_y \}.
\label{eq_B1038} \end{equation}
Then $a_y < t_1 - \eps$ while $b_y > t_1 + \eps$.
Since $B$ is a parallel line cluster, then the functions $a$ and $b$ are continuous on $B_0$.
Since $\gamma$ is continuous  with $\gamma(0) = y$, then for some $\eta > 0$,
\begin{equation}  a_{\gamma(s)} < t_1 - \eps, \ b_{\gamma(s)} > t_1 + \eps \qquad \qquad \text{for all} \ s \in I \cap (-\eta, \eta).
\label{eq_B1039} \end{equation}
According to Proposition \ref{prop_B1122}(iii) and the chain rule,
for any $t \in (t_1 - \eps, t_1 + \eps)$,
\begin{equation}   J(y,t) = \sum_{i=1}^{n-1} \lambda_i \frac{\partial F}{\partial y_i} (y,t) = \left. \frac{d}{ds} F(\gamma(s), t) \right|_{s=0}
\label{eq_B1129_} \end{equation}
where we only consider values $s \in I$ when computing the limit defining the derivative with respect to $s$.
Note that the use of the chain rule is legitimate, as $F$ is differentiable at $(y,t)$
while $\gamma(0) = y$ and $\dot{\gamma}(0) = v = (\lambda_1,\ldots,\lambda_{n-1})$.
From (\ref{eq_B1129_}), for any $t \in (t_1 - \eps, t_1 + \eps)$,
\begin{equation}
|J(y,t)| =
\lim_{I \ni s \rightarrow 0} \frac{d(F(\gamma(0), t), F(\gamma(s), t)  )}{|s|}
=
\lim_{I \ni s \rightarrow 0} \frac{d(F(y, t), F(\gamma(s), t) )}{|s|}.
\label{eq_B1747} \end{equation}
Fix $0 < \delta < \eps$. For $s\in (-\eta, \eta) \cap I$ and $i=0,1,2$ define
\begin{equation} x_i = F(y, t_1 + \delta (i-1)), \qquad z_i(s) = F(\gamma(s), t_1 + \delta (i-1)).
\label{eq_B1652__}
\end{equation}
The points $x_0, x_1, x_2, z_0(s), z_1(s), z_2(s) \in \cM$ are well-defined due  to (\ref{eq_B1038}) and (\ref{eq_B1039}).
According to Proposition \ref{prop_B1122}(ii),
\begin{equation} u(x_i) = t_1 + \delta (i-1) + r_0 = u(z_i(s))  \qquad \text{for} \ i=0,1,2,  \  s \in I \cap (-\eta, \eta). \label{eq_B1737}
 \end{equation}
 Recall that $\| u \|_{Lip} \leq 1$ and that $t \mapsto F(y,t)$ is a minimizing geodesic, as well as $t \mapsto F(\gamma(s), t)$. We
 thus conclude from (\ref{eq_B1652__}) and (\ref{eq_B1737}) that for any $s \in I \cap (-\eta, \eta)$ and $i,j=0,1,2$,
 \begin{equation}
  d(x_i, x_j) = d(z_i(s), z_j(s)) = \delta |i-j| = |u(x_i) - u(z_j(s))| \leq d(x_i, z_j(s)).
  \label{eq_B1739}
 \end{equation}
Furthermore, since $d(x_i, x_1) \leq \delta < \eps$ for $i=0,1,2$, then thanks to (\ref{eq_B2033A}) and (\ref{eq_B1038}),
\begin{equation}  x_0,x_1, x_2 \in \cB_{\cM}(x_1, \eps) = \cB_{\cM}(p, \eps) \subseteq \cB_{\cM}(p, \delta_1). \label{eq_B1233A}
\end{equation}
 The map $F$ is continuous, while $\gamma(s) \rightarrow y$ as $I \ni s \rightarrow 0$. Therefore, for
  $i=0,1,2$ we have that $z_i(s) \rightarrow x_i$ as $I \ni s \rightarrow 0$.
  From (\ref{eq_B1233A}) we thus conclude that
 $z_0(s), z_1(s), z_2(s) \in \cB_{\cM}(p, \delta_1)$
 for any $s \in I \cap (-\tilde{\eta}, \tilde{\eta})$ for some $0 < \tilde{\eta} < \eta$. Thanks to (\ref{eq_B1739}) we may apply Lemma
  \ref{lem_849} for the six points $$ x_0, x_1, x_2, z_0(s), z_1(s), z_2(s) \in \cB_{\cM}(p, \delta_1), $$ when $s \in I \cap (-\tilde{\eta}, \tilde{\eta})$. From the conclusion of Lemma \ref{lem_849},
\begin{equation}   \limsup_{I \ni s \rightarrow 0} \frac{d(x_0, z_0(s)) + d(x_2, z_2(s))}{|s|}  \leq 20 \cdot \limsup_{I \ni s \rightarrow 0} \frac{d(x_1, z_1(s))}{|s|}. \label{eq_B1132_} \end{equation}
By using (\ref{eq_B1747}), (\ref{eq_B1652__}) and (\ref{eq_B1132_}) we obtain
\begin{equation} |J(y, t_1 - \delta)| + |J(y, t_1 + \delta)| \leq 20 \cdot |J(y, t_1)|.
\label{eq_B1756} \end{equation}
However, $\delta > 0$ was an arbitrary number in $(0, \eps)$. From (\ref{eq_B1755}) and (\ref{eq_B1756}) we therefore conclude that
$$ |J(y,t)| = 0 \qquad \qquad \qquad \text{for all} \ t \in (t_1 - \eps, t_1 + \eps). $$
This completes the proof that the set $ \left \{ t \in (a_y, b_y) \, ; \, J(y,t) = 0 \right \} $
is an open set. Since $J$ is a smooth Jacobi field, then this set is also closed.
Therefore, either $t \mapsto J(y,t)$ never vanishes on $(a_y, b_y)$, or else it is the zero function.
In other words, for any $\lambda_1,\ldots,\lambda_{n-1} \in \RR$,
$$ \exists t \in (a_y, b_y), \ \sum_{i=1}^{n-1} \lambda_i J_i(y,t) =0
\qquad \Longrightarrow \qquad \forall t \in (a_y, b_y), \ \sum_{i=1}^{n-1} \lambda_i J_i(y,t) =0. $$
By linear algebra, either $J_1(y,t),\ldots,J_{n-1}(y,t)$ are linearly independent for all $t \in (a_y, b_y)$, or
else they are linearly dependent for all $t \in (a_y, b_y)$.
\end{proof}

Recall that $(\cM, d, \mu)$ is an $n$-dimensional weighted Riemannian manifold which is geodesically-convex.
Recall also that $\lambda_\cM$ is the Riemannian volume measure on the Riemannian manifold $\cM$. Let $\rho: \cM \rightarrow \RR$ be the smooth function for which
\begin{equation} \frac{d \mu}{d \lambda_{\cM}} = e^{-\rho}.
\label{eq_B1153}
\end{equation}

\begin{definition} A measure $\nu$ on $\cM$ is called a ``needle candidate'' of the weighted Riemannian manifold $(\cM, d, \mu)$ and the
Lipschitz function
$u$ if there
exist a non-empty subset $(a,b) \subseteq \RR$ with $a,b \in \RR \cup \{ \pm \infty \}$,
a measure $\theta$ on $(a,b)$, a minimizing geodesic $\gamma: (a,b) \rightarrow \cM$
and Jacobi fields $J_1(t),\ldots,J_{n-1}(t)$ along $\gamma$ with the following properties:
\begin{enumerate}
\item[(i)] The measure $\nu$ is the push-forward of $\theta$ under the map $\gamma$.
\item[(ii)] Denote $J_n = \dot{\gamma}$. Then the measure $\theta$ is absolutely-continuous with respect
to the Lebesgue measure in $(a,b) \subseteq \RR$, and its density is
proportional to
\begin{equation}  t \mapsto e^{-\rho(\gamma(t))} \cdot \sqrt{\det \left( \langle J_i(t), J_k(t) \rangle \right)_{i,k=1,\ldots,n}}.
\label{eq_B1327} \end{equation}
\item[(iii)] There exists $t \in (a,b)$ with
\begin{equation}  \langle J_i(t), \dot{\gamma}(t) \rangle = \langle J_i^{\prime}(t), \dot{\gamma}(t)  \rangle = 0 \qquad \qquad (i=1,\ldots,n-1),
\label{eq_B1702_} \end{equation}
and
\begin{equation}  \langle J_i'(t), J_k(t) \rangle = \langle J_k'(t), J_i(t) \rangle \qquad \qquad (i,k=1,\ldots,n-1).
\label{eq_B1041} \end{equation}
\item[(iv)] Either for all $t \in (a, b)$ the vectors
$$ J_1(t),\ldots,J_{n-1}(t) \in T_{\gamma(t)} \cM $$
are linearly independent, or else for all $t \in (a, b)$ these vectors are linearly dependent.
\item[(v)] Denote $A = (a,b) \subseteq \RR$. Then the set $\gamma(A)$ is the relative interior of a transport ray associated with $u$ and
$$ u(\gamma(t)) = t \qquad \qquad \text{for all} \ t \in A. $$
\end{enumerate}
\label{def_B1210}
\end{definition}

Assume that $\Omega_1, \Omega_2,\ldots$ are certain disjoint sets. Let  $\nu_i$ be a measure defined on $\Omega_i$
for $i \geq 1$. We may clearly consider the measure $\nu = \sum_{i \geq 1} \nu_i$ defined on $\Omega = \cup_{i \geq 1} \Omega_i$.
A subset $A \subseteq \Omega$ is $\nu$-measurable if and only if $A \cap \Omega_i$ is $\nu_i$-measurable for any $i \geq 1$.

\medskip Recall that $T^{\circ}[u]$ is a partition of $\Strain[u]$ and that $\pi: \Strain[u] \rightarrow T^{\circ}[u]$ is the partition map,
i.e., $x \in \pi(x) \in T^{\circ}[u]$ for any $x \in \Strain[u]$.
According to Lemma \ref{lem_A1031},  for any $x \in \Strain[u]$, the set $\pi(x)$ is the relative interior of the unique transport ray containing $x$.

\begin{lemma}
Let $u: \cM \rightarrow \RR$ satisfy $\| u \|_{Lip} \leq 1$. Then there exist
a measure $\nu$ on $T^{\circ}[u]$ and a family $\{ \mu_{\cI} \}_{\cI \in T^{\circ}[u]}$
of measures on $\cM$, such that the following hold true:
\begin{enumerate}
\item[(i)] If $G \subseteq T^{\circ}[u]$ is $\nu$-measurable then $\pi^{-1}(G) \subseteq \Strain[u]$
is a measurable subset of $\cM$.
For any measurable set $A \subseteq \cM$,
the map $\cI \mapsto \mu_{\cI}(A)$ is well-defined $\nu$-almost everywhere and is a $\nu$-measurable map.
\item[(ii)] For any measurable set $A \subseteq \cM$,
\begin{equation}  \mu(A \cap \Strain[u]) = \int_{T^{\circ}[u]} \mu_{\cI}(A) d \nu(\cI). \label{eq_B1227}
\end{equation}
\item[(iii)] For $\nu$-almost any $\cI \in T^{\circ}[u]$,
the measure $\mu_{\cI}$ is a needle candidate of $(\cM, d, \mu)$ and $u$ that is supported on $\cI$ and it satisfies $\mu_{\cI}(\cM) > 0$.
Furthermore, $A$ and $\gamma$ from Definition \ref{def_B1210} satisfy $\cI = \gamma(A)$.
\end{enumerate}
\label{prop_1147}
\end{lemma}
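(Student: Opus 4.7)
The plan is to disintegrate $\mu$ one ray cluster at a time, using Proposition~\ref{cfm} to reduce to the single-cluster case and Fubini on the parametrizing line-cluster to extract the conditional densities along rays. Invoke Proposition~\ref{cfm} to obtain disjoint ray clusters of continuous length $R_1, R_2, \ldots \subseteq \Strain[u]$ with $\lambda_{\cM}(\Strain[u] \setminus \bigcup_i R_i) = 0$. For each $i$, Proposition~\ref{prop_B1122} provides a parallel line-cluster $B^{(i)} \subseteq \RR^{n-1} \times \RR$ with measurable base $B_0^{(i)}$, a locally-Lipschitz bijection $F_i : B^{(i)} \to R_i$, and a constant $r_0^{(i)} \in \RR$ with $u(F_i(y,t)) = t + r_0^{(i)}$; moreover, for almost every $y \in B_0^{(i)}$ the vectors $(\partial F_i / \partial y_k)(y, \cdot) = J_k^{(i)}(y, \cdot)$ are Jacobi fields along $t \mapsto F_i(y,t)$ satisfying relations (\ref{eq_B1459}) and (\ref{eq_B1621}) at $t = 0$. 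By Remark~\ref{rem_1318}, $F_i$ pushes the measure with Lebesgue density $(y,t) \mapsto \sqrt{\det T_i(y,t)}$ forward to $\lambda_{\cM}|_{R_i}$, where $T_i(y,t) = (\langle J_j^{(i)}(y,t), J_k^{(i)}(y,t)\rangle)_{j,k=1,\ldots,n}$ with $J_n^{(i)} := \partial_t F_i$.

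For each $y \in B_0^{(i)}$, set $\cI_i(y) = \pi(F_i(y, 0)) \in T^{\circ}[u]$, $A_i(y) = (a_y^{(i)} + r_0^{(i)}, b_y^{(i)} + r_0^{(i)})$, and $\gamma_{i,y}(s) = F_i(y, s - r_0^{(i)})$, so that $u \circ \gamma_{i,y} = \mathrm{id}$ and $\gamma_{i,y}(A_i(y)) = \cI_i(y)$. Define $\mu_{\cI_i(y)}$ as the pushforward under $\gamma_{i,y}$ of the measure on $A_i(y)$ with Lebesgue density $s \mapsto e^{-\rho(\gamma_{i,y}(s))} \sqrt{\det T_i(y, s - r_0^{(i)})}$, and set $\mu_\cI = 0$ for any $\cI \in T^{\circ}[u]$ not of this form. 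Put $\tilde B_0^{(i)} = \{y \in B_0^{(i)} : \mu_{\cI_i(y)}(\cM) > 0\}$. Since $f_i := F_i(\cdot, 0)$ is injective and the $R_i$ pairwise disjoint, $\cI_i|_{\tilde B_0^{(i)}}$ is injective and the images $\cI_i(\tilde B_0^{(i)})$ pairwise disjoint. Let $\nu = \sum_i \nu_i$ where $\nu_i$ is the pushforward under $\cI_i$ of Lebesgue measure restricted to $\tilde B_0^{(i)}$. Since $d\mu = e^{-\rho}\, d\lambda_{\cM}$, Remark~\ref{rem_1318} and Fubini on $B^{(i)}$ give
\[ \mu(A \cap R_i) = \int_{B^{(i)}} 1_A(F_i(y,t))\, e^{-\rho(F_i(y,t))} \sqrt{\det T_i(y,t)}\, dy\, dt = \int_{T^{\circ}[u]} \mu_\cI(A)\, d\nu_i(\cI), \]
and summing over $i$ yields (\ref{eq_B1227}).

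For conclusion (iii), the Lebesgue density theorem shows that the $m$-full-measure subset of $\tilde B_0^{(i)}$ on which Proposition~\ref{prop_B1122}(iii) and Lemma~\ref{lem_B1811} apply pushes under $\cI_i$ to a $\nu_i$-full-measure subset of $\cI_i(\tilde B_0^{(i)})$. For each such $\cI = \cI_i(y_0)$, items (i), (ii) and (v) of Definition~\ref{def_B1210} are built into the construction of $\gamma_{i,y_0}$ and its density (using the reparametrized Jacobi fields $J_k(s) = J_k^{(i)}(y_0, s - r_0^{(i)})$), item (iii) is exactly (\ref{eq_B1459})--(\ref{eq_B1621}) at $t = 0$, item (iv) is Lemma~\ref{lem_B1811}, and $\mu_\cI(\cM) > 0$ holds by the definition of $\tilde B_0^{(i)}$. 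For the measurability claim in (i), declare $G \subseteq T^{\circ}[u]$ to be $\nu$-measurable iff $\cI_i^{-1}(G) \cap \tilde B_0^{(i)}$ is Lebesgue-measurable for every $i$; then $\pi^{-1}(G) \cap R_i = F_i((\cI_i^{-1}(G) \times \RR) \cap B^{(i)})$ is measurable in $\cM$ as the image of a measurable set in $\RR^n$ under the locally-Lipschitz map $F_i$, while $\pi^{-1}(G) \setminus \bigcup_i R_i$ is $\lambda_{\cM}$-null by Proposition~\ref{cfm}; the $\nu$-measurability of $\cI \mapsto \mu_\cI(A)$ follows from the same Fubini argument.

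The substantive ingredients---the parametrization $F_i$, the Jacobi-field identities, the area formula of Remark~\ref{rem_1318} and the rigidity Lemma~\ref{lem_B1811}---are already in hand, so the only real obstacle is measure-theoretic bookkeeping across the countable family $\{R_i\}$ and the choice of $\sigma$-algebra on $T^{\circ}[u]$. The one genuinely subtle point is ensuring $\mu_\cI(\cM) > 0$ on a $\nu$-full-measure set, handled by restricting to $\tilde B_0^{(i)}$; this costs nothing because when the Jacobi fields are linearly dependent at some (hence, by Lemma~\ref{lem_B1811}, every) point of a ray, $\sqrt{\det T_i} \equiv 0$ along it, so such rays contribute nothing to $\mu$.
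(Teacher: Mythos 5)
Your proposal is correct and follows essentially the same route as the paper: decompose $\Strain[u]$ into disjoint ray clusters of continuous length via Proposition~\ref{cfm}, use Proposition~\ref{prop_B1122} and Remark~\ref{rem_1318} to parametrize each cluster by a parallel line-cluster and transfer $\mu|_{R_i}$ via the area formula, define the conditional measures by Fubini along the rays, restrict to the $y$ where the conditional mass is positive, and push forward the $\sigma$-algebra. The only cosmetic difference is that the paper multiplies the conditional measure by $e^{|y|}$ and takes $\nu_i$ to be the pushforward of $e^{-|y|}\,dy$ rather than Lebesgue measure, purely so that each $\nu_i$ is finite; your version omits this inessential normalization and is otherwise the same argument.
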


\begin{proof} The measure $\mu$ is assumed to be absolutely-continuous with respect to $\lambda_{\cM}$.
According to Proposition \ref{cfm}, there exist disjoint ray clusters of continuous length $\{ R_i \}_{i=1,2,\ldots}$ with
\begin{equation}
 \mu \left( \Strain[u] \setminus \left( \bigcup_{i=1}^{\infty} R_i \right) \right) = 0.
 \label{eq_B1143}
 \end{equation}
Recall from Definition \ref{def_B1215}
and Lemma \ref{cor_2108} that each ray cluster $R_i$ is a measurable set contained in $\Strain[u]$
of the form $R_i = \cup_{\cI \in S_i} \cI$ for some subset $S_i \subseteq T^{\circ}[u]$.
Fix $i \geq 1$. Let us apply Proposition \ref{prop_B1122}
 for $R_i$, which is a ray cluster of continuous length.
  Proposition \ref{prop_B1122} provides us with a certain
parallel line cluster $B \subseteq \RR^{n-1} \times \RR$, a locally-Lipschitz, invertible map $F: B \rightarrow R_i$,
and also with vector  fields $$ J_1(y,t),\ldots,J_{n-1}(y,t). $$
Let $J_n, r_0, B_0, a_y$ and $b_y$ be as in
Proposition \ref{prop_B1122}. Then for almost any Lebesgue density point $y \in B_0$, the vector fields $J_1(y,t),\ldots,J_{n-1}(y,t)$
are well-defined Jacobi fields along the entire geodesic $t \mapsto F(y,t)$ for $t \in (a_y, b_y)$.
 Consider the measure on $B$ whose density with respect to the Lebesgue measure on $B$ is
\begin{equation}
 (y,t) \mapsto \sqrt{ \det \left( \langle J_\ell(y,t), J_k(y,t) \rangle \right)_{\ell,k=1,\ldots,n} }.
 \label{eq_B1149} \end{equation}
According to Proposition \ref{prop_B1122}(iv)
and Remark \ref{rem_1318}, the map $F$ pushes forward the measure whose density is given by
 (\ref{eq_B1149}) to the restriction of $\lambda_{\cM}$ to the ray cluster $R_i$.
Next, consider the measure on $B$  with density
\begin{equation}
 (y,t) \mapsto e^{-
 \rho(F(y,t))} \cdot
 \sqrt{ \det \left( \langle J_\ell(y,t), J_k(y,t) \rangle \right)_{\ell,k=1,\ldots,n} }.
 \label{eq_B1155} \end{equation}
Glancing at (\ref{eq_B1153}), we see that the map $F$ pushes forward the measure whose density is given by (\ref{eq_B1155})
to the restriction of $\mu$ to $R_i$. From Proposition \ref{prop_B1122}(ii), for any $y \in B_0$ there exists
$\cI(y) \in T^{\circ}[u]$ such that
$$ \cI(y) = \left \{ F(y,t) \, ; \, a_y < t < b_y \right \}. $$
Furthermore, $\cI(y) \subseteq R_i$, and since $F$ is invertible then
$\cI(y_1) \cap \cI(y_2) = \emptyset$ for $y_1 \neq y_2$.
By
Proposition \ref{prop_B1122}(ii), for all $y \in B_0$ the map $t \mapsto F(y,t)$ is a minimizing geodesic.
Define the measure
$$ \tilde{\mu}_{\cI(y)} $$
to be the push-forward under the map $t \mapsto F(y,t)$ of the measure on $(a_y, b_y)$ whose density is given
by  (\ref{eq_B1155}). Then $\tilde{\mu}_{\cI(y)}$ is a well-defined measure supported on $\cI(y)$ for almost any $y \in B_0$.
Recall that the  map $F$ pushes forward the measure whose density is given by (\ref{eq_B1155})
to the restriction of $\mu$ to $R_i$. By Fubini's theorem, for any measurable set $A \subseteq R_i$,
\begin{equation} \mu(A) = \int_{B_0} \tilde{\mu}_{\cI(y)}(A) d y = \int_{B_0} \mu_{\cI(y)}(A) e^{-|y|} dy,
\label{eq_B1218} \end{equation}
where $\mu_{\cI(y)} := e^{|y|} \tilde{\mu}_{\cI(y)}$. Denote
\begin{equation}  \tilde{B}_0 = \left \{ y \in B_0 \, ; \, \mu_{\cI(y)}(\cM) > 0 \right \}, \label{eq_B222A} \end{equation}
which is a measurable subset of $B_0 \subseteq \RR^{n-1}$. Define the measure $\nu_i$ to be the push-forward under the map $y \mapsto \cI(y)$ of
the measure on $\tilde{B}_0$ whose density is $y \mapsto e^{-|y|}$.
 Then $\nu_i$ is a finite measure supported on $T^{\circ}[u]$.
 In fact, $\nu_i$ is supported on  $S_i \subseteq T^{\circ}[u]$
 since $\cI(y) \in S_i$ for all $y \in B_0$.
 From (\ref{eq_B1218}) and (\ref{eq_B222A}), for any measurable set $A \subseteq \cM$,
\begin{equation} \mu(A \cap R_i) = \int_{S_i} \mu_{\cI}(A \cap R_i) d \nu_i(\cI) =
\int_{S_i} \mu_{\cI}(A) d \nu_i(\cI).
\label{eq_B1811_} \end{equation}
 Furthermore, $\mu_{\cI}(\cM) > 0$ for $\nu_i$-almost any $\cI \in S_i$, by the definition of $\tilde{B}_0$.
 Recall that when we push-forward a measure, we also push-forward its $\sigma$-algebra.
 Therefore if a subset $G \subseteq S_i$ is $\nu_i$-measurable, then $\{ y \in \tilde{B}_0 \, ; \,
\cI(y) \in G \}$ is a measurable subset of $B_0$. Since $B$ is a parallel line cluster, then also
$\{ (y,t) \in B \, ; \, \cI(y) \in G \}$ is measurable in $\RR^{n-1} \times \RR$. The image of the latter
measurable set under $F$ equals $\pi^{-1}(G)$. Since $F$
is locally-Lipschitz, then $\pi^{-1}(G)$ is a measurable subset of $\Strain[u]$, whenever $G \subseteq S_i$
is $\nu_i$-measurable.

\medskip
Let us show that $\mu_{\cI}$ is a needle-candidate for $\nu_i$-almost any $\cI \in S_i$.
Since $\mu_{\cI}$ is proportional to $\tilde{\mu}_{\cI}$, it suffices to prove
that $\tilde{\mu}_{\cI(y)}$ is a needle-candidate for almost any $y \in \tilde{B}_0$.
Properties (i) and (ii)
from Definition \ref{def_B1210} hold by the definition of $\tilde{\mu}_{\cI(y)}$,
where we set
$$ J_i(t) = J_i(y,t-r_0), \quad \gamma(t) = F(y, t-r_0), \quad a = a_y + r_0, \quad b = b_y + r_0. $$
Property (v) follows from Proposition \ref{prop_B1122}(ii).
We deduce property (iii) of Definition \ref{def_B1210} (with $t = r_0$)
from Proposition \ref{prop_B1122}(iii).
Property (iv) follows from Lemma \ref{lem_B1811}. Note also that setting $A = (a,b)$ we have
\begin{equation} \cI(y) = \gamma(A). \label{eq_B1218A} \end{equation}
 Hence
$\tilde{\mu}_{\cI(y)}$ is a needle-candidate supported on $\cI(y)$ for almost any $y \in \tilde{B}_0$, and consequently
$\mu_{\cI}$ is a needle-candidate supported on $\cI$ for $\nu_i$-almost any $\cI \in S_i$.
Write $\tilde{S}_i \subseteq S_i$ for the collection of all $\cI \in S_i$ for which
$\mu_{\cI}$ is a needle-candidate supported on $\cI$ with $\mu_{\cI}(\cM) > 0$. Then $\nu_i(S_i \setminus \tilde{S}_i) = 0$.
For completeness, let us redefine $\mu_{\cI} \equiv 0$
for $\cI \in S_i \setminus \tilde{S}_i$. Note that
(\ref{eq_B1811_}) still holds true for any measurable set $A \subseteq \cM$, since
we altered the definition of $\mu_{\cI}$ only on a $\nu_i$-null set.

\medskip
To summarize, we found a family of measures $\{ \mu_\cI \}_{\cI \in S_i}$
such that (\ref{eq_B1811_}) holds true for any measurable set $A \subseteq \cM$.
We now let $i$ vary. Since the ray clusters $\{ R_i \}_{i=1,2,\ldots}$ are disjoint, then
 $S_1,S_2,\ldots \subseteq T^{\circ}[u]$ are also disjoint.
Denoting $\nu = \sum_i \nu_i$, we deduce (\ref{eq_B1227}) from   (\ref{eq_B1143}) and (\ref{eq_B1811_}).
This completes the proof of (ii), and also of the second assertion in (i).
Furthermore, for $\nu$-almost any $\cI \in T^{\circ}[u]$, we have that $\cI \in S_i$
for some $i$, and the measure $\mu_{\cI}$ is a needle-candidate supported on $\cI$ with $\mu_{\cI}(\cM) > 0$.
It thus follows from (\ref{eq_B1218A}) that  conclusion (iii) holds true.
Note that if a subset $G \subseteq T^{\circ}[u]$ is $\nu$-measurable, then
$G \cap S_i$ is $\nu_i$-measurable for any $i$, and hence $\pi^{-1}(G \cap S_i) \subseteq R_i$ is measurable in $\cM$.
Consequently $\pi^{-1}(G)$ is $\lambda_{\cM}$-measurable whenever $G \subseteq T^{\circ}[u]$ is $\nu$-measurable.
This completes the  proof of (i). The lemma is therefore proven.
\end{proof}

Recall from Section \ref{sec_intro}  the definition of the generalized Ricci tensor $\Ric_{\mu, N}$ of the weighted
Riemannian manifold $(\cM, d, \mu)$.

\begin{definition} Let $n \geq 2, N \in (-\infty, 1) \cup [n, +\infty]$ and let
$(\cM,d,\mu)$ be an $n$-dimensional weighted Riemannian manifold.
We say that a measure $\nu$ on
the Riemannian manifold $\cM$ is an  ``$N$-curvature needle'' if there exist a non-empty, connected open set $A \subseteq \RR$,
a smooth function $\Psi: A \rightarrow \RR$ and a minimizing geodesic $\gamma:A \rightarrow \cM$ such that:
\begin{enumerate}
\item[(i)] Denote by $\theta$ the measure on $A \subseteq \RR$ whose density with respect to the Lebesgue measure
is $e^{-\Psi}$. Then $\nu$ is the push-forward of $\theta$ under the map $\gamma$.
\item[(ii)] The following inequality holds in the entire set $A$:
\begin{equation}
\Psi^{\prime \prime} \geq \Ric_{\mu, N}(\dot{\gamma}, \dot{\gamma}) + \frac{(\Psi^{\prime})^2}{N-1},
\label{eq_B327}
\end{equation}
where in the case $N = \infty$, we interpret the term $(\Psi^{\prime})^2 / (N-1)$ as zero.\end{enumerate}
\label{def_curvature}
\end{definition}

The following proposition asserts that
any needle-candidate in the sense of Definition
\ref{def_B1210} is in fact an {\it $N$-curvature needle}.

\begin{proposition}
Let $n \geq 2, N \in (-\infty, 1) \cup [n, +\infty]$ and let $(\cM,d,\mu)$ be an $n$-dimensional weighted Riemannian manifold
which is geodesically-convex. Let $u: \cM \rightarrow \RR$ satisfy $\|  u \|_{Lip} \leq 1$.
Let $\nu$ be a needle-candidate of $(\cM, d, \mu)$ and $u$. Then either $\nu$ is the zero measure, or else $\nu$ is an $N$-curvature needle.
\label{prop_1134}
\end{proposition}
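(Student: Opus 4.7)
The plan is to translate the density formula of a needle-candidate into a one-dimensional comparison, reducing the problem to the classical Riccati/Bochner computation for the Jacobian of geodesic flow, and then closing the inequality by a trace Cauchy--Schwarz and a single algebraic ``complete the square'' manoeuvre.

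First I would dispose of the degenerate alternative in Definition~\ref{def_B1210}(iv): if $J_1(t),\ldots,J_{n-1}(t)$ are linearly dependent for all $t\in(a,b)$, then $\det(\langle J_i(t),J_k(t)\rangle)_{i,k=1,\ldots,n}$ vanishes identically, the density (\ref{eq_B1327}) is zero, so $\theta=0$ and $\nu=0$. From now on I assume $J_1(t),\ldots,J_{n-1}(t)$ are linearly independent for every $t\in(a,b)$. The point of condition (iii) is that the orthogonality relations $\langle J_i,\dot\gamma\rangle=\langle J_i',\dot\gamma\rangle=0$ and the symmetry $\langle J_i',J_k\rangle=\langle J_k',J_i\rangle$ hold at some $t_0$, but both are conserved along $\gamma$: differentiating $\langle J_i',\dot\gamma\rangle$ once gives $\langle R(\dot\gamma,J_i)\dot\gamma,\dot\gamma\rangle=0$, and differentiating $\langle J_i',J_k\rangle-\langle J_k',J_i\rangle$ gives $\langle R(\dot\gamma,J_i)\dot\gamma,J_k\rangle-\langle R(\dot\gamma,J_k)\dot\gamma,J_i\rangle=0$ by the symmetries of the curvature tensor. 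Thus the orthogonality and symmetry propagate to all $t\in(a,b)$.

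Next I choose a parallel orthonormal frame $E_1(t),\ldots,E_{n-1}(t)$ of $\dot\gamma(t)^\perp$ along $\gamma$, and write $J_i(t)=\sum_{j=1}^{n-1}M_{ji}(t)E_j(t)$. The Jacobi equation becomes $M''=-\mathcal R(t)M$, where the symmetric matrix $\mathcal R(t)$ has entries $\langle R(E_k,\dot\gamma)\dot\gamma,E_j\rangle$ and satisfies $\operatorname{Trace}\mathcal R(t)=\Ric_{\cM}(\dot\gamma,\dot\gamma)$. Linear independence of the $J_i$ means $M(t)$ is invertible on $(a,b)$, so I may define the shape operator $S(t)=M'(t)M(t)^{-1}$; the propagated symmetry gives $M^TSM=M^TS^TM$, hence $S=S^T$, and differentiating $M'=SM$ gives the Riccati identity
\begin{equation*}
S'+S^2+\mathcal R=0.
\end{equation*}
Since $J_n=\dot\gamma$ is a unit vector orthogonal to all $J_i$, $\det(\langle J_\ell,J_k\rangle)_{\ell,k=1}^{n}=\det(M^TM)=(\det M)^2$. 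I then set
\begin{equation*}
\Psi(t)=\rho(\gamma(t))-\log|\det M(t)|+\text{const},
\end{equation*}
so that the density of $\theta$ is $e^{-\Psi}$ as required by Definition~\ref{def_curvature}(i). Differentiating gives $\Psi'=\partial_{\dot\gamma}\rho-\operatorname{Trace} S$ and, using the Riccati equation,
\begin{equation*}
\Psi''=\Hess_\rho(\dot\gamma,\dot\gamma)+\Ric_{\cM}(\dot\gamma,\dot\gamma)+\operatorname{Trace}(S^2)=\Ric_\mu(\dot\gamma,\dot\gamma)+\operatorname{Trace}(S^2).
\end{equation*}

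The final step is the curvature-dimension closure. Let $a=\operatorname{Trace}S=\partial_{\dot\gamma}\rho-\Psi'$ and $b=\Psi'$. Since $S$ is a symmetric $(n-1)\times(n-1)$ matrix, Cauchy--Schwarz in Hilbert--Schmidt gives $\operatorname{Trace}(S^2)\geq a^2/(n-1)$. For $N=\infty$ this alone yields $\Psi''\geq\Ric_\mu(\dot\gamma,\dot\gamma)$; for $N=n$ with $\rho$ constant, $a=-b$ gives $\Psi''\geq\Ric_{\cM}(\dot\gamma,\dot\gamma)+(\Psi')^2/(n-1)$. For the remaining range, I need
\begin{equation*}
\frac{a^2}{n-1}+\frac{(\partial_{\dot\gamma}\rho)^2}{N-n}\;\geq\;\frac{b^2}{N-1},
\end{equation*}
and substituting $\partial_{\dot\gamma}\rho=a+b$ and multiplying through by $(n-1)(N-n)(N-1)$ (a product which is positive both when $N>n$ and when $N<1$, hence the inequality direction is preserved in both cases) reduces the claim to
\begin{equation*}
\bigl[(N-1)a+(n-1)b\bigr]^2\geq 0,
\end{equation*}
which is trivially true. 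This yields (\ref{eq_B327}) and completes the proof.

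The only step I expect to need a bit of care is the propagation of the conditions in Definition~\ref{def_B1210}(iii) from a single $t_0$ to all of $(a,b)$, so that the algebraic identities above remain valid throughout the interval; checking the sign of $(n-1)(N-n)(N-1)$ in the two admissible ranges of $N$ is the secondary point that must not be glossed over.
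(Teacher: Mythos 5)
Your proposal is correct and follows essentially the same route as the paper's proof of Proposition~\ref{prop_1134}: propagate the orthogonality and the symmetry of $\langle J_i',J_k\rangle$ from the single time $t_0$ to the whole interval using the Jacobi equation and curvature symmetries, reduce the $n\times n$ Gram determinant to the $(n-1)\times(n-1)$ one, differentiate $\Psi=\rho\circ\gamma-\log|\det M|$ twice to get $\Psi''=\Ric_\mu(\dot\gamma,\dot\gamma)+\Trace(S^2)$, apply the trace Cauchy--Schwarz bound $\Trace(S^2)\geq(\Trace S)^2/(n-1)$, and close with the algebraic identity that reduces to $[(N-1)a+(n-1)b]^2\geq 0$. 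The only cosmetic difference is that you express the Jacobi fields in a parallel orthonormal frame and work with $M(t)$ and the Riccati equation $S'+S^2+\mathcal R=0$, whereas the paper works with the Gram matrix $G_t$ and the coefficient matrix $A_t$ defined by $J_i'=\sum_k A_t(i,k)J_k$; these are conjugate to each other, so $\Trace S=\Trace A_t$ and $\Trace S^2=\Trace A_t^2$, and your final ``complete the square'' is exactly what the paper packages as its Lemma~\ref{lem_B521}.
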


The proof of Proposition \ref{prop_1134} essentially boils down to a classical estimate in Riemannian geometry
from Heintze and Karcher \cite{HK} that was generalized to the case of {\it weighted} Riemannian
manifolds by Bayle \cite[Appendix E.1]{bayle} and by Morgan \cite{morgan}.
According to Gromov  \cite{G_levy}, the estimate stems from the work of Paul Levy on the isoperimetric inequality
in 1919.
We begin the proof of Proposition \ref{prop_1134} with the following trivial lemma:

\begin{lemma} Let $a,b \in \RR$ with $b > 0$ and $a \not \in [-b, 0]$. Then,
$$ \frac{x^2}{a} + \frac{y^2}{b} \geq \frac{(x- y)^2}{a+b} \qquad \qquad (x,y \in \RR).
$$ \label{lem_B521} \end{lemma}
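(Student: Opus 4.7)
The plan is to clear denominators and identify the difference as a perfect square. First I would check that under the hypotheses the quantity $ab(a+b)$ is strictly positive. There are two cases to consider, corresponding to the two components of $\RR \setminus [-b,0]$: if $a > 0$, then $a, b, a+b$ are all positive, so their product is positive; if $a < -b$, then $a < 0$, $b > 0$ and $a+b < 0$, and the product of two negatives and a positive is again positive. (The borderline values $a=0$ and $a=-b$ are exactly the ones excluded.)

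Since $ab(a+b) > 0$, multiplying the desired inequality through by $ab(a+b)$ preserves its direction, reducing it to
\begin{equation*}
 b(a+b) x^2 + a(a+b) y^2 - ab (x-y)^2 \;\geq\; 0.
\end{equation*}
Expanding the left-hand side yields
\begin{equation*}
abx^2 + b^2 x^2 + a^2 y^2 + aby^2 - abx^2 + 2abxy - aby^2 \;=\; b^2 x^2 + 2abxy + a^2 y^2 \;=\; (bx + ay)^2,
\end{equation*}
which is manifestly $\geq 0$. This completes the proof.

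There is no real obstacle here; the only thing to be careful about is the case split on the sign of $a$, which is what ensures the common denominator is positive and hence the clearing of denominators preserves the inequality. The condition $a \notin [-b,0]$ is precisely what is needed for this to go through — it excludes both $a=0$ (where $1/a$ is undefined) and the range $-b < a < 0$ where $ab(a+b)$ would be negative and the direction of the inequality would reverse under clearing, as well as $a=-b$ where the right-hand side is undefined.
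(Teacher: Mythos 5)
Your proof is correct and rests on the same underlying computation as the paper's: both identify the difference of the two sides, after clearing denominators, as the perfect square $(bx+ay)^2$ divided by $ab(a+b)$, with the hypothesis $a \notin [-b,0]$ serving precisely to make $ab(a+b) > 0$. The paper phrases this slightly differently (factoring out $1/(a+b)$ and invoking $|b/a|x^2 \pm 2xy + |a/b|y^2 \geq 0$ to cover the two signs of $a$ at once), but that is a cosmetic variation of the same argument, and your explicit case split is if anything a bit cleaner.
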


\begin{proof} We use the inequality $|b/a| \cdot x^2 \pm 2 xy + |a/b| \cdot y^2 \geq 0$ to deduce that
$$
\frac{x^2}{a} + \frac{y^2}{b} - \frac{(x- y)^2}{a+b} = \frac{1}{a+b} \left( \frac{b}{a} x^2 + 2 xy + \frac{a}{b} y^2 \right) \geq 0, $$
whenever $b > 0$ and $a \not \in [-b, 0]$.
\end{proof}

Let us recall the familiar formulas for differentiating a determinant. If $A_t$ is an invertible  $n \times n$ matrix
that depends smoothly on $t \in \RR$, then
\begin{equation}
 \frac{d}{dt} \log \left| \det(A_t)\right| = \Trace[ A_t^{-1} \cdot \dot{A}_t ],
 \label{eq_B956}
 \end{equation}
 and
\begin{equation}
\frac{d^2}{dt^2} \log \left| \det(A_t) \right|= \Trace[ A_t^{-1} \cdot \ddot{A_t} ] - \Trace \left[ \left( A_t^{-1} \cdot \dot{A}_t \right)^2 \right].
\label{eq_B957} \end{equation}

\begin{proof}[Proof of Proposition \ref{prop_1134}] Let $\nu$ be a needle-candidate of $(\cM, d, \mu)$ and $u$.
We may assume that $\nu$ is not the zero measure.
Let $a,b,\theta,\gamma$
and $J_1,\ldots,J_{n-1}$ be as in
Definition \ref{def_B1210}. For $t \in (a,b)$ denote
\begin{equation}  f(t)  = e^{-\rho(\gamma(t))} \cdot \sqrt{\det \left( \langle J_i(t), J_k(t) \rangle \right)_{i,k=1,\ldots,n}}
\label{eq_B941} \end{equation}
where $J_n = \dot{\gamma}$. According to Definition \ref{def_B1210}(ii), the
density of the measure $\theta$ on $(a,b) \subseteq \RR$
is proportional to the function $f$.
We will prove that $f$ is smooth and positive
in $(a,b)$, and that $\Psi := -\log f$ satisfies
\begin{equation}
\Psi^{\prime \prime} \geq \Ric_{\mu, N}(\dot{\gamma}, \dot{\gamma}) + \frac{(\Psi^{\prime})^2}{N-1}, \label{eq_B943}
\end{equation}
where in the case $N = + \infty$ we interpret the term $(\Psi^{\prime})^2 / (N-1)$ as zero.
Comparing Definition
\ref{def_curvature} of $N$-curvature needles and Definition \ref{def_B1210} of needle-candidates, we
see that the proposition would follow from (\ref{eq_B943}). The rest of the proof is therefore
devoted to establishing (\ref{eq_B943}).
The Jacobi fields $J_1,\ldots,J_{n-1}$ satisfy the Jacobi equation:
\begin{equation} J_i^{\prime \prime}(t) = R(\dot{\gamma}(t), J_i(t)) \dot{\gamma}(t) \qquad \qquad \qquad \text{for} \ t \in (a,b), i=1,\ldots,n-1.
\label{eq_B1308}
\end{equation}
Since $\gamma$ is a geodesic then  $\nabla_{\dot{\gamma}} \dot{\gamma} = 0$, and for any $i=1,\ldots,n-1$ and $t \in (a,b)$,
\begin{equation}   \frac{d}{dt} \langle J_i, \dot{\gamma} \rangle = \langle J_i^{\prime}, \dot{\gamma} \rangle,
\qquad \frac{d^2}{dt^2} \langle J_i, \dot{\gamma} \rangle = \langle J_i^{\prime \prime}, \dot{\gamma} \rangle.
\label{eq_B1044} \end{equation}
From (\ref{eq_B1308}) and the symmetries of the Riemann curvature tensor
we deduce that $\langle J_i^{\prime \prime}, \dot{\gamma} \rangle \equiv 0$.
Therefore $\langle J_i(t), \dot{\gamma}(t) \rangle$ is an affine function of $t \in (a,b)$.
It thus follows from
(\ref{eq_B1702_}) and (\ref{eq_B1044}) that for any $t \in (a,b)$,
\begin{equation}  J_1(t),\ldots,J_{n-1}(t) \perp \dot{\gamma}(t). \label{eq_B1710} \end{equation}
From (\ref{eq_B941}) and (\ref{eq_B1710}) we obtain
\begin{equation}  f(t) = e^{-\rho(\gamma(t))} \cdot \sqrt{\det \left( \langle J_i(t), J_k(t) \rangle \right)_{i,k=1,\ldots,n-1}}.
\label{eq_B1717} \end{equation}
(The indices run only up to $n-1$, as $\dot{\gamma} = J_n$ is a unit vector orthogonal to $J_1,\ldots,J_{n-1}$).
Since $\theta$ is not the zero measure, there exists $t_1 \in (a,b)$ for which $f(t_1) \neq 0$. From (\ref{eq_B1717}) we learn
that the vectors
$$ J_1(t_1),\ldots,J_{n-1}(t_1) \in T_{\gamma(t)} \cM $$
are linearly independent. According to  Definition \ref{def_B1210}(iv), the vectors
$J_1(t),\ldots,J_{n-1}(t)$ are linearly independent for all $t \in (a,b)$.
Hence, (\ref{eq_B1717}) yields
\begin{equation} \forall t \in (a,b), \ \ f(t) > 0. \label{eq_B1813} \end{equation}
From the Jacobi equation (\ref{eq_B1308}), for any $t \in (a,b)$ and $i,k =1,\ldots,n-1$,
\begin{equation}
\frac{d}{dt} \left( \langle J_i^{\prime}, J_k \rangle -  \langle J_i, J_k^{\prime} \rangle \right)
= \langle J_i^{\prime \prime}, J_k \rangle - \langle J_i, J_k^{\prime \prime} \rangle =
\langle R(\dot{\gamma}, J_i) \dot{\gamma}, J_k \rangle - \langle J_i, R(\dot{\gamma}, J_k) \dot{\gamma} \rangle = 0,
\label{eq_B1316}
\end{equation}
by the symmetries of the Riemann curvature tensor.
By using (\ref{eq_B1041}) and (\ref{eq_B1316}) we deduce that in the entire interval $(a,b) \subseteq \RR$,
\begin{equation}  \langle J_i^{\prime}, J_k \rangle = \langle J_i, J_k^{\prime} \rangle \qquad \qquad \text{for} \ i,k=1,\ldots,n. \label{eq_B1713}
\end{equation}
Let $G_t = (G_t(i,k))_{i,k=1,\ldots,n-1}$ be the symmetric, positive-definite $(n-1) \times (n-1)$ matrix
whose entries are $G_t(i,k) = \langle J_i(t), J_k(t) \rangle$.
According to (\ref{eq_B1717}) and (\ref{eq_B1813}), the function $\Psi = -\log f$ satisfies,
\begin{equation}
\Psi(t)  = \rho(\gamma(t)) - \frac{1}{2} \log \det G_t \qquad \qquad \qquad \text{for} \ t \in (a,b).
\label{eq_B1042}
\end{equation}
Denote $H(t) = \dot{\gamma}(t)^{\perp} \subset T_{\dot{\gamma}(t)} \cM$, the orthogonal complement to the vector $\dot{\gamma}(t)$.
From (\ref{eq_B1044}) and (\ref{eq_B1710}),
\begin{equation}  J_i(t), J_i^{\prime}(t) \in H(t) \qquad \qquad \qquad \text{for all} \ t \in (a,b), i=1,\ldots,n-1. \label{eq_B2155_}
\end{equation}
For any $t \in (a,b)$ the linearly-independent vectors $J_1(t),\ldots,J_{n-1}(t) \in H(t)$ constitute a basis of the $(n-1)$-dimensional space $H(t)$.
In view of (\ref{eq_B2155_}), we may define
an $(n-1) \times (n-1)$ matrix
$A_t = (A_t(i,k))_{i,k=1,\ldots,n-1} $ by requiring that
\begin{equation}
J_i^{\prime}(t) = \sum_{k=1}^{n-1} A_t(i, k) J_k(t) \qquad \qquad \qquad \text{for} \ t \in (a,b), i=1,\ldots,n-1.
\label{eq_B1025}
\end{equation}
Recall that $G_t(i,k) = \langle J_i(t), J_k(t) \rangle$.
From (\ref{eq_B1713}) and (\ref{eq_B1025}),  for any $t \in (a,b)$,
$$
 \dot{G}_t(i,k) = \langle J_i^{\prime}, J_k \rangle + \langle J_i, J_k^{\prime} \rangle = 2 \langle
  J_i^{\prime}, J_k \rangle = 2 \left \langle \sum_{\ell=1}^{n-1} A_t(i, \ell) J_{\ell}, J_k \right \rangle =
  2 \sum_{\ell=1}^{n-1} A_t(i, \ell) G_t(\ell, k).  $$
  Equivalently, $\dot{G}_t = 2 A_t G_t$. Since $G_t$ is a symmetric matrix
  then also $A_t G_t = \dot{G}_t / 2$ is a symmetric matrix.
Since $G_t$ is a positive-definite matrix, from (\ref{eq_B956}), then
 \begin{equation}
\frac{d}{dt} \log \det(G_t) = \Trace \left[  G_t^{-1} \dot{G}_t \right]
= 2 \Trace \left[ G_t^{-1} A_t G_t \right] = 2 \Trace[ A_t ].
\label{eq_B1906}
\end{equation}
As for the second derivative, we use (\ref{eq_B1025}) and the Jacobi equation (\ref{eq_B1308}) and obtain,
\begin{align}
\label{eq_B1757_} \ddot{G}_t(i,k)  & = \langle J_i^{\prime \prime}, J_k \rangle + 2 \langle J_i^{\prime}, J_k^{\prime} \rangle
+ \langle J_k^{\prime \prime}, J_i \rangle \\ & = 2 \langle R(\dot{\gamma}, J_i) \dot{\gamma}, J_k \rangle + 2 \sum_{\ell, m=1}^{n-1} A_t(i, \ell) A_t(k, m) G_t(\ell, m) \nonumber \end{align}
where we used the symmetries of the Riemann curvature tensor in the last passage.
Recall that $\Ric_{\cM}(\dot{\gamma}, \dot{\gamma})$ is the trace of the linear transformation $V \mapsto -R(\dot{\gamma}, V) \dot{\gamma}$
in the linear space $H(t)$. By linear algebra, (\ref{eq_B1757_}) entails that
\begin{equation} \Trace \left[ G_t^{-1} \ddot{G}_t \right] = -2 \Ric_{\cM}(\dot{\gamma}(t), \dot{\gamma}(t))
+ \Trace \left[ 2 G_t^{-1} A_t^2 G_t \right],
\label{eq_B2157_} \end{equation}
where we used the fact that $A_t G_t A_t^* = A_t (A_t G_t)^* = A_t^2 G_t$ in the last passage, as $A_t G_t$ is symmetric.
Since $\dot{G}_t = 2 A_t G_t$ then from (\ref{eq_B957}) and (\ref{eq_B2157_}),
\begin{equation}   \frac{d^2}{dt^2} \log \det(G_t) = -2 \Ric_{\cM}(\dot{\gamma}(t), \dot{\gamma}(t))
+ 2 \Trace \left[ A_t^2  \right] - 4 \Trace \left[ G_t^{-1} A_t^2 G_t \right].
\label{eq_B1059} \end{equation}
Applying (\ref{eq_B1042}) and (\ref{eq_B1906}) yields
\begin{equation}  \Psi^{\prime}(t) = \partial_{\dot{\gamma}(t)} \rho - \Trace[ A_t ]. \label{eq_B1150} \end{equation}
Since $\gamma$ is a geodesic, the equations (\ref{eq_B1042}) and (\ref{eq_B1059}) lead to
\begin{align} \Psi^{\prime \prime}(t) & = \Hess_{\rho}(\dot{\gamma}(t), \dot{\gamma}(t)) + \Ric_{\cM}(\dot{\gamma}(t), \dot{\gamma}(t)) +
\Trace \left[ A_t^2  \right]. \label{eq_B1358} \end{align}
We will now utilize the definition of the generalized Ricci tensor with parameter $N$. Therefore,
from (\ref{eq_B1358}),
\begin{align} \Psi^{\prime \prime}(t) \geq  \Ric_{\mu, N}(\dot{\gamma}(t), \dot{\gamma}(t))
+ \frac{(\partial_{\dot{\gamma}(t)} \rho)^2}{N-n} + \Trace \left[ A_t^2  \right], \label{eq_B1411} \end{align}
where in the case where $N = \infty$
we interpret the term $(\partial_{\dot{\gamma}(t)} \rho)^2 / (N-n)$ as zero. In the case where $N = n$, we
require $\rho$ to be a constant function and the latter term is again interpreted as zero.
The matrix $\dot{G}_t = 2 A_t G_t$ is symmetric, and hence $G_t^{-1/2} A_t G_t^{1/2}$ is also symmetric.
Thus the $(n-1) \times (n-1)$ matrix $A_t$ is conjugate to a symmetric matrix and consequently it has $n-1$ real eigenvalues (repeated according
to their multiplicity).
The Cauchy-Schwartz inequality yields $\left[ \Trace(A_t) \right]^2 \leq (n-1) \Trace[A_t^2]$ and therefore,
for any $t \in (a,b)$,
\begin{equation} \Psi^{\prime \prime}(t) \geq  \Ric_{\mu, N}(\dot{\gamma}(t), \dot{\gamma}(t))
+ \frac{(\partial_{\dot{\gamma}(t)} \rho)^2}{N-n} + \frac{\left( \Trace  [A_t]  \right)^2}{n-1}.
\label{eq_B1148}
\end{equation}
In the case where $N = \infty$ or $N = n$, we deduce (\ref{eq_B943})  from (\ref{eq_B1150}) and (\ref{eq_B1148}).
Otherwise, we have $N \in \RR \setminus [1,n]$ and from (\ref{eq_B1148}) and Lemma \ref{lem_B521},
\begin{equation} \Psi^{\prime \prime}(t) \geq  \Ric_{\mu, N}(\dot{\gamma}(t), \dot{\gamma}(t))
+ \frac{(\partial_{\dot{\gamma}(t)} \rho - \Trace[ A_t ])^2}{N-1}.
\label{eq_B1418}
\end{equation}
From (\ref{eq_B1150}) and (\ref{eq_B1418}) we
conclude that
(\ref{eq_B943}) holds true for any $t \in (a,b)$, and the proof of the proposition is complete.
\end{proof}

\begin{example}{\rm Consider the example where $\rho \equiv Const$ and where $\cM \subseteq \RR^n$ is an open, convex set.
Equations (\ref{eq_B1150}) and (\ref{eq_B1358}) along with simple manipulations show that here,
\begin{equation}  \Psi^{\prime}(t) = -\Trace[A_t], \qquad \Psi''(t) = \Trace[A_t^2] \qquad \text{and} \qquad \dot{A}_t = -A_t^2.
\label{eq_B1741_} \end{equation}
The eigenvalues of $A_t$ may be viewed as ``principal curvatures'' or as ``eigenvalues of the second fundamental form'' of a level set of $u$.
Solving (\ref{eq_B1741_}), we see that the density $f(t) = e^{-\Psi(t)}$ is proportional to the function
\begin{equation} t \mapsto \prod_{i=1}^{k} |t - \lambda_i| \qquad \qquad \qquad \text{for} \ t \in (a,b), \label{eq_B2045A} \end{equation}
where $k \leq n-1$ and $\lambda_1,\ldots,\lambda_{k} \in \RR \setminus (a,b)$ are some numbers. An empty product is defined to be one.
We
 learn from (\ref{eq_B2045A}) that the
positive function $f: (a,b) \rightarrow \RR$
is a polynomial of degree at most $n-1$, all of whose roots lie in $\RR \setminus (a,b)$.
}\end{example}

\begin{theorem}
Let $n \geq 2$ and $N \in (-\infty, 1) \cup [n, +\infty ]$.
Assume that $(\cM,d,\mu)$ is an $n$-dimensional weighted Riemannian manifold
which is geodesically-convex.
 Let $u: \cM \rightarrow \RR$ satisfy $\| u \|_{Lip} \leq 1$.
Then there exist a measure $\nu$ on the set $T^{\circ}[u]$
and a family $\{ \mu_{\cI} \}_{\cI \in T^{\circ}[u]}$ of measures on $\cM$
such that:
\begin{enumerate}
\item[(i)] For any Lebesgue-measurable set $A \subseteq \cM$, the map $\cI \mapsto \mu_{\cI}(A)$ is well-defined $\nu$-almost everywhere and is a $\nu$-measurable map.
When a subset $S \subseteq T^{\circ}[u]$ is $\nu$-measurable then $\pi^{-1}(S) \subseteq \Strain[u]$
is a measurable subset of $\cM$.
\item[(ii)] For any Lebesgue-measurable set $A \subseteq \cM$,
$$ \mu(A \cap \Strain[u]) = \int_{T^{\circ}[u]} \mu_{\cI}(A) d \nu(\cI). $$
\item[(iii)] For $\nu$-almost any $\cI \in T^{\circ}[u]$, the measure $\mu_{\cI}$ is an $N$-curvature needle supported on $\cI \subseteq \cM$.
Furthermore, the set $A \subseteq \RR$ and the minimizing geodesic $\gamma:A \rightarrow \cM$ from Definition \ref{def_curvature} may be
selected so that $\cI = \gamma(A)$ and
so that
$$ u(\gamma(t)) = t \qquad \qquad \qquad \text{for all} \ t \in A. $$
\end{enumerate}
\label{thm_main2_}
\end{theorem}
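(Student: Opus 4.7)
The plan is to obtain Theorem \ref{thm_main2_} as a direct synthesis of Lemma \ref{prop_1147} and Proposition \ref{prop_1134}, since together these furnish essentially all of the required structure.

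First, I would invoke Lemma \ref{prop_1147} to produce a measure $\nu$ on $T^{\circ}[u]$ and a family $\{\mu_{\cI}\}_{\cI \in T^{\circ}[u]}$ of measures on $\cM$. Properties (i) and (ii) of the theorem are then immediate: the measurability clauses come from Lemma \ref{prop_1147}(i), and the disintegration identity $\mu(A \cap \Strain[u]) = \int_{T^{\circ}[u]} \mu_{\cI}(A)\, d\nu(\cI)$ is precisely Lemma \ref{prop_1147}(ii). Additionally, Lemma \ref{prop_1147}(iii) tells us that for $\nu$-almost every $\cI$, the measure $\mu_{\cI}$ is a needle-candidate (in the sense of Definition \ref{def_B1210}) supported on $\cI$ with $\mu_{\cI}(\cM) > 0$, and moreover the set $A \subseteq \RR$ and the minimizing geodesic $\gamma: A \to \cM$ supplied by Definition \ref{def_B1210} satisfy $\cI = \gamma(A)$.

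Next, I would upgrade ``needle-candidate'' to ``$N$-curvature needle'' by appealing to Proposition \ref{prop_1134}. For $\nu$-almost every $\cI$, since $\mu_{\cI}(\cM) > 0$, the measure $\mu_{\cI}$ is not the zero measure, and hence Proposition \ref{prop_1134} guarantees that $\mu_{\cI}$ is an $N$-curvature needle in the sense of Definition \ref{def_curvature}. A glance at the proof of Proposition \ref{prop_1134} shows that the geodesic $\gamma$ and the interval $A = (a,b)$ furnished by Definition \ref{def_curvature} are the same as those coming from the needle-candidate structure—the function $\Psi = -\log f$ is merely the negative logarithm of the density appearing in (\ref{eq_B1327}), so no re-parametrization is needed. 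Thus we keep $\cI = \gamma(A)$.

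Finally, I would verify the normalization $u(\gamma(t)) = t$ for all $t \in A$. This follows directly from Definition \ref{def_B1210}(v), which is built into the needle-candidate structure produced by Lemma \ref{prop_1147}: property (v) there states precisely that $\gamma(A)$ is the relative interior of a transport ray and that $u \circ \gamma$ is the identity on $A$. Combining all three steps yields conclusions (i), (ii), and (iii) of Theorem \ref{thm_main2_}.

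There is essentially no obstacle remaining at this stage, because the genuine work has been done in the preceding sections: the $C^{1,1}$-regularity of $u$ on $\Strain[u]$ (Theorem \ref{prop_939}), the decomposition of $\Strain[u]$ into countably many ray clusters (Proposition \ref{cfm}), the Jacobi-field representation of the conditional densities along each ray (Proposition \ref{prop_B1122}), and the curvature computation translating the Heintze--Karcher--Bayle--Morgan estimate into the differential inequality (\ref{eq_B327}) (Proposition \ref{prop_1134}). The only point requiring a small amount of care is the bookkeeping that the $\gamma$ and $A$ appearing in the three different definitions (transport ray, needle-candidate, $N$-curvature needle) can all be taken to coincide, which is ensured by tracing through the constructions as above.
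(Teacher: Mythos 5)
Your proof is correct and follows essentially the same route as the paper: invoke Lemma \ref{prop_1147} for the disintegration and measurability (yielding (i), (ii)), upgrade needle-candidates to $N$-curvature needles via Proposition \ref{prop_1134}, and read off the normalization $u(\gamma(t))=t$ and $\cI=\gamma(A)$ from Definition \ref{def_B1210}(v). Your extra remark that $\mu_{\cI}(\cM)>0$ rules out the zero measure is a useful clarification that the paper leaves implicit.
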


\begin{proof}
Apply
Lemma \ref{prop_1147} to obtain certain measures $\nu$ and $\{ \mu_{\cI} \}_{\cI \in T^{\circ}[u]}$.
Applying Lemma \ref{prop_1147}(iii) and
Proposition \ref{prop_1134}, we learn that $\mu_{\cI}$ is an $N$-curvature needle supported on $\cI$ for $\nu$-almost any $\cI \in T^{\circ}[u]$.
Together with Definition \ref{def_B1210}(v), this proves conclusion (iii). Conclusions (i) and (ii) follow from Lemma \ref{prop_1147}(i) and
Lemma \ref{prop_1147}(ii), respectively.
\end{proof}

\medskip
\begin{proof}[Proof of Theorem \ref{thm_main2}]
Recall from Section \ref{sec_intro} that the weighted Riemannian manifold $(\cM, d, \mu)$ satisfies the curvature-dimension  condition $CD(\kappa,N)$  when
$$ \Ric_{\mu, N}(v, v) \geq \kappa \qquad \qquad \qquad \text{for any} \ p \in \cM, v \in T_p \cM, |v| = 1. $$
Glancing at Definition \ref{def_cd} and Definition \ref{def_curvature}, we see that
under curvature-dimension condition $CD(\kappa,N)$, any $N$-curvature needle is in fact a $CD(\kappa, N)$-needle.
The theorem thus follows from Theorem \ref{thm_main2_}.
\end{proof}

\section{The Monge-Kantorovich problem}
\label{sec_monge}
\setcounter{equation}{0}

In this section we prove Theorem \ref{prop_intro}, following the approach of Evans and Gangbo \cite{EG}.
We assume that $(\cM, d, \mu)$ is an $n$-dimensional, geodesically-convex, weighted Riemannian manifold
of class $CD(\kappa, N)$, where $n \geq 2, \kappa \in \RR$ and $N \in (-\infty, 1) \cup [n, + \infty]$.
Suppose that $f: \cM \rightarrow \RR$ is a $\mu$-integrable function with
\begin{equation}
\int_\cM f d \mu = 0. \label{eq_C1801}
\end{equation}
Assume also that there exists a point $x_0 \in \cM$ with
\begin{equation}  \int_\cM |f(x)| \cdot  d(x_0, x) d \mu(x) < \infty. \label{eq_C1802}
\end{equation}
It follows from (\ref{eq_C1802}) that for any $1$-Lipschitz function $v: \cM \rightarrow \RR$,
 $$ \int_{\cM} |f v| d \mu \leq |v(x_0)| \int_{\cM} |f| d \mu + \int_{\cM} |f(x)| d(x_0, x) d \mu(x) < \infty, $$
as $|v(x)| \leq |v(x_0)| + d(x_0, x)$ for all $x \in \cM$.
Conclusion (A) of Theorem \ref{prop_intro} follows from the following standard lemma:

\begin{lemma} There exists a $1$-Lipschitz function $u: \cM \rightarrow \RR$ with
\begin{equation}  \int_{\cM} u f d \mu = \sup \left \{ \int_{\cM} v f d \mu \, ; \, v: \cM \rightarrow \RR, \, \| v \|_{Lip} \leq 1 \right \}.
\label{eq_B1803} \end{equation}
\label{lem_C1815}
\end{lemma}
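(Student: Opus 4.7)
The plan is a standard direct method: take a maximizing sequence, extract a locally uniformly convergent subsequence by Arzel\`a--Ascoli, and pass to the limit using dominated convergence. The key preliminary observation is that since $\int_\cM f d\mu = 0$ by (\ref{eq_C1801}), adding a constant to any candidate $v$ does not affect $\int_\cM v f d\mu$; hence we may restrict to $1$-Lipschitz functions $v$ with $v(x_0) = 0$, which automatically satisfy $|v(x)| \leq d(x_0, x)$ and are therefore $f d\mu$-integrable by (\ref{eq_C1802}). Denote the supremum in (\ref{eq_B1803}) by $S$; the majorant $|f(x)| d(x_0, x)$ shows $S < \infty$.

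Next, I would pick a maximizing sequence $u_k: \cM \rightarrow \RR$ of $1$-Lipschitz functions with $u_k(x_0) = 0$ and $\int_\cM u_k f d\mu \to S$. Recall from the discussion preceding Lemma \ref{lem_1055} that $\cM$, being geodesically-convex, is second-countable and $\sigma$-compact; write $\cM = \bigcup_{i \geq 1} K_i$ with each $K_i$ compact. On each $K_i$ the family $\{u_k\}$ is equicontinuous (all $1$-Lipschitz) and uniformly bounded by $\sup_{x \in K_i} d(x_0, x) < \infty$. Applying the Arzel\`a--Ascoli theorem on $K_1$, extract a uniformly convergent subsequence; iterating on $K_2, K_3, \ldots$ and diagonalizing, I obtain a subsequence $u_{k_j}$ converging locally uniformly on $\cM$ to some function $u: \cM \rightarrow \RR$.

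The limit $u$ is $1$-Lipschitz as a pointwise limit of $1$-Lipschitz functions, and satisfies $|u(x)| \leq d(x_0, x)$. Since
$$ |u_{k_j}(x) f(x)| \leq |f(x)| \cdot d(x_0, x), $$
and the right-hand side is $\mu$-integrable by (\ref{eq_C1802}), the dominated convergence theorem yields
$$ \int_\cM u f d\mu = \lim_{j \to \infty} \int_\cM u_{k_j} f d\mu = S, $$
which is exactly (\ref{eq_B1803}). I do not anticipate a genuine obstacle: all ingredients (equi-Lipschitz compactness, $\sigma$-compactness of $\cM$, the integrable envelope $|f(x)| d(x_0, x)$) are already available from the setup, and the only care needed is to perform the normalization $u_k(x_0) = 0$ before invoking Arzel\`a--Ascoli.
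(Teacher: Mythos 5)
Your proof is correct and takes essentially the same approach as the paper: normalize the maximizing sequence at $x_0$ using $\int_\cM f\,d\mu = 0$, extract a locally uniformly convergent subsequence via Arzel\`a--Ascoli (you simply spell out the diagonalization over the $\sigma$-compact exhaustion more explicitly), and pass to the limit by dominated convergence using the integrable envelope $|f(x)|\,d(x_0,x)$.
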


\begin{proof} Recall that $(\cM, d)$ is a locally-compact, separable, metric space (see, e.g., Section \ref{transport_rays}).
For $k=1,2,\ldots$ let $v_k: \cM \rightarrow \RR$ be a $1$-Lipschitz function such that
$$ \int_{\cM} v_k f d \mu \stackrel{k \rightarrow \infty} \longrightarrow \sup_{\| v \|_{Lip} \leq 1} \int_{\cM} v f d \mu. $$
Since $\int_{\cM} f d \mu = 0$, then we may add a constant to $v_k$ and assume that $v_k(x_0) = 0$ for all $k$.
By the Arzela-Ascoli theorem, there exists a subsequence $v_{k_i}$
that converges locally-uniformly to a $1$-Lipschitz function $u: \cM \rightarrow \RR$ with $u(x_0) = 0$.
Since $|v_k(x)| \leq d(x_0, x)$ for all $x \in \cM$ and $k \geq 1$, then we may apply the dominated convergence theorem
thanks to (\ref{eq_C1802}). We conclude that
\begin{align*} \int_{\cM} u f d \mu = \lim_{i \rightarrow \infty} \int_{\cM} v_{k_i} f d \mu = \sup_{\| v \|_{Lip} \leq 1} \int_{\cM} v f d \mu.
\tag*{\qedhere} \end{align*}
\end{proof}

The maximization problem in Lemma \ref{lem_C1815} is dual to the
$L^1$-Monge-Kantorovich problem in the theory of optimal transportation. For information about the Monge-Kantorovich $L^1$-transportation problem,
we refer the reader
to  the book by Kantorovich and Akilov \cite[Section VIII.4]{KA}
and to the papers by Ambrosio \cite{amb}, Evans and Gangbo \cite{EG} and Gangbo \cite{gangbo}.

\medskip Most of the remainder of this section is devoted to the proof of conclusions (B) and (C) of Theorem \ref{prop_intro}.
To that end, let us fix a $1$-Lipschitz
function $u: \cM \rightarrow \RR$ such that
\begin{equation}  \int_{\cM} u f d \mu = \sup_{\| v \|_{Lip} \leq 1} \int_{\cM} v f d \mu.
\label{eq_C1535} \end{equation}

Recall the definition of a transport ray from Section \ref{transport_rays}. The set $T[u]$ is the collection
of all transport rays associated with $u$. From the definition of a transport ray, for any $x, y \in \cM$,
\begin{equation}  |u(x) - u(y)| = d(x,y) \qquad \Longleftrightarrow \qquad \exists \cI \in T[u], \, x,y \in \cI. \label{eq_C2146A}
\end{equation}
A transport ray is called degenerate when it is  a singleton. By the maximality property of transport rays
(see Definition \ref{def_1050}),
for any $x \in \cM$,
\begin{equation}
\{ x \} \in T[u] \qquad \Longleftrightarrow \qquad \forall x \neq y \in \cM, \ |u(y) - u(x)| < d(x,y).
\label{eq_C950} \end{equation}
Define $\Inactive[u] \subseteq \cM$ to be the union of all {\it degenerate} transport rays associated with $u$.
Thus,
$$ \Inactive[u] = \left \{ x \in \cM \, ; \, \{ x \} \in T[u] \right \}. $$
By the maximality property of transport rays, for any $\cI \in T[u]$,
\begin{equation}
\cI \cap \Inactive[u] \neq \emptyset \qquad \Longleftrightarrow \qquad \exists x \in \Inactive[u], \ \cI = \{ x \}.
\label{eq_C1354}
\end{equation}
From Lemma \ref{lem_1102}, any transport ray $\cI \in T[u]$ is the image  of a minimizing geodesic.
The relative interior of $\cI \in T[u]$ is empty if and only if $\cI$ is a singleton.
Recall from Lemma \ref{lem_A317}
that $T^{\circ}[u]$ is the collection of all relative interiors of non-degenerate transport rays associated with $u$,
while
\begin{equation}  \Strain[u] = \bigcup_{\cI \in T^{\circ}[u]} \cI. \label{eq_C1736} \end{equation}
It follows from  (\ref{eq_C1354}) and (\ref{eq_C1736}) that
\begin{equation}
\Strain[u] \cap \Inactive[u] = \emptyset.
\label{eq_C954} \end{equation}
Finally, let us set $\Ends[u] = \cM \setminus \left( \Inactive[u] \cup \Strain[u] \right)$.
Thus, $\Strain[u], \Ends[u]$ and $\Inactive[u]$ are three disjoint sets whose union equals $\cM$.

\begin{lemma} $\displaystyle  \mu \left( \Ends[u]   \right) = \lambda_{\cM} \left( \Ends[u]   \right) = 0$.
\label{lem_C1047}
\end{lemma}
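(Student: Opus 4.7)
The plan is to reduce everything to Lemma \ref{lem_1054} via a countable decomposition of $\Strain[u]$ into ray clusters, and then to show $\Ends[u]$ is contained in the corresponding (countable) union of $\Ends(R_i)$ sets. Since $\mu$ has a smooth density $e^{-\rho}$ with respect to $\lambda_\cM$, any $\lambda_\cM$-null set is automatically $\mu$-null, so it suffices to prove $\lambda_\cM(\Ends[u])=0$.

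First I would invoke Lemma \ref{lem_B952} to obtain a countable family of pairwise disjoint ray clusters $\{R_i\}_{i=1}^\infty$ (not necessarily of continuous length) such that $\Strain[u]=\bigcup_i R_i$. The key structural property I would exploit is that each $R_i$ is saturated with respect to the partition $T^{\circ}[u]$: by Definition \ref{def_B1215}, whenever a relative interior $\cI\in T^{\circ}[u]$ meets the seed $R_{i,0}$, the entire set $\cI$ is contained in $R_i$. Combined with Lemma \ref{lem_A1031} (uniqueness of the transport ray through a strain point), this forces each $\cI\in T^{\circ}[u]$ to lie entirely in a single $R_i$, so
\[
T^{\circ}[u]=\bigsqcup_{i=1}^\infty S_i, \qquad R_i=\bigcup_{\cI\in S_i}\cI.
\]

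Next I would check that $\Ends[u]\subseteq\bigcup_i \Ends(R_i)$. Let $x\in\Ends[u]$; then $x\notin\Inactive[u]$, so by (\ref{eq_C950}) and (\ref{eq_C1354}) there is some non-degenerate transport ray $\cJ\in T[u]$ with $x\in\cJ$. Since $x\notin\Strain[u]$, the point $x$ must lie in the relative boundary of $\cJ$. By Lemma \ref{lem_A317} the relative interior $\cI$ of $\cJ$ belongs to $T^{\circ}[u]$, so $\cI\subseteq R_i$ for some $i$, giving $\cJ\cap R_i\neq\emptyset$. By the definition of $\Ends(R_i)$ (as the union of relative boundaries of all transport rays intersecting $R_i$), this puts $x\in \Ends(R_i)$.

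Finally I would apply Lemma \ref{lem_1054} to each $R_i$ to conclude $\lambda_\cM(\Ends(R_i))=0$. Here I rely on the fact, recorded in Remark \ref{rem_942}, that Lemma \ref{lem_1054} and the preceding parameterization via Proposition \ref{prop_B1122} remain valid for arbitrary ray clusters (with $B$ becoming an almost line-cluster); the proof of Lemma \ref{lem_1054} only needs the measurability of $a_y,b_y$, not continuity. A countable union of $\lambda_\cM$-null sets is null, so $\lambda_\cM(\Ends[u])=0$ and therefore $\mu(\Ends[u])=0$. There is no serious obstacle here once the ``saturation'' observation is in hand; the only point one must be careful about is ensuring that the ray clusters used cover all of $\Strain[u]$ (and not merely almost all of it, as in Proposition \ref{cfm}), which is precisely why Lemma \ref{lem_B952} rather than Proposition \ref{cfm} is the appropriate tool.
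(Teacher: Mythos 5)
Your proposal is correct and follows essentially the same route as the paper: decompose $\Strain[u]$ into countably many ray clusters via Lemma \ref{lem_B952}, show that $\Ends[u]$ is covered by $\bigcup_i \Ends(R_i)$, and conclude by Lemma \ref{lem_1054} plus absolute continuity of $\mu$. The paper reaches the same covering more directly by noting $\Ends[u]\subseteq\Ends(\Strain[u])$ and that $\Ends(\cdot)$ commutes with unions, so your ``saturation'' observation, while correct, is unnecessary machinery.
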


\begin{proof}
Recall from Section \ref{lip_sec}
that for a subset $A \subseteq \cM$, we define $\Ends(A) \subseteq \cM$ to be the union
of all relative {\it boundaries} of transport rays intersecting $A$. We claim that
\begin{equation}
\Ends[u] \subseteq \Ends\left( \Strain[u] \right).
\label{eq_C1005}
\end{equation}
Indeed, if $x \in \Ends[u]$, then $\{ x \}$ is not a transport ray as $x \not \in \Inactive[u]$.
From Definition \ref{def_1050}, there exists a non-degenerate transport ray $\cI \in T[u]$
that contains $x$. Since $x \not \in \Strain[u]$, then the point $x \in \cI$ does not
belong to the relative interior of $\cI$. Consequently, $x$ belongs to the relative
boundary of $\cI$. Since the relative interior of $\cI$ is non-empty, then $\cI \cap \Strain[u] \neq \emptyset$
and consequently $x \in \Ends\left( \Strain[u] \right)$. Thus (\ref{eq_C1005}) is proven.
Next, according to Lemma
\ref{lem_B952}, there exist ray clusters $R_1,R_2,\ldots$ such that $\Strain[u] = \cup_{i} R_i$. Hence,
\begin{equation}
\Ends(\Strain[u]) = \bigcup_{i=1}^{\infty} \Ends(R_i).
\label{eq_C1053_}
\end{equation}
However, Lemma \ref{lem_1054} asserts that $\lambda_{\cM}(\Ends(R_i)) = 0$ for any $i \geq 1$.
Consequently, from (\ref{eq_C1005}) and (\ref{eq_C1053_}) we conclude that
$$ \lambda_{\cM} \left( \Ends[u] \right) = 0. $$
Since $\mu$ is absolutely-continuous with respect to $\lambda_{\cM}$, the lemma is proven.
\end{proof}

The following lemma, just like  our entire proof of conclusion (B), is similar  to the mass balance lemma of Evans and Gangbo \cite[Lemma 5.1]{EG}.
For a set $K$ we write $1_K$ for the function that equals one on $K$ and vanishes elsewhere.

\begin{lemma} Let $K \subseteq \cM$ be a compact set. For $\delta > 0$ denote
\begin{equation}  u_{\delta}(x) = \inf_{y \in \cM} \left[ u(y) + d(x,y) - \delta \cdot 1_K(y) \right]
\qquad \qquad \text{for} \ x \in \cM.
 \label{eq_C2132} \end{equation}
 Let $A \subseteq \cM$ be the union of all transport
rays $\cI \in T[u]$ that intersect $K$.
 Then there exists a function $v: \cM \rightarrow [0,1]$ such that
 \begin{equation}  \lim_{\delta \rightarrow 0^+} \frac{u(x) - u_{\delta}(x)}{\delta} = \left \{ \begin{array}{cl} 0 & x \in \cM \setminus A \\ v(x) & x \in A \setminus K \\ 1 & x \in K \end{array} \right. \label{eq_C2127}
\end{equation}
 Moreover, for any $x \in \cM$ and $\delta > 0$ we have that $0 \leq u(x) - u_{\delta}(x) \leq \delta$.
\label{lem_1532}
\end{lemma}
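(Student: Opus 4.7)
\medskip
\noindent
\textbf{Plan of proof.}
My first move will be to pin down the elementary sandwich $0 \leq u(x) - u_\delta(x) \leq \delta$ that is asserted at the end of the lemma. Using the $1$-Lipschitz property of $u$, one has $u(y) + d(x,y) \geq u(x)$ for all $y \in \cM$, hence $u_\delta(x) \geq u(x) - \delta$. On the other hand, taking $y = x$ in the infimum defining $u_\delta(x)$ yields $u_\delta(x) \leq u(x) - \delta \cdot 1_K(x) \leq u(x)$. This disposes of the second assertion of the lemma.

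Next, I will rewrite the quantity of interest as
\begin{equation*}
u(x) - u_\delta(x) \;=\; \sup_{y \in \cM} \bigl\{ (u(x) - u(y)) - d(x,y) + \delta \cdot 1_K(y) \bigr\}.
\end{equation*}
Because $u(x) - u(y) - d(x,y) \leq 0$ by the $1$-Lipschitz property, the contribution of $y \notin K$ to the supremum is nonpositive, so setting $\phi(x) := \sup_{y \in K}\bigl[ (u(x) - u(y)) - d(x,y) \bigr]$ (a function of $x$ alone, independent of $\delta$, taking values in $[-\infty, 0]$) I obtain the crucial identity
\begin{equation*}
u(x) - u_\delta(x) \;=\; \max\bigl\{ 0, \; \phi(x) + \delta \bigr\}.
\end{equation*}
From here the three cases can be read off: when $x \in K$ the choice $y = x$ forces $\phi(x) = 0$, hence $u(x) - u_\delta(x) = \delta$ and the limit equals $1$; when $\phi(x) < 0$, the right-hand side vanishes for all $\delta < |\phi(x)|$, so the limit equals $0$; and when $\phi(x) = 0$ with $x \notin K$, the ratio equals $1$ for every $\delta > 0$. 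I will therefore define $v: \cM \to [0,1]$ by $v(x) = 1$ whenever $\phi(x) = 0$ and $v(x) = 0$ whenever $\phi(x) < 0$; this matches the three cases as long as I can identify $\{\phi = 0\}$ with $A$ modulo the set $K$.

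The main obstacle, and the only substantive step, is the compactness argument showing that $\phi(x) < 0$ precisely when $x \notin A$, while $\phi(x) = 0$ for every $x \in A$. The easy direction is: if $x \in A$ lies on a transport ray $\cI$ that also meets $K$ at some $y_* \in K$, then either $u(x) \geq u(y_*)$ (in which case $u(x) - u(y_*) = d(x,y_*)$ by (\ref{eq_1141}) and $\phi(x) \geq 0$ follows, forcing $\phi(x) = 0$), or else $u(x) < u(y_*)$; in the latter case I will exploit the fact that the entire segment of $\cI$ between $x$ and $y_*$ is available in $K$ only when $y_* \in K$, but Lemma \ref{lem_1102} lets me replace $y_*$ by any point of $\cI \cap K$ with $u$-value below $u(x)$ if one exists, and otherwise run the argument at the $u$-minimal point of $\cI \cap K$, which may be $x$ itself in limit; handling this limit is where compactness of $K$ enters. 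For the converse, suppose $\phi(x) = 0$: by compactness of $K$ and continuity of the map $y \mapsto (u(x) - u(y)) - d(x,y)$, the supremum is attained at some $y_* \in K$ with $u(x) - u(y_*) = d(x,y_*)$, so by Definition \ref{def_1050} and Zorn's lemma the pair $\{x, y_*\}$ lies in a transport ray that meets $K$, placing $x$ in $A$.

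Once these pieces are in place, combining the identity $u(x) - u_\delta(x) = \max\{0, \phi(x) + \delta\}$ with the classification of $\{\phi = 0\}$ yields (\ref{eq_C2127}) immediately, and extending $v$ by $1$ on $K$ and $0$ on $\cM \setminus A$ produces a function $\cM \to [0,1]$ as required. No curvature, no regularity, and no needle-decomposition is used; the whole argument rests on the $1$-Lipschitz property of $u$, compactness of $K$, and the definition of transport rays.
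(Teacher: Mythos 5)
Your identity $u(x) - u_\delta(x) = \max\{0,\, \phi(x) + \delta\}$, where $\phi(x) = \sup_{y\in K}\bigl[(u(x)-u(y)) - d(x,y)\bigr]$, is correct and is actually a cleaner reformulation than the paper's: the paper establishes existence of the limit through monotonicity of $\delta \mapsto (u(x)-u_\delta(x))/\delta$, whereas your formula yields the limit directly and reveals that it is always either $0$ or $1$. The ``Moreover'' part, the case $x \in K$ (where $\phi(x)=0$ trivially), and the compactness argument that $\phi(x)=0 \Rightarrow x\in A$ (hence $x\notin A \Rightarrow \phi(x)<0 \Rightarrow$ limit $=0$) are all sound and match the paper's use of compactness.

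However, the sentence ``the compactness argument showing that $\phi(x) < 0$ precisely when $x \notin A$, while $\phi(x) = 0$ for every $x \in A$'' asserts a false claim, and the paragraph you devote to proving ``$\phi(x) = 0$ for every $x \in A$'' cannot be repaired. Take $\cM = \RR$, $u(x) = x$, $K = [1,2]$: here the only transport ray is all of $\RR$, so $A = \RR$, yet $\phi(0) = \sup_{y\in[1,2]}(-2y) = -2 < 0$. Consistently, $u_\delta(0) = 0$ for $\delta < 2$, so the limit at $x=0$ is $0$, not $1$. Your attempted argument for this direction --- replacing $y_*$ by a $u$-smaller point of $\cI\cap K$, or ``running the argument at the $u$-minimal point \ldots in limit'' --- does not produce a valid conclusion because such a point with $u$-value $\leq u(x)$ may simply not exist in $K$. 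Fortunately this identification is \emph{not needed}: the lemma only pins the limit to $0$ on $\cM\setminus A$ and to $1$ on $K$, and leaves it unconstrained (any value in $[0,1]$) on $A\setminus K$. So simply define $v(x)$ to be the limit itself; by your identity this is $1$ where $\phi=0$ and $0$ where $\phi<0$, hence lies in $[0,1]$. Delete the spurious identification and the proof stands, and is in fact slightly tighter than the paper's.
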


\begin{proof} Since $\| u \|_{Lip} \leq 1$ then for all $x \in \cM$,
\begin{equation}
u_{\delta}(x) = \inf_{y \in \cM} \left[ u(y) + d(x,y) - \delta \cdot 1_K(y) \right] \geq
\inf_{y \in \cM} \left[ u(y) + d(x,y) \right] - \delta \geq u(x) - \delta. \label{eq_C1521} \end{equation}
The ``Moreover'' part of the lemma follows from (\ref{eq_C1521}) and from the simple inequality  $u_{\delta}(x) \leq u(x)$.
For any $x,y \in \cM$ we have that $u(x) - u(y) - d(x,y) \leq 0$ as $u$ is $1$-Lipschitz. Therefore, for any $x \in \cM$, the function
$$ \delta \mapsto \frac{u(x) - u_{\delta}(x)}{\delta} = \sup_{y \in \cM} \left[ \frac{u(x) - u(y) - d(x,y)}{\delta} +  1_K(y) \right] $$
is non-decreasing in $\delta > 0$. Hence the limit in (\ref{eq_C2127}) exists and belongs to $[0,1]$ for all $x \in \cM$.
Next, fix a point $x \in \cM \setminus A$.
 Then for any $y \in K$, the points $x$ and $y$ do not belong to the same transport ray.
Therefore $|u(x) - u(y)| < d(x,y)$ and hence $u(y) + d(x,y) > u(x)$ for any $y \in K$. By the compactness of $K$, there exists $\delta_x > 0$ such that
\begin{equation}  \inf_{y \in K} \left[ u(y) + d(x,y) \right]
= \min_{y \in K} \left[ u(y) + d(x,y) \right]
> u(x) + \delta_x.
\label{eq_B2129}
\end{equation}
Since $u$ is $1$-Lipschitz, then $u(y) + d(x,y) \geq u(x)$ for all $y \in \cM$. Consequently, from (\ref{eq_C2132})
and (\ref{eq_B2129}),
$$ u_{\delta}(x) =  u(x) \qquad \qquad \qquad \text{when} \ 0 < \delta < \delta_x. $$
This proves (\ref{eq_C2127}) in the case where $x \in \cM \setminus A$.
Consider now the case where $x \in K$. Then,
\begin{equation}  u_{\delta}(x) = \inf_{y \in \cM} \left[ u(y) + d(x,y) - \delta \cdot 1_K(y) \right] \leq u(x) + d(x,x) - \delta = u(x) - \delta. \label{eq_C1520}
\end{equation}
From (\ref{eq_C1521}) and (\ref{eq_C1520}) we learn that $u_{\delta}(x) = u(x) - \delta$ for any $x \in K$ and $\delta > 0$. This proves (\ref{eq_C2127}) for the case where $x \in K$.
\end{proof}

Following Evans and Gangbo \cite[Lemma 5.1]{EG}, we say that a measurable subset $A \subseteq \cM$ is a {\it transport set}
associated with $u$ if for any $x \in A \setminus \Ends[u]$ and $\cI \in T[u]$,
\begin{equation}  x \in \cI \qquad \Longrightarrow \qquad \cI \subseteq A. \label{eq_C1055} \end{equation}
In other words, a transport set $A$ is a measurable set that contains all transport
rays intersecting $A \setminus \Ends[u]$.

\begin{lemma} Let $A \subseteq \cM$ be a transport set associated with $u$. Then,
$$ \int_A f d \mu \geq 0. $$
\label{lem_C1529}
\end{lemma}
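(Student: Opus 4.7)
The plan is to exploit the maximality of $u$ in (\ref{eq_C1535}) by perturbing $u$ on compact subsets of $A$ via the functions $u_{\delta}$ supplied by Lemma \ref{lem_1532}.

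First, since $\mu$ is a Radon measure on the $\sigma$-compact manifold $\cM$ and $\mu(\Ends[u]) = 0$ by Lemma \ref{lem_C1047}, the measurable set $A \setminus \Ends[u]$ admits an inner regular compact exhaustion: there exists a sequence of compact sets $K_n \subseteq A \setminus \Ends[u]$ with $\mu(A \setminus K_n) \to 0$. By absolute continuity of the integral with respect to $\mu$, this also yields $\int_{A \setminus K_n} |f|\, d\mu \to 0$ and hence $\int_{K_n} f\, d\mu \to \int_A f\, d\mu$.

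Second, I will fix $n$ and apply Lemma \ref{lem_1532} with $K := K_n$. Let $u_{\delta}$ be defined by (\ref{eq_C2132}), and let $\cR_n$ denote the union of all transport rays intersecting $K_n$ (the set ``$A$'' in the notation of Lemma \ref{lem_1532}). Being a pointwise infimum of $1$-Lipschitz functions in $x$, $u_{\delta}$ is itself $1$-Lipschitz, and Lemma \ref{lem_1532} gives $0 \leq u - u_{\delta} \leq \delta$ everywhere on $\cM$. The maximality property (\ref{eq_C1535}) of $u$ among $1$-Lipschitz functions then yields
$$
\int_\cM \frac{u(x) - u_{\delta}(x)}{\delta} f(x)\, d\mu(x) \geq 0
$$
for every $\delta > 0$. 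The integrand is bounded in absolute value by $|f| \in L^1(\mu)$, and its pointwise limit as $\delta \to 0^+$ equals $1_{K_n}(x) + v_n(x) \cdot 1_{\cR_n \setminus K_n}(x)$ for some measurable $v_n : \cM \to [0,1]$, by (\ref{eq_C2127}) of Lemma \ref{lem_1532}. The dominated convergence theorem therefore produces
$$
\int_{K_n} f\, d\mu + \int_{\cR_n \setminus K_n} v_n f\, d\mu \geq 0.
$$

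Third, I will invoke the transport-set property of $A$ to dispose of the error term. Because $K_n \subseteq A \setminus \Ends[u]$ and $A$ is a transport set, every transport ray meeting $K_n$ lies entirely in $A$ by (\ref{eq_C1055}); hence $\cR_n \subseteq A$ and in particular $\cR_n \setminus K_n \subseteq A \setminus K_n$. Combined with the bound $|v_n| \leq 1$, this gives
$$
\left| \int_{\cR_n \setminus K_n} v_n f\, d\mu \right| \leq \int_{A \setminus K_n} |f|\, d\mu,
$$
which tends to $0$ as $n \to \infty$. Passing to the limit in the previous display yields $\int_A f\, d\mu \geq 0$.

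I anticipate no serious technical obstacle beyond keeping the bookkeeping straight: the limit function $v_n$ from Lemma \ref{lem_1532} carries no useful sign information, and one is forced to rely on the crude bound $|v_n| \leq 1$, which suffices precisely because the transport-set hypothesis localizes $\cR_n$ inside $A$ and $\mu(A \setminus K_n)$ can be made arbitrarily small.
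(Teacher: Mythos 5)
Your argument is, in all its essential steps, the same as the paper's: perturb $u$ to $u_\delta$ via Lemma \ref{lem_1532}, use the maximality of $u$ to get $\int_\cM (u-u_\delta) f\,d\mu \geq 0$, pass to the limit via dominated convergence, then use the transport-set property to confine the set $\cR_n$ (where the limit is non-zero) inside $A$, and finally discard the error over $A\setminus K_n$. The bookkeeping with the sequence $K_n$ versus the paper's fixed $\eps$ is cosmetic.

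There is, however, one genuine misstep in your first paragraph: you claim compact sets $K_n \subseteq A\setminus\Ends[u]$ exist with $\mu(A\setminus K_n) \to 0$. This is false whenever $\mu(A) = \infty$, which certainly can happen here (take $A = \cM$, a valid transport set, on any non-compact manifold with its volume measure). Since a Radon measure is finite on compacts, one then has $\mu(A\setminus K_n) = \infty$ for every $n$. The conclusion you actually need, $\int_{A\setminus K_n}|f|\,d\mu\to 0$, therefore cannot be deduced from $\mu(A\setminus K_n)\to 0$. The correct route — and the one the paper takes — is to apply inner regularity to the \emph{finite} Borel measure $|f|\,d\mu$ rather than to $\mu$: since $\int_A |f|\,d\mu < \infty$ and $\cM$ is locally compact and second countable, this finite measure is Radon, so one can choose compacts $K_n \subseteq A\setminus\Ends[u]$ with $\int_{(A\setminus\Ends[u])\setminus K_n}|f|\,d\mu \to 0$ directly; the $\mu$-nullity of $\Ends[u]$ then gives $\int_{A\setminus K_n}|f|\,d\mu \to 0$. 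With this one-line repair, the rest of your proof goes through unchanged.
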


\begin{proof} It suffices to prove that $\int_A f d \mu > -\eps$ for any $\eps > 0$. To this end, let us fix $\eps > 0$.
According to Lemma \ref{lem_C1047}, the set $\Ends[u]$ is of $\mu$-measure zero. Therefore,
\begin{equation}  \int_{A \setminus \Ends[u]} |f| d \mu  = \int_A |f| d \mu  < \infty. \label{eq_C1016} \end{equation}
Since $\mu$ is a Borel measure, it follows from (\ref{eq_C1016}) that there exists a compact $K \subseteq A \setminus \Ends[u]$ such that
\begin{equation}  \int_{A \setminus K} |f| d \mu < \eps. \label{eq_C1544} \end{equation}
For $\delta > 0$ we define $u_{\delta}: \cM \rightarrow \RR$ as in (\ref{eq_C2132}).
Then $u_{\delta}$ is a $1$-Lipschitz function, since it is the infimum of a family of $1$-Lipschitz functions.
From (\ref{eq_C1535}),
\begin{equation}  \int_{\cM} \frac{u - u_{\delta}}{\delta} \cdot f \cdot d \mu \geq 0  \qquad \qquad \qquad \text{for all} \ \delta > 0. \label{eq_C1536}
\end{equation}
For $k=1,2,\ldots$ denote
\begin{equation}  v_k(x) = \frac{u(x) - u_{1/k}(x)}{1/k}  \qquad \qquad \qquad (x \in \cM).
\label{eq_C2208} \end{equation}
From the ``Moreover'' part of Lemma \ref{lem_1532} we know that $0 \leq v_k(x) \leq 1$ for all $x \in \cM$ and $k \geq 1$.
According to Lemma \ref{lem_1532}, there exists
a function $v: \cM \rightarrow [0,1]$ such that $v_k(x) \longrightarrow v(x)$ for all $x \in \cM$.
Furthermore,  by (\ref{eq_C2127}),
\begin{equation}  v(x) = \left \{ \begin{array}{rl} 0 & x \in \cM \setminus A \\ 1 & x \in K \end{array} \right.
\label{eq_C2211} \end{equation}
where we used the fact that $A$ is a transport set and hence $A$ contains all transport rays intersecting $K \subseteq A \setminus \Ends[u]$.
Since $f$ is $\mu$-integrable and $|v_k(x)| \leq 1$ for all $k$ and $x$, then we may use the dominated convergence theorem and conclude from (\ref{eq_C1536})
and (\ref{eq_C2211}) that
\begin{equation}  0 \leq \int_{\cM} v_k  f d \mu \stackrel{k \rightarrow \infty}\longrightarrow \int_{\cM} v f d \mu
= \int_A v f d \mu = \int_{A \setminus K} v f d \mu + \int_K f d \mu. \label{eq_C1546}
\end{equation}
Since $v(x) \in [0,1]$ for all $x \in \cM$, then according to  (\ref{eq_C1544}) and (\ref{eq_C1546}),
$$ \int_K f d \mu \geq -\int_{A \setminus K} v f d \mu \geq -\int_{A \setminus K} |f| d \mu > - \eps, $$
and the lemma is proven.
\end{proof}

\begin{corollary} Let $A \subseteq \cM$ be a transport set associated with $u$. Then,
$$ \int_A f d \mu = 0. $$
\label{cor_C1529}
\end{corollary}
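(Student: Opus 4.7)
The plan is to upgrade the one-sided bound of Lemma \ref{lem_C1529} to an equality by applying that lemma both to $A$ and to a suitable ``complement'' transport set, then invoking the hypothesis $\int_{\cM} f \, d\mu = 0$.

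First I would show that the set
$$ A' := (\cM \setminus A) \cup \Ends[u] $$
is a transport set associated with $u$. Measurability is immediate: $A$ is measurable by hypothesis, while $\Ends[u]$ is contained in the $\lambda_{\cM}$-null set $\Ends(\Strain[u])$ of Lemma \ref{lem_C1047}, hence is measurable (the measures in question being complete). For the defining property in (\ref{eq_C1055}), fix $x \in A' \setminus \Ends[u]$ and $\cI \in T[u]$ with $x \in \cI$; by construction $x \in \cM \setminus A$. We must verify $\cI \subseteq A'$, i.e.\ $\cI \cap A \subseteq \Ends[u]$. If there were some $y \in \cI \cap A$ with $y \notin \Ends[u]$, then since $A$ is a transport set we would get $\cI \subseteq A$, contradicting $x \notin A$. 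This confirms that $A'$ is a transport set.

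Next I would apply Lemma \ref{lem_C1529} twice: once to $A$, yielding $\int_A f\, d\mu \geq 0$, and once to $A'$, yielding $\int_{A'} f \, d\mu \geq 0$. Since $\mu(\Ends[u]) = 0$ by Lemma \ref{lem_C1047}, the second integral equals $\int_{\cM \setminus A} f\, d\mu$. Adding the two inequalities and using (\ref{eq_C1801}),
$$ 0 \leq \int_A f\, d\mu + \int_{\cM \setminus A} f\, d\mu = \int_{\cM} f\, d\mu = 0, $$
which forces $\int_A f\, d\mu = 0$. I do not anticipate any real obstacle here beyond verifying the transport-set property of $A'$, which only requires chasing the definitions together with the fact that $\Ends[u]$ is a null set and was already absorbed into the definition of a transport set.
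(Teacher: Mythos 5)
Your proof is correct but takes a genuinely different route from the paper's. The paper obtains the reverse inequality by a symmetry argument: observing that $-u$ is a maximizer for $-f$, that $T[u]=T[-u]$ and $\Ends[u]=\Ends[-u]$, so $A$ is also a transport set associated with $-u$; then Lemma \ref{lem_C1529} applied to $(-u, -f)$ gives $\int_A(-f)\,d\mu\geq 0$. You instead keep $u$ and $f$ fixed and show that the ``complement'' $A'=(\cM\setminus A)\cup\Ends[u]$ is itself a transport set for $u$, apply Lemma \ref{lem_C1529} to both $A$ and $A'$, and then use $\int_{\cM}f\,d\mu=0$ together with $\mu(\Ends[u])=0$ to force the nonnegative pieces to vanish. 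Your verification that $A'$ satisfies condition (\ref{eq_C1055}) is a correct definition chase, and the rest is sound. Both arguments are short; the paper's trades the transport-set check for the observation that the optimization problem is antisymmetric under $(u,f)\mapsto(-u,-f)$, while yours stays entirely within the fixed $(u,f)$ and exploits the complementation structure already built into the definition of a transport set (which absorbs $\Ends[u]$). Yours is a nice alternative and requires no new ideas beyond what the definitions provide.
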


\begin{proof} In view of Lemma \ref{lem_C1529} we only need to prove that $\int_A f d \mu \leq 0$.
Note that the supremum of $\int v (-f) d \mu$ over all $1$-Lipschitz functions $v$ is attained for $v = -u$. Furthermore,
 $T[u] = T[-u]$ and $\Ends[u] = \Ends[-u]$. Therefore  $A$ is also a transport set associated with $-u$.
We may therefore apply Lemma \ref{lem_C1529} with $f$ replaced by $-f$ and with $u$ replaced by $-u$.
 By the conclusion of Lemma \ref{lem_C1529},
 $\int_A (-f) d \mu \geq 0$, and the corollary is proven.
 \end{proof}

Recall that $T^{\circ}[u]$ is a partition of $\Strain[u]$, and that $\pi: \Strain[u] \rightarrow T^{\circ}[u]$
is the partition map, i.e., $x \in \pi(x) \in T^{\circ}[u]$ for all $x \in \Strain[u]$.

\begin{lemma} Let $S \subseteq T^{\circ}[u]$. Assume that $\pi^{-1}(S) \subseteq \Strain[u]$ is a measurable subset of $\cM$. Then,
$$ \int_{\pi^{-1}(S)} f d \mu = 0. $$
\label{lem_C1046}
\end{lemma}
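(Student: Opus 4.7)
The plan is to reduce the claim to Corollary \ref{cor_C1529} by exhibiting $\pi^{-1}(S)$, up to a $\mu$-null set, as a transport set. Concretely, I would consider
\[
 A \; := \; \pi^{-1}(S) \, \cup \, \Ends[u],
\]
where, following the convention of the section, $\Ends[u] = \cM \setminus (\Strain[u] \cup \Inactive[u])$. Since $\pi^{-1}(S) \subseteq \Strain[u]$ is measurable by hypothesis and $\Ends[u]$ has $\mu$-measure zero by Lemma \ref{lem_C1047} (and $\mu$ is complete), the set $A$ is $\mu$-measurable, and
\[
 \int_A f\,d\mu \; = \; \int_{\pi^{-1}(S)} f\,d\mu.
\]
So it suffices to verify that $A$ is a transport set, and then invoke Corollary \ref{cor_C1529}.

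The main step, and the only one requiring thought, is the verification of the defining property \eqref{eq_C1055}: for every $x \in A \setminus \Ends[u]$ and every $\cI \in T[u]$ with $x \in \cI$, one must have $\cI \subseteq A$. Since $\Strain[u] \cap \Inactive[u] = \emptyset$ and $\Strain[u] \cap \Ends[u] = \emptyset$, we have $A \setminus \Ends[u] = \pi^{-1}(S) \subseteq \Strain[u]$. So fix $x \in \pi^{-1}(S)$ and a transport ray $\cI \ni x$. By Lemma \ref{lem_A1031}, $\cI$ is the unique transport ray containing $x$, and the relative interior of $\cI$ equals $\pi(x)$, which belongs to $S$ by assumption; hence every point in the relative interior of $\cI$ lies in $\pi^{-1}(S) \subseteq A$.

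It remains to check that the (at most two) relative boundary points of $\cI$ lie in $\Ends[u]$. Let $y$ be such a point. Then $y$ cannot lie in $\Strain[u]$: indeed, $\Strain[u]$ is the union of all relative interiors of non-degenerate transport rays, so if $y \in \Strain[u]$, Lemma \ref{lem_A1031} would force $y$ to lie in the relative interior of $\cI$, contradicting the choice of $y$. Similarly, $y$ cannot lie in $\Inactive[u]$: otherwise $\{y\}$ would be a transport ray, but $\cI$ is also a transport ray and $\{y\} \subsetneq \cI$ satisfies \eqref{eq_1141}, contradicting the maximality condition in Definition \ref{def_1050}. Hence $y \in \Ends[u] \subseteq A$, which gives $\cI \subseteq A$ as required.

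With $A$ shown to be a measurable transport set, Corollary \ref{cor_C1529} yields $\int_A f\,d\mu = 0$, and the equality $\int_A f\,d\mu = \int_{\pi^{-1}(S)} f\,d\mu$ noted above completes the proof. The only delicate point is the dichotomy argument showing that relative boundary points of a non-degenerate transport ray lie in neither $\Strain[u]$ nor $\Inactive[u]$; the rest is bookkeeping plus the $\mu$-null property of $\Ends[u]$.
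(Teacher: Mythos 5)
Your proof is correct and follows essentially the same strategy as the paper's: reduce to Corollary~\ref{cor_C1529} by exhibiting a measurable transport set that differs from $\pi^{-1}(S)$ only inside the $\mu$-null set $\Ends[u]$. The only cosmetic difference is your choice of transport set $A = \pi^{-1}(S) \cup \Ends[u]$ versus the paper's $A = \bigcup_{\cJ \in S} \overline{\cJ}$; both hinge on the same dichotomy (namely that a relative boundary point of a non-degenerate transport ray lies in neither $\Strain[u]$ nor $\Inactive[u]$), which in the paper appears as the inclusion $\overline{\cJ} \setminus \cJ \subseteq \Ends[u]$. Your version is a hair simpler on the measurability side, since $A$ is manifestly a union of two measurable sets, whereas the paper infers measurability of its $A$ from $A \setminus \pi^{-1}(S) \subseteq \Ends[u]$ and completeness.
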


\begin{proof} Recall that $\Strain[u], \Inactive[u]$ and $\Ends[u]$ are three disjoint sets whose union equals $\cM$.
In view
of Lemma \ref{lem_C1047} and Corollary \ref{cor_C1529}, it suffices to show that there exists
a transport set $A \subseteq \cM$ with
\begin{equation}
\pi^{-1}(S) \subseteq A \qquad \text{and} \qquad A \setminus \pi^{-1}(S) \subseteq \Ends[u].
\label{eq_C1029} \end{equation}
Any $\cJ \in T^{\circ}[u]$ is the relative interior of a non-degenerate transport ray.
Since transport rays are closed sets,  it follows from Lemma \ref{lem_1102} that the closure $\overline{\cJ}$
of any $\cJ \in T^{\circ}[u]$ is  a transport ray.
We claim that for any $\cJ \in T^{\circ}[u]$,
\begin{equation}
 \overline{\cJ} \setminus \cJ \subseteq \cM \setminus (\Inactive[u] \cup \Strain[u]) = \Ends[u].
 \label{eq_C933}
 \end{equation}
Indeed, it follows from (\ref{eq_C1354}) that $\overline{\cJ}$ is contained in $\cM \setminus \Inactive[u]$ since it is a transport ray whose relative interior is non-empty.
Any point $x \in \overline{\cJ} $ belonging to $\Strain[u]$ must lie in $\cJ$,
according to Lemma \ref{lem_A1031}. Hence $\overline{\cJ} \setminus \cJ$ is disjoint from $\Strain[u]$, and (\ref{eq_C933})
is proven. Denote
\begin{equation}  A = \bigcup_{\cJ \in S} \overline{\cJ}. \label{eq_C1053} \end{equation}
Clearly $A \supseteq \bigcup_{\cJ \in S} \cJ = \pi^{-1}(S)$.
It follows from (\ref{eq_C933}) that
\begin{equation}  A \setminus \pi^{-1}(S) = \left \{\bigcup_{\cJ \in S} \overline{\cJ}  \right\} \setminus \left \{\bigcup_{\cJ \in S} \cJ  \right\}
\subseteq \bigcup_{\cJ \in S} (\overline{\cJ} \setminus \cJ) \subseteq \Ends[u]. \label{eq_C1046}
\end{equation}
Now (\ref{eq_C1029}) follows from (\ref{eq_C1046}) and from the fact that
$A \supseteq \bigcup_{\cJ \in S} \cJ = \pi^{-1}(S)$.
 All that remains
is to show that $A \subseteq \cM$ is a transport set. Since $\pi^{-1}(S)$ is assumed to be measurable
and $\Ends[u]$ is a null set, then the measurability of $A$ follows from (\ref{eq_C1029}).
In order to prove condition (\ref{eq_C1055}) and conclude that $A$ is a transport set, we choose $x \in A \setminus \Ends[u]$ and $\cI \in T[u]$ with
\begin{equation}  x \in \cI. \label{eq_C1050} \end{equation}
Since $x \in A \setminus \Ends[u]$, then
necessarily
$x \in \pi^{-1}(S) \subseteq \Strain[u]$ according to (\ref{eq_C1046}).
Denote by $\cJ$ the relative interior of the transport ray $\cI$.
 From (\ref{eq_C1050}) and Lemma \ref{lem_A1031}
we deduce that $\cI$ is the unique transport ray containing $x$, and that $x \in \cJ$.
Since $x \in \pi^{-1}(S)$, we  learn that $\cJ \in S$. From (\ref{eq_C1053}) we conclude that $\cI = \overline{\cJ} \subseteq A$.
We have thus verified condition (\ref{eq_C1055}) and proved that $A$ is a transport set associated with $u$.
The lemma is proven. \end{proof}

\begin{proof}[Proof of Theorem \ref{prop_intro}(B)]
The measurability of $\Strain[u]$ follows from Lemma \ref{lem_1055}.
We would like to show that
\begin{equation}
f(x) = 0 \qquad \qquad \qquad \text{for} \ \mu\text{-almost any point} \ x \in \cM \setminus \Strain[u].
\label{eq_C1715}
\end{equation}
We learn from (\ref{eq_C1354}) and from the definition
(\ref{eq_C1055})
that  any measurable set $S \subseteq \Inactive[u]$ is a transport set associated with $u$.
From Corollary \ref{cor_C1529}, for any measurable set $S \subseteq \Inactive[u]$,
$$ \int_{S} f d \mu = 0. $$
This implies that $f$ vanishes $\mu$-almost everywhere in $\Inactive[u]$.
Recall that $\cM \setminus \Strain[u] = \Inactive[u] \cup \Ends[u]$. In view of Lemma \ref{lem_C1047}, we
conclude (\ref{eq_C1715}).

\medskip Next, let $\nu$ and $\{ \mu_{\cI} \}_{\cI \in T^{\circ}[u]}$ be measures on $T^{\circ}[u]$
and $\cM$, respectively, satisfying conclusions (i), (ii) and (iii) of Theorem \ref{thm_main2}. Thus,
for $\nu$-almost any $\cI \in T^{\circ}[u]$, the measure $\mu_{\cI}$ is a $CD(\kappa,N)$-needle supported on $\cI$. Additionally,
 for any measurable set $A \subseteq \cM$,
\begin{equation}  \mu(A \cap \Strain[u]) = \int_{T^{\circ}[u]} \mu_{\cI}(A) d \nu(\cI),
\label{eq_C1713_}
\end{equation}
and in particular, the map $\cI \mapsto \mu_{\cI}(A)$ is $\nu$-measurable. It follows from (\ref{eq_C1713_}) that for any $\mu$-integrable function
$g: \cM \rightarrow \RR$,
\begin{equation}  \int_{\Strain[u]} g d \mu = \int_{T^{\circ}[u]} \left( \int_{\cI} g(x) d \mu_{\cI}(x) \right) d \nu(\cI).
\label{eq_C1713}
\end{equation}
In order to complete the proof, we need to show that
\begin{equation}
\int_{\cI} f d \mu_{\cI} = 0 \qquad \qquad \qquad \text{for} \ \nu\text{-almost any}  \ \cI \in T^{\circ}[u]. \label{eq_C1726} \end{equation}
Since $f$ is $\mu$-integrable, from (\ref{eq_C1713}) the map $\cI \mapsto \int_{\cI} f d \mu_{\cI}$ is
$\nu$-integrable, and in particular, it is well-defined for $\nu$-almost any $\cI \in T^{\circ}[u]$. The desired conclusion
(\ref{eq_C1726}) would follow once we show that for any $\nu$-measurable subset $S \subseteq T^{\circ}[u]$,
\begin{equation}
\int_S \left( \int_{\cI} f d \mu_{\cI} \right) d \nu(\cI) = 0.
\label{eq_C920}
\end{equation}
Thus, let us fix a $\nu$-measurable subset $S \subseteq T^{\circ}[u]$. From Theorem \ref{thm_main2}(i),
the set $\pi^{-1}(S)$ is a measurable subset of $\cM$. According to Lemma \ref{lem_C1046},
\begin{equation} 0 = \int_{\pi^{-1}(S)} f d \mu = \int_{\Strain[u]} f(x) \cdot 1_{\pi^{-1}(S)}(x) d \mu(x). \label{eq_C944}
\end{equation}
By using (\ref{eq_C1713}) and (\ref{eq_C944}),
$$ 0 = \int_{\Strain[u]} f \cdot 1_{\pi^{-1}(S)} d \mu =
\int_{T^{\circ}[u]} 1_S(\cI) \cdot \left( \int_{\cI} f d \mu_{\cI} \right) d \nu(\cI) = \int_S \left( \int_{\cI} f d \mu_{\cI} \right) d \nu(\cI). $$
Recalling that $S \subseteq T^{\circ}[u]$ was an arbitrary $\nu$-measurable set, we see that (\ref{eq_C920}) is proven.
The proof is complete.
\end{proof}

\begin{proof}[Proof of Theorem \ref{thm_main}]
From Theorem \ref{thm_main2}, Theorem \ref{prop_intro}(A) and Theorem \ref{prop_intro}(B) we obtain a $1$-Lipschitz function $u: \cM \rightarrow \RR$,
a certain measure $\nu$ on $T^{\circ}[u]$ and a family of measures $\{ \mu_{\cI} \}_{\cI \in T^{\circ}[u]}$ on the manifold $\cM$.
We make the following formal manipulations:
Let $\Omega$ be the partition of $\cM$ obtained by adding the singletons $\left \{ \{ x \} \, ; \, x \in \cM \setminus \Strain[u] \right \}$
to the partition $T^{\circ}[u]$ of $\Strain[u]$. Let $\tilde{\nu}$ be the push-forward of $\mu|_{\cM \setminus \Strain[u]}$ under the map
$x \mapsto \{ x \}$ to the set $\Omega$. Define
$$ \nu_1 = \nu + \tilde{\nu}, $$
a measure on $\Omega$. Finally, for $x \in \cM \setminus \Strain[u]$ write $\mu_{\{x \}}$ for Dirac's delta measure at $x$.
From Theorem \ref{thm_main2}, for any measurable subset $A \subseteq \cM$,
\begin{align*}  \mu(A) & = \mu(A \cap \Strain[u]) + \mu(A \setminus \Strain[u]) \\ & = \int_{T^{\circ}[u]} \mu_{\cI}(A) d \nu(\cI) + \int_{\cM \setminus \Strain[u]} \mu_{ \{ x \}} (A) d \mu(x)  =
\int_{\Omega} \mu_{\cI}(A) d \nu_1(\cI).
\end{align*}
Thus conclusion (i) holds true with $\nu$ replaced by $\nu_1$.
For $\nu_1$-almost any $\cI \in \Omega$, we have that either $\cI$
is a singleton, or else $\cI$ is the relative interior of a transport ray on which the $CD(\kappa,N)$-needle $\mu_{\cI}$ is supported.
We have thus verified conclusion (ii).
Theorem \ref{prop_intro}(B) shows that
$f$ vanishes almost everywhere in $\cM \setminus \Strain[u]$.
Conclusion (iii) thus follows from Theorem \ref{prop_intro}(B).
\end{proof}

\begin{proof}[Proof of Theorem \ref{prop_intro}(C)] This follows from Theorem \ref{thm_main2}(iii)
and the previous proof.
\end{proof}

\begin{corollary}[``Uniqueness of maximizer''] Let $(\cM, d, \mu)$ be an $n$-dimensional, geodesically-convex, weighted Riemannian manifold.
Suppose that $f: \cM \rightarrow \RR$ is a $\mu$-integrable function with $\int_{\cM} f d \mu = 0$ and
that there exists $x_0 \in \cM$ with $\int_{\cM} d(x_0,x) |f(x)| d \mu(x) < +\infty$. Assume furthermore that
\begin{equation}  \mu \left( \{ x \in \cM \, ; \, f(x) = 0 \} \right) = 0. \label{eq_C354} \end{equation}
Let $u_1, u_2: \cM \rightarrow \RR$ be $1$-Lipschitz functions with
\begin{equation}  \int_{\cM} u_1 f d \mu = \int_{\cM} u_2 f d \mu = \sup \left \{ \int_{\cM} u f d \mu \, ; \, u: \cM \rightarrow \RR, \, \| u \|_{Lip} \leq 1 \right \}. \label{eq_C357}
\end{equation}
Then $u_1 - u_2$ is a constant function.
\end{corollary}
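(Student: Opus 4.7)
The plan is to average $u_1$ and $u_2$ and apply the already-established part (B) of Theorem \ref{prop_intro} to the average. Set $u = (u_1 + u_2)/2$. Then $u$ is $1$-Lipschitz, and by the linearity of the integral together with (\ref{eq_C357}), $u$ also attains the supremum in the dual Monge--Kantorovich problem. Hence Theorem \ref{prop_intro}(B) applies to $u$: the function $f$ vanishes $\mu$-almost everywhere in $\cM \setminus \Strain[u]$. Combined with hypothesis (\ref{eq_C354}), this forces $\mu(\cM \setminus \Strain[u]) = 0$, and since $\mu$ has a smooth positive density with respect to $\lambda_\cM$, also $\lambda_\cM(\cM \setminus \Strain[u]) = 0$.

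The next step is a pointwise gradient comparison. By Lemma \ref{lem_1046}, at every $x \in \Strain[u]$ the function $u$ is differentiable with $|\nabla u(x)| = 1$. By Rademacher's theorem, there is a full-measure set $E \subseteq \cM$ on which both $u_1$ and $u_2$ are differentiable, and at every point of $E$ we have $|\nabla u_i| \leq 1$ because $\|u_i\|_{Lip} \leq 1$. At any $x \in E \cap \Strain[u]$,
\[ \nabla u_1(x) + \nabla u_2(x) \, = \, 2 \nabla u(x), \qquad |\nabla u_1(x) + \nabla u_2(x)| = 2. \]
Since $|\nabla u_1(x)|, |\nabla u_2(x)| \leq 1$, the equality case of the triangle inequality in the inner product space $T_x \cM$ forces $\nabla u_1(x) = \nabla u_2(x)$ (both unit vectors). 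Thus $\nabla(u_1 - u_2) = 0$ almost everywhere on $\cM$.

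It remains to deduce that the Lipschitz function $w := u_1 - u_2$ is constant. Working in any local chart, $w$ is a Lipschitz function on an open subset of $\RR^n$ whose gradient vanishes almost everywhere; by a standard mollification argument, $w$ is locally constant. Since $\cM$ is connected (being geodesically-convex, hence path-connected via minimizing geodesics), $w$ is globally constant, as required.

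There is no genuine obstacle: the strict-convexity argument in step 2 is pointwise in a single tangent space and is immediate, while the final passage from a.e. vanishing gradient to constancy is a routine local consequence of mollification combined with connectedness. The conceptual content lies entirely in the observation that the dual problem is linear so its maximizers form a convex set, coupled with the fact that Theorem \ref{prop_intro}(B) imposes $|\nabla u| = 1$ on a full-measure set whenever $f$ has full support.
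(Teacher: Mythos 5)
Your proposal is correct and follows essentially the same route as the paper's own proof: both observe that the set of maximizers is convex, apply Theorem~\ref{prop_intro}(B) together with hypothesis (\ref{eq_C354}) to conclude that almost every point lies in $\Strain$, invoke Lemma~\ref{lem_1046} to get a unit gradient there, and then use the equality case in the triangle/parallelogram inequality to force $\nabla u_1 = \nabla u_2$ almost everywhere. The only cosmetic difference is that the paper applies the full-measure-of-$\Strain$ observation to all three maximizers $u_1$, $u_2$, $(u_1+u_2)/2$ to get $|\nabla u_1|=|\nabla u_2|=1$ directly, while you apply it only to the average and instead invoke $|\nabla u_i|\le 1$ from the Lipschitz bound; both yield the same strict-convexity conclusion.
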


\begin{proof} A $1$-Lipschitz function $u: \cM \rightarrow \RR$ for which the supremum in (\ref{eq_C357}) is attained
is called here a {\it maximizer}. According to (\ref{eq_C354}) and
Theorem \ref{prop_intro}(B),
the set $\cM \setminus Strain[u]$ is a Lebesgue-null set
for any maximizer $u$.
From Lemma \ref{lem_1046} we deduce that for any maximizer $u: \cM \rightarrow \RR$,
$$ |\nabla u(x)| = 1 \qquad \qquad \qquad \text{for almost any} \ x \in \cM. $$
Suppose now that $u_1$ and $u_2$ are two maximizers. Then
also $(u_1 + u_2) / 2$ is a  maximizer. Therefore for almost any $x \in \cM$,
$$ |\nabla u_1(x)| = |\nabla u_2(x)| = \left|\frac{\nabla u_1(x)  + \nabla u_2(x)}{2} \right| = 1. $$
Consequently $\nabla u_1 = \nabla u_2$ almost everywhere, and hence $u_1 - u_2 \equiv Const$.
\end{proof}

The $CD(\kappa, N)$ curvature-dimension condition
was used in our argument only in order to deduce that $N$-curvature needles are $CD(\kappa, N)$-needles.
The ``$N$-curvature needle'' variant of Theorem \ref{thm_main2} is rendered as Theorem \ref{thm_main2_} above.
Next we formulate an $N$-curvature variant of Theorem \ref{prop_intro}:

\begin{theorem}
Let $n \geq 2, \kappa \in \RR$ and $N \in (-\infty, 1) \cup [n, +\infty ]$.
Assume that $(\cM,d,\mu)$ is an $n$-dimensional weighted Riemannian manifold
which is geodesically-convex.
Let $f: \cM \rightarrow \RR$ be a $\mu$-integrable function with $ \int_\cM f d \mu = 0$.
Assume that there exists a point $x_0 \in \cM$ with $\int_\cM |f(x)| \cdot  d(x_0, x) d \mu(x) < \infty$.
Then,
\begin{enumerate}
\item[(A)] There exists a $1$-Lipschitz function $u: \cM \rightarrow \RR$ such that
$$ \int_{\cM} u f d \mu = \sup_{\| v \|_{Lip} \leq 1} \int_{\cM} v f d \mu.
$$
\item[(B)] For any such function $u$, the function $f$ vanishes $\mu$-almost everywhere in
$\cM \setminus \Strain[u]$. Furthermore, let $\nu$ and $\{ \mu_{\cI} \}_{\cI \in T^{\circ}[u]}$ be measures on $T^{\circ}[u]$
and $\cM$, respectively, satisfying conclusions (i), (ii) and (iii) of Theorem \ref{thm_main2_}.
Then for $\nu$-almost any $\cI \in T^{\circ}[u]$,
$$  \int_{\cI} f d \mu_{\cI} = 0.
$$
 \end{enumerate}
 \label{prop_intro_}
\end{theorem}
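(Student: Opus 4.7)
The plan is to observe that the proof of Theorem \ref{prop_intro}(A) and (B) given above never actually invokes the $CD(\kappa,N)$ hypothesis on $(\cM,d,\mu)$, so it transfers verbatim to the more general setting of Theorem \ref{prop_intro_}. Concretely, part (A) is exactly the statement of Lemma \ref{lem_C1815}; that lemma's proof rests only on geodesic-convexity (so that we may subtract a constant, invoke the Arzel\`a-Ascoli theorem on a sequence of uniformly $1$-Lipschitz functions, and pass to a locally-uniformly convergent subsequence), on $\mu$-integrability of $f$, and on the moment assumption $\int_\cM |f(x)| d(x_0,x) d\mu(x) < \infty$ to legitimize dominated convergence. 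No curvature bound enters the argument.

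For part (B), I will reuse in unchanged form the chain of auxiliary results from Section \ref{sec_monge}. In order: Lemma \ref{lem_C1047} relies on the ray-cluster decomposition (Proposition \ref{cfm}) together with Lemma \ref{lem_1054}, neither of which uses the curvature-dimension condition; Lemma \ref{lem_1532} and Lemma \ref{lem_C1529} exploit only the variational identity (\ref{eq_C1535}) together with the properties of $1$-Lipschitz infimal convolutions; Corollary \ref{cor_C1529} follows by swapping $(u,f)$ with $(-u,-f)$, since $T[u] = T[-u]$ and $\Ends[u] = \Ends[-u]$; and Lemma \ref{lem_C1046} combines these to produce $\int_{\pi^{-1}(S)} f d\mu = 0$ for every $\nu$-measurable $S \subseteq T^{\circ}[u]$. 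The measurability of $\pi^{-1}(S)$ that is used here is supplied by conclusion (i) of Theorem \ref{thm_main2_}, which is formally identical to conclusion (i) of Theorem \ref{thm_main2}.

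The only step in the proof of Theorem \ref{prop_intro}(B) where the curvature-dimension hypothesis was invoked is the final conversion of $\int_{\pi^{-1}(S)} f d\mu = 0$ into $\int_{\cI} f d\mu_{\cI} = 0$ by means of the disintegration identity (\ref{eq_C1713}), which in turn was obtained from Theorem \ref{thm_main2}. For Theorem \ref{prop_intro_}(B) I simply replace the reference to Theorem \ref{thm_main2} by the reference to Theorem \ref{thm_main2_}: both theorems supply disintegrations whose measurability and mass-balance properties (i) and (ii) are literally identical, and the pointwise description (iii) of the fibers $\mu_{\cI}$ --- $N$-curvature needle versus $CD(\kappa,N)$-needle --- does not enter the vanishing-integral argument at all. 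There is no substantive obstacle to overcome; the only conceptual point worth stressing is that the curvature assumption was used in Section \ref{sec_monge} exclusively to refine the pointwise characterization of the fibers, and not to establish the transport identity $\int_{\cI} f d\mu_{\cI} = 0$, which is a purely measure-theoretic consequence of the maximality of $u$.
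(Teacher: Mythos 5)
Your proposal is correct and matches the paper's own proof, which simply states that the argument for Theorem \ref{prop_intro} transfers verbatim with Theorem \ref{thm_main2} replaced by Theorem \ref{thm_main2_} and ``$CD(\kappa,N)$-needle'' replaced by ``$N$-curvature needle''. You spell out more explicitly why the transfer works --- in particular that Lemmas \ref{lem_C1815}, \ref{lem_1532}, \ref{lem_C1529}, Corollary \ref{cor_C1529}, Lemmas \ref{lem_C1047} and \ref{lem_C1046} never invoke the curvature-dimension hypothesis, and that the disintegration conclusions (i) and (ii) of Theorems \ref{thm_main2} and \ref{thm_main2_} are identical while conclusion (iii) is never used in the mass-balance argument --- but this is exactly the content of the paper's terse remark, not a different route.
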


The proof of Theorem \ref{prop_intro_} is almost identical to the proof of Theorem \ref{prop_intro}.
The only difference is that one needs to appeal to Theorem \ref{thm_main2_} rather than
to Theorem \ref{thm_main2} rather than, and to replace the words
``$CD(\kappa, N)$-needle'' by ``$N$-curvature needle'' throughout the proof.

\medskip \begin{remark}{\rm
Similarly,
Theorem \ref{thm_main}
and Theorem \ref{prop_four_functions} remain valid without the $CD(\kappa, N)$-assumption,
yet one has to replace the words ``$CD(\kappa, N)$-needle'' by ``$N$-curvature needle''.
\label{rem_BL}
}\end{remark}

\section{Some applications}
\label{applications}
\setcounter{equation}{0}

One-dimensional log-concave needles are quite well-understood.
Theorem \ref{thm_main} allows us to reduce certain questions pertaining to Riemannian manifolds whose Ricci curvature is non-negative,
to analogous questions for one-dimensional log-concave needles.

\subsection{The inequalities of Buser, Ledoux and  E. Milman}
\setcounter{equation}{0}

Let $\cM$ be a Riemannian manifold with distance function $d$.
For a subset $S \subseteq \cM$ and $\eps > 0$  denote
$$ S_\eps = \left \{ x \in \cM \, ; \, \inf_{y \in S} d(x,y) < \eps \right \}, $$
the $\eps$-neighborhood of the set $S$.
The next proposition was proven by E. Milman \cite{e_milman_invent},
improving upon earlier results by Buser \cite{buser} and by Ledoux \cite{ledoux}:

\begin{proposition} Let $n \geq 2, R > 0$.
Assume that $(\cM,d,\mu)$ is an $n$-dimensional weighted Riemannian manifold
of class $CD(0, \infty)$ which is geodesically-convex with $\mu(\cM) = 1$.
Assume that for any $1$-Lipschitz function $u: \cM \rightarrow \RR$,
\begin{equation} \inf_{\alpha \in \RR} \int_{\cM} |u(x) - \alpha| d \mu(x) < R. \label{eq_E1139} \end{equation}
Then for any measurable set $S \subseteq \cM$ and $0 < \eps < R$,
$$ \mu(S_{\eps} \setminus S) \geq  c \cdot \frac{\eps}{R} \cdot \mu(S) \cdot (1 - \mu(S)), $$
where $c > 0$ is a universal constant.
\label{prop_1116}
\end{proposition}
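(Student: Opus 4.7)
The plan is to localize $\mu$ into one-dimensional log-concave needles via Theorem \ref{prop_intro}, apply a classical Cheeger-type inequality on the line to each needle, and combine the resulting bounds by a Cauchy--Schwarz argument driven by the guiding function of the localization.

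Write $A = S$, $B = \cM \setminus S_\eps$ and $C = S_\eps \setminus S$, so that $A, B, C$ are pairwise disjoint with $\mu(A)+\mu(B)+\mu(C)=1$ and $d(A,B) \geq \eps$; the cases $\mu(A) = 0$ or $\mu(B) = 0$ are trivial, so I assume both are positive. The bounded function $f := \mu(B) 1_A - \mu(A) 1_B$ satisfies $\int f \, d\mu = 0$, and applying (\ref{eq_E1139}) to the $1$-Lipschitz function $x \mapsto d(x_0, x)$ (with $x_0 \in \cM$ arbitrary) shows $\int d(x_0, \cdot) \, d\mu < \infty$, so the integrability hypothesis of Theorem \ref{prop_intro} holds. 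That theorem (together with Theorem \ref{thm_main2}) then produces a $1$-Lipschitz guiding function $u: \cM \to \RR$, a measure $\nu$ on $T^{\circ}[u]$ and a family of log-concave needles $\{\mu_\cI\}$ disintegrating $\mu|_{\Strain[u]}$, with $\int_\cI f \, d\mu_\cI = 0$, i.e.\ $\mu(B) \mu_\cI(A) = \mu(A) \mu_\cI(B)$, for $\nu$-a.e.\ $\cI$. Setting $\tilde\mu_\cI := \mu_\cI/\mu_\cI(\cM)$ and $R_\cI := \inf_\beta \int |s-\beta| \, d\tilde\mu_\cI(s)$, the fact that $u$ acts isometrically along each needle (since $u\circ\gamma_\cI(t) = t + \mathrm{const}$) together with (\ref{eq_E1139}) applied to $u$ yields
\begin{equation*}
\int R_\cI \, d\nu_1 \;<\; R, \qquad \text{where } d\nu_1 := \mu_\cI(\cM) \, d\nu,
\end{equation*}
which is a probability measure on $T^{\circ}[u]$.

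The one-dimensional ingredient I invoke is the classical Cheeger-type inequality for log-concave probability measures on the line: for any log-concave probability $\tilde\theta$ on an interval $I$ with $L^1$-spread $R_{\tilde\theta}$ and any disjoint $A', B' \subseteq I$ with $d(A', B') \geq \eps$,
\begin{equation*}
\tilde\theta\bigl(I \setminus (A' \cup B')\bigr) \;\geq\; c \cdot \min(1, \eps/R_{\tilde\theta}) \cdot \tilde\theta(A') \tilde\theta(B').
\end{equation*}
With $\tilde R_\cI := \max(\eps, R_\cI)$ (so that $\int \tilde R_\cI \, d\nu_1 < 2R$) and $\psi_\cI := \mu_\cI(B)/\mu_\cI(\cM)$, the one-dimensional bound combined with the proportionality $\mu_\cI(A) = (\mu(A)/\mu(B))\mu_\cI(B)$ gives
\begin{equation*}
\mu_\cI(C) \;\geq\; c \eps \cdot \frac{\mu(A)}{\mu(B)} \cdot \frac{\psi_\cI^2 \, \mu_\cI(\cM)}{\tilde R_\cI}.
\end{equation*}
Integrating against $\nu$ and applying Cauchy--Schwarz in $L^2(\nu_1)$,
\begin{equation*}
\mu(B)^2 \;=\; \Bigl(\int \psi_\cI \, d\nu_1 \Bigr)^2 \;\leq\; \int \frac{\psi_\cI^2}{\tilde R_\cI} \, d\nu_1 \cdot \int \tilde R_\cI \, d\nu_1 \;<\; 2R \int \frac{\psi_\cI^2}{\tilde R_\cI} \, d\nu_1,
\end{equation*}
which yields $\mu(C) \geq c'(\eps/R) \mu(A) \mu(B)$. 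Since $\mu(A)\mu(B) \geq \mu(S)(1-\mu(S)) - \mu(C)$ and $\eps < R$, a rearrangement absorbs the stray $\mu(C)$ term into a universal constant and delivers the proposition.

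The main technical point is the precise form of the one-dimensional log-concave Cheeger inequality above (a classical result provable using unimodality of log-concave densities, that requires some care in the saturation regime $R_\cI \lesssim \eps$, hence the use of $\tilde R_\cI$). The Cauchy--Schwarz step then succeeds only because the proportionality $\mu_\cI(A) \propto \mu_\cI(B)$ collapses the two needle masses into the single function $\psi_\cI$; a direct Cauchy--Schwarz applied to $\int \mu_\cI(A)\mu_\cI(B)/\mu_\cI(\cM) \, d\nu$, crudely bounding $\mu_\cI(A)\mu_\cI(B)/\mu_\cI(\cM) \leq 1$, would produce only the weaker estimate $\mu(C) \geq c(\eps/R)\,[\mu(A)\mu(B)]^2$ and fail to prove the proposition.
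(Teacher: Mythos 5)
Your proof is correct, but it follows a genuinely different route from the paper, and it is worth comparing the two.

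The paper's proof sets $f = 1_S - t$ with $t = \mu(S)$. This has two structural consequences that your choice $f = \mu(B) 1_A - \mu(A) 1_B$ (with $A = S$, $B = \cM \setminus S_\eps$) lacks. First, $1_S - t$ never vanishes on $\cM$, so Theorem \ref{prop_intro}(B) forces $\Strain[u]$ to have full $\mu$-measure; your $f$ vanishes on $C = S_\eps \setminus S$, so $\nu_1 := \mu_{\cI}(\cM)\,d\nu$ is only a sub-probability measure, not a probability measure as you claim. This is a harmless slip---you only ever use $\nu_1(T^{\circ}[u]) \leq 1$ and $\int \psi_{\cI}\,d\nu_1 = \mu(B)$, the latter holding because $f$ is nonzero on $B$ and hence $B \subseteq \Strain[u]$ up to a null set---but it should be stated correctly. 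Second, the paper's $f$ gives the constraint $\mu_\cI(S) = t\,\mu_{\cI}(\cM)$, which lets one apply the one-dimensional Lemma \ref{lem_1258} with the same $t$ on every needle, whereas your $f$ gives the proportionality $\mu(B)\mu_\cI(A) = \mu(A)\mu_\cI(B)$, a relation between two needle-dependent quantities that you then must collapse via $\psi_\cI$, and which leaves a stray $\mu(C)$ to be absorbed at the end. Third, where the paper passes from the integral bound $\int_{T^\circ[u]} \int_\cI |u|\,d\mu_\cI\,d\nu \leq R$ to the needle estimate by a Markov--Chebyshev restriction to a ``good'' set $B$ of needles carrying at least half the mass, you use Cauchy--Schwarz against $\tilde R_\cI$; both are legitimate averaging devices, and the paper's is marginally shorter but yours is equally effective. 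Finally, the one-dimensional ingredient you invoke (the Cheeger-type bound $\tilde\theta(\RR\setminus(A'\cup B')) \geq c\min(1,\eps/R_{\tilde\theta})\,\tilde\theta(A')\tilde\theta(B')$ for $\eps$-separated sets) is a mild repackaging of the paper's Lemma \ref{lem_1258}: for $\eps < R_\cI$ apply Lemma \ref{lem_1258} directly with $S = A'$ and note $A'_\eps \setminus A' \subseteq \RR\setminus(A'\cup B')$; for $\eps \geq R_\cI$ apply it with $\eps' = R_\cI$ and again use the inclusion. Since you invoke it as ``classical'' without a reference or derivation, and since the saturation regime needs exactly this care, it would strengthen the write-up to make that reduction explicit. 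In sum, the argument is sound and the key mechanism---localization via Theorem \ref{prop_intro}, a one-dimensional Bobkov-type estimate, and an averaging step to use the $L^1$-spread hypothesis---is shared with the paper, but the normalization of $f$ and the averaging device are genuinely different choices, with the paper's $f = 1_S - t$ producing a somewhat tighter and cleaner proof.
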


It is well-known that the optimal choice of $\alpha$ in (\ref{eq_E1139}) is the median of the function $u$.
The expectation $E = \int_{\cM} u d \mu$ is also a reasonable choice for the parameter $\alpha$, since
$\int_{\cM} |u - E| d \mu$ is at most twice as large as the actual infimum in (\ref{eq_E1139}).
We begin the proof of Proposition \ref{prop_1116} with the following
standard estimate from the theory of
 one-dimensional log-concave measures:

\begin{lemma} Let $R > 0$, let $A \subseteq \RR$ be a non-empty, open connected set, let $\Psi: A \rightarrow \RR$ be a
convex function
with $\int_A e^{-\Psi} < \infty$, and let $\eta$ be the measure supported on $A$ whose density is $e^{-\Psi}$.
Suppose that $R =  \int_{A} |t| d \eta(t) / \eta(\RR)$.
Then for any  $0 < t < 1,  0 < \eps < 2 R$
and a measurable subset $S \subseteq \RR$,
\begin{equation}
 \eta(S) = t \cdot \eta(\RR)  \qquad \Longrightarrow \qquad
 \eta(S_{\eps} \setminus S) \geq c \cdot \frac{\eps}{ R } \cdot  t (1 -t) \cdot \eta(\RR),
 \label{eq_E1236} \end{equation}
 where $c > 0$ is a universal constant.
 \label{lem_1258}
\end{lemma}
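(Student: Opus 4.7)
First normalize $\eta$ to a probability measure (divide by $\eta(\RR)$), so the target becomes $\eta(S_\eps \setminus S) \geq c(\eps/R)\,t(1-t)$ with $R = \int_A |x|\,d\eta$. Let $m$ denote the median. Using that at least half of $\eta$'s mass lies on the far side of $m$ from the origin, one obtains $R \geq |m|/2$, whence $|m| \leq 2R$ and $R_1 := \int |x-m|\,d\eta \leq R + |m| \leq 3R$. Translating by $-m$, I may assume the median is at $0$ and prove the inequality with $R_1$ in place of $R$, losing at most a factor of $3$ in the final constant.

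The key input is Bobkov's one-dimensional Cheeger inequality for log-concave probability measures centered at the median:
\[ \eta^+(T) \geq \frac{c_0\,\min(\eta(T),\,1-\eta(T))}{R_1} \qquad \text{for every measurable } T \subseteq \RR, \]
where $\eta^+$ denotes the lower Minkowski content. This is a classical consequence of the log-concavity of $f = e^{-\Psi}$ combined with the first-moment bound $\int|y|\,d\eta = R_1$. Setting $g(r) = \eta(S_r)$, the continuity of $f$ implies that $g$ is absolutely continuous and $g'(r) \geq \eta^+(S_r) \geq c_0\,\min(g(r),\,1-g(r))/R_1$ almost everywhere. A Gr\"onwall-type integration, handled separately on the region $\{g \leq 1/2\}$ (where $g'/g \geq c_0/R_1$ gives $g(r) \geq t\,e^{c_0 r/R_1}$) and on the region $\{g \geq 1/2\}$ (where the analogous bound applies to $1-g$), yields
\[ g(\eps) - t \geq c_1\,\min(t,\,1-t)\cdot\min\!\left(\frac{\eps}{R_1},\,1\right) \]
for every $\eps > 0$, with a uniform constant $c_1 > 0$.

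Finally, $R_1 \leq 3R$ gives $\min(\eps/R_1, 1) \geq \eps/(3R)$ throughout $0 < \eps < 2R$, and $\min(t,1-t) \geq t(1-t)$ (since $\max(t,1-t) \leq 1$). Combining these with the previous display yields $\eta(S_\eps \setminus S) = g(\eps) - t \geq (c_1/3)\,\eps\,t(1-t)/R$, as desired. The main obstacle I expect is Bobkov's density lower bound itself --- a classical but nontrivial one-dimensional isoperimetric inequality for log-concave measures, proved by a short argument exploiting the convexity of $-\log f$ together with the first-moment constraint; a secondary technical point is verifying the absolute continuity of $g$ and the pointwise bound $g' \geq \eta^+(S_r)$ a.e., which for open $S_r$ reduces to a straightforward manipulation using the continuity of $f$.
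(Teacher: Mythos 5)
Your approach is correct and rests on the same Bobkov machinery as the paper, but it takes a somewhat longer route. The paper normalizes $\eta$ to a probability with $\int|t|\,d\eta=1$, invokes Bobkov's Proposition 2.1 to reduce immediately to the case where $S$ is a half-line, and then bounds $\eta((a,a+\eps))$ directly from below using the concavity of the profile $I(s)=e^{-\Psi(\Phi^{-1}(s))}$ together with Bobkov's estimate $I(1/2)\geq c$. Your route instead keeps $S$ general, formulates the input as a Cheeger inequality $\eta^+(T)\geq c_0\min(\eta(T),1-\eta(T))/R_1$, and integrates a differential inequality for $g(r)=\eta(S_r)$. Both are sound; the paper's version is cleaner because the half-line reduction makes $g$ trivially smooth and sidesteps the technical step of establishing absolute continuity of $r\mapsto\eta(S_r)$ and the a.e.\ bound $g'\geq\eta^+(S_r)$ for arbitrary measurable $S$ (you flag this yourself; it can be handled by the one-dimensional coarea formula applied to the Lipschitz function $x\mapsto d(x,S)$, but it is unnecessary effort). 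Two smaller remarks: (1) the median-centering step is an unnecessary detour here --- since the median minimizes $m\mapsto\int|x-m|\,d\eta$, one has $R_1\leq R$ directly, so the estimates $|m|\leq 2R$ and $R_1\leq 3R$ are both superfluous; alternatively one can avoid translation altogether, as the paper does, by using Bobkov's bound on $I(1/2)$ in terms of $\int|t|\,d\eta$ rather than the median-centered moment. (2) The Gr\"onwall step as stated (''handled separately on $\{g\leq 1/2\}$ and $\{g\geq 1/2\}$'') needs a little care when $g$ crosses $1/2$ inside $[0,\eps]$; a clean way is to observe that monotonicity gives $\min(g(r),1-g(r))\geq\min(t,1-g(\eps))$ on $[0,\eps]$ and then split into the cases $1-g(\eps)\geq(1-t)/2$ and its complement. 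With these patches your argument goes through and yields the same conclusion with a comparable universal constant.
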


\begin{proof} We may add a constant to $\Psi$ and stipulate that $\eta(\RR) = 1$.
We may rescale and assume furthermore that $R = \int_A |t| d \eta(t) = 1$.
According to Bobkov  \cite[Proposition 2.1]{bobkov},
it suffices to prove (\ref{eq_E1236}) under the additional assumption that $S$ is a half-line in $\RR$ with $\eta(S) = t$.
Reflecting $\Psi$ if necessary, we may suppose that  $S$ takes the form $S = (-\infty, a)$ for some $a \in A$.
Furthermore, we may assume that
\begin{equation} \eta\left( (a, a + \eps) \right) \leq  \min \{ t, 1 - t \}/2. \label{eq_E1450}
\end{equation}
Indeed, if (\ref{eq_E1450}) fails then $\eta(S_\eps \setminus S) = \eta \left( (a, a + \eps) \right) \geq (\eps/R) \cdot t(1-t)/4$
and (\ref{eq_E1236}) holds true. For $x \in \RR$ and $0 < s < 1$ denote
$$ \Phi(x) = \int_{-\infty}^x e^{-\Psi}, \qquad \qquad I(s) = \exp(-\Psi(\Phi^{-1}(s))). $$
Since $\Psi$ is convex, then $I: (0,1) \rightarrow (0, \infty)$ is a well-defined concave function according
to Bobkov \cite[Lemma 3.2]{bobkov2}. Furthermore, since $\int_{A} |t| d \eta(t) = 1$
then $I(1/2) \geq c$ where $c > 0$ is a universal constant, as is shown in \cite[Section 3]{bobkov2}.
Therefore, by the concavity of the non-negative function $I: (0,1) \rightarrow \RR$,
\begin{equation}  I(t) \geq 2c \cdot \min \{ t, 1 - t \} \qquad \qquad \text{for all} \ 0 < t < 1. \label{eq_E1453} \end{equation}
According to (\ref{eq_E1450}) and (\ref{eq_E1453}),
$$ \eta\left( (a, a + \eps) \right) \geq \eps \cdot \inf_{x \in (a, a+ \eps) \cap A} e^{-\Psi(x)} \geq
\eps \cdot \inf_{s \in [t, t + \min \{ t, 1-t \}/2]} I(s) \geq \eps \cdot c \cdot \min \{ t, 1-t \}, $$
and (\ref{eq_E1236}) is proven.
\end{proof}

\begin{proof}[Proof of Proposition \ref{prop_1116}]
Denote $t = \mu(S) \in [0,1]$. We may assume that $t \in (0,1)$, as otherwise there is nothing to prove.
Set $f(x) = 1_S(x) - t$ for $x \in \cM$. Then $\int_{\cM} f d\mu = 0$, and certainly for any $x_0 \in \cM$,
$$ \int_{\cM} |f(x)| \cdot d(x_0, x) d \mu(x) \leq |t + 1| \cdot \int_{\cM}  d(x_0, x) d \mu(x) < \infty, $$
where the integrability of the $1$-Lipschitz function $x \mapsto d(x_0, x)$ follows from (\ref{eq_E1139}).
Applying Theorem \ref{prop_intro}, we obtain a certain $1$-Lipschitz function $u: \cM \rightarrow \RR$
and measures $\nu$ and $\{ \mu_{\cI} \}_{\cI \in T^{\circ}[u]}$ on $T^{\circ}[u]$ and $\cM$ respectively.
It follows from (\ref{eq_E1139}) that after adding an appropriate constant to the
$1$-Lipschitz function $u$, we have
\begin{equation}  \int_{\cM} |u| d \mu \leq R. \label{eq_D1333} \end{equation}
For $\nu$-almost any $\cI \in T^{\circ}[u]$ we know that $\int_{\cI} f d \mu_{\cI} = 0$. Consequently,
for $\nu$-almost any $\cI \in T^{\circ}[u]$,
\begin{equation}
\mu_{\cI}(S) = t \cdot \mu_{\cI}(\cM) < \infty. \label{eq_E1212}
\end{equation}
From Theorem \ref{prop_intro}(B), the function $f$ vanishes $\mu$-almost everywhere outside $\Strain[u]$, but
our function $f(x) = 1_S(x) - t$ never vanishes in $\cM$.
Hence $\Strain[u]$ is a set of a full $\mu$-measure.
From Theorem \ref{thm_main2}(ii) and from (\ref{eq_D1333}) we thus obtain that
\begin{equation}  \int_{T^{\circ}[u]} \left( \int_{\cI} |u| d \mu_{\cI} \right) d \nu(\cI) = \int_{\Strain[u]} |u| d \mu = \int_{\cM} |u| d \mu \leq R.
\label{eq_E1143} \end{equation}
Denote
\begin{equation} B = \left \{ \cI \in T^{\circ}[u] \, ; \, \int_{\cI} |u| d \mu_{\cI} \leq 2 R \cdot \mu_{\cI}(\cM) \right \}. \label{eq_E1144}
\end{equation}
Since $\mu(\cM) = \mu(\Strain[u]) = 1$ then $\int_{T^{\circ}[u]} \mu_{\cI}(\cM) d \nu(\cI) = 1$. From (\ref{eq_E1143}) and
the Markov-Chebyshev inequality,
\begin{equation}
\int_B \mu_{\cI}(\cM) d \nu(\cI) \geq \frac{1}{2}. \label{eq_E1144_}
\end{equation}
Furthermore, $\mu_{\cI}$ is a log-concave needle (i.e., a $CD(0, \infty)$-needle) for $\nu$-almost any $\cI \in B$.
We would like to show that for $\nu$-almost any $\cI \in B$ and any $0 < \eps < R$,
\begin{equation}
\mu_{\cI}(S_{\eps} \setminus S) \geq c \cdot \frac{\eps}{R} \cdot  t (1 -t) \cdot \mu_{\cI}(\cM),
\label{eq_E1215}
\end{equation}
for a universal constant $c > 0$.
Let us fix $\cI \in B$ such that $\mu_{\cI}$ is a log-concave needle for which (\ref{eq_E1212}) holds true.
Let $A \subseteq \RR, \Psi: A \rightarrow \RR$ and $\gamma: A \rightarrow \cM$ be as in Definition \ref{def_cd}.
Then $A \subseteq \RR$ is a non-empty, open, connected set and $\Psi: A \rightarrow \RR$ is smooth and convex.
From Theorem \ref{thm_main2}(iii) we know that $\cI = \gamma(A)$ and
\begin{equation}  u(\gamma(t)) = t \qquad \qquad \qquad \text{for all} \ t \in A. \label{eq_E1336} \end{equation}
Since $\cI \in B$, we may apply Lemma
\ref{lem_1258} thanks to (\ref{eq_E1212}), (\ref{eq_E1144}) and (\ref{eq_E1336}).
The conclusion of Lemma \ref{lem_1258} implies (\ref{eq_E1215}). Consequently, for any $0 < \eps < R$,
$$ \mu(S_{\eps} \setminus S) = \int_{T^{\circ}[u]} \mu_{\cI}(S_{\eps} \setminus S) d \nu(\cI)
\geq \int_{B} \mu_{\cI}(S_{\eps} \setminus S) d \nu(\cI) \geq c \frac{\eps}{R} \cdot t (1 -t) \cdot \int_B \mu_{\cI}(\cM) d \nu(\cI). $$
The proposition now follows from (\ref{eq_E1144_}).
\end{proof}

Proposition \ref{prop_1116} is stated and proved in the particular case where $\kappa = 0$ and $N = \infty$.
For general $\kappa$ and $N$, an appropriate $CD(\kappa,N)$-variant of the one-dimensional
Lemma \ref{lem_1258} would lead to a $CD(\kappa,N)$-variant of the $n$-dimensional Proposition \ref{prop_1116}.

\subsection{A Poincar\'e inequality for geodesically-convex domains}
\setcounter{equation}{0}

For $\kappa \in \RR, 1 \neq N \in \RR \cup \{ +\infty \}$ and $D \in (0, +\infty)$
write $\cF_{\kappa, N, D}$ for the collection of all measures $\nu$ supported on the interval $(0, D) \subseteq \RR$
which are $CD(\kappa, N)$-needles. According to Definition \ref{def_cd}, a measure $\nu$  belongs to
$\cF_{\kappa, N, D}$ if and only if $\nu$ is supported on a non-empty, open interval $A \subseteq (0, D)$ with density
$e^{-\Psi}$, where $\Psi: A \rightarrow \RR$ is a smooth function that satisfies
\begin{equation}
\Psi^{\prime \prime} \geq \kappa + \frac{(\Psi^{\prime})^2}{N-1}.
\label{eq_E2030} \end{equation}
The term $(\Psi^{\prime})^2 / (N-1)$ in (\ref{eq_E2030}) is interpreted as zero when $N = +\infty$.
In order to include the case $D = +\infty$, we write $\cF_{\kappa, N, +\infty}$ for the collection of
all measures $\nu$ on $\RR$ which are $CD(\kappa, N)$-needles.
Define
$$ \lambda_{\kappa, N, D} = \inf \left \{ \frac{\int_{\RR} |u'|^2 d \nu}{\int_{\RR} u^2 d \nu} \, ; \, \nu \in \cF_{\kappa, N, D}, \,
u \in C^1 \cap L^{1 \cap 2}(\nu), \, \int_{\RR} u d \nu =0, \, \int_{\RR} u^2 d \nu > 0 \right \}, $$
where $L^{1 \cap 2}(\nu)$ is an abbreviation for $L^1(\nu) \cap L^2(\nu)$.
There are some cases where $\lambda_{\kappa, N, D}$ may be computed explictely. For example, for
$N \in (-\infty, -1] \cup (1, +\infty ]$, the simple one-dimensional lemma of Payne and Weinberger \cite{PW} shows that
\begin{equation}  \lambda_{0, N, D} = \frac{\pi^2}{D^2}. \label{eq_E2209} \end{equation}
We refer the reader to Bakry and Qian \cite{BQ} and references therein for generalizations
of the following proposition:

\begin{proposition} Let $n \geq 2, \kappa \in \RR$ and $N \in (-\infty, 1) \cup [n, +\infty ]$.
Assume that $(\cM,d,\mu)$ is an $n$-dimensional weighted Riemannian manifold
of class $CD(\kappa, N)$ which is geodesically-convex. Denote
$$ D = \Diam(\cM) = \sup_{x, y \in \cM} d(x,y) \in (0, +\infty] $$
the diameter of $\cM$. Then for any $C^1$-function $f: \cM \rightarrow \RR$ with $f \in L^1(\mu) \cap L^2(\mu)$,
\begin{equation}  \int_{\cM} f d \mu = 0 \qquad \Longrightarrow \qquad \lambda_{\kappa, N, D} \cdot \int_{\cM} f^2 d \mu \leq \int_{\cM} |\nabla f|^2 d \mu.
\label{eq_E1141} \end{equation}
\label{prop_1144}
\end{proposition}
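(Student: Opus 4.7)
The plan is to apply the localization theorem (Theorem \ref{thm_main}) to the mean-zero function $f$, reducing the $n$-dimensional Poincar\'e inequality to a family of one-dimensional Poincar\'e inequalities on $CD(\kappa, N)$-needles, and then invoking the very definition of $\lambda_{\kappa,N,D}$. I may assume $\lambda_{\kappa,N,D} > 0$ and $\int_\cM |\nabla f|^2 d\mu < \infty$, as otherwise (\ref{eq_E1141}) is vacuous. When $D < \infty$ the auxiliary moment hypothesis of Theorem \ref{thm_main} is automatic, since $d(x_0,x) \leq D$ forces $\int_\cM |f(x)| d(x_0,x) d\mu(x) \leq D \int_\cM |f| d\mu < \infty$. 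The case $D = \infty$ (which is only of interest when $\lambda_{\kappa,N,\infty} > 0$) is routine to handle by truncating $f$ against indicators of geodesic balls $B_\cM(x_0, k)$, re-centering each truncate to have zero mean, and passing to the limit via Fatou's lemma; I suppress this below.

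First I would apply Theorem \ref{thm_main} to $f$, obtaining a partition $\Omega$ of $\cM$, a measure $\nu$ on $\Omega$, and a family $\{\mu_\cI\}_{\cI \in \Omega}$ such that $\mu = \int_\Omega \mu_\cI d\nu(\cI)$ in the sense of disintegration, for $\nu$-almost every $\cI$ the set $\cI$ is either a singleton or the image of a minimizing geodesic carrying a $CD(\kappa,N)$-needle $\mu_\cI$, and $\int_\cI f d\mu_\cI = 0$. For singleton needles $\cI = \{x\}$ with $\mu_\cI(\cM) > 0$, the vanishing of $\int_\cI f d\mu_\cI$ forces $f(x) = 0$, and the desired one-dimensional inequality holds trivially on such $\cI$.

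Now fix a non-degenerate needle; by Definition \ref{def_cd} there are an open interval $A \subseteq \RR$, a smooth $\Psi: A \to \RR$ satisfying (\ref{eq_B1952}), and a minimizing geodesic $\gamma: A \to \cM$ with $\mu_\cI = \gamma_* (e^{-\Psi} dt)$. Since $d(\gamma(s),\gamma(t)) = |s-t| \leq \Diam(\cM) = D$ for all $s,t \in A$, the interval $A$ has length at most $D$, and since (\ref{eq_B1952}) is invariant under translation of the $t$-variable, I may translate so that $A \subseteq (0,D)$; the resulting measure $\theta := e^{-\Psi} dt$ then lies in $\cF_{\kappa,N,D}$. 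Setting $g(t) := f(\gamma(t))$, the chain rule and the unit-speed condition $|\dot\gamma(t)| = 1$ give $g \in C^1(A)$ with $|g'(t)|^2 = \langle \nabla f(\gamma(t)), \dot\gamma(t)\rangle^2 \leq |\nabla f(\gamma(t))|^2$, the constraint $\int_\cI f d\mu_\cI = 0$ becomes $\int_A g d\theta = 0$, and Fubini together with the disintegration shows that $g \in L^1(\theta) \cap L^2(\theta)$ for $\nu$-almost every $\cI$. The definition of $\lambda_{\kappa,N,D}$ therefore yields
\[
 \lambda_{\kappa,N,D} \int_A g^2 d\theta \leq \int_A |g'|^2 d\theta \leq \int_A |\nabla f(\gamma(t))|^2 d\theta(t),
\]
which translates to $\lambda_{\kappa,N,D} \int_\cI f^2 d\mu_\cI \leq \int_\cI |\nabla f|^2 d\mu_\cI$. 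Integrating this estimate against $\nu$ and disintegrating $\mu$ twice,
\[
 \lambda_{\kappa,N,D} \int_\cM f^2 d\mu = \int_\Omega \lambda_{\kappa,N,D} \int_\cI f^2 d\mu_\cI d\nu(\cI) \leq \int_\Omega \int_\cI |\nabla f|^2 d\mu_\cI d\nu(\cI) = \int_\cM |\nabla f|^2 d\mu,
\]
which is exactly (\ref{eq_E1141}).

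The main obstacle, and it is rather mild, is the bookkeeping of the third paragraph: verifying that the $C^1$-regularity and $L^1 \cap L^2$-integrability of $g$ on $A$ really do survive the disintegration for $\nu$-almost every needle, and that the translation of $A$ preserves membership in $\cF_{\kappa,N,D}$. The only genuinely delicate point is the $D=\infty$ case without the moment hypothesis, which is handled by the truncation argument mentioned at the outset.
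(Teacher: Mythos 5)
Your proposal follows essentially the same route as the paper: apply Theorem \ref{thm_main} to $f$, reduce to a one-dimensional Poincar\'e inequality for each $CD(\kappa, N)$-needle via $g = f\circ\gamma$ and the bound $|g'(t)| \leq |\nabla f(\gamma(t))|$, and then invoke the definition of $\lambda_{\kappa,N,D}$; the observation that $A$ is an interval of length at most $D$, the Fubini argument for $g \in L^{1\cap 2}(\theta)$ for $\nu$-a.e.\ needle, and the final reassembly all match the paper's argument. The one point where you add something is the explicit check of the auxiliary moment hypothesis of Theorem \ref{thm_main}: you correctly note that $f \in L^1(\mu)\cap L^2(\mu)$ does not by itself give $\int_\cM |f(x)|\,d(x_0,x)\,d\mu(x) < \infty$ when $D = \infty$, and the paper silently passes over this (it simply says ``Applying Theorem \ref{thm_main}'' without verifying the hypothesis). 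Your observation is accurate, though you leave the truncation argument for $D=\infty$ unproved; note also that in the most relevant unbounded case ($\kappa>0$, $N=\infty$, $\mu$ a probability measure), the moment hypothesis is in fact automatic by sub-Gaussian concentration of Lipschitz functions together with Cauchy--Schwarz, so a truncation may not even be necessary there.
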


\begin{proof} Let $f: \cM \rightarrow \RR$ be a $C^1$-function with $f \in L^{1 \cap 2}(\mu)$ and $\int_{\cM} f d \mu = 0$.
Applying Theorem \ref{thm_main}, we see that (\ref{eq_E1141}) would follow from the following inequality:
for any measure $\nu$ on $\cM$ which is a $CD(\kappa, N)$-needle,
\begin{equation}  \left[ f \in L^{1 \cap 2}(\nu) \ \ \  \text{and}  \ \ \  \int_{\cM} f d \nu = 0 \right] \qquad \Longrightarrow \qquad \lambda_{\kappa, N, D} \cdot \int_{\cM} f^2 d \nu \leq \int_{\cM} |\nabla f|^2 d \nu.
\label{eq_E1141_} \end{equation}
Thus, let us fix a $CD(\kappa, N)$-needle $\nu$ for which
$f \in L^{1 \cap 2}(\nu)$
and
$\int_{\cM} f d \nu = 0$.
Let $A \subseteq \RR, \Psi: A \rightarrow \RR$ and $\gamma: A \rightarrow \cM$ be as in Definition \ref{def_cd}. Denoting $g = f \circ \gamma$, we see that
$$
|g^{\prime}(t)| \leq |\nabla f(\gamma(t))| \qquad \qquad \qquad \text{for} \ t \in A,
$$
as $\gamma$ is a unit speed geodesic. Hence
(\ref{eq_E1141_}) would follow from
the inequality
\begin{equation}  \int_{A} g e^{-\Psi}  = 0 \qquad \Longrightarrow \qquad \lambda_{\kappa, N, D} \cdot \int_{A} g^2 e^{-\Psi} \leq  \int_{A} (g^{\prime})^2 e^{-\Psi},
\label{eq_E1141__} \end{equation}
where $g: A \rightarrow \RR$ is a $C^1$-function with $\int_A \left( |g| + g^2 \right) e^{-\Psi} < \infty$.
The set $A$ is open and connected, and  since $\gamma: A \rightarrow \cM$ is a minimizing geodesic then $A$ is an open interval whose length is at most $D$.
The smooth function $\Psi: A \rightarrow \RR$ satisfies (\ref{eq_E2030}),
and the desired inequality (\ref{eq_E1141__}) holds in view of the definition of $\lambda_{\kappa, N, D}$.
This completes the proof.
\end{proof}

The case $\kappa = 0$
of Proposition \ref{prop_1144}, with the constant $\lambda_{0, N, D}$ given
by (\ref{eq_E2209}),
appears in  Payne-Weinberger \cite{PW} in the Euclidean case,
and in Li-Yau \cite{LY} and Yang-Zhong \cite{YZ} in the Riemannian case.

\subsection{The isoperimetric inequality and its relatives}
\setcounter{equation}{0}

Recall the definition of
$\cF_{\kappa, N, D}$ from the previous subsection. Recall that $A_\eps$ stands for the $\eps$-neighborhood of the set $A$.
For $0 < t < 1$ and $\eps > 0$ define
\begin{equation}  I_{\kappa, N, D}(t, \eps) = \inf \left \{ \nu(A_\eps) \, ; \, \nu \in \cF_{\kappa, N, D}, \, A \subseteq \RR, \, \nu(\RR) = 1, \, \nu(A) = t \right \}. \label{eq_E2237}
\end{equation}
That is, $I_{\kappa,N, D}(t,\eps)$ is the infimal measure of an $\eps$-neighborhood  of a subset of measure $t$.
There
are cases where the function $I_{\kappa, N, D}$ may be computed explicitly.
For example, when $\kappa > 0, N = D = \infty$, the infimum in (\ref{eq_E2237})
is attained when $A$ is a half-line and $\nu$ is a Gaussian measure on the real line of variance $1/\kappa$.
See E. Milman \cite{E_milman_model} and references therein for more information
about the function $I_{\kappa, N, D}$.

\begin{proposition} Let $n \geq 2, \kappa \in \RR$ and $N \in (-\infty, 1) \cup [n, +\infty ]$.
Assume that $(\cM,d,\mu)$ is an $n$-dimensional weighted Riemannian manifold
of class $CD(\kappa, N)$ which is geodesically-convex. Assume that $\mu(\cM) = 1$.
Denote $ D = \Diam(\cM)$, the diameter of $\cM$. Then for any measurable set $A \subseteq \cM$ and $\eps > 0$, denoting $t = \mu(A)$,
$$ \mu(A_\eps) \geq I_{\kappa, N, D}(t, \eps). $$
\label{prop_2241}
\end{proposition}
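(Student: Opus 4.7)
The plan is to apply the localization theorems to $f := 1_A - t$, and thereby reduce the $n$-dimensional inequality to the one-dimensional extremal problem that defines $I_{\kappa, N, D}$. The degenerate cases $t \in \{0, 1\}$ are trivial (for $t = 1$ one has $A_\eps = \cM$ up to a null set, and $I_{\kappa, N, D}(0,\eps) = 0$), so I assume $0 < t < 1$. When $D < \infty$, $\mu$ has finite first moment since $d(x_0, \cdot) \le D$, so the integrability hypothesis of Theorem \ref{prop_intro} is satisfied for $f$; the case $D = \infty$ is addressed at the end.

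I would then invoke Theorem \ref{prop_intro}(A) to obtain a $1$-Lipschitz guiding function $u$, and Theorem \ref{thm_main2} to produce the disintegration $\nu$ on $T^\circ[u]$ together with a family of $CD(\kappa, N)$-needles $\{\mu_\cI\}_{\cI \in T^\circ[u]}$. Because $f = 1_A - t$ never vanishes, Theorem \ref{prop_intro}(B) forces $\mu(\cM \setminus \Strain[u]) = 0$, so
\[
\mu(A_\eps) \;=\; \int_{T^\circ[u]} \mu_\cI(A_\eps) \, d\nu(\cI),
\]
and simultaneously $\int_\cI f \, d\mu_\cI = 0$ for $\nu$-a.e.\ $\cI$, i.e.\ $\mu_\cI(A) = t \cdot \mu_\cI(\cM)$.

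The core of the proof is the one-dimensional reduction on each needle. For $\nu$-a.e.\ $\cI$, fix the geodesic parameterization $\gamma : A_0 \to \cM$ from Theorem \ref{thm_main2}(iii), set $\tilde{A} := \gamma^{-1}(A \cap \cI)$, and let $\theta$ be the measure on $A_0$ whose push-forward under $\gamma$ equals $\mu_\cI$; by Definition \ref{def_cd}, $\theta$ is a $CD(\kappa, N)$-needle on the open interval $A_0 \subseteq \RR$. Two geometric observations are then crucial: (i) because $\gamma$ is minimizing, $|s - t| = d(\gamma(s), \gamma(t)) \le D$ for all $s,t \in A_0$, so a translate of $A_0$ lies inside $(0, D)$; (ii) if $s \in A_0$ satisfies $|s - s'| < \eps$ for some $s' \in \tilde{A}$, then $d_\cM(\gamma(s), \gamma(s')) = |s - s'| < \eps$ and hence $\gamma(s) \in A_\eps$, giving
$\gamma^{-1}(A_\eps \cap \cI) \supseteq \tilde{A}_\eps \cap A_0$
(with the Euclidean $\eps$-neighborhood on the right). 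Normalizing to $\hat{\theta} := \theta / \theta(\RR)$, the measure $\hat\theta$ belongs (after translation) to $\cF_{\kappa, N, D}$ with $\hat\theta(\tilde A) = t$, so the defining infimum of $I_{\kappa, N, D}(t, \eps)$ yields
\[
\mu_\cI(A_\eps) \;\ge\; \theta(\tilde{A}_\eps) \;\ge\; I_{\kappa, N, D}(t, \eps) \cdot \mu_\cI(\cM).
\]
Integrating against $\nu$ and using $\int \mu_\cI(\cM) \, d\nu(\cI) = \mu(\cM) = 1$ completes the proof in the integrable case.

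The main technical obstacle is the case $D = \infty$ when $\int_\cM d(x_0, x) \, d\mu(x)$ may be infinite, since then Theorem \ref{prop_intro} cannot be invoked on $f$ directly. I would circumvent this by a truncation: for $R > 0$ set $f_R := (1_A - t_R)\phi_R$ with $\phi_R$ a $1$-Lipschitz cutoff supported in $B_\cM(x_0, R+1)$ and $t_R$ adjusted so $\int f_R \, d\mu = 0$. The truncated $f_R$ has compact support and hence satisfies the integrability hypothesis, and the argument above applied to $f_R$ yields an isoperimetric inequality localized to $B_\cM(x_0, R)$; letting $R \to \infty$, using $\mu(\cM) = 1$ together with dominated convergence and the obvious monotonicity of $I_{\kappa,N,D}(\cdot,\eps)$ in its first argument, recovers the stated bound.
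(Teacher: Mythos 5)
Your main argument coincides with the paper's intent: set $f = 1_A - t$, apply Theorem \ref{thm_main}, and reduce each needle to the one-dimensional problem defining $I_{\kappa,N,D}$, using that $\gamma$ is a unit-speed minimizing geodesic so that $\gamma^{-1}(A_\eps \cap \cI)$ contains the Euclidean $\eps$-enlargement of $\gamma^{-1}(A \cap \cI)$ intersected with the parameter interval. This is exactly what the paper intends by ``arguing similarly to the proof of Proposition \ref{prop_1144}''.

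Your concern about the case $D = \infty$ with infinite first moment is legitimate, and the paper does not discuss it: for $\kappa<0$ there are geodesically-convex $CD(\kappa,\infty)$ probability spaces with $\int d(x_0,\cdot)\,d\mu = \infty$, e.g.\ $\RR^2$ with density proportional to $(1+|x|^2)^{-3/2}$, which satisfies $CD(-3/8,\infty)$. However, your truncation does not close this gap. Applying Theorem \ref{thm_main} to $f_R = (1_A - t_R)\phi_R$ yields on each needle the constraint $\int_\cI (1_A - t_R)\phi_R\, d\mu_\cI = 0$, which reads $\widetilde{\mu}_\cI(A) = t_R\,\widetilde{\mu}_\cI(\cM)$ for the \emph{weighted} measure $\widetilde{\mu}_\cI := \phi_R\,\mu_\cI$ rather than for $\mu_\cI$ itself; and $\widetilde{\mu}_\cI$ is not a $CD(\kappa,N)$-needle, so the one-dimensional input that defines $I_{\kappa,N,D}$ is unavailable on these needles. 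Consequently the ``isoperimetric inequality localized to $B_\cM(x_0,R)$'' you appeal to has not been established, and the passage to the limit (which also rests on an unsubstantiated monotonicity or continuity property of $t\mapsto I_{\kappa,N,D}(t,\eps)$) does not go through. A correct repair would need to restrict to needles contained entirely in $B_\cM(x_0,R)$ and control the $\nu$-mass of the remaining ones, or approximate the set $A$ rather than the function $f$.
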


\begin{proof} Denote $f(x) = 1_A(x) - t$. Then $\int_{\cM} f d \mu = 0$. The proposition follows
by applying Theorem \ref{thm_main} and arguing similarly to the proof of Proposition \ref{prop_1144}.
\end{proof}

Similarly, one may reduce the proof of log-Sobolev or transportation-cost inequalities to the one-dimensional case
by using Theorem \ref{thm_main}, as well as the proof of the inequalities of
Cordero-Erausquin, McCann and Schmuckenschl\"ager \cite{CMS1, CMS2}.
By using Theorem \ref{prop_intro_}, it is also straightforward to reduce the proof
of the Brascamp-Lieb inequality and its dimensional variants to the one-dimensional case.
We will end this section
with the proof of the {\it four functions theorem},
rendered as Theorem  \ref{prop_four_functions} above.

\begin{proof}[Proof of Theorem \ref{prop_four_functions}]
By approximation, we may assume that the function $f_3: \cM \rightarrow [0, +\infty)$ does not vanish in $\cM$
(for example, replace $f_3$ by $f_3 + \eps g$ where $g$ is a positive function with suitable integrability properties,
and then let $\eps$ tend to zero). We claim that for any $CD(\kappa, N)$-measure $\eta$ on the Riemannian manifold $\cM$ for which $f_1,f_2,f_3, f_4 \in L^1(\eta)$,
\begin{equation}  \left( \int_{\cM} f_1 d \eta \right)^{\alpha} \left( \int_{\cM} f_2 d \eta \right)^{\beta}
\leq \left( \int_{\cM} f_3 d \eta \right)^{\alpha} \left( \int_{\cM} f_4 d \eta \right)^{\beta}. \label{eq_E951_}
\end{equation}
Indeed, inequality (\ref{eq_E951_}) appears in the assumptions of the theorem, but under the additional assumption that $\eta$
is a probability measure. By homogeneity, (\ref{eq_E951_}) holds true under the additional assumption that  $\eta$ is a finite measure.
In the general case, we may select a sequence of finite $CD(\kappa,N)$-measures $\eta_\ell$ such that $\eta_\ell \nearrow \eta$, and use
the monotone convergence theorem. Thus (\ref{eq_E951_}) is proven.

\medskip
 Next, denote $\lambda = \int_{\cM} f_1 d \mu / \int_{\cM} f_3 d \mu$,
define $f = f_1 - \lambda f_3$,
and apply Theorem \ref{thm_main}.
Let $\Omega, \{ \mu_{\cI} \}_{\cI \in \Omega}, \nu$ be as in Theorem \ref{thm_main}. Then for $\nu$-almost any $\cI \in \Omega$
we have that $f_1,f_2,f_3,f_4 \in L^1(\mu_{\cI})$ and
\begin{equation} \left( \int_{\cI} f_1 d \mu_{\cI} \right)^{\alpha} \left( \int_{\cI} f_2 d \mu_{\cI} \right)^{\beta}
\leq \left( \int_{\cI} f_3 d \mu_{\cI} \right)^{\alpha} \left( \int_{\cI} f_4 d \mu_{\cI} \right)^{\beta} \label{eq_E1002} \end{equation}
as follows from (\ref{eq_E951_}) and from the pointwise inequality
$f_1^{\alpha} f_2^{\beta} \leq f_3^{\alpha} f_4^{\beta}$ that holds almost-everywhere in $\cM$.
However, $\int_{\cI} f_1 d \mu_{\cI} = \lambda \int_{\cI} f_3 d \mu_{\cI}$ for $\nu$-almost any $\cI \in \Omega$.
Thus (\ref{eq_E1002}) implies that for $\nu$-almost any $\cI \in \Omega$,
\begin{equation} \lambda^{\alpha  / \beta} \int_{\cI} f_2 d \mu_{\cI}
\leq  \int_{\cI} f_4 d \mu_{\cI}.  \label{eq_E1006} \end{equation}
Integrating (\ref{eq_E1006}) with respect to the measure $\nu$ yields
$$ \lambda^{\alpha  / \beta} \int_{\cM} f_2 d \mu = \lambda^{\alpha  / \beta} \int_{\Omega} \left( \int_{\cI}
f_2 d \mu_{\cI} \right) d \nu(\cI) \leq \int_{\Omega} \left( \int_{\cI}
 f_4 d \mu_{\cI} \right) d \nu(\cI) = \int_{\cM} f_4 d \mu. $$
 From the definition of $\lambda$ we thus obtain
$$  \left( \int_{\cM} f_1 d \mu \right)^{\alpha} \left( \int_{\cM} f_2 d \mu \right)^{\beta}
\leq \left( \int_{\cM} f_3 d \mu \right)^{\alpha} \left( \int_{\cM} f_4 d \mu \right)^{\beta}, $$
and the theorem is proven.
\end{proof}

\section{Further research}
\setcounter{equation}{0}
\label{sec_future}

This section contains ideas and conjectures for possible
extensions of the results in this manuscript. First, we conjecture that the results and the arguments presented above  may be generalized
to the case of a smooth Finsler manifold. Another interesting generalization involves {\it several constraints}.
That is, suppose that we are given a
weighted Riemannian manifold $(\cM, d, \mu)$ and a $\mu$-integrable function $f: \cM \rightarrow \RR^k$ with
$$ \int_{\cM} f d \mu = 0.
$$
We would like to understand whether the measure $\mu$ may be decomposed into $k$-dimensional pieces
in a way analogous to Theorem \ref{thm_main}.

\begin{definition} Let $\cM$ and $\cN$ be geodesically-convex Riemannian manifolds. We declare that
``$\cM \rightarrow \cN$ has the isometric extension property''
if for any subset $A \subseteq \cM$ and a distance-preserving map $f: A \rightarrow \cN$, there exists
a geodesically-convex subset $B \subseteq \cM$ containing $A$ and an extension of $f$ to a
distance-preserving map $f: B \rightarrow \cN$.
\end{definition}

Lemma \ref{lem_1522} shows that $\RR \rightarrow \cM$ has the isometric extension property whenever
$\cM$ is a geodesically-convex Riemannian manifold. If $\cM \subseteq \RR^n$ is a convex set then for any $k \leq n$,
$$ \RR^k \rightarrow \cM $$
has the isometric extension property. Also $S^k \rightarrow S^n$ has the isometric extension property, as well as $S^k \rightarrow \cM$
when $\cM$ is a geodesically-convex subset of the sphere $S^n$. These facts have direct proofs which do not
rely on the Kirszbraun theorem.
Let us discuss in greater detail the case where $\cM \subseteq \RR^n$ is an open, convex
set. Suppose that $u: \cM \rightarrow \RR^k$ is a $1$-Lipschitz map. We may generalize Definition
\ref{def_1050} as follows: A subset
 $\cS \subseteq \cM$ is a {\it leaf} associated with $u$ if
$$  |u(x) - u(y)| = |x- y|  \qquad \qquad \qquad \text{for all} \, x, y \in \cS,
$$
and if for any $\cS_1 \supsetneq \cS$ there exist
$x,y \in \cS_1$ with $|u(x) - u(y)| < |x - y|$. For any leaf $\cS \subseteq \cM$, the set
$$ u(\cS) = \left \{ u(x) \, ; \, x \in \cS \right \} $$
is a closed, convex subset of $\RR^k$. This follows from the isometric extension property of
$\RR^k \rightarrow \cM$. Let us define $\Strain[u]$ to be the union of all relative interiors of leafs.
Write $T^{\circ}[u]$ for the collection of all non-empty relative interiors of leafs. Suppose that $\mu$
is a measure on the convex set $\cM \subseteq \RR^n$ such that $(\cM,| \cdot |,\mu)$ is an $n$-dimensional weighted Riemannian manifold
of class $CD(\kappa, N)$. We conjecture that there exists a measure $\nu$ on $T^{\circ}[u]$
and a family of measures $\{ \mu_{\cS} \}_{\cS \in T^{\circ}[u]}$ such that
$$ \mu(A \cap \Strain[u]) = \int_{T^{\circ}[u]} \mu_{\cS}(A) d \nu(\cS) \qquad \qquad \text{for any measurable} \ A \subseteq \cM. $$
Additionally,  for $\nu$-almost any $\cS \in T^{\circ}[u]$, the measure $\mu_{\cS}$ is supported on $\cS$ and
$$ (\cS, | \cdot |, \mu_{\cS}) $$
is a weighted Riemannian manifold of class $CD(\kappa, N)$.
In other words, at least in the Euclidean setting, we conjecture that Theorem
\ref{thm_main2} admits a direct generalization to functions $u: \cM \rightarrow \RR^k$.
Perhaps the generalization works whenever $u: \cM \rightarrow \cN$ is $1$-Lipschitz,
where $\cN \rightarrow \cM$ has the isometric extension property, and we require certain
bounds on sectional curvatures.
Moreover, in the Euclidean setting, we believe that Theorem \ref{prop_intro} may be generalized as follows:
Assume that $f: \cM \rightarrow \RR^k$ satisfies $\int_{\cM} f d \mu = 0$ and also
$\int_{\cM} |f(x)| \cdot d(x_0, x) d \mu(x) < +\infty$ for a certain $x_0 \in \cM$.
Let us maximize
\begin{equation}  \int_{\cM} \langle f,  u \rangle d \mu \label{eq_I1116} \end{equation}
among all $1$-Lipschitz functions $u: \cM \rightarrow \RR^k$. One may use Kirszbraun's theorem
and prove that for any maximizer $u: \cM \rightarrow \RR^k$  and for $\nu$-almost any leaf $\cS \in T^{\circ}[u]$,
$$ \int_{\cS} f d \mu_{\cS} = 0 \quad \text{and} \quad  \int_{\cM} \langle f,  u \rangle d \mu_{\cS} = \sup \left \{ \int_{\cS} \langle f,  v \rangle d \mu_{\cS} \, ; \,  v: \cS \rightarrow \RR^k, \, \| v \|_{Lip} \leq 1 \right \}.
$$

\begin{remark}{\rm The bisection method outlined in Section \ref{sec_intro} has one significant
advantage compared to our results. The methods discussed in this manuscript are very much {\it linear}, as we obtain a geodesic foliation from
the linear maximization problem (\ref{eq_I1116}). In comparison, the bisection method works only in symmetric spaces such as $\RR^n$ or $S^n$,
but in these spaces it offers more flexibility, since one may devise various  linear and non-linear rules
for the bisection procedure. This flexibility is exploited artfully by Gromov \cite{gr_waist}. It is currently
unclear to us whether one may arrive at an {\it integrable} foliation in the situations considered by Gromov \cite{gr_waist}.
}\end{remark}

\medskip Another possible research direction is concerned with $CD(\kappa, N+1)$-needles in one dimension.
It seems that many concepts and results
from convexity theory admit  generalizations to the class of $CD(\kappa, N+1)$-needles.
For example, when $0 \neq N \in \RR$ and $\kappa/N > 0$, we may define a Legendre-type transform
of a function $f: \RR \rightarrow [0, +\infty]$ by setting
\begin{equation} f^*(s) = \inf_{t ; f(t) < +\infty} \frac{g(s +t)}{f(t)} \qquad \qquad \text{for} \ s \in \RR, \label{eq_E1026} \end{equation}
where $$ g(t) =
\left \{ \sin \left( \sqrt{\frac{\kappa}{N}} \cdot t \right) \cdot 1_{[0, \pi]} \left(\sqrt{\frac{\kappa}{N}} \cdot t \right) \right \}^N $$
and we agree that $g(s + t) / 0 \equiv +\infty$
and that $0^{N} = 0$ when $N \in (0, +\infty)$ and $0^N = +\infty$ when $N \in (-\infty, 0)$.
It seems that the function $f^*$ is either a density of a $CD(\kappa, N+1)$-needle in $\RR$,
or else it is a  limit of such densities.
We say that a function $f: \RR \rightarrow [0, +\infty]$ is
 $(\kappa, N+1)$-concave if the set
$$  \{ t \in \RR \, ; \, f(t) > R \cdot g(s + t)  \} $$
is connected for all $R > 0, s \in \RR$.
Perhaps the transform (\ref{eq_E1026}) is an order-reversing involution on the class of
upper semi-continuous $(\kappa, N+1)$-concave functions on $\RR$.

\medskip One reason for investigating one-dimensional $CD(\kappa, N)$-needles
is that $CD(\kappa, N)$-needles may be further decomposed into needles of a simpler form
that satisfy a certain linear constraint.
This was already discovered by Lov\'asz and Simonovits \cite{LS} in the most interesting case $\kappa = 0, N = n$.

\begin{definition} Let $\kappa \in \RR, 1 \neq N \in \RR \cup \{ \infty \}$ and let $\nu$ be  a measure on
a certain Riemannian manifold $\cM$ which is a $CD(\kappa, N)$-needle. Let $A, \Psi$ and $\gamma$ be as in Definition \ref{def_cd}.
We say that $\nu$ is a ``$CD(\kappa,N)$-affine needle'' if the following inequality holds true in the entire set $A$:
$$
\Psi^{\prime \prime} = \kappa + \frac{(\Psi^{\prime})^2}{N-1},
$$
where in the case $N = \infty$, we interpret the term $(\Psi^{\prime})^2 / (N-1)$ as zero.
\label{def_cd2}
\end{definition}

For $x \in \RR$ write $x_+ = \max \{x, 0 \}$.
The class of $CD(\kappa, N)$-affine needles may be described explicitly, as follows:

\begin{enumerate}
\item The {\it exponential needles} are $CD(0, \infty)$-affine needles, for which the function
 $e^{-\Psi}$ is an exponential function restricted
to the open, connected set  $A$.
That is, the function $e^{-\Psi}$ takes the form $$ A \ni t \mapsto \alpha \cdot e^{\beta \cdot t} $$ for certain $\beta \in \RR, \alpha > 0$.
The {\it $\kappa$-log-affine needles} are $CD(\kappa, \infty)$-affine needles, for which  $\Psi(t) - \kappa t^2/2$ is an affine function
in the open, connected set $A$.

\item  The {\it $N$-affine needles} are  $CD(0, N+1)$-affine needles with $0 \neq N \in \RR$,
for which  $f^{1/N}$ is an affine function in the open, connected set  $A$.

\item  For $0 \neq \kappa \in \RR$ and $0 \neq N \in \RR$, the $CD(\kappa, N+1)$-affine needles satisfy, for all $t \in A$,
$$ e^{-\Psi(t)} = \left \{ \begin{array}{lr}
       \left \{ \alpha \cdot \sin \left( \sqrt{\frac{\kappa}{N}} t  - \beta \right) \cdot 1_{[0, \pi]} \left(\sqrt{\frac{\kappa}{N}} t - \beta\right) \right \}_+^N & \kappa/N > 0 \vspace{5pt} \\
    (\alpha + t \beta)_+^N & \kappa = 0 \vspace{5pt} \\
     \left(\alpha \cdot \sinh \left( \sqrt{|\frac{\kappa}{N}|} \cdot t \right) + \beta \cdot \cosh \left( \sqrt{|\frac{\kappa}{N}|} \cdot t \right) \right)^N_+
     & \kappa/N <0
     \end{array} \right. $$
for some $\alpha, \beta \in \RR$.
\end{enumerate}

In the case where $N \in (0, +\infty]$ and $\kappa \geq 0$ it seems pretty safe to make the following:

\begin{conjecture} Let $\mu$ be a probability measure on
$\RR$ which is a $CD(\kappa,N+1)$-needle. Let $\vphi: \RR \rightarrow \RR$ be a
continuous,   $\mu$-integrable  function with
$ \int_{\RR} \vphi d \mu = 0$.
Then there exist probability measures $\{ \mu_{\alpha} \}_{\alpha \in \Omega}$ on $\RR$ and a probability measure $\nu$ on the set $\Omega$ such that:
\begin{enumerate}
\item[(i)] For any Lebesgue-measurable set $A \subseteq \RR$ we have $\mu(A) = \int_{\Omega} \mu_{\alpha}(A) d \nu(\alpha)$.
\item[(ii)] For $\nu$-almost any $\alpha \in \Omega$, the measure $\mu_{\alpha}$ is either supported on a singleton, or else it is a $CD(\kappa, N+1)$-affine needle with $  \int_{\RR}
\vphi d \mu_{\alpha} = 0$.
\end{enumerate}
\label{prop_551}
\end{conjecture}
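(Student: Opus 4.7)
The strategy mirrors the iterated bisection of Lov\'asz and Simonovits, now applied within the one-dimensional class of $CD(\kappa, N+1)$-needles with the added constraint that each piece has vanishing $\vphi$-integral. Under the hypotheses $\kappa \geq 0$ and $N \in (0, +\infty]$, the class of $CD(\kappa, N+1)$-affine needles forms a three-parameter family (two endpoint locations plus one amplitude or slope parameter), and the goal is to write $\mu$ as a continuous mixture of such needles with the integral vanishing on each.

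First I would prove a one-step splitting lemma: if $\sigma$ is a $CD(\kappa, N+1)$-needle on $\RR$ which is not already affine, and if $\psi \in L^1(\sigma)$ satisfies $\int \psi \, d\sigma = 0$, then $\sigma$ decomposes as a mixture $\sigma = \int \sigma_\omega \, d\nu_0(\omega)$ in which $\nu_0$-almost every $\sigma_\omega$ is a $CD(\kappa, N+1)$-needle satisfying $\int \psi \, d\sigma_\omega = 0$, and for which a suitable slack functional such as $\int (\Psi_\sigma'' - \kappa - (\Psi_\sigma')^2/N) \, d\sigma$ decreases strictly in an averaged sense. A natural construction is to study the convex cone of affine sub-needles $\tau \leq C\sigma$ with $\int \psi\, d\tau = 0$; under the non-affine assumption this cone is nontrivial, and picking extremal rays yields the required mixture.

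Second I would iterate this splitting to produce a binary (or finer) tree of decompositions. Along any branch the slack strictly decreases, and tightness estimates (since the support of every sub-needle is contained in that of $\mu$ and its density is dominated by a constant multiple of that of $\mu$) guarantee that the needles along a branch converge weakly to an affine needle. The boundary of the tree, endowed with the probability measure derived from the splitting weights, furnishes the index set $\Omega$, and a standard construction yields the desired disintegration $\mu(A) = \int_\Omega \mu_\alpha(A) \, d\nu(\alpha)$ with $\mu_\alpha$ a $CD(\kappa, N+1)$-affine needle satisfying $\int \vphi \, d\mu_\alpha = 0$ for $\nu$-almost every $\alpha$.

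The main obstacle will be the one-step splitting: the $CD(\kappa, N+1)$ condition is a nonlinear differential inequality on $\Psi = -\log(\text{density})$, and it does not interact linearly with mixtures of measures, so producing a splitting that preserves the needle property is delicate. A promising route is to regard affine needles as geodesic rays in model spaces of constant generalized Ricci curvature and to exploit the corresponding warped-product structure. An alternative avoiding the iteration altogether would be to lift $\mu$ to an $(N+1)$-dimensional weighted Riemannian model whose radial profile realizes $\mu$, and to apply Theorem \ref{thm_main} there, so that the transport-ray decomposition upstairs pushes down to the sought affine-needle decomposition on $\RR$. This lift is most natural when $N$ is a positive integer (using a warped product over $S^N$) and requires an abstract warped-product construction for general $N$, which is the principal technical hurdle in that route.
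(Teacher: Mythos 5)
This statement is posed as a \emph{conjecture} in the paper; the author does not prove it. The only guidance the paper gives is a single remark at the end of Section 6: a proof for the special cases $N=+\infty$ or $\kappa=0$ follows from Choquet's integral representation theorem combined with the extreme-point characterization of Fradelizi and Gu\'edon \cite{FG}. The general case is left open, with the author even expressing uncertainty about the correct formulation when $N<0$ and $\kappa<0$. So there is no proof in the paper to compare your proposal against.

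With that said, your proposal is an honest exploration of the problem that correctly identifies the difficulties, but neither of your two routes is brought to completion, and both have gaps. Your first route (iterated bisection) has a gap precisely where you flag it: the one-step splitting lemma. The $CD(\kappa,N+1)$ condition is a pointwise differential inequality on $\Psi=-\log(\mathrm{density})$, and writing $\sigma=\int\sigma_\omega\,d\nu_0$ does not by itself guarantee each $\sigma_\omega$ is a needle. You propose looking at ``the convex cone of affine sub-needles $\tau\le C\sigma$,'' but the set of measures $\tau$ with $\tau\le C\sigma$, $\int\psi\,d\tau=0$, and $\tau$ a $CD(\kappa,N+1)$-affine needle is not a convex set (affine needles are not closed under convex combination), so ``extremal rays'' of this collection is not well-posed; and even granting some form of it, it is unclear why the ambient needle $\sigma$ lies in the closed convex hull of such $\tau$'s. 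The route that \emph{would} work here is the abstract Choquet machinery, which the paper points to: one must identify the extreme points of the compact convex set of probability $CD(\kappa,N+1)$-needles satisfying the linear constraint $\int\vphi\,d\mu=0$, and this is exactly the Fradelizi--Gu\'edon localization theorem (proved for $s$-concave measures, hence covering $\kappa=0$, and for log-concave, hence $N=\infty$); extending their extreme-point characterization to the full $CD(\kappa,N+1)$ class is the open problem. Your iteration/tree approach is essentially an attempt to prove Choquet by hand, which is possible in principle but replaces one unresolved analytic step (extreme-point identification) with another (the monotone slack functional plus convergence of branches), without obvious gain.

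Your second route, lifting $\mu$ to a warped-product model space of dimension $N+1$ and applying Theorem~\ref{thm_main} there, is an attractive idea but has its own gap beyond the non-integer-$N$ issue you mention: even when $N$ is a positive integer, after lifting $\mu$ to a rotationally symmetric density on a model manifold and lifting $\vphi$ to a radial function with mean zero, Theorem~\ref{thm_main} decomposes the lifted measure into $CD(\kappa,N+1)$-needles along transport rays of a Kantorovich potential. Nothing forces these transport rays to be radial; if they are not radial, pushing the decomposition back down to $\RR$ does not yield a disintegration of $\mu$, let alone one into \emph{affine} needles. And even if the rays were radial, the resulting needles would again be generic $CD(\kappa,N+1)$-needles, not affine ones, so you would be back where you started. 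The conjecture is genuinely one dimension lower: it asserts a refinement of the needle class into its extremals, which Theorem~\ref{thm_main} does not address. In short, both of your routes stall at the same conceptual point, and the paper's hint (Choquet plus an extreme-point theorem) is the cleaner target; the real work is proving the analogue of \cite{FG} for the full $CD(\kappa,N+1)$ class, which remains open.
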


Conjecture \ref{prop_551} reduces certain questions on $CD(\kappa, N+1)$-needles
to an inequality involving only two or three real parameters. A proof of Conjecture \ref{prop_551} in the case where $N = +\infty$ or $\kappa = 0$
follows from Choquet's integral representation theorem and the results of Fradelizi and Gu\'edon \cite{FG}. We are not sure what should be the correct formulation of Conjecture
\ref{prop_551} in the case where $N < 0$ and $\kappa < 0$.

\bigskip \bigskip
\noindent {\large \bf Appendix: The Feldman-McCann proof of Lemma \ref{lem_849}}
\addcontentsline{toc}{section}{Appendix: The Feldman-McCann proof of Lemma \ref{lem_849}}
\setcounter{equation}{0}
\bigskip \smallskip

In this appendix we describe the Feldman-McCann proof of Lemma \ref{lem_849}.
Let $\cM$ be a Riemannian manifold with distance function $d$.
Fix $p \in \cM$ and let $\delta_0 = \delta_0(p) > 0$ be the constant provided by Lemma \ref{lem_1315}.
Thus, $U = B_{\cM}(p, \delta_0/2)$ is a strongly-convex set. As in Section \ref{charts},
for $a \in U$ we write $$ U_a = \exp_a^{-1}(U) \subseteq T_a \cM, $$
a convex subset of $T_a \cM$.
For $a \in U$ and $X, Y \in U_a$, denoting $x = \exp_a(X), y = \exp_a(Y)$ we set
$$ F_a(X,Y) = \exp_{x}^{-1}(y) \in T_x \cM, $$
and also
$$ \Phi_a(X, Y) = \text{The parallel translate of} \ F_a(X,Y) \ \text{along the unique geodesic from} \ x \ \text{to} \ a. $$
The map $\Phi_a: U_a \times U_a \rightarrow T_a \cM$ satisfies
\begin{equation} |\Phi_a(X, Y)| = |F_a(X,Y)| = d(\exp_a X, \exp_a Y). \label{eq_1738_} \end{equation}
The behavior of $\Phi_{a}$ on lines through the origin is quite simple: Since $\exp_a(s X)$
and $\exp_a(t X)$ lie on the same geodesic emanating from $a$, then for any $X \in T_a \cM$ and $s,t \in \RR$,
\begin{equation}
\Phi_a(s X, t X) = (t -s) X \qquad \qquad \qquad \text{when} \ s X, t X \in U_a.
\label{eq_2224_}
\end{equation}
See \cite[Section 3.2]{FM} for more details about $\Phi_a$. Our next lemma is precisely
Lemma 14 in \cite{FM}. The proof given in \cite[Lemma 14]{FM} is very simple and uses essentially the same notation
as ours, and it is not reproduced here. In fact, the argument is similar to the proof of Lemma \ref{lem_1737} above,
and it relies only on  the smoothness of $\Phi_a$ and on the relation $\Phi_a(0,Y) = Y$ that follows from (\ref{eq_2224_}).

\medskip
\noindent {\bf Lemma A.1. } {\it
Let $a \in U$ and $X, Y_1, Y_2 \in U_a$. Then,
$$ \left| \Phi_a(X, Y_2) - \Phi_a(X, Y_1) \, - \, (Y_2 - Y_1) \right| \leq \bar{C}_{p} \cdot |X| \cdot |Y_1 - Y_2|,
$$
where $\bar{C}_{p} > 0$ is a constant depending only on $p$.}

\medskip
\begin{proof}[Proof of Lemma \ref{lem_849} (due to Feldman and McCann \cite{FM})]
Define
\begin{equation}
\delta_1 = \delta_1(p) = \min \left \{ \frac{1}{2000 \cdot \bar{C}_{p}}, \frac{\delta_0}{2} \right \}, \label{eq_A1038}
\end{equation}
where $\bar{C}_{p} > 0$ is the constant from Lemma A.1.
Both the assumptions and the conclusion of the lemma are not altered
if we replace $x_i, y_i$ by $x_{2-i}, y_{2-i}$ for $i=0,1,2$. Applying this replacement if necessary,
we assume from now on that
\begin{equation} d(x_0, y_0) \leq d(x_2, y_2). \label{eq_1413} \end{equation}
The points $x_0,x_1,x_2,y_0, y_1,y_2$ belong to $B_{\cM}(p, \delta_1) \subseteq U$. Recall that the main assumption of the Lemma is that
\begin{equation}  d(x_i, x_j) = d(y_i, y_j) = \sigma |i-j| \leq d(x_i, y_j) \qquad \qquad \text{for} \ i,j=0,1,2. \label{eq_0845_} \end{equation}
Define
\begin{equation}
\eps := d(x_1, y_1).
\label{eq_1746}
\end{equation}
Denote $a = x_0$ and let $X_0,X_1, X_2, Y_0, Y_1, Y_2 \in U_a$
be such that $x_i = \exp_a(X_i)$ and $y_i = \exp_a(Y_i)$ for $i=0,1,2$. Since $a = x_0$ then
$$ X_0 = 0. $$
For $i=0,1,2$ we know that $x_i, y_i \in B_{\cM}(p, \delta_1)$ and $X_i, Y_i \in U_a$.
It follows from (\ref{eq_1738_}), (\ref{eq_2224_})
and (\ref{eq_0845_}) that
\begin{equation}
|X_i| = |\Phi_a(X_0, X_i)| = d(x_0, x_i) \leq 2 \delta_1, \quad
|Y_i| = |\Phi_a(X_0, Y_i)| = d(x_0, y_i) \leq 2 \delta_1.
 \label{eq_921}
\end{equation}
By using (\ref{eq_921})  and Lemma A.1,
for any $R, Z, W \in \{ 0= X_0, X_1, X_2, Y_0, Y_1, Y_2 \}$,
\begin{equation}
\left| \Phi_a(R, Z) - \Phi_a(R, W)  - (Z - W)  \right| \leq \bar{C}_{p} \cdot |R| \cdot |Z - W|  \leq 2 \bar{C}_{p} \delta_1 |Z - W|
\leq \frac{|Z - W|}{10},
\label{eq_1001}
\end{equation}
where we used  (\ref{eq_A1038}) in the last passage.
By using (\ref{eq_1738_}), (\ref{eq_1746}) and also (\ref{eq_1001}) with $R = Z = X_1$ and $W = Y_1$,
\begin{equation}
|Y_1 - X_1| \leq \frac{10}{9}  \cdot|\Phi_a(X_1, Y_1) - \Phi_a(X_1, X_1)| = \frac{10}{9} \cdot|\Phi_a(X_1, Y_1) |
= \frac{10}{9} \cdot d(x_1, y_1) = \frac{10}{9} \cdot \eps, \label{eq_923}
\end{equation}
where $\Phi_a(X_1, X_1) = 0$ by (\ref{eq_2224_}). From (\ref{eq_2224_}), (\ref{eq_0845_}) and the fact that $X_0 = 0$,
\begin{equation}  2\sigma \leq d(x_0, y_2) = |\Phi_a(X_0, Y_2)| = |Y_2| = |(Y_2 - X_2) + (X_2 - X_0)|. \label{eq_1532}
\end{equation}
Note that $|X_2 - X_0| = |\Phi_a(X_0, X_2)| = 2 \sigma$ from (\ref{eq_1738_}), (\ref{eq_2224_}) and (\ref{eq_0845_}). Hence, by squaring (\ref{eq_1532}),
\begin{equation}  (2 \sigma)^2 \leq |Y_2 - X_2|^2 + 2 \langle Y_2 - X_2, X_2 - X_0 \rangle + (2 \sigma)^2. \label{eq_1540} \end{equation}
According to (\ref{eq_0845_}), the point
$x_1$ is the midpoint of the geodesic between $x_0 = a$ and $x_2$. Therefore
$x_2 = \exp_a(X_2) = \exp_a(2 X_1)$ and by strong-convexity  $2 X_1 = X_2$.
Consequently $X_2 - X_0 = 2 (X_2 - X_1)$, and from (\ref{eq_1540}) we deduce that
\begin{equation}  \langle Y_2 - X_2, X_2 - X_1 \rangle = \frac{1}{2} \langle Y_2 - X_2, X_2 - X_0 \rangle
\geq -\frac{1}{4} |Y_2 - X_2|^2.
\label{eq_1525} \end{equation}
Our next goal, like in \cite[Lemma 16]{FM}, is to prove that
\begin{equation}
 \langle Y_2 - X_2, Y_1 - Y_2 \rangle \geq -\frac{1}{3} |Y_2 - X_2|^2. \label{eq_1545}
 \end{equation}
 Begin by applying (\ref{eq_2224_}) and (\ref{eq_0845_}), in order to obtain
\begin{equation} 2\sigma \leq d(y_0, x_2) = \left |\Phi_a(Y_0, X_2) \right| = \left| \left( \Phi_a(Y_0, X_2) - \Phi_a(Y_0, Y_2) \right) \,  +\, \Phi_a(Y_0, Y_2) \right|. \label{eq_1729} \end{equation}
From (\ref{eq_0845_}), the point $y_1$ is the midpoint of the geodesic between $y_0$ and $y_2$. This implies that
$F_a(Y_0, Y_2) = 2 F_a(Y_0, Y_1)$ and therefore
$\Phi_a(Y_0, Y_2) = 2 \Phi_a(Y_0, Y_1)$.
Recall that $|\Phi_a(Y_0, Y_2)| = d(y_0, y_2) = 2 \sigma$, according to (\ref{eq_0845_}). Thus, by squaring (\ref{eq_1729}) and rearranging,
\begin{align} \nonumber
- |\Phi_a(Y_0, X_2) & - \Phi_a(Y_0, Y_2)|^2 \leq 2 \langle \Phi_a(Y_0, X_2) - \Phi_a(Y_0, Y_2), \Phi_a(Y_0, Y_2) \rangle
\\ & = 4 \langle \Phi_a(Y_0, X_2) - \Phi_a(Y_0, Y_2), \Phi_a(Y_0, Y_2) - \Phi_a(Y_0, Y_1) \rangle. \label{eq_1733} \end{align}
The deduction of (\ref{eq_1545}) from (\ref{eq_1733}) involves several approximations.
Begin by using (\ref{eq_1733}) and also (\ref{eq_1001}) with $R = Y_0, Z = X_2, W = Y_2$, to obtain
\begin{equation}
- \left(11/10 \right)^2 \cdot |X_2 - Y_2|^2 \leq 4 \langle \Phi_a(Y_0, X_2) - \Phi_a(Y_0, Y_2), \Phi_a(Y_0, Y_2) - \Phi_a(Y_0, Y_1) \rangle. \label{eq_1740} \end{equation}
Applying (\ref{eq_1413}), together
with (\ref{eq_1001}) for $R = Z = X_2, W = Y_2$, we obtain
\begin{equation}
|Y_0| = |\Phi_a(X_0, Y_0)| \leq |\Phi_a(X_2, Y_2)| = |\Phi_a(X_2, Y_2) -\Phi_a(X_2, X_2)| \leq \frac{11}{10} \cdot |Y_2 - X_2|.
\label{eq_953} \end{equation}
According to Lemma A.1
and (\ref{eq_953}), for any $Z, W \in \{ 0=X_0, X_1, X_2, Y_0, Y_1, Y_2 \}$,
\begin{equation}
\left| \Phi_a(Y_0, Z) - \Phi_a(Y_0, W)  - (Z - W)  \right| \leq \bar{C}_{p} \cdot |Y_0| \cdot |Z - W| \leq 2 \bar{C}_{p} \cdot |Y_2 - X_2| \cdot |Z - W|.
\label{eq_1000}
\end{equation}
It follows from (\ref{eq_1740}) and from the case $Z = X_2, W = Y_2$ in (\ref{eq_1000})  that
\begin{align} \label{eq_1742}
-&(11/10)^2  \cdot |X_2 - Y_2|^2 \\ & \leq 4 \langle X_2 - Y_2, \Phi_a(Y_0, Y_2) - \Phi_a(Y_0, Y_1) \rangle
+ 8 \bar{C}_{p} |X_2 - Y_2|^2 \cdot |\Phi_a(Y_0, Y_2) - \Phi_a(Y_0, Y_1)|. \nonumber \end{align}
Note that $|\Phi_a(Y_0, Y_2) - \Phi_a(Y_0, Y_1)| \leq 2 |Y_2 - Y_1|$,
as follows from an application of (\ref{eq_1001}) with $R = Y_0, Z = Y_2, W = Y_1$.
We now use (\ref{eq_1000}) with $Z = Y_2$ and $W = Y_1$, and upgrade (\ref{eq_1742}) to
\begin{align} \label{eq_1745}
- \left(11/10 \right)^2  |X_2 - Y_2|^2 & \leq 4 \langle X_2 - Y_2, Y_2 - Y_1 \rangle
+ 30 \cdot  \bar{C}_{p} |X_2 - Y_2|^2 \cdot |Y_2 - Y_1|. \end{align}
The next step is to use that $|Y_2 - Y_1| \leq |Y_2| + |Y_1| \leq 4 \delta_1 \leq 1 / (300 \bar{C}_{p})$
according to (\ref{eq_A1038}) and (\ref{eq_921}). Thus (\ref{eq_1745}) implies
$$ - \left( 11/10 \right)^2 \cdot |X_2 - Y_2|^2 \leq 4 \langle X_2 - Y_2, Y_2 - Y_1 \rangle
+ \frac{|X_2 - Y_2|^2}{10}, $$
and (\ref{eq_1545}) follows. From (\ref{eq_1525}) and (\ref{eq_1545}),
\begin{align}  \nonumber
\langle Y_2 - X_2, & Y_1 - X_1 \rangle  = \langle Y_2 - X_2, (Y_1 - Y_2) + (Y_2 - X_2) + (X_2 - X_1) \rangle \\ & \geq
-\frac{|X_2 - Y_2|^2}{3} + |X_2 - Y_2|^2 - \frac{|X_2 - Y_2|^2}{4} \geq \frac{1}{3} \cdot |Y_2 - X_2|^2. \label{eq_909}
\end{align}
According to (\ref{eq_923}), (\ref{eq_909}) and the Cauchy-Schwartz inequality,
\begin{equation} \frac{10}{9} \cdot \eps \cdot |Y_2 - X_2| \geq |Y_2 - X_2| \cdot |Y_1 - X_1| \geq \langle Y_2 - X_2, Y_1 - X_1 \rangle \geq \frac{1}{3} \cdot |Y_2 - X_2|^2.
\label{eq_1512}
\end{equation}
From (\ref{eq_1512}),
\begin{equation}
 |Y_2 - X_2| \leq 4 \eps. \label{eq_927}
\end{equation}
We may summarize (\ref{eq_923}), (\ref{eq_953}) and (\ref{eq_927}) by
\begin{equation}
|Y_i - X_i| \leq 5 \eps \qquad \qquad \qquad (i=0,1,2).
\label{eq_1114}
\end{equation}
For $i=0,1,2$, we use (\ref{eq_1738_}), (\ref{eq_1114}) and also (\ref{eq_1001}) with $R = Z = X_i$ and $W = Y_i$. This yields
\begin{equation} d(x_i, y_i) = |\Phi_a(X_i, Y_i)| = |\Phi_a(X_i, Y_i) - \Phi_a(X_i, X_i)| \leq (11/10) \cdot |Y_i - X_i| \leq 6 \eps, \label{eq_1538} \end{equation}
where $\Phi_a(X_i, X_i) = 0$ according to (\ref{eq_2224_}). The lemma follows
from (\ref{eq_1746}) and (\ref{eq_1538}).
\end{proof}

{
}

\end{document}